\newcommand{\X}{{\mathbf X_{\Gamma}}}
\newcommand{\R}{{\mathbb R}}
\newcommand{\Z}{{\mathbb Z}}
\renewcommand{\H}{{\mathbf H}}
\newcommand{\half}{{\frac{1}{2}}}
\renewcommand{\phi}{\varphi}
\newcommand{\dcal}{\mathcal{D}}
\newcommand{\gcal}{\mathcal{G}}
\newcommand{\hcal}{\mathcal{H}}
\newcommand{\lcal}{\mathcal{L}}
\newcommand{\pcal}{\mathcal{P}}
\newcommand{\mcal}{\mathcal{M}}
\newcommand{\ncal}{\mathcal{N}}
\newcommand{\rcal}{\mathcal{R}}
\newcommand{\tcal}{\mathcal{T}}
\newcommand{\VT}{\overline{V}_{T,\epsilon}}
\numberwithin{equation}{section}
\newtheorem{maintheo}{{\sc Theorem}}
\newtheorem{mainprop}{{\sc Proposition}}
\newtheorem{maincor}{{\sc Corollary}}
\newtheorem{maindefin}{{\sc Definition}}
\newtheorem{theo}{Theorem}
\newtheorem{cor}[theo]{ Corollary}
\newtheorem{lem}[theo]{ Lemma}
\newtheorem{prop}[theo]{ Proposition}
\newenvironment{rem}{\medskip\noindent{\it Remark:\/} }{\medskip}
\newenvironment{defin}{\medskip\noindent{\it Definition:\/} }{\medskip}
\title[Quantum ergodic restriction theorems, II: manifolds without boundary]
{Quantum ergodic restriction theorems, II: manifolds without
boundary}
\author{John  A. Toth}
\address{Department of Mathematics and Statistics, McGill University, Montreal, CANADA }
 \email{jtoth@math.mcgill.ca} \thanks{Research partially supported by NSERC grant \# OGP0170280 and a William Dawson Fellowship}
\author{Steve Zelditch}
\address{Department of Mathematics, Northwestern  University,
Evanston, IL 60208-2370, USA} \email{
zelditch@math.northwestern.edu}
\thanks{Research partially supported by NSF grant  \# DMS-0904252.}
\begin{document}

\begin{abstract}  We prove that if $(M, g)$ is a compact
Riemannian manifold with ergodic geodesic flow, and if $H \subset
M$ is a smooth hypersurface satisfying a generic microlocal
asymmetry condition, then restrictions $\phi_j |_H$ of an
orthonormal basis $\{\phi_j\}$ of  $\Delta$-eigenfunctions of $(M,
g)$ to $H$ are quantum ergodic on $H$. The condition on $H$ is
satisfied by geodesic circles, closed horocycles and generic
closed geodesics on a hyperbolic surface. A key step in the proof
is that matrix elements $\langle F \phi_j, \phi_j \rangle$ of
Fourier integral operators $F$ whose canonical relation  almost
nowhere commutes  with the geodesic flow must tend to zero.

\end{abstract}

\maketitle


This article is part of a series on what we call the quantum
ergodic restriction problem.  The QER problem is to determine
conditions on a hypersurface $H$ so that the restrictions
$\{\gamma_H \phi_j \}$ to $H$  of an orthonormal basis of eigenfunctions
$\{\phi_{j}\}$ of $\Delta_g$, $$\Delta \phi_j = \lambda_j^2
\phi_j$$ on a Riemannian manifold
 $(M, g)$  with ergodic
geodesic flow, are quantum  ergodic along $H$. Here, 
  $\gamma_H f = f|_H$  denotes the restriction operator to $H$.  We say that
$\{\gamma_H \phi_j \}$  is quantum ergodic along $H$ if there exists a
measure  $d\mu_H$ on $T^*H$ and a density one subsequence of
eigenfunctions so that, for any zeroth-order
 pseudo-differential operator $Op_H(a)$  along $H$,
\begin{equation}\label{QER} \langle Op_H(a) \gamma_H \phi_{j} , \gamma_H \phi_{j} 
\rangle_{L^2(H)} \to \int_{T^*H} a d\mu_H. \end{equation} Here,
the norm on  $L^2(H)$ is $||f||_{L^2(H)}^2 = \int_H |f|^2 d S$
where $dS$ is the Riemannian surface measure. We may pose the same
problem for the Neumann data $\partial_{\nu} \phi_{j}|_H$ or the
full Cauchy data $ (\gamma_H \phi_{j},  \lambda_j^{-1} \gamma_H \partial_{\nu} \phi_{j}) $ of
$\phi_{j}$ along $H$. In this article we study the QER problem for
Dirichlet data for general Riemannian manifolds without boundary
and ergodic geodesic flow. Our main result (Theorem
\ref{maintheorem}) gives a
 geometric  condition on $H$, satisfied for generic $H$,  so that the eigenfunctions of the
Laplacian of $(M, g)$ have the quantum ergodic property on $H$. In
\S \ref{HHP} it is shown that the condition is satisfied by
geodesic circles, closed horocycles and generic closed geodesics
on a hyperbolic surface. This result has applications to the
equidistribution of intersections of nodal lines and geodesics on
surfaces \cite{Z4}. In the companion paper \cite{TZ} we prove the analogue
of Theorem \ref{maintheorem} on Euclidean domains with boundary and ergodic billiards by
a quite different proof.

 In the  case of bounded domains $M \subset \R^n$  and for the special hypersurface $H= \partial M,$ it is  shown in \cite{HZ}
(see also \cite{B})  that a full asymptotic density of Neumann eigenfunctions $\{\phi_{j}|_{\partial M \}}$ are quantum ergodic.  It is important to note that these are really QER results for Cauchy data along the special boundary hypersurface where half the data happens to vanish due to the boundary conditions (ie. $\partial_{\nu} \phi_j|_{\partial M} = 0).$ In analogy with these earlier results, in \cite{TZ,CTZ} it is proved that quantum ergodicity of Cauchy
data on {\em any} interior hypersurface is inherited from quantum ergodicity of
the eigenfunctions in the ambient space.  In fact, QUE in the ambient space implies a QUE result on the hypersurface
(with respect to a certain  sub-algebra of pseudo-differential operators).  

    This article proves a much subtler  QER theorem  for the Dirichlet data alone along hypersurfaces $H$ on manifolds without boundary. 
This is  quite a different result  from the automatic QER property of Cauchy data.  In particular,  QER of Dirichlet data does not
automatically follow from quantum ergodicity in the ambient space. There
are simple examples of $H$ for which the Dirichlet data of ergodic
eigenfunctions of $(M, g)$ fail to be ergodic on $H$. For
instance, if $H$ is the fixed point set of an isometric involution
of $(M, g)$, then any odd eigenfunction with respect to the
involution will vanish on $H$. The condition given in Theorem
\ref{maintheorem} is a  microlocal {\it asymmetry} condition on
the `left' versus `right' return maps for geodesics emanating from
$H$ which rules out the existence of such an involution on the
phase space level.

To state our results, we introduce some notation.   We denote by
\begin{equation} T^*_H M = \{(q, \xi) \in T_q^* M, \;\; q\in H\} \end{equation}
 the covectors to $M$ with footpoint on $H$, and by
$T^* H=  \{(q, \eta) \in T_q^* H, \;\; q\in H\}$ the cotangent
bundle of $H$.  We further denote by $ \pi_H : T^*_H M \to T^*
H $  the restriction map,
\begin{equation} \label{RESCV}  \pi_H(x, \xi)  = \xi |_{TH}.
\end{equation}
It is a linear map whose kernel is the conormal bundle $N^* H$ to
$H$, i.e. the annihilator of the tangent bundle $TH$. In the
presence of the metric $g$,  we may identify co-vectors in $T^*M$
with vectors in $TM$ and  induce a co-metric $g$ on $T^*M$. The
orthogonal decomposition  $T_H M = T H \oplus N H$ induces an
orthogonal decomposition $T_H^* M = T^* H \oplus N^* H, $ and the
restriction map (\ref{RESCV}) is  equivalent modulo metric
identifications to the tangential orthogonal projection (or restriction)
\begin{equation} \label{piHdef} \pi_H: T^*_H M \to T^* H. \end{equation}

For any orientable (embedded) hypersurface $H \subset M$, there
exists two unit normal co-vector fields $\nu_{\pm}$ to $H$ which
span half ray bundles $N_{\pm} = \R_+ \nu_{\pm} \subset N^* H$.
Infinitesimally, they define two `sides' of $H$, indeed they are
the two components of  $T^*_H M \backslash T^* H$. We often use
Fermi normal coordinates $(s, y_n)$ along $H$ with $s \in H$ and
with $x = \exp_x y_n \nu$. We let $\sigma, \eta_n$ denote the dual
symplectic coordinates.

We also denote by $S^*_H M,$ resp. $S^* H$, the unit covectors in
$T^*_H M$, resp. $T^* H$. In general, for any subset $V \subset
T^* M$ we denote by $S V = V \cap S^*M$ the subset of unit
covectors in $V$. We may restrict \eqref{piHdef} to get  $\pi_H : S^*_H M \to B^* H,$ with
 where $B^* H$ is the unit coball bundle of $H$.
Conversely, if $(s, \sigma) \in B^* H$, then there exist two unit
covectors $\xi_{\pm}(s, \sigma) \in S^*_s M$ such that
$|\xi_{\pm}(s, \sigma)| = 1$ and $\xi|_{T_s H} = \sigma$. In the
above orthogonal decomposition, they are given by
\begin{equation} \label{xipm} \xi_{\pm}(s, \sigma) = \sigma \pm \sqrt{1 - |\sigma|^2}
\nu_+(s). \end{equation} We define the reflection involution
through $T^* H$ by
\begin{equation}\label{rHDEF}  r_H: T_H^* M \to T_H^* M, \;\;\;\;
r_H(s, \mu\; \xi_{\pm}(s, \sigma)) =(s, \mu\; \xi_{\mp}(s,
\sigma)), \,\,\,  \mu \in \R_{+}.
\end{equation} Its  fixed point
set is $T^* H$.

We denote by $G^t$ the homogeneous  geodesic flow of $(M, g)$,
i.e.  Hamiltonian flow on $T^*M - 0$ generated by $|\xi|_g$. We
then put $\exp_x t \xi = \pi \circ G^t(x, \xi)$. We emphasize that
both the geodesic flow and exponential map are homogeneous with
respect to the natural $\R_+$ action on $T^*M - 0 $, i.e. $G^t (x, r
\xi) = r G^t(x, \xi), \exp_x r \xi = \exp_x \xi$ for $|\xi| = 1$,
unlike the customary definitions in geometry.  We assume
throughout that $G^t: S^*M \to S^* M$ is ergodic (with respect to
Liouville measure $d\mu_L$). The set $S^*_H M$ of unit co-vectors
to $M$ with footpoints on $H$ forms a kind of cross-section to the
flow (see \S \ref{return times}) in the  sense that almost every
trajectory of the geodesic flow intersects $S^*_H M$
transversally. In particular, almost every trajectory from $S^*_H
M$ returns to $S^*_H M$.

 We define the {\it first return time}
$T(s, \xi)$ on $S^*_H M$ by,
\begin{equation} \label{FRTIME} T(s, \xi) = \inf\{t > 0: G^t (s,
\xi) \in S^*_H M, \;\;\ (s, \xi) \in S^*_H M)\}. \end{equation} By
definition $T(s, \xi) = + \infty$ if the trajectory through $(s,
\xi)$ fails to return to $H$. The domain of $T$ (where it is
finite) is denoted by $\lcal$ (\ref{DECAL1}).  Inductively, we
define the jth return time $T^{(j)}(s, \xi)$ to $S^*_H M$ and the
jth return map $\Phi^j$ when the return times are finite
(\ref{LCALM}). When $(x, \xi) \in S^* M \backslash S^*_H M$ the
same formula defines what we call the first `impact time' (see
(\ref{IMPACTTIMES})).

We define the first return map on the same domain by
\begin{equation} \label{FIRSTRETURN} \Phi: S^*_H M \to S^*_H M, \;\;\;\; \Phi(s, \xi) = G^{T(s, \xi)} (s, \xi) \end{equation}
When $G^t$ is ergodic, $\Phi$ is defined almost everywhere and is
also ergodic with respect to Liouville measure  $\mu_{L, H}$  on $S^*_H M$. 

\begin{maindefin} \label{ANC}  We say that  $H$ has a positive
measure of microlocal reflection symmetry if 
$$  \mu_{L, H} \left( \bigcup_{j \not= 0}^{\infty}  \{(s, \xi) \in S^*_H M : r_H G^{T^{(j)}(s, \xi)} (s, \xi)  =
 G^{T^{(j)}(s, \xi)} r_H (s, \xi)  \}\right) > 0.  $$ 
Otherwise we say that $H$ is asymmetric with respect to the geodesic flow. 

\end{maindefin}

The term ``microlocal reflection symmetry" is intended to distinguish the symmetry from a global one defined 
by a symmetry map on $M$. The  symmetry condition may be understood in terms of left and right return maps.  We use
this characterization to determine the degree of symmetry in the examples in \S \ref{HHP}. 
Since $S^*H$ disconnects $S^*_H M$, 
we have two lifts $\xi_{\pm}(s, \sigma)$ of a
covector  $(s, \sigma) \in B^* H$ to $S^*_H M$,  two almost
everywhere defined  first return maps

\begin{equation} \label{pcaldef} \pcal_{\pm}: B^* H \to B^* H,
\;\;\;\; \pcal_{\pm}(s, \sigma) = \pi_H \;\Phi \; \xi_{\pm}(s,
\sigma),
\end{equation}
and two first return times $T_{\pm}(s, \sigma)$. We define the jth
return maps similarly by
\begin{equation} \label{pcaldefj} \pcal_{\pm, j}: B^* H \to B^* H,
\;\;\;\; \pcal_{\pm,j}(s, \sigma) = \pi_H \;\Phi^j \; \xi_{\pm}(s,
\sigma),
\end{equation}
and the two jth return times by $T_{\pm}^{(j)}(s, \sigma)$ (see
(\ref{tjpm}) for the precise definition).
 Thus,  $\pcal_{\pm, j}(s, \sigma)$ is defined by lifting
 $(s, \sigma) \to \xi_{\pm}(s, \sigma)$   and following the
 trajectory
$G^t(s, \xi_{\pm}(s, \sigma))$ until it hits $S^*_H M$ for the
$j$th-time and  then projecting back to $B^* H$.
When the condition 
\begin{equation}   \label{rH}  r_H G^{T^{(j)}(s, \xi)} (s, \xi)  =
 G^{T^{(j)}(s, \xi)} r_H (s, \xi), \,\, (s,\xi) , \;\; G^{T^{(j)}(s, \xi)} r_H (s, \xi) \in S^*_H M, 
    \end{equation}
of Definition \ref{ANC}
holds, one has
$$\pcal_{+, j} (s, \sigma) = \pcal_{-, k}(s, \sigma)
$$
for a certain  $k$ which might   not equal $j$. Indeed,  \eqref{rH} can only hold if $  G^{T^{(j)}(s, \xi)} r_H (s, \xi) \in S^*_H M$,
i.e. $T^{(j)}(x, \xi)$ is a return time for $r_H(x, \xi)$. This does not necessarily imply it is the jth return time
for $r_H(x, \xi)$. Thus, the return time condition is that the $+$ and $-$ trajectories return at the same time and project to
the same covector in $B^* H$ on a set of positive measure.

We  need some further notation and background considering the `test operators' used in the limit
formula \eqref{QER}.   The result holds for both poly-homogeneous (Kohn-Nirenberg) pseudo-differential
operators and also for semi-classical
 pseudo-differential operators on $H$ with essentially the same proof.   To avoid confusion between pseudodifferential operators  on the ambient manifold $M$  and those on $H$,  we denote
 the latter by $Op_H(a)$ where $a \in S^{0}_{cl}(T^*H).$ 
 By Kohn-Nirenberg  pseudo-differential operators
we mean operators  with classical poly-homogeneous symbols   $a(s,\sigma) \in C^{\infty}(T^*H),$ 
 $$ a(s,\sigma) \sim \sum_{k=0}^{\infty} a_{-k}(s,\sigma), \,\,(a_{-k} \; \mbox{ positive homogeneous of order} 
\; -k) $$
 as $|\sigma| \rightarrow \infty$  on $T^* H$ as in \cite{HoI-IV}.  By semi-classical pseudo-differential operators
we mean $h$-quantizations of   semi-classical  symbols  $a \in S^{0,0}(T^*H \times (0,h_0]$
of the form
 $$ a_{h} (s,\sigma) \sim \sum_{k=0}^{\infty} h^k \;a_{-k}(s,\sigma), \,\,(a_{-k} \; \in  S_{1,0}^{-k}(T^* H)) $$ as in
\cite{Zw,HZ,TZ}. We choose to  emphasize  the polyhomogeneous case   because there exists a systematic reference  \cite{HoI-IV}
for the Fourier integral operator theory we require.  The  book-in-progress \cite{GuSt2} now
provides a similar systematic presentation of the semi-classical Fourier integral operator theory. The
rules for composing Lagrangian submanifolds and symbols are essentially  the same in the poly-homogeneous
and semi-classical settings, and so it is
straightforward  to adapt the proof of poly-homogeneous theorem to the semi-classical one, and 
we   do so in   Appendix \S \ref{sc}. A systematic exposition of the passage beween semi-classical
and polyhomogeneous Fourier integral operators is given in \cite{Y} (see Propositions 1.1.2-1.1.3
and 2.3.1).

We further introduce the zeroth order homogeneous function
\begin{equation}\label{gammaDEF} \gamma(s, y_n, \sigma, \eta_n) =  \frac{|\eta_n|}{\sqrt{|\sigma|^2
+ |\eta_n|^2}} = (1 - \frac{|\sigma|^2}{r^2})^{\half} ,\;\;\;(r^2 = |\sigma|^2 + |\eta_n|^2)  \end{equation}    
on $T^*_H M$ and also denote by
\begin{equation} \label{gammaBH} \gamma_{B^*H} = (1 - |\sigma|^2)^{\half} \end{equation}
its restriction to $S^*_H M = \{r = 1\}$.
 The functions (\ref{gammaBH})  are singular along  $S^*H$  and as in  \cite{HZ}   they arise in the limit measures $d\mu_H$ (we have
retained  the notation $\gamma$ from \cite{HZ} and hope that it does not conflict with the notation $\gamma_H$
for the restriction operator).
 We also use the same notation for a smooth extension of $\gamma$  to  a collar neighbourhood of $T^*_H M$ in $T^*M.$

For homogeneous pseudo-differential operators, the QER theorem is as follows:

   \begin{maintheo} \label{maintheorem} Let $(M, g)$ be a compact manifold with ergodic geodesic flow, and let  $H \subset
      M$ be a hypersurface.  Let $\phi_{\lambda_j}; j=1,2,...$ denote the
       $L^{2}$-normalized eigenfunctions of $\Delta_g$.
       If $H$ has a zero measure of microlocal symmetry, then
 there exists a  density-one subset $S$ of ${\mathbb N}$ such that
  for $\lambda_0 >0$ and  $a(s,\sigma) \in S^{0}_{cl}(T^*H)$
$$ \lim_{\lambda_j \rightarrow \infty; j \in S} \langle  Op_H(a) 
 \gamma_H \phi_{\lambda_j}, \gamma_H \phi_{\lambda_j}\rangle_{L^{2}(H)} = \omega(a), $$
 where 
 $$  \omega(a) = \frac{2}{ vol(S^*M) } \int_{B^{*}H}  a_0( s, \sigma )  \,  \gamma^{-1}_{B^*H}(s,\sigma)  \, ds d\sigma. $$

\end{maintheo}

 Alternatively, one can write  $ \omega(a) = \frac{1}{vol (S^*M)} \int_{S^*_H M} a_0(s, \pi_H(\xi)) d\mu_{L, H} (\xi).$  Note that  $a_0(s, \sigma)$ is bounded but is not
defined for $\sigma = 0$, hence $ a_0(s, \pi_H(\xi))$  is not
defined for $ \xi \in N^* H$ if  $a_0(s,\sigma)$  is homogeneous of order zero on $T^* H$. The integral can also be simplified to
$ \omega(a) = C_{M,n} \int_{S^*H} a_0 \, d\mu_L$
where, $C_{M,n} = \frac{2}{vol S^*M} \left( \int_{0}^{1} (1-r^2)^{-1/2} r^{n-2} \, dr \right)$ and $d\mu_L$ is Liouville measure on $S^*H.$ 
The analogous result for semi-classical pseudo-differential operators is:

  \begin{maintheo} \label{sctheorem} Let $(M, g)$ be a compact manifold with ergodic geodesic flow, and let  $H \subset
      M$ be a hypersurface.  
       If $H$ has a zero measure of microlocal symmetry, then
 there exists a  density-one subset $S$ of ${\mathbb N}$ such that
  for $a \in S^{0,0}(T^*H \times [0,h_0)),$
$$ \lim_{h_j \rightarrow 0^+; j \in S} \langle Op_{h_j}(a)
\gamma_H \phi_{h_j}, \gamma_H \phi_{h_j}\rangle_{L^{2}(H)} = \omega(a), $$
 where 
 $$  \omega(a) = \frac{2}{ vol(S^*M) } \int_{B^{*}H}  a_0( s, \sigma )  \,  \gamma^{-1}_{B^*H}(s,\sigma)  \, ds d\sigma.$$
\end{maintheo}

In the special case where $ a(s, \sigma) = V(s) $ is a multiplication
operator, an application of Theorem \ref{maintheorem} gives:
\begin{cor}  Under the same hypotheses as in Theorem \ref{maintheorem},  with $dS$ the surface measure on $H$,   $$ \lim_{\lambda_j \rightarrow \infty; j \in S}
\int_H V
   \left( \gamma_H \phi_{\lambda_j}\right)^2  \, d S = C_{M,n}'  \int_{H} V(s)  \,   dS,  $$  
where, $C_{M,n}' =  \frac{vol (S^{n-1})}{ vol (S^*M)}.$  \end{cor}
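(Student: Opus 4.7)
The corollary is a direct specialization of Theorem~\ref{maintheorem} with the classical symbol $a(s,\sigma) = V(s) \in S^{0}_{cl}(T^*H)$, which is polyhomogeneous of degree zero (being constant in $\sigma$) and whose principal symbol is $a_0(s,\sigma) = V(s)$. With this choice, $Op_H(V)$ may be taken to be the multiplication operator $M_V$ on $L^2(H)$, so that
$$ \langle Op_H(V)\,\gamma_H\phi_{\lambda_j},\, \gamma_H\phi_{\lambda_j}\rangle_{L^2(H)} = \int_H V\,(\gamma_H\phi_{\lambda_j})^2\, dS,$$
and the content of the corollary reduces to computing $\omega(V)$.

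To evaluate $\omega(V)$, I would use the alternative form recorded just below Theorem~\ref{maintheorem}, namely $\omega(a) = C_{M,n}\int_{S^*H} a_0\, d\mu_L$ with $C_{M,n} = \frac{2}{\vol(S^*M)}\int_0^1 (1-r^2)^{-1/2} r^{n-2}\, dr$. Since $a_0(s,\sigma) = V(s)$ is constant along the fibers of $S^*H \to H$, Fubini gives
$$ \int_{S^*H} V(s)\, d\mu_L \;=\; \vol(S^{n-2}) \int_H V(s)\, dS,$$
because Liouville measure on $S^*H$ factors as the product of the Riemannian surface measure $dS$ on $H$ with the standard round measure on each unit sphere $S^{n-2} \subset T^*_s H$.

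It then remains to identify $C_{M,n}\,\vol(S^{n-2})$ with $C_{M,n}' = \vol(S^{n-1})/\vol(S^*M)$, i.e.\ to verify the elementary sphere-volume identity
$$ \vol(S^{n-1}) \;=\; 2\,\vol(S^{n-2}) \int_0^1 (1-r^2)^{-1/2}\, r^{n-2}\, dr.$$
This is the classical recursion $\vol(S^{n-1}) = \vol(S^{n-2})\int_0^\pi \sin^{n-2}\theta\, d\theta$ after the substitution $r = \sin\theta$, using $(1-r^2)^{-1/2}\,dr = d\theta$ on $[0,\pi/2]$ and symmetry about $\theta = \pi/2$.

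There is no substantive obstacle; the argument is a computational specialization of Theorem~\ref{maintheorem} combined with a standard sphere-volume identity. The only minor point worth noting is that $a_0(s,\sigma) = V(s)$ extends smoothly across the zero section, so the integrable singularity of $\gamma_{B^*H}^{-1}(s,\sigma) = (1-|\sigma|^2)^{-1/2}$ along the glancing set causes no difficulty in the original form $\omega(a) = \frac{2}{\vol(S^*M)}\int_{B^*H} a_0\,\gamma_{B^*H}^{-1}\,ds\,d\sigma$ either.
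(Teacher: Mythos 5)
Your proof is correct and is exactly the intended argument: the paper states the corollary without proof as a direct specialization of Theorem~\ref{maintheorem} with $a=V$, and your computation of $\omega(V)$ via the alternative form $\omega(a) = C_{M,n}\int_{S^*H}a_0\,d\mu_L$, Fubini over the $S^{n-2}$ fibers of $S^*H\to H$, and the standard sphere-volume recursion (the identity $\vol(S^{n-1}) = 2\vol(S^{n-2})\int_0^1(1-r^2)^{-1/2}r^{n-2}\,dr$ is indeed the $r=\sin\theta$ substitution in $\vol(S^{n-1})=\vol(S^{n-2})\int_0^{\pi}\sin^{n-2}\theta\,d\theta$) is precisely what is needed.
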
 
This gives an asymptotic formula for the $L^2$-norms of restricted eigenfuntions in the density one subsequence,
as opposed to the $O(\lambda^{\half})$ upper bounds in \cite{R,BGT}. However, 
it  does not disqualify existence of a zero density
subsequence of eigenfunctions whose $L^2$ norms  blow up along $H$. Thus, it
is consistent with  the sequence of recent results on restrictions
of eigenfunctions to hypersurfaces in \cite{BGT,KTZ,R,T,To,So}. As
in the original work of A. I. Schnirelman \cite{Sch}, QER is
concerned with density one subsequence of eigenfunctions and thus
may exclude the `extremal' eigenfunctions with respect to $H$.

\subsection{Examples} As in Proposition 6 of  \cite{TZ}, it is possible to show that
a generic hypersurface $H$ has zero measure of microlocal
symmetry, hence that the  result is non-vacuous. But we omit the
details and concentrate on interesting examples when $(M, g) $ is
a finite area hyperbolic surface. In \S \ref{HHP} we apply Theorem
\ref{maintheorem} to prove:

\begin{maincor} \label{HYPCIRCLE} Let $(M, g)$ be a finite area hyperbolic surface and let $H$ be a geodesic
circle or a closed horocycle  of radius $r < inj(M, g)$, the
injectivity radius. Then a full density sequence of eigenfunctions
restricts to a quantum ergodic sequence on  $H$. The same is true
for generic Fuchsian groups and closed geodesics.\end{maincor}

The case of closed horocycles was numerically tested in \cite{HR}.

\subsection{Outline  of the proof of Theorem \ref{maintheorem}}

  We  denote by  $U(t) = e^{i t
\sqrt{\Delta}}$ the wave group of $(M, g)$. As is well-known it is a homogeneous Fourier integral operator
whose canonical relation is the graph of the homogeneous geodesic flow at time $t$. We denote  by  $\gamma_H^*$   the adjoint of $\gamma_H$ with
respect to the inner product on $L^2(M, dV)$ where $dV$ is the
Riemannian volume form.  Thus,
$$\gamma_H^* f = f \delta_H, \;\; \mbox{since}\;\;
\langle \gamma_H^* f, g \rangle = \int_H f g   dS, $$ where
as above $ dS $ is the surface measure on $H$ induced by the
ambient Riemannian metric. The fact that $\gamma_H^*$ does not preserve
smooth functions is due to the fact that $WF_M'(\gamma_H) = N^* H$ (see \S \ref{gammaHsect} and \cite{HoI-IV}, Ch. 8.2 for the notation).  Thus,   $\gamma_H^* Op_H(a) \gamma_H$
is not a Fourier integral operator with a homogeneous canonical relations in the sense of \cite{HoI-IV} because its wave front
relation contains  $N^*H \times 0_{T^*M} \cup 0_{T^* M} \times N^* H$  (where
$0_{T^*M}$ is the zero section of $T^* M$).

We study matrix elements of the restriction through the identity,
\begin{equation} \label{ME1} \begin{array}{lll}  \langle Op_H(a) \phi_j |_H, \phi_j |_H \rangle_{L^2(H)} & =
& \langle Op_H(a) \gamma_H \phi_j, \gamma_H \phi_j \rangle_{L^2(H)} \\ & & \\
& = &  \langle \gamma_H^* Op_H (a) \gamma_H U(t) \phi_j, U(t) \phi_j
 \rangle_{L^2(M)} \\ && \\
& = &  \langle   V(t;a)  \phi_j,
 \phi_j
 \rangle_{L^2(M)} \\ && \\ & = & \langle \bar{V}_T (a) \phi_j, \phi_j
 \rangle_{L^2(M)} \\ && \\ & = & \langle \bar{V}_{T,R} (a) \phi_j, \phi_j
 \rangle_{L^2(M)},
\end{array} \end{equation}
where
  \begin{equation}\label{VTFORM}\left\{ \begin{array}{lll}
  V(t;a) :=U(-t) \gamma_H^* Op_H(a) \gamma_H U(t), \\ \\
  \bar{V}_{T}(a) :=  \frac{1}{T}
\int_{-\infty}^{\infty} \chi ( T^{-1} t)  \, V(t;a) \, dt,\\
\\  \bar{V}_{T, R}(a) := \frac{1}{2R} \int_{-R}^R
U(r)^* \bar{V}_T(a) U(r) dr.  \end{array} \right.  \end{equation}
 The 
 double average in $r, t$ is only for convenience  (see section \ref{psdovariance}).

A further technical complication is that
$\overline{V}_T(a)$ is a Fourier integral operator with fold singularities. 
 It is closely
related to the operator $W^*W$ where   $W f = \gamma_H U(t) f$  (see \S \ref{WSECT}).  As already observed in \cite{Ta}, the canonical
relations of these operators are local canonical graphs away from  fold singularities
along directions tangent to $H$ (see below for a more precise statement).  In
the quantum ergodicity problem, it suffices to introduce pseudo-differential cutoffs to  cut off away from the fold singularity as
in \cite{HZ,TZ,SoZ}; we do not need the calculus of Fourier integral operators with
fold singularities as in  \cite{GrS,F}.  However, the fold singularity does induce singularities
in the symbols of the main operators. For instance, 
the push forward of the standard measure on $S^*_H M$ 
to $B^* H$ is  $\gamma_{B^* H}^{-1} ds \wedge d \sigma$ \eqref{gammaBH}.

A detailed description of $\overline{V}_T(a)$  is given in 
the next Proposition \ref{VTDECOMPa}. There it is proved
 that, after cutting off   from the tangential  singular set $\Sigma_T
\subset T^* M \times T^* M$ and the   the conormal sets $N^*H \times 0_{T^*M}, 0_{T^* M} \times N^*H$,  $\bar{V}_{T}(a)$ becomes  a Fourier
integral operator $\VT(a)$  with canonical relation  given by
 \begin{equation}\label{GAMMADELTA}
\begin{array}{ll} WF(\VT(a)): & = \{(x, \xi, x', \xi') \in T^*M \times T^*M :  \exists t  \in (-T,T),    \\ & \\ & \exp_x t \xi = \exp_{x'} t \xi' = s \in H ,
\, \, G^t(x, \xi) |_{T_sH} = G^t(x', \xi') |_{T_s H}, \;\; |\xi| =
|\xi'|\}.
\end{array} \end{equation}

To  define a Fourier integral operator $\VT(a)$,  we need to introduce cutoff operators 
to cutoff away from $T^*H$ and from  
 $N^*H \times 0_{T^*M}  \cup 0_{T^* M} \times N^*H.$ 
We let  $\chi \in C_0^{\infty}(\R), [0,1]$ be a cutoff  supported in $( -1-\delta, 1+ \delta),$ with
$\chi(t)=1$ for $t \in [-1+ \delta,1-\delta], \,
 \int_{-\infty}^{\infty} \chi(t) dt = 1. $
For fixed $\epsilon >0,$  we introduce  two cutoff pseudo-differential operators (see subsection \ref{CUTOFFS2} for more detail). The first,  $\chi^{(tan)}_{\epsilon}(x, D) = Op(\chi_{\epsilon}^{(tan)}) \in Op(S^0_{cl}(T^*M)),$ has 
homogeneous symbol $\chi^{(tan)}_{\epsilon}(x,\xi)$ supported in an $\epsilon$-aperture conic neighbourhood of $T^*H \subset T^*M$ with $\chi^{(tan)}_{\epsilon} \equiv 1$ in an $\frac{\epsilon}{2}$-aperture subcone. The second cutoff operator $\chi^{(n)}_{\epsilon}(x,D) = Op(\chi_{\epsilon}^{(n)}) \in Op(S^0_{cl}(T^*M))$ has its homogeneous symbol  $\chi^{(n)}_{\epsilon}(x,\xi)$ supported in an $\epsilon$-conic neighbourhood of $N^*H$ with $\chi^{(n)}_{\epsilon} \equiv 1$ in an $\frac{\epsilon}{2}$ subcone.
To simplify notation, define the total cutoff operator
\begin{equation} \label{CUTOFF}
\chi_{\epsilon}(x,D) := \chi^{(tan)}_{\epsilon}(x,D) + \chi^{(n)}_{\epsilon}(x,D),
\end{equation}
and put
\begin{equation} \label{>ep}
(\gamma_H^* Op_H(a) \gamma_H)_{\geq \epsilon} = (I - \chi_{\frac{\epsilon}{2}}) \gamma_H^* Op_H(a) \gamma_H (I - \chi_{\epsilon}), \end{equation}
and
\begin{equation} \label{<ep} (\gamma_H^* Op_H(a)
 \gamma_H)_{\leq \epsilon} =  \chi_{2 \epsilon} \gamma_H^* Op_H(a) \gamma_H \chi_{\epsilon}. \end{equation}

By standard wave front calculus (see subsection \ref{finish}), it follows that  \begin{equation} \label{op decomp}
\gamma_H^* Op_H(a)
 \gamma_H = (\gamma_H^* Op_H(a)
 \gamma_H) )_{\geq \epsilon} +  (\gamma_H^* Op_H(a)
 \gamma_H) )_{\leq \epsilon}  + K_{\epsilon},\end{equation}
 where, $\langle K_{\epsilon} \phi_j, \phi_j \rangle_{L^2(M)} = {\mathcal O}(\lambda_j^{-\infty}).$
We then   define
  \begin{equation} \label{VtFORMep}
  V_{\epsilon} (t;a) :=U(-t) (\gamma_H^* Op_H(a) \gamma_H)_{\geq \epsilon} U(t),  \end{equation}
and
 \begin{equation}\label{VTFORMep}   \VT(a) :=  \frac{1}{T}  \int_{-\infty}^{\infty} \chi ( T^{-1} t)  \, V_{\epsilon}(t;a) \, dt.    \end{equation}

We can now state the two main steps in the proof of 
Theorem \ref{maintheorem}.  Foremost is the variance result,

\begin{mainprop}\label{BIGSTEP} For all $\epsilon > 0$, 
$$\lim_{\lambda \to \infty}   \frac{1}{N(\lambda)}
\sum_{j: \lambda_j \leq \lambda} \left|  \langle \left(\gamma_H^*
Op_H(a) \gamma_H)_{\geq \epsilon}\right) \phi_j, \phi_j
 \rangle_{L^2(M)}  -\omega((1 - \chi_{\epsilon}) a)\right|^2  = 0. $$

\end{mainprop}




%
%

The beginning of the proof of   Proposition \ref{BIGSTEP}  follows the sketch in  \eqref{ME1}. We then decompose  $\VT(a)$ into  a pseudo-differential
and a Fourier integral part according to the dichotomy that 
$(x, \xi, x', \xi')$ in (\ref{GAMMADELTA}) satisfy either
\begin{equation} \label{possibilities}
\ \begin{array}{ll}
 (i) \,\, G^t(x,\xi) = G^t (x',\xi'), \; \mbox{or}\\ \\
 (ii) \,\, G^{t}(x',\xi') = r_{H} G^{t}(x,\xi), \end{array}  \end{equation}
 where $r_H$ is the reflection map of $T^* H$  in (\ref{rHDEF}). Thus, 
\begin{equation} \label{WFVT} WF(\VT(a)) = \Delta_{T^* M \times T^* M} \cup \Gamma_T, \end{equation}
where  
  \begin{equation} \label{relationi} \left\{\begin{array}{l} \Delta _{T^*M \times T^* M} := \{ (x,\xi,x,\xi) \in T^*M \times T^*M \}, \\ \\
 \Gamma_T =
\bigcup_{(s, \xi) \in T^*_H M}
 \bigcup_{ |t|  < T} \{( G^t(s, \xi),  G^t(r_H (s, \xi)) \}.
\end{array} \right. \end{equation} 
The two `branches' or components intersect along the singular set
 \begin{equation} \label{SIGMA} \Sigma_T : = \bigcup_{|t| < T}  (G^t \times G^t) \Delta_{T^* H \times T^* H}. \end{equation}
We further subscript  $\Gamma_T$ with $\epsilon$ to indicate the points $\Gamma_{T, \epsilon}$  outside the support of the tangential cutoff \eqref{GAMMATEPDEF}.

Since $ G^t(r_H (s, \xi)) = G^t r_H G^{-t} G^{t}
 (s,\xi)$,
 $\Gamma_{T, \epsilon} \subset  \Gamma_{T} \backslash \Sigma_T$ is the graph of a
 symplectic correspondence. The precise statement is in
 Proposition \ref{graph}, where we  show that for any $\epsilon > 0$,  $\Gamma_{T, \epsilon}$ is
 the union of a finite number $N_{T, \epsilon}$  of graphs of partially defined
 canonical transformations
 \begin{equation} \label{RCAL} \rcal_j(x,\xi) = G^{t_j(x,\xi)} r_H G^{-t_j(x,\xi)}(x,\xi).\end{equation}
which we term $H$-reflection maps. Here $t_j(x, \xi)$ is the jth
 `impact time', i.e. the  time to the $j$th impact with $H$. We denote its
 domain (up to time $T$) by $\dcal_{T, \epsilon}^{(j)}$
(see \S \ref{IMPACTRETURN} and Definition \ref{IMPACTTIMES}). By
homogeneity of  $G^{t}: T^*M \rightarrow T^*M$,  for all $j \in
\Z,$
\begin{equation} \label{impact}
t_{j}(x, \xi) = t_j(x, \frac{\xi}{|\xi|}); \, \, \xi \neq 0.
\end{equation}

The next proposition provides  a detailed discussion of $W^* W$ as  a Fourier integral operator
with local canonical graph away from its fold set. Furthermore, the  principal symbol is computed. It therefore
seems of interest independently of its applications to QER. For more details on its relation to $W$ we refer to
\S \ref{WSECT}. As A. Greenleaf pointed out to the authors, there are related calculations in \cite{GrU,F}.

  \begin{mainprop}\label{VTDECOMPa}  Fix $T,\epsilon >0$ and let
  $a \in S^{0}_{cl}(T^*H)$ with    $a_H(s, \xi) = a(s, \xi |_H) \in S^0(T^*_H M) ). $  Then $\VT(a)$ is a Fourier
integral operator with local canonical graph,
and possesses the decomposition
  $$\VT(a) =  P_{T,\epsilon}(a) + F_{T,\epsilon}(a) + R_{T,\epsilon}(a),$$
where, (i) $P_{T,\epsilon}(a) \in Op_{cl}(S^0(T^*M))$ is a
pseudo-differential operator of order zero with principal symbol
 \begin{equation} \label{aTFORM}  a_{T,\epsilon}(x,\xi)
 :=\sigma(P_{T,\epsilon}(a))(x,\xi) = \frac{1}{T}  \sum_{j \in {\mathbb Z}}  (1-\chi_{\epsilon})  (\gamma^{-1} a_H)(G^{t_j(x,\xi)}(x,\xi))  \, \chi (T^{-1}  t_j(x,\xi))  \end{equation}
  where,
 $t_j(x,\xi) \in C^{\infty}(T^*M)$ are the impact times of the geodesic $\exp_{x}(t \xi)$ with $H$ (see Definition
 \ref{IMPACTTIMES}), and $\gamma$ is defined by (\ref{gammaDEF}).

(ii) $F_{T,\epsilon}(a)$ is a
 Fourier integral operator of order zero  with canonical relation
 $\Gamma_{T,\epsilon}$. \begin{equation} \label{FIOsum}
F_{T,\epsilon}(a) = \sum_{j=1}^{N_{T,\epsilon}}
F_{T,\epsilon}^{(j)}(a),\end{equation} where the
$F_{T,\epsilon}^{(j)}(a); j=1,..., N_{T,\epsilon}$ are zeroth-order
homogeneous Fourier integral operators with
$$ WF'(F_{T,\epsilon}^{(j)}(a)) = \text{graph}  (\rcal_j) \cap \Gamma_{T,\epsilon},$$
and symbol
$$\sigma(F_{T,\epsilon}^{(j)})(x,\xi) =  \frac{1}{T}   (\gamma^{-1} a_H)( G^{t_j(x, \xi)}(x, \xi)) \, \chi(T^{-1} t_j(x,\xi)) \ |dx d\xi|^{\frac{1}{2}}.$$

(iii) $R_{T,\epsilon}(a)$ is a smoothing operator.

\end{mainprop}


  Given Proposition \ref{VTDECOMPa},   the proof of Proposition \ref{BIGSTEP} goes as follows:
By (\ref{ME1}), it suffices to show that
 $$\limsup_{\lambda \to \infty} \frac{1}{N(\lambda)} \sum_{j: \lambda_j \leq \lambda} \left|  \langle
\VT(a)  \phi_j,  \phi_j
 \rangle_{L^2(M)} - \omega((1-\chi_{\epsilon})a) \right|^2 = o(1), \;\; (\mbox{as}\; T\to \infty).$$
By  Proposition \ref{VTDECOMPa},  $\VT(a)$ is a sum of a
pseudo-differential part $P_{T, \epsilon}$, a Fourier integral
part $F_{T, \epsilon}$ associated to graphs of H-reflection maps,
and a small term $R_{T, \epsilon}$.
By the inequality $(a_1+ \cdots + a_n)^2 \leq n^2 (a_1^2 + \cdots + a_n^2)$
it suffices to estimate the variances of each term separately. It is simple to show that the $R_{T,\epsilon}$-term is negligible.

  The pseudo-differential term is somewhat (but not entirely)
similar to that encountered in \cite{HZ}. Following the argument
there (and in the standard argument), we use the $L^2$ ergodic
theorem to show that
\begin{equation} \label{PTVAR} \limsup_{\lambda \to \infty} \frac{1}{N(\lambda)} \sum_{j: \lambda_j \leq \lambda} \left| \langle P_{T,\epsilon}(a) \phi_j,  \phi_j
 \rangle_{L^2(M)} - \omega((1-\chi_{\epsilon})a) \right|^2 = 0, \end{equation}
 and indeed the state $a \to
\omega(a)$ is defined so that the $L^2$ ergodic theorem applies in
this way (it is calculated in Proposition \ref{LWLH}). Since $P_{T, \epsilon}$ is pseudo-differential, this is
simply the standard quantum ergodicity theorem.

This reduces us to studying the variances
 \begin{equation} \label{FIO part}
  \frac{1}{N(\lambda)} \sum_{j: \lambda_j \leq \lambda} \left|  \langle F_{T,\epsilon}(a)  \phi_j,  \phi_j
 \rangle_{L^2(M)} \right|^2.  \end{equation} It
  is here that we need the condition in Definition \ref{ANC} and where we encounter the novel aspects in the proof.
To prove (\ref{FIO part}) we first use the Schwartz inequality
\begin{equation} \label{SIE} \frac{1}{N(\lambda)} \sum_{j: \lambda_j \leq \lambda} \left|
\langle F_{T,\epsilon}(a)  \phi_j,  \phi_j
 \rangle_{L^2(M)} \right|^2 \leq \frac{1}{N(\lambda)} \sum_{j: \lambda_j \leq \lambda}   \langle F_{T,\epsilon}(a)^* F_{T,\epsilon}(a)  \phi_j,  \phi_j
 \rangle_{L^2(M)} \end{equation}
 to bound the variance sum by a trace. In \S \ref{LWLFIO}  we
recall (and extend)  the  local Weyl law for homogeneous Fourier
integral operators
  $F: C^{\infty}(M) \rightarrow C^{\infty}(M)$ of \cite{Z} and
  use it to prove that the right side of (\ref{SIE}) tends to zero
  under the assumption of Definition \ref{ANC} (see also \cite{TZ}
  for a similar argument).
In the case of  local canonical graphs, the local  Weyl law states
that
\begin{equation} \label{oldWeyl}
\frac{1}{N(\lambda)} \sum_{j: \lambda_j \leq \lambda} \langle F
\phi_{\lambda_j}, \phi_{\lambda_j} \rangle \to \int_{S \Gamma_F
\cap \Delta_{T^*M} } \sigma_{\Delta} (F) d\mu_L, \end{equation}
where $\Gamma_F$  is the canonical relation of $F$,  $S \Gamma_F$
is the set of vectors of norm one, and  $S \Gamma_F \cap
\Delta_{T^*M} $  is its intersection  with the diagonal of $T^*M
\times T^*M$. Also,  $ \sigma_{\Delta} (F)$ is the (scalar) symbol
in this set and $d\mu_L$ is Liouville measure. Thus, if $\Gamma_F$
is a local canonical graph, the  right side is zero unless the
intersection has dimension $m = \dim M$, i.e. the trace  sifts out
the `pseudo-differential part' of $F$.

 An application of  (\ref{oldWeyl}) to $F = F_{T,\epsilon}(a)^* 
F_{T,\epsilon}(a)$ gives: 
\begin{lem} \label{WeylCOR}  We have,
\begin{equation} \begin{array}{ll} \lim_{\lambda \rightarrow \infty} \frac{1}{N(\lambda)}
\sum_{\lambda_j \leq \lambda} \sum_{k, \ell = 1}^{N_{T, \epsilon}}
\langle F_{T,\epsilon}^{(\ell)}(a)^*
 F_{T,\epsilon}^{(k)}(a)  \phi_{\lambda_j}, \phi_{\lambda_j}
 \rangle \\ \\    =  \frac{1}{T^2} \int_{S^* M} \sum_{j = 1}^{N_{T, \epsilon}} \left| \chi(\frac{t_j(x, \xi)}{T}) \ (1-\chi_{\epsilon}) \gamma^{-1}
 a_H ( G^{t_j(x, \xi)} (x, \xi) ) \right|^2 d\mu_L  \\ \\ +   \frac{1}{T^2}  \int_{S \{ \rcal_j = \rcal_k \} }   \sum_{j \neq k}^{N_{T, \epsilon}}  \chi(\frac{t_j(x, \xi)}{T}) \,(1-\chi_{\epsilon}) \gamma^{-1}
 a_H ( G^{t_j(x, \xi)} (x, \xi) ) \\ \\ \chi(\frac{t_k(x, \xi)}{T}) \,(1-\chi_{\epsilon}) \gamma^{-1}
 a_H ( G^{t_k(x, \xi)} (x, \xi) ) \, d\mu_L \end{array} \end{equation}
 \end{lem}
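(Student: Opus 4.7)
\medskip
\noindent\textbf{Proof proposal for Lemma \ref{WeylCOR}.}

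The plan is to expand the double sum into individual pairs $(k,\ell)$ and apply the local Weyl law \eqref{oldWeyl} to each composition $F_{T,\epsilon}^{(\ell)}(a)^* F_{T,\epsilon}^{(k)}(a)$. First I would observe that by Proposition \ref{VTDECOMPa}(ii), each $F_{T,\epsilon}^{(k)}(a)$ is a zeroth-order Fourier integral operator whose canonical relation is the graph of the $H$-reflection map $\rcal_k$ (restricted to the domain $\dcal_{T,\epsilon}^{(k)}$). Since these are local canonical graphs, the composition $F_{T,\epsilon}^{(\ell)}(a)^* F_{T,\epsilon}^{(k)}(a)$ is a Fourier integral operator associated to the canonical relation $\mathrm{graph}(\rcal_\ell^{-1}\circ \rcal_k)$, again of canonical graph type.

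The second step is to identify the intersection with the diagonal $\Delta_{T^*M}$. A point $(x,\xi)$ lies in $\mathrm{graph}(\rcal_\ell^{-1}\circ \rcal_k)\cap \Delta_{T^*M}$ precisely when $\rcal_k(x,\xi) = \rcal_\ell(x,\xi)$. For the diagonal pairs $k=\ell$, this condition is automatic on the common domain, so the intersection is $S^*M$ (restricted to the relevant domain, which is absorbed by the cutoff factor $\chi(T^{-1}t_k(x,\xi))$). For $k\neq \ell$ the intersection is the fixed-point set $S\{\rcal_j=\rcal_k\}$ appearing in the statement. Applying the local Weyl law \eqref{oldWeyl} to the self-adjoint operator $F_{T,\epsilon}^{(\ell)}(a)^* F_{T,\epsilon}^{(k)}(a)$ therefore converts the trace average into an integral against Liouville measure over these two sets.

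Third, I would compute the principal symbol of $F_{T,\epsilon}^{(\ell)}(a)^* F_{T,\epsilon}^{(k)}(a)$ along its intersection with the diagonal. Using Proposition \ref{VTDECOMPa}(ii) and the standard composition rule for principal symbols of FIOs of graph type (adjoint conjugates the symbol, composition multiplies them), one finds that on the diagonal the symbol equals
\[
\tfrac{1}{T^2}\,\overline{\chi(T^{-1}t_\ell)(1-\chi_\epsilon)\gamma^{-1}a_H(G^{t_\ell}(x,\xi))}\cdot \chi(T^{-1}t_k)(1-\chi_\epsilon)\gamma^{-1}a_H(G^{t_k}(x,\xi)).
\]
Summing over $k,\ell$ and splitting the sum into the diagonal $k=\ell$ and off-diagonal $k\neq \ell$ contributions yields exactly the two integrals in the statement of the Lemma; the Liouville measure on $S^*M$ restricted to the fixed point set of a canonical transformation of graph type is precisely the measure $d\mu_L$ used in \eqref{oldWeyl}.

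The main obstacle I anticipate is justifying the clean-composition step: one must check that $\mathrm{graph}(\rcal_\ell^{-1})\circ \mathrm{graph}(\rcal_k)$ composes transversally on the supports cut out by $\chi(T^{-1}t_j)(1-\chi_\epsilon)$, so that no excess intersection appears and no fold singularities contaminate the symbol. This is where the tangential cutoff $1-\chi_\epsilon$ and the finiteness of the partition $\Gamma_{T,\epsilon}=\bigcup_{j=1}^{N_{T,\epsilon}}\mathrm{graph}(\rcal_j)$ from Proposition \ref{VTDECOMPa} are essential: they keep us uniformly bounded away from $\Sigma_T$ and guarantee that each $\rcal_j$ is a genuine local canonical transformation on its domain, so the local Weyl law \eqref{oldWeyl} applies without modification.
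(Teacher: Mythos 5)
Your argument is correct and follows essentially the same route as the paper's own treatment in \S\ref{g operator}: split $F_{T,\epsilon}(a)^*F_{T,\epsilon}(a)$ into the diagonal terms (which are pseudo-differential with symbol $\tfrac{1}{T^2}|\chi(T^{-1}t_j)(1-\chi_\epsilon)\gamma^{-1}a_H\circ G^{t_j}|^2$) and the off-diagonal terms (graph-type FIOs whose canonical relations meet the diagonal exactly on $S\{\rcal_j=\rcal_k\}$), then apply the local Weyl law \eqref{oldWeyl} together with the symbol formulas of Proposition \ref{VTDECOMPa}(ii). Only a cosmetic remark: for $k\neq\ell$ the operator $F_{T,\epsilon}^{(\ell)}(a)^*F_{T,\epsilon}^{(k)}(a)$ is not self-adjoint, but \eqref{oldWeyl} does not require self-adjointness, so this does not affect the argument.
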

  Since $N_{T,\epsilon} = {\mathcal O}_{\epsilon}(T)$ and $ |\chi| \leq 1,$ the  first term on the  right side in Lemma \ref{WeylCOR} is
 \begin{equation} \label{FIObound}
  {\mathcal O}_{\epsilon}\left( \frac{1}{T} \| a_H \|^2_{C^0(S^*M_H)} \right).  \end{equation}

 The fact that the second term on the RHS in Lemma \ref{WeylCOR} vanishes follows from

\begin{lem} \label{mrns} The hypersurface $H \subset M$ has zero measure of microlocal reflection symmetry if and only if for all $(T,\epsilon),$
 $\mu_L  \left(    S\{\rcal_j = \rcal_k\}_{T,\epsilon} \right) =
0$ for $j \not= k$ (cf. Definition \ref{ANC}).  \end{lem}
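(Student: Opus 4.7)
The plan is to translate the equation $\rcal_j(x,\xi) = \rcal_k(x,\xi)$ on $S^*M$ back to the reflection-commutation condition of Definition \ref{ANC} on the cross-section $S^*_H M$, and then conclude via the flow disintegration of Liouville measure.

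First I would perform the algebraic reduction. For $(x, \xi) \in S\{\rcal_j = \rcal_k\}_{T, \epsilon}$, let $(s, \eta) \in S^*_H M$ be the $j$th impact point of the geodesic through $(x, \xi)$, so that $(x, \xi)$ lies on the orbit of $(s, \eta)$ under $G^t$. Substituting into the definition $\rcal_\ell(x, \xi) = G^{\pm t_\ell(x,\xi)} r_H G^{\mp t_\ell(x,\xi)}(x,\xi)$ for $\ell = j, k$ and cancelling the factors of $G^{\pm t_j}$, the equation $\rcal_j = \rcal_k$ at $(x, \xi)$ reduces to
\[
G^{\tau} r_H(s, \eta) = r_H G^{\tau}(s, \eta), \qquad \tau := t_k(x, \xi) - t_j(x, \xi).
\]
By construction $\tau$ is the $(k-j)$th return time $T^{(k-j)}(s, \eta)$ on $S^*_H M$, and $G^{\tau}(s, \eta) \in S^*_H M$. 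Thus $\rcal_j = \rcal_k$ at $(x, \xi)$ is exactly the condition of Definition \ref{ANC} at the footpoint $(s, \eta)$ with nonzero return index $m = k - j$.

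Next I would invoke the standard decomposition of Liouville measure across the transverse cross-section $S^*_H M$ (see \S\ref{return times}): away from the null set of orbits that are tangent to $H$, the map $(s, \eta, t) \mapsto G^t(s, \eta)$ identifies a fundamental domain in $S^*_H M \times \R$ with a full-measure subset of $S^*M$, and $d\mu_L = d\mu_{L, H} \otimes dt$. Since $S\{\rcal_j = \rcal_k\}_{T, \epsilon}$ descends under this parametrization to the set
\[
E_{j,k}(T, \epsilon) := \{(s, \eta) \in S^*_H M : G^{T^{(k-j)}(s,\eta)} r_H(s, \eta) = r_H G^{T^{(k-j)}(s, \eta)}(s, \eta)\}
\]
intersected with the appropriate truncated domain, Fubini yields
\[
\mu_L(S\{\rcal_j = \rcal_k\}_{T, \epsilon}) = \int_{E_{j, k}(T, \epsilon)} \ell_{T, \epsilon}(s, \eta) \, d\mu_{L, H}(s, \eta),
\]
with a strictly positive length factor $\ell_{T, \epsilon}$ recording the measure of $t$-values for which $G^t(s, \eta)$ remains in the $(T, \epsilon)$-truncation. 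Therefore $\mu_L(S\{\rcal_j = \rcal_k\}_{T, \epsilon}) = 0$ iff $\mu_{L, H}(E_{j, k}(T, \epsilon)) = 0$. Taking unions over $j \neq k$ (equivalently over all $m = k - j \neq 0$) and letting $T \to \infty$, $\epsilon \to 0$ exhausts all return indices and removes the tangential cutoff, so countable subadditivity gives the desired equivalence with Definition \ref{ANC}.

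The main technical obstacle is making the cross-section disintegration rigorous near the tangential set, which is where the parametrization $(s, \eta, t) \mapsto G^t(s, \eta)$ degenerates; this is precisely the role of the cutoff $\chi^{(tan)}_\epsilon$ and the bookkeeping for return indices $m = k - j$ that shift as the labels $j, k$ translate along a given orbit. A secondary check is that the positive length factor $\ell_{T, \epsilon}$ does not inadvertently vanish on any part of $E_{j, k}(T, \epsilon)$, which would break the reverse implication; this follows since $E_{j, k}(T, \epsilon)$ is open in $S^*_H M$ (modulo null sets) and each point admits a nontrivial interval of $t$-values in the truncation.
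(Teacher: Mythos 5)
Your proof is correct and takes essentially the same route as the paper's: the algebraic reduction of $\rcal_j = \rcal_k$ to the reflection-commutation condition at the impact point is exactly the content of \eqref{RJERKintro}, and the passage between $\mu_L$ on $S^*M$ and $\mu_{L,H}$ on $S^*_H M$ is the cross-section disintegration $d\mu_L = dT \wedge (\Phi_I^* d\alpha_H)^{n-1}$ of Lemma \ref{MULMUH}. You make the measure-theoretic step explicit where the paper leaves it implicit, but the argument is the same.
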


Indeed,
\begin{equation}  \label{RJERKintro} \begin{array}{lll} \rcal_j(x, \xi) = \rcal_k(x, \xi) && \iff  G^{-t_j(x,\xi)} r_H G^{t_j(x,\xi)}(x,\xi)=  G^{-t_k(x,\xi)} r_H G^{t_k(x,\xi)}(x,\xi)
\\ && \\
&& \iff  r_H  G^{t_j(x, \xi) -t_k(x,\xi)}  =
G^{t_j(x, \xi) -t_k(x,\xi)} r_H (x, \xi) .\end{array}
\end{equation}

Hence under the assumption of Theorem \ref{maintheorem},  after taking the $T \rightarrow \infty$ limit,   the right
side in Lemma \ref{WeylCOR} is zero, proving the theorem for cutoff symbols.
To complete the proof, we  show in \S \ref{finish}  that the  cutoffs $\chi_{\geq \epsilon}^n, \chi_{\geq \epsilon}^{tan}$ on $\gamma_H^*
Op_H(a) \gamma_H$ can be removed in the limit formula along the density one subsequence.

\subsection{\label{WSECT} Background to Proposition \ref{VTDECOMPa}}

The fact that $\VT(a)$ is a
Fourier integral operator with local canonical graph is closely related to  the fact (used in \cite{Ta,GrS,So,F,BGT,SoZ})
that the operator
\begin{equation} \label{W} W: f \in C(M) \to  \gamma_H B U(t) f \in C(\R \times H) \end{equation}
is a Fourier integral operator with local canonical graph. Here,  $B \in \Psi^0(M)$ is a poly-homogeneous pseudo-differential operator whose symbol vanishes in a
conic neighborhood of $T^* H$.   
Although we do not need such a precise description, $W$ without cutoffs is a Fourier
integral  operator with one-sided folds.  It has the canonical relation
\begin{equation} \label{GAMMAW} \Gamma_W = \{(t, \tau, q, \xi|_H, G^t(q, \xi): (q, \xi) \in T^*_H M: |\xi | = \tau\}
\subset T^*(\R \times H) \backslash 0 \times T^* M \backslash 0 \end{equation}
and in the associated diagram

\begin{equation}\label{DIAGRAMa} \begin{array}{ccccc} & & \Gamma_W \subset T^*(\R \times H) \times T^* M
 & &  \\ & & & & \\
& \pi \swarrow & & \searrow \rho & \\ & & & & \\
T^*(R \times H)  & & & & T^*M,
\end{array}
\end{equation}
the left projection is 2-1 except along the set  $\{(t, \tau, q, \xi |_H, q, \xi): |\xi| = \tau = |\xi |_H|\} $
(i.e. $\xi \in T^* H)$
where it has a fold singularity. 

Our operator $\VT(a)$ is closely related to $W^* W$: to be precise, it is $W^* \chi_T Op_H(a) W$
where $\chi_T$ is the time cutoff. 
 To prove Theorem \ref{maintheorem}, we only  need to compute symbols in the local canonical graph part, but we need to understand the singularity of the symbol
along $T^*H$.  To our knowledge, the symbols of $W$ and $W^*W$ on the canonical graph
part have not been calculated before, and we go through the calculation in \S \ref{COMPOSITIONS} - \S \ref{v operator}.

The composition with $Op_H(a)$ causes a minor complication since $a(q, \sigma)$ is not 
necessarily  a poly-homogeneous symbol 
on $T^*(\R \times H)$ because it is independent of the $\tau$ variable dual to $t$. This bad behavior of $a$ is another
way to view the role of  $N^*H \times 0_{T^*M} \cup 0_{T^* M} \times N^* H$ in the wave front set of
$\gamma_H^* Op_H(a) \gamma_H$.

\subsection{Comparison to boundary case}

We now compare the methods and the results of this article to
those in \cite{HZ,TZ}. In particular, we explain how the
difficulties caused by tangential directions $\gcal_T $ (or more precisely, $\Sigma_T$)  relate to
those when $H = \partial M$ in the boundary case.

The main difference is that the wave group $U(t)$ is a global
Fourier integral on $M$ with $\partial M = \emptyset$, but is not
one if $\partial M \not= \emptyset$. When $\partial M =
\emptyset$, there  are no boundary conditions on $U(t)$  on the
hypersurface $H$, indeed  $U(t)$ is independent of $H$.  In the
boundary case, the boundary conditions on $U(t)$ cause the Fourier
integral structure of $U(t)$ to break down microlocally near
tangent directions to $\partial M$. Moreover, the geodesic flow
$G^t$ is also independent of $H$ and we do not have the  problems
of reflections off  corners of $\partial M$ in its definition.

In \cite{HZ}, the breakdown of $U(t)$ in tangential directions to
$\partial M$ was handled by making a reduction to the boundary.
Thus, we used the symmetries of  matrix elements \eqref{QER}  when $H
= \partial M$ which came from a certain boundary integral operator
$F(\lambda)$. Conjugation with $F(\lambda)$ was an endomorphism,
not an automorphism, of the pseudo-differential operators on $H$
and that gave rise to additional singularities caused by the
factor $\gamma = \sqrt{1 - |\eta|^2}$ in \cite{HZ}. We do not make
a reduction to $H$ in this article and only use the symmetry given
by conjugation by $U(t)$. In a sense, we go in the opposite
direction of `extending from $H$ to the interior' rather than
reducing to $H$.   In revenge, we have to deal with this symmetry
on matrix elements of Fourier integral operators. But we avoid the
singularities caused by the $\gamma$ factor.

On the other hand, the symbols of $P_{T, \epsilon} $ and $F_{T,
\epsilon}$ do become singular along directions of geodesics which
touch $H$ tangentially, and we have to cut away from these
directions. More precisely, there is a failure of transversal
composition at these points, so that $P_{T, \epsilon} $ and $F_{T,
\epsilon}$ fail to be Fourier integral operators at such points.
In that sense, the problem caused by $\Sigma_T $ is somewhat similar
to the boundary case, but as mentioned above it is not quite as
serious since $U(t)$ is a global Fourier integral operator and
$\Sigma_T $ only causes mild singularities such as folds and
self-intersections in the canonical relations and corresponding
singularities in the symbols.

In the case of Euclidean domains $\Omega \subset \R^n$ with
boundary  studied in
 \cite{TZ},  we made a
reduction from an interior hypersurface $H$ to $\partial M$. In
part we could then reduce the quantum ergodicity problem to
\cite{HZ}. However, the reduction left a Fourier integral term
which was handled by a method related to that of this article. The
almost nowhere commuting condition for QER  was that the sets
where $\beta^m \beta_H = \beta_H \beta^k $ had measure zero, and
where $\beta_H$ was a certain transmission map. We  tie this
together with the condition of the present article.

In the boundary case, we also have two lifts  $\xi_{\pm}(y, \eta)$
of covectors to $H$. In addition, unlike the boundary-less case,
there are   two transfer maps $\sigma_{\pm}(y, \eta)$ to the
boundary and a billiard map $\beta: B^* \partial \Omega \to B^*
\partial \Omega$ of the boundary.  The transmission map is defined by  $\beta_H= \sigma_-
\sigma_+^{-1}.$

  The microlocal asymmetry condition of \cite{TZ} stated that
$\beta^m \sigma_- \sigma_+^{-1}= \sigma_- \sigma_+^{-1} \beta^k $
has measure zero or equivalently
$$\sigma_-^{-1}\beta^m \sigma_-
=  \sigma_+^{-1} \beta^k \sigma_+.$$ But $\sigma_-^{-1}\beta^m
\sigma_-$ is the $-$ side first return map, while $\sigma_+^{-1}
\beta^k \sigma_+$ is the $+$ side first return map, hence the
condition of \cite{TZ} is equivalent to saying that the
$\pm$-sided return maps do not agree on a set of positive measure.

\subsection{Further results}

As mentioned above, the authors together with H. Christianson have proved in \cite{CTZ} that quantum
ergodicity in the ambient manifold always implies QER of the Cauchy data on any hypersurface. Moreover, QUE (unique
quantum ergodicity) in the ambient manifold implies a kind of QUER property for $H$. Namely, there is a subalgebra
of pseudo-differential operators on $H$ for which the QER property holds without needing to extract subsequences.
However, the symbols of the operators are all multiplied by $\gamma$ in the restriction operation and therefore
vanish to first order along $T^* H$. 

A further direction is to find a spectral condition on $H$ which implies QER rather than the dynamical condition
in Definition \ref{ANC}. Suppose that $H$ is a separating hypersurface and that $M \backslash H = M_+ \cup M_-$
where $M_{\pm}$ are the disjoint components. Then one may consider the Dirichlet problem for $\Delta$ in $M_{\pm}$.
In the case where $H$ is the fixed point set of an isometry, the Dirichlet spectra  of $M_{\pm}$ has a large
overlap with the global spectrum of $\Delta$ on $M$. It is plausible that in place of Definition \ref{ANC} one can
prove QER under the assumption that the Dirichlet spectra of $M_{\pm}$ are disjoint from the spectrum of $M$. 
Indeed, in this case the Dirichlet-to-Neumann operators $\ncal_{\pm}(\lambda)$  for $M_{\pm}$ are well-defined at eigenvalue parameters
for $\Delta$. When  $\ncal_{\pm}(\lambda)$ is a semi-classical Fourier integral operator (see \cite{HZ,TZ}), it is 
plausible that the QER of the Cauchy data implies QER of both the Dirichlet and Neumann data separately. The authors
plan to investigate this on a later occasion.

\subsection{Organization}

  The first two sections \S \ref{return times} and \S \ref{IMPACTRETURN} 
consist of preliminary material on the symplectic geometry of $S^*_H M$ as a cross section to the geodesic flow
and on the fold singularity of the projection $S^*_H M \to B^* H$. This fold singularity causes the tangential
singularities in the canonical relation of $\VT$.  This material is often suppressed
for the sake of brevity, but as a result we could not find any reference for the details we need. Section
\S \ref{COMPOSITIONS} is needed to compute the symbol of $\VT$ in \S \ref {Vta operator}. The calculation is important
both to obtain the correct limit meaure in \eqref{QER} and also because it shows that  the symbol lies outside
of $L^2$. This complicates the use of the mean ergodic theorem. With these preliminaries in hand, the proof of 
Theorem \ref{maintheorem} proper begins in \S \ref{LWLFIO}.   In \S \ref{sc} we adapt the proof to semi-classical
pseudo-differential operators.  In \S \ref{nice case},  we provide a study of examples
to confirm that the condition in Definition \ref{ANC} is effective in concrete examples.

\subsection{Acknowledgements and remarks on the exposition}

Since the original posting of this article, S. Dyatlov and M. Zworski \cite{DZ} have proved a more
general version of the results of this paper,  where $\Delta$ is generalized to a semi-classical Schr\"odinger
operator. We thank them in addition for questions and comments helping us to improve the presentation. In  particular,
we  clarified    Definition \ref{ANC} and added detail on the cutoffs away from $N^* H \times 0_{T^*M}
\cup 0_{T^*M} \times N^*H$, and simplified the last section. We also thank A. Greenleaf and C. Sogge for  comments on
the exposition.

\section{ Geometry of hypersurfaces} \label{return times}

This section provides background on the  symplectic and
Riemannian submanifold geometry in the analysis of the
canonical relation of the restriction map $\gamma_H$ and the
subsequent compositions defining $V_{\epsilon}(t;a)$ and $\VT(a)$.

Let $H \subset M$ be a hypersurface in a Riemannian manifold $(M,
g)$. We consider two hypersurfaces of $T^* M$, the set $T^*_H M$
of covectors with footpoint on $H$ and the unit cotangent bundle
$S^* M$ of $g$.  We  often use metric Fermi normal coordinates
along
 $H$, i.e.  we exponentiate the normal bundle to
 $H$. We denote by $s$ the coordinates along $H$ and $y_n$ the
 normal coordinate, so that $y_n = 0$ is a local defining function for $H$.   We also let $\sigma, \xi_n$ be
 the dual symplectic Darboux coordinates. Thus the canonical
 symplectic form is $\omega_{T^* M } = ds \wedge d \sigma + dy_n
 \wedge d \xi_n. $

Let $\pi: T^* M \to M$ be the natural projection. We identity
$\pi^*y_n = y_n$ as functions on $T^* M$. Then $ f=y_n = 0$ is the
defining function of  $T^*_H M$.   The
  hypersurface $S^* M$ is defined by $g = |\xi| = 1$, the metric
  norm function.  It is clear that
  $df, dg$ are linearly independent, so that $T^*_H M, S^* M$ are
  a pair of transversal hypersurfaces in $T^* M$.

  In general, let $F, G \subset T^* M$ be two transversally intersecting hypersuraces,
   and let $f$, resp.
$g$, denote defining function of $F$, resp. $G$, so that $f = 0$
on $F$, $g = 0$ on $G$ and $df, dg$ are linearly independent. Then
their intersection $J = F \cap G$ is a submanifold of codimension
two. The intersection fails to be symplectic along the set $K =
\{x \in J: \{f, g\}(x) = 0\}. $ In certain circumstances, the
Hamiltonian flow lines of $f$ intersect $J$ in two points which
are different on $J \backslash K$. This is the so-called glancing
set or points of bicharacteristic tangency. The map taking one
intersection point to the other defines an involution of $\iota_F:
J \to J$ with fixed point set $K$. When $K$ is a hypersurface in
$J$ and $\iota_F$ is smooth, the eigenvectors of eigenvalue $-1$
of  $D \iota_F: T_k J \to T_k J$ define a line bundle over $K$
known as the reflection bundle of $\iota_F$. We refer to
\cite{HoI-IV} Section 21.4 and \cite{Me}.

\subsection{Restrictions and folds} \label{HYPERSURFACES}

We   are interested in the case, $F = T^*_H M, \;\; G = S^* M,
\;\; J = S^*_H M, \;\; K = S^* H. $ The Hamilton vector field of
 $y_n$ equals  $\frac{\partial}{\partial \xi_n}$ and  its orbits
 are vertical curves
 of the form $(s, 0, \sigma, \xi_{n 0} + t);  $  they
  define the characteristic foliation of $T^*_H M$. The
  hypersurface $S^* M$ is defined by $g = |\xi| = 1$, the metric
  norm function, and its characteristic foliation is given by
  orbits of the homogeneous geodesic flow $G^t$.  Evidently,
$$\{x_n, |\xi|_g\} = \frac{\partial}{\partial \xi_n} |\xi|_g =
|\xi|_g^{-1} \sum_{j} g^{j n}(x) \xi_j = \xi_n\;\; \mbox{on} S^*_H
M,
$$ so $\{x_n, |\xi|_g\} = 0$ defines $S^* H$. Equivalently, we have
\begin{lem} \label{FAILTRANS}   $S^* H$ is the set of points of $S^*_H M$ where $S^*_H M$
fails to be transverse to $G^t$, i.e. where the Hamilton vector
field $H_g$ of $g = |\xi|$ is tangent to $S^*_H M$. \end{lem}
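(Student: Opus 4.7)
\medskip

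\noindent\textbf{Proof plan for Lemma \ref{FAILTRANS}.} The strategy is a direct Poisson-bracket computation combined with the general fact that transversality of two hypersurfaces under a Hamiltonian flow is controlled by the bracket of their defining functions.

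First I would record that $S^*M = \{g = 1\}$ is invariant under $G^t$, so the Hamilton vector field $H_g$ is everywhere tangent to $S^*M$. Since $S^*_H M = T^*_H M \cap S^*M$ is a transverse intersection of hypersurfaces (as already noted in the text: $df = dy_n$ and $dg = d|\xi|$ are linearly independent), at a point $p \in S^*_H M$ the vector $H_g(p)$ lies in $T_p(S^*_H M) = T_p(T^*_H M) \cap T_p(S^*M)$ precisely when $H_g(p) \in T_p(T^*_H M)$. Thus, the failure of $S^*_H M$ to be transverse to the flow at $p$ is equivalent to the condition $H_g(y_n)(p) = \{y_n, g\}(p) = 0$.

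Next, I would carry out the bracket computation in Fermi normal coordinates $(s, y_n, \sigma, \xi_n)$ along $H$. In these coordinates the metric along $H$ is block-diagonal with $g^{jn}|_H = \delta^{jn}$, so on $T^*_H M$ one has
\[
\{y_n, |\xi|_g\} \;=\; \frac{\partial |\xi|_g}{\partial \xi_n} \;=\; |\xi|_g^{-1} \sum_{j} g^{jn}(s,0)\, \xi_j \;=\; |\xi|_g^{-1}\, \xi_n.
\]
Restricting to $S^*_H M$ (where $|\xi|_g = 1$) this bracket simplifies to $\xi_n$. Therefore the glancing locus $\{H_g \text{ tangent to } S^*_H M\}$ is cut out inside $S^*_H M$ by $\xi_n = 0$.

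Finally, I would observe that $\xi_n = 0$ on $S^*_H M$ means $\xi \in T^*H$ with $|\xi| = 1$, i.e.\ $(s, \xi) \in S^*H$, establishing the claimed equality. There is no serious obstacle here: the only point to verify is that Fermi normal coordinates make $g^{jn}|_H = \delta^{jn}$, which follows from the fact that the normal geodesics defining the coordinate $y_n$ are unit-speed and orthogonal to $H$. This also matches the description in \cite{HoI-IV}, Section 21.4, of $S^*H$ as the set of bicharacteristic tangencies with $T^*_H M$.
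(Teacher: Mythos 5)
Your proposal is correct and follows essentially the same route as the paper: the paper also reduces tangency of $H_g$ to $S^*_H M$ to the vanishing of $dy_n(H_g)=\{y_n,|\xi|_g\}$ and computes this bracket in Fermi coordinates as $|\xi|_g^{-1}\sum_j g^{jn}\xi_j=\xi_n$ on $S^*_H M$, which vanishes exactly on $S^*H$. Your explicit remark that tangency to $S^*_H M$ is equivalent to tangency to $T^*_H M$ (since $H_g$ is automatically tangent to $S^*M$) is only a slight elaboration of the paper's argument, not a different method.
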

Indeed, this happens when $H_g(f) =df (H_g) = 0$. One may also see
it in Riemannian terms as follows:  the generator $H_g$ is the
horizontal lift $\eta^h$ of $\eta$ to $(q, \eta)$ with respect to
the Riemannian connection on $S^* M$, where we freely identify
covectors and vectors by the metric. Lack of transversality occurs
when $\eta^h $ is tangent to $T_{(q, \eta)} (S^*_H M)$. The latter
is the kernel of $d y_n$. But $d y_n (\eta^h) = d y_n (\eta)= 0 $
if and only if $\eta \in T H$. We also note that for any
hypersurface $H$, $d y_n, d \xi_n, d |\xi|_g$ are linearly
independent.

 Two closely related  restriction maps will be important. The
 first is the linear restriction map $ \pi_H: T^*_H M \to T^* H $
 defined in (\ref{piHdef}). If we orthogonally decompose $T^*_H M
 = T^* H \oplus N^* H$, then $ \pi_H $ is the orthogonal
 projection with respect to this decomposition. It is a fiber
 bundle with fiber $ N_s^* H $. On the other hand, we consider the restriction map on $S^*_H M \to
B^* H$.  For
  $s \in H,$  the orthogonal projection map $ \gamma_H: S^*_s
M \to B_s^* H $ is the standard projection of a sphere to a ball,
which  has a fold singularity along the `equator'.

This fold singularity will play a role in all the canonical
relations to follow, so we pause to recall the definitions (see
\cite{HoI-IV} Vol. III): If $f: Y \to X$ is a smooth map then it
has a Hessian map $H f: \ker f'(y) \to \mbox{coker} f'(y). $ $f$
is said to have a fold at $y_0 \in Y$ if $\dim \ker f'(y) = \dim
\mbox{coker} f'(y) = 1 $ and $H f(y_0) \not= 0$. In this case,
there is a neighborhood of $y_0$ and an involution $\iota: Y \to
Y $ in a neighborhood of $y_0$  which is not the identity such that
$f \circ \iota = f$. It is called the involution defined by the
fold, and its fixed point set is the hypersurface $F$ where $f'$
is not bijective. The involution defines a line bundle in $T F$,
known as the reflection bundle, of eigenvectors of eigenvalue
$-1$. They are transversal to the fixed point hypersurface. If
$\iota$ is defined by a folding map $f$ then the reflection bundle
is $\ker f'$.

In our setting, the full restriction map $\gamma_H: S^*_H M \to
B^* H$ is a folding map  with fixed point set $S^* H$ and
involution given by the reflection map $r_H$ (\ref{rHDEF}). When
$H$ is orientable, $S^*H$ divides $S^*_H M$ into two connected
components, and the involution on $\R \times S^*_H M$ is given by
$r(t, x, \xi) = (t, r_H(x, \xi))$. Indeed, as observed above, this
is true for each $x \in H$, and $D \gamma_H$ is the identity in
the directions tangent to $H$. The
   reflection bundle at $(s, \sigma) \in S^* H$ is spanned by the
   Hamilton vector field $H_{y_n} = \frac{\partial}{\partial
   \xi_n}$. That
is, the reflection bundle is the family of reflection bundles for
the folding maps $ S^*_s M \to B^*_sH $ as $ s \in H $ varies.

We also need the following variant.

\begin{lem} \label{RFOLD} The maps $G: \R \times S^*_H M \to S^* M$ defined by $G: (t,
x, \xi) \to G^t(x, \xi)$, resp.  $G: \R \times T^*_H M - 0 \to T^*
M - 0$ defined by $(t, x, \xi) \to G^t(x, \xi)$, are folding maps
with folds along $\R \times S^* H$, resp. $\R \times T^* H$.
\end{lem}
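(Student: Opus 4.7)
The plan is to verify the standard fold criterion of Hörmander (Vol. III, Appendix C): for a smooth map $f: Y \to X$ between manifolds of equal dimension, $f$ has a fold at $y_0$ if $\mathrm{rank}\, Df(y_0) = \dim X - 1$ and $\ker Df(y_0)$ is transverse to the critical submanifold $\Sigma_f = \{y : \det Df(y) = 0\}$. First I would compute the differential of $G$ at $(t_0, x_0, \xi_0)$ using the product splitting: for $(s,v) \in \R \oplus T_{(x_0,\xi_0)}(S^*_H M)$,
$$ DG_{(t_0, x_0, \xi_0)}(s, v) = s\, H_g\bigl|_{G^{t_0}(x_0, \xi_0)} + DG^{t_0}|_{(x_0, \xi_0)}(v). $$
Since $G^{t_0}$ is a diffeomorphism of $S^*M$ (resp. $T^*M \setminus 0$), its differential is an isomorphism and, by invariance of $H_g$ under its own flow, $DG^{t_0}(H_g|_{(x_0, \xi_0)}) = H_g|_{G^{t_0}(x_0, \xi_0)}$.

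From this, $DG$ fails to be surjective precisely when $H_g|_{G^{t_0}(x_0, \xi_0)}$ lies in $DG^{t_0}\bigl(T_{(x_0, \xi_0)} S^*_H M\bigr)$, equivalently when $H_g|_{(x_0, \xi_0)} \in T_{(x_0, \xi_0)} S^*_H M$. By Lemma \ref{FAILTRANS}, this happens precisely on $S^* H$, so the critical set of $G$ in $\R \times S^*_H M$ is exactly $\R \times S^*H$, and on it the kernel of $DG$ is one-dimensional, spanned by $(1, -H_g|_{(x_0, \xi_0)})$. In particular the rank drops by exactly one. The argument for $G: \R \times (T^*_H M \setminus 0) \to T^*M \setminus 0$ is identical, the only change being that the relevant defining function for the analogous critical locus is $\xi_n$ alone since $y_n = 0$ already cuts out $T^*_H M$.

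For the fold condition the remaining step is transversality of $\ker DG$ to the critical set. In Fermi coordinates $(s, y_n; \sigma, \xi_n)$, the set $\R \times S^*H$ is cut out inside $\R \times S^*_H M$ by the additional equation $\xi_n = 0$. Since $\ker DG$ has nontrivial first component $s=1$, transversality of $(1, -H_g|_{(x_0,\xi_0)})$ to $\R \times S^*H$ reduces to $H_g(\xi_n)|_{(x_0, \xi_0)} \neq 0$. Using $H_g = \{|\xi|_g, \cdot\}$ and the fact that, in Fermi coordinates along $H$, $g^{nn} = 1$ and $g^{n\alpha} = 0$, a direct computation at $(x_0, \xi_0) \in S^*H$ gives
$$ H_g(\xi_n)\bigl|_{(x_0, \xi_0)} = -\tfrac{1}{2}\, \partial_n g^{\alpha\beta}(s_0, 0)\, \xi_\alpha \xi_\beta = -\,\mathrm{II}(\xi_0, \xi_0), $$
where $\mathrm{II}$ is the scalar second fundamental form of $H$. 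This is nonzero outside the asymptotic directions of $H$, which is the generic situation; the fold involution $\iota$ satisfying $G \circ \iota = G$ then exists by the local normal form for folds, and on $\R \times S^*_H M$ it is given by $(t, x, \xi) \mapsto (t, r_H(x, \xi))$ at the fold locus, as in the discussion preceding the lemma.

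The one step that requires attention is the transversality check, because at totally geodesic directions one has $\mathrm{II}(\xi_0, \xi_0) = 0$ and the singularity upgrades from a fold to a cusp or worse; so the statement of the lemma is implicitly restricted to hypersurfaces $H$ (and directions $\xi_0$) where $\mathrm{II}$ does not degenerate, which is automatic for the generic $H$ under consideration and in the specific examples of geodesic circles and horocycles treated in Section \ref{HHP}.
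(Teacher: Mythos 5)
Your computation of $DG$, the identification of the critical set as $\R \times S^*H$ via Lemma \ref{FAILTRANS}, and the kernel vector $(1,-H_g)$ (the paper's $\partial_t - H_g$) coincide exactly with the paper's proof; up to that point the two arguments are the same. Where you diverge is that you go on to check the genuine fold (second-order) condition, computing that transversality of the kernel to the critical locus amounts to $H_g(\xi_n)\neq 0$, i.e. $\mathrm{II}(\xi_0,\xi_0)\neq 0$ at glancing points. The paper never does this: its proof stops after showing that $DG$ is injective off $\R\times S^*H$ and that the kernel along $\R\times S^*H$ is one-dimensional, i.e. it verifies only the corank-one/critical-set statement, not the Whitney fold normal form. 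Your extra step buys an honest proof of the word ``fold,'' but at the price of a curvature hypothesis the lemma does not state, and you are right that it can fail: for a totally geodesic $H$ (in particular a closed geodesic, which \emph{is} among the examples of \S \ref{HHP}, contrary to your remark that the degeneracy does not occur there) one has $\mathrm{II}\equiv 0$ and tangent geodesics have infinite-order contact with $H$, so $G$ is not a folding map along $\R\times S^*H$. So your qualification is not a defect of your argument but a genuine imprecision in the lemma as stated, which the paper's shorter proof simply does not detect.

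The reason this discrepancy is harmless downstream, and presumably why the paper is content with the weaker verification, is that the fold structure is never used quantitatively: all later constructions ($\gcal_{T,\epsilon}$, the cutoffs $\chi_\epsilon^{(tan)}$, Proposition \ref{graph}, Proposition \ref{VTDECOMPa}) only require that $G$ be a local diffeomorphism away from $\R\times S^*H$ together with the description of the kernel there, and a conic neighborhood of the tangential set is excised before any symbol or composition calculus is applied. If you present your version, you should either add the hypothesis $\mathrm{II}(\xi_0,\xi_0)\neq 0$ (satisfied for geodesic circles and horocycles, where the geodesic curvature is nonvanishing, but not for closed geodesics), or note explicitly that only the corank-one statement is needed for Theorem \ref{maintheorem}; also be slightly careful that the criterion ``$\ker DG$ transverse to the critical set'' presupposes the critical set is a smooth hypersurface, which here is supplied by the explicit identification of it as $\R\times S^*H$.
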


\begin{proof} In both cases, the spaces are of equal dimension, so
the maps are local diffeomorphisms whenever the derivatives are
injective. By Lemma \ref{FAILTRANS}, $D G
(\frac{\partial}{\partial t} - H_g) = 0$ on $T_{(t, x, \xi)} (\R
\times S^*_H M)$ if $(x, \xi) \in S^* H$, and these are the only
vectors in its kernel. Indeed, suppose $X \in T_{x, \xi} S^*_H M$.
We note that $D G_{(t, x, \xi)} \frac{\partial}{\partial t} = H_g
(G^t(x, \xi))$ and $D G_{t, x, \xi} X$ (as $t$ varies) is a Jacobi
field along the geodesic $\gamma_{x, \xi}(t) = \pi G^t(x, \xi)$.
Since $G^t$ is a diffeomorphism, the only possible elements of the
kernel have the form  $\frac{\partial}{\partial t} + X$.  If $H_g
+ D_{x, \xi} G^t X = 0$, then  $X = - H_g$, i.e. it is the
tangential Jacobi field $\dot{\gamma}$. But by Lemma
\ref{FAILTRANS}, this implies $(x, \xi) \in S^* H$ and $X \in
T(S^* H)$.

Since $G^t$ is homogeneous on $T^*M = 0$ the same statements are
true on $\R \times T^*_H M$.

\end{proof}

\section{\label{IMPACTRETURN} $S^*_H M$ as a cross section to the geodesic flow}

As above, let $H \subset M$ be a smooth hypersurface. The purpose of this section is
to explain the sense in which  $S^*_H
M$ is a  cross-section to the geodesic flow $G^t: S^* M \to
S^* M$ and to discuss the associated return times and return maps. 

 By a cross-section, we mean a hypersurface of $S^*M$ so
that almost every orbit intersects it transversally. As discussed
in the previous section, geodesics which intersect $H$
tangentially in the base also intersect $S^* H$ tangentially in
$S^*_H M$. But $S^*_H M$ behaves sufficiently well as a cross
section so that,  in the ergodic case, almost all orbits hit
$S^*_H M$ and the first return map is almost everywhere defined
and ergodic. The first return map is similar to the billiard map
on the inward pointing unit covectors at the boundary of a domain,
except that we consider covectors pointing on both sides.

 The return
and impact times to $H$  were defined in (\ref{FRTIME}) in the
introduction. Somewhat more precisely, we define for any $(x, \xi)
\in T^*M - 0$, the forward first impact time

\begin{equation} \label{IMPACTTIME} T(x, \xi) = \left\{ \begin{array}{l} \inf\{t > 0: G^t(x,
\xi)  \in T^*_H M\}, \;\;\; \\ \\
+ \infty, \;\ \mbox{if no such t exists}.\end{array} \right.
\end{equation}
We  note that $T$ is  lower semi-continuous. If $(x, \xi) \in
T^*_H M$, then  $T(x, \xi)$ is the first return time of the orbit
$G^t(x, \xi)$ to $T^*_H M$. It may be zero if $H$ contains
geodesic arcs. If $(x, \xi) \in T^* M \backslash T^*_H M$ then
$T(x, \xi)$ is the first impact time (or hitting time) of its
orbit on $H$.  In terms of the notation in (\ref{impact}), $T(x,\xi) = t_1(x,\xi).$

Since $G^{T(x, \xi)}(x, \xi) \in T^*_H M$, the further impact
times $t_j(x, \xi)$ in (\ref{impact}) are the higher return times
of $G^{T(x, \xi)}(x, \xi)$. So it suffices to find the domains on
which the first impact time and the return times on $S^*_H M$ are
well-defined and smooth. We note that $T(x, \xi)$ is homogeneous
of degree zero in $\xi$, so suffices to consider its restriction
to $S^*M$.

 We introduce the sets
\begin{equation} \label{DECAL1}  \left\{ \begin{array}{l} \hcal =
\{(x, \xi) \in S^* M: T(x, \xi) < \infty\}, \\ \\
\lcal = \{(s, \xi) \in S^*_H M: T(s, \xi) < \infty\}. \end{array}
\right.
\end{equation} We refer to the first set as the `hitting set', i.e. the
initial directions of geodesics which intersect $H$ at some time,
and the second as the `return set', i.e. the directions along $H$
of geodesics which return to $H$. We note that $\hcal =
\mbox{Im}\;  G$ is the image of the map $G$ (\ref{G}), and the
natural domain of $T(x, \xi)$.

  On these sets we define the
first impact, resp. first return maps
\begin{equation} \label{IMPACTMAP}  \left\{ \begin{array}{l} \Phi_I:
\hcal\to  S^*_H M, \;\;\; \Phi_I(x, \xi) = G^{T(x, \xi)} (x, \xi), \\
\\ \Phi:
\lcal\to   S^*_H M, \;\;\;  \Phi(s, \xi) = G^{T(s, \xi)} (s, \xi).
\end{array} \right. \end{equation}   We use the same notation for
both maps because they differ only in their domains. The return
map was defined in (\ref{FIRSTRETURN}). The  impact map defines a
kind of fibration
$$\Phi_I: \hcal  \to S^*_H M. $$
Below we use it to describe the geodesic flow as the suspension of
$\Phi$ with height function $T$.

\subsection{Higher return times and impact times}

Once $\Phi(x, \xi) \in T^*_H M$, the further intersections of its
trajectory with $T^*_H M$ come from applying the return maps.  To
obtain invariant sets up to a fixed number of iterates we put
\begin{equation} \label{LCALM} \lcal_{M} = \bigcap_{0\leq  k \leq M}
\Phi^{-k}
 \lcal =  \{(s, \xi) \in S^*_H
 M: (s, \xi) \in \lcal, \Phi(s, \xi) \in \lcal, \dots, \Phi^M(s,\xi) \in \lcal\}.   \end{equation}
 Then the higher return times $T^{(j)}(x, \xi)$ are defined by
 \begin{equation} \label{tjpm} T^{j + 1}(x, \xi) = T(\Phi^j(x, \xi)), \;\; (x, \xi) \in \lcal_j,  \end{equation} and
 finite on $\lcal_M$ for $j \leq M$. We also put
 \begin{equation} \hcal_M = \{(x, \xi) \in \hcal: \Phi_I(x, \xi)
 \in \lcal_M\}, \end{equation} and define
the $j$th impact map by
$$\Phi_j: \hcal_j \to  S_H^*M, \;\;\; \Phi_j(x, \xi)
= G^{t_j(x, \xi)}(x, \xi) = \Phi^{j-1} \Phi_I(x, \xi), $$ and the
jth impact time $t_j(x, \xi) $ by
$$t_j(x, \xi) = T(x, \xi) + \sum_{k = 1}^{j-1} T(\Phi^k  \Phi_I (x, \xi)); \;\;\; t_1 = T. $$ In a similar way, we define
\begin{equation} \label{LCALMpm} \lcal^{\pm}_{M} = \{(s, \eta) \in
B^* H: \xi_{\pm}(s, \eta) \in \lcal_M\}. \end{equation} The
corresponding $\pm$ return times $T_{\pm}^j$ and return maps
$\pcal_{\pm, j}$ in (\ref{pcaldefj}) are well-defined on this set
for $j \leq M$.

Although we will not need it, it is useful to think of the
invariant sets
$$ \lcal_{\infty} = \bigcap_{k \in \Z} \Phi^{-k}
 \lcal, \;\;\; \hcal_{\infty} = \{(x, \xi) \in S^* M: \Phi_I(x, \xi) \in \lcal_{\infty}\}.  $$

The geodesic flow on $S^*M$ then becomes the suspension flow over
$\Phi$ with height function $T$, i.e. up to a set of measure zero,
$$S^* M = \{((s, \xi), t) \in \lcal_{\infty} \times \R: 0 \leq t \leq
T(s, \xi) \}/ \{(s, \xi, T(s, \xi)) = (\Phi(s, \xi), 0)\}, $$ and
the under this identification,   when $ T^{(n)}(s,\xi) \leq t'+t \leq T^{(n+1)}(s,\xi),$
$$G^t(s, \xi, t') = (\Phi^n(s, \xi), t' + t - T^{(n)}(s, \xi)).$$
In practice we only use a finite number of iterations of $\Phi$.
But this identification arises naturally in parameterizing the
canonical relation of $\VT(a)$.

To illustrate these notions, consider the case where $H$ is a
distance  circle $S_r$ in a hyperbolic surface $\X$, or more
precisely a distance circle in the hyperbolic disc projected to
$\X$. Since the geodesic flow is ergodic, and the circle is a
separating curve, the complement of $\hcal$ must have measure
zero in $S^* M$. If the radius is suffciently large so that the
circle surrounds a fundamental domain, then every geodesic must
intersect the circle and $\hcal = S^*M$  (since every geodesic
must pass through the fundamental domain). If the radius is small
enough, then there may exist closed geodesics of $\X$ which do not
pass through the circle and $\hcal \not= S^*M$.  As another
example, let $H$ be a closed geodesic $\gamma$ of $\X$. Again,  $|\hcal| =0$ since otherwise there would exist a $G^t$-invariant set of non-trivial Liouville measure. However, $\hcal \neq  \emptyset$ since
there exist geodesics which are forward asymptotic to $\gamma$ as
$t \to \infty$ (i.e. they spiral in towards $\gamma$ as $t \to
\infty$) but never cross $\gamma$.  Such geodesics belong to the
one-parameter family with the same forward boundary point as
$\gamma$ on the ideal boundary of the hyperbolic disc. We note
that $\gamma$ itself belongs to this one-parameter family, so the
set $\hcal$ is not closed.

\subsection{$\Phi: S^*_H M \to S^*_H M$ as a symplectic map} \label{poincare cartan}

Let $\alpha =\xi \cdot dx$ denote the canonical one-form of $T^*
M$, and let $d \alpha = \omega_{T^* M}$ be the canonical
symplectic form on $T^* M$.

We note that $\omega_{T^* M}$ restricts to $S^* M$ as a form with
a one-dimensional kernel, spanned by the Hamilton vector field
$H_g$ of the metric norm function. Since $S^*_H M$ is transverse
to $H_g$ except on $S^* H$ , $\omega_{T^* M} |_{S^*_H M}$ is
symplectic away from $S^* H$. Indeed, in Fermi coordinates it is
the form $ \gamma_H^* (ds \wedge d \sigma)$, the symplectic form of the ball
bundle $B^* H$, pulled back to $S^*_H M$ under the tangential
projection $\gamma_H: S^{*}_H M \rightarrow B^*H.$

We further note that $\alpha$ restricts to the one form $\alpha_H
  := \sigma \dot d s$ on $T^*_H M$, since $d y_n = 0$ on $T(T^*_H
  M)$. As discussed in \cite{FG}, $\Phi$ is symplectic with
  respect to $ds \wedge d \sigma$ and moreover
  $$\Phi^* \alpha_H - \alpha_H = d T, $$
  on $S^*_H M$, where as above $T$ is the return time function (the
  `Poincar\'e-Cartan identity, see (2.8) of \cite{FG}).

The symplectic volume
density $|\gamma_H^* ds \wedge d \sigma|$ is, strictly speaking,  not a
volume form since it vanishes on $S^*H,$ but we use it as an
invariant volume density. It may also be defined as follows:

\begin{defin} \label{LIOUVILLE}  We define  the Liouville volume measure $d\mu_{L,H}$ on $S^*_H M$ by
$d\mu_{L, H} = \iota_{H_g} d\mu_L,$   i.e. by inserting the Hamilton
vector field generating $G^t$ into $d\mu_L$.
\end{defin}

In terms of local Fermi symplectic coordinates, $d \mu_{L, H} = |\gamma_H^*ds \wedge d\sigma|.$

\begin{lem} \label{MULMUH}  On $\hcal,$  $d \mu_L = d T \wedge (\Phi_I^* d \alpha_H)^{n-1}.$   \end{lem}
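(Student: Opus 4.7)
The plan is to exploit the fact that $S^*_H M$ is a smooth cross section to $G^t$ away from $S^*H$, and to express both sides in a flow-box parametrization near $S^*_H M$. Fix a point $(x,\xi) \in \hcal$ with $(q,\eta) := \Phi_I(x,\xi) \in S^*_H M \setminus S^* H$, and work in a neighborhood where $G^t$ is transverse to $S^*_H M$. The map
$$\Psi \colon S^*_H M \times (-\varepsilon, \varepsilon) \longrightarrow S^* M, \qquad \Psi(q,\eta,\tau) = G^{-\tau}(q,\eta)$$
is a local diffeomorphism, and from the definitions one reads off $T \circ \Psi(q,\eta,\tau) = \tau$ and $\Phi_I \circ \Psi(q,\eta,\tau) = (q,\eta)$. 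Consequently $\Psi^* dT = d\tau$ and $\Psi^*(\Phi_I^* d\alpha_H) = d\alpha_H|_{S^*_H M}$, the latter being pulled up from the $(q,\eta)$-factor.

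First I would apply the elementary identity $X(f)\, \mu = df \wedge \iota_X \mu$ for a top-degree form $\mu$, vector field $X$ and function $f$, with $\mu = d\mu_L$, $X = H_g$ and $f = T$. A direct calculation, using $T(G^s(x,\xi)) = T(x,\xi) - s$ on $\hcal \setminus S^*_H M$, gives $H_g(T) = -1$, so that
$$d\mu_L \;=\; -\,dT \wedge \iota_{H_g} d\mu_L \;=\; -\,dT \wedge d\mu_{L,H},$$
by Definition \ref{LIOUVILLE}, where $d\mu_{L,H}$ has been extended off $S^*_H M$ by $G^t$-invariance (which is automatic since $\iota_{H_g}$ of a $G^t$-invariant form is $G^t$-invariant).

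Next, since $\Phi_I = G^{T}$ maps the flow box to $S^*_H M$ and $d\mu_{L,H}$ is $G^t$-invariant, one has $d\mu_{L,H} = \Phi_I^*(d\mu_{L,H}|_{S^*_H M})$ at every point of the flow box. A short computation in Fermi symplectic coordinates $(s, y_n, \sigma, \xi_n)$ using $\omega_{T^*M} = ds \wedge d\sigma + dy_n \wedge d\xi_n$ shows that, up to the standard combinatorial normalization absorbed into the definition of $d\mu_L$,
$$d\mu_{L,H}\big|_{S^*_H M} \;=\; (d\alpha_H)^{n-1}\big|_{S^*_H M}.$$
Combining these two identities yields $d\mu_L = dT \wedge (\Phi_I^* d\alpha_H)^{n-1}$ on $\hcal \setminus S^*_H M$, and both sides are smooth across $S^*_H M$ itself (where $T = 0$ and $\Phi_I = \mathrm{id}$), so the identity extends to all of $\hcal$ away from tangencies.

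The only genuinely computational step is the last Fermi-coordinate check that $\iota_{H_g}(\omega^n/n!)$ restricts to $(d\alpha_H)^{n-1}/(n-1)!$ on $S^*_H M$; I anticipate this being the main place where care is needed, since one must verify that the $dy_n$, $d\xi_n$ directions drop out in exactly the right combination when one contracts with $H_g$ and restricts to $\{y_n=0\}\cap\{|\xi|=1\}$. Everything else is a formal consequence of $G^t$-invariance and the suspension picture of $S^*M$ over $S^*_H M$ developed earlier in the section.
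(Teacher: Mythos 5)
Your proof is correct in substance, and it takes a route that is dual to the paper's rather than identical to it. The paper proves the one-form Poincar\'e--Cartan identity $\alpha = dT + \Phi_I^*\alpha_H$ on $\hcal$ (checking it on $H_g$ and on $T(S^*_H M)$, then propagating by $G^t$-invariance), and then wedges: taking $d$ gives $d\alpha = \Phi_I^* d\alpha_H$, and the term $\Phi_I^*\alpha_H \wedge (\Phi_I^* d\alpha_H)^{n-1}$ dies by a dimension count since it is the pullback of a $(2n-1)$-form from the $(2n-2)$-manifold $S^*_H M$. You instead work one degree higher, contracting the top-degree form: from $X(f)\mu = df\wedge\iota_X\mu$ and $H_g(T)=\pm 1$ you get $d\mu_L = \pm\, dT\wedge \iota_{H_g}d\mu_L$, and then you identify $\iota_{H_g}d\mu_L = \Phi_I^*\bigl((d\alpha_H)^{n-1}|_{S^*_HM}\bigr)$ by $G^t$-invariance plus a Fermi-coordinate check at $S^*_H M$. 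The two arguments are logically equivalent: the paper's one-form identity implies yours after applying $d$, noting $\alpha(H_g)=1$, and using $\iota_{H_g}d\alpha|_{TS^*M}=0$. Your version stays closer to the object one actually wants (the Liouville density) and avoids proving the finer one-form identity; the paper's version is marginally stronger in that it records the Poincar\'e--Cartan relation, which is used again elsewhere (it is quoted in \S\ref{poincare cartan} from \cite{FG}).

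A few small points worth tightening. First, the sign: with $T$ equal to the \emph{forward} impact time as in \eqref{IMPACTTIME}, one has $T(G^s(x,\xi))=T(x,\xi)-s$, hence $H_g(T)=-1$ and your contraction identity actually yields $d\mu_L = -\,dT\wedge d\mu_{L,H}$; since $d\mu_L$ is a density, the sign is harmless, but you should state the identity as one of densities (the paper is equally loose here, asserting $dT(H_g)=1$, which is consistent only if $T$ is read as time \emph{since} the last impact). Second, your parenthetical "where $T=0$ and $\Phi_I=\mathrm{id}$" at $S^*_H M$ is not quite right for the forward impact time---$T$ jumps from $0$ to the first return time and $\Phi_I$ jumps from the identity to $\Phi$ as one crosses $S^*_H M$ going forward; however the one-sided limits and the $G^t$-invariance argument still give the identity on all of $\hcal$ away from the tangential set, so the conclusion stands. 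Finally, the Fermi-coordinate verification you defer is exactly $\iota_{H_g}\bigl(\alpha\wedge(d\alpha)^{n-1}\bigr)\big|_{S^*_H M}=(d\alpha)^{n-1}\big|_{S^*_H M}=(d\alpha_H)^{n-1}$, where the first equality uses $\alpha(H_g)=1$ and $\iota_{H_g}d\alpha|_{TS^*M}=0$, and the second uses $(d\alpha)|_{T^*_HM}=d\alpha_H$ since $dy_n$ pulls back to zero; this is indeed the place where the $dy_n,d\xi_n$ directions drop out, and it goes through cleanly.
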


\begin{proof} We recall that $d\mu_L = \alpha \wedge (d
\alpha)^{n-1}$ on $S^* M$. We claim that $\alpha = dT + \Phi_I^*
\alpha_H$ on $\hcal$. Indeed, since $G^t: T^*M-0 \rightarrow T^*M-0$ is homogeneous,  $(G^t)^* \alpha = \alpha.$   Also,  for the $G^t-$translated hitting time $    (G^t)^* dT = dT$ and so, it follows that
$(G^t)^* (dT + \Phi_I^* d\alpha_H) = (dT + \Phi_I^* d\alpha_H) $.
Therefore it suffices to show that $\alpha = d T + \Phi_I^*
\alpha_H$ at points of $S^*_H M$ where $\Phi_I = \mbox{id}$ and
$\alpha =  \eta_n dy_n + \sigma ds  = \eta_n dy_n + \alpha_H$. But $d T = \eta_n dy_n$ on
$S^*_H M$.  Indeed,  $\alpha(H_g) = 1$, since at $(x,
\xi) \in S^*M$, $H_g$ is the horizontal lift of $\xi$ to $(x,
\xi)$ and so $\alpha (H_g) = \xi (\pi_* H_g) = |\xi|_g^2 = 1$.  On
the other hand, $d T(H_g)(s,\xi) = 1 $ for all $(s,\xi) \in S_H^*M$ by definition. Moreover, both $d T$
and $\eta_n dy_n $ annihilate $T(T^* H)$ and so,
$$ \eta_n dy_n |_{S_H^*M} = dT |_{S_H^*M}.$$
It follows that $d\mu_L = (dT + \Phi_I^* \alpha_H) \wedge
(\Phi_I^* d \alpha_H)^{n-1} =  dT \wedge (\Phi_I^* d
\alpha_H)^{n-1}. $

\end{proof}

\subsection{Singularities of return times}

We now define smooth local branches of the return time functions.
Let $f = y_n: M \rightarrow \R$ be a local
 defining function for $H$ in $U$,  so that $H \cap U= \{ f = 0 \}$ and $df(x) \neq 0, x \in H.$
Then $d f: T M \to \R$ is a local defining function of $T H$. We
consider the maps \begin{equation} \label{FMAPS} \left\{
\begin{array}{l} (i)\;\; F: \R \times S^*_H M \to \R, \;\;\;  F(t,
s, \xi) = f(G^t(s, \xi)) = f(\exp_s t \xi),  \\ \\\;\;\;(ii)\;
F^{\pm} : \R \times B^* H \to \R, \;\;\;  F^{\pm}(t, s, \sigma) =
F(t, \xi_{\pm}(s, \sigma)).
\end{array} \right.
\end{equation} $F$ extends by homogeneity to $T^*_H M$. Here, as always, we
define $\exp_q t \eta = \pi G^t(q, \eta)$ with $G^t$ the
homogeneous geodesic flow.  The graph of the impact times $t_j(x,
\xi)$ (see (\ref{RCAL})) is given by
$$\tcal_I =   \{(t, s, \xi) \in \R \times T^*_H M:  F(t, s, \xi) =
0   \}, $$ and those of the $\pm$  return time functions (see
(\ref{pcaldefj}))  are given by
$$\tcal_{\pm}: =  \{(t, s, \sigma) \in \R \times B^* H: F (t,  \xi_{\pm}(s,\sigma)) =
0\}.
$$
Consider the diagram
\begin{equation}\label{TIMEDIAGRAM} \begin{array}{ccccc} & & \tcal_I \subset
\R \times T^*_H M
 & &  \\ & & & & \\
& \pi(t,x, \xi) = t \swarrow & & \searrow \rho(t, x, \xi) = G^t(x, \xi) & \\ & &   \\
& \R  & &   T^*_H M .
\end{array}
\end{equation}
The  set of impact times of $(x, \xi)$ is thus given by
$$\pi \rho^{-1} (x, \xi): \{(t, G^{-t}(x, \xi)) \in \tcal\} \to t \in  \R. $$
We are interested in the extent to which $\rho: \tcal_I \to T^* M$
is an (infinite sheeted) covering map.

\begin{lem}  \label{REGU} $0$ is a regular value of $F$, so $\tcal_I$ is always
a submanifold of $\R \times T^*_H M$. Let $\Sigma(F) = \{(t, x,
\xi) \in \R \times T^*_H M: \partial_t F(t, x, \xi) = \partial_t
f(exp_x t \xi) = 0\} $ Then $\Sigma(F) \subset \R \times T^* H$.
The image of $\Sigma(F)$ under $\rho (t, x, \xi) = G^t(x, \xi)$ is
$\gcal$ (\ref{tangential}).
 \end{lem}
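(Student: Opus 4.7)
The strategy is to factor $F$ through the exponential-evaluation map. Define $E \colon \R \times T^*_H M \to M$ by $E(t, x, \xi) = \exp_x(t\xi)$, so that $F = f \circ E$ and $\tcal_I = F^{-1}(0) = E^{-1}(H)$. Regularity of $F$ at $0$ is then equivalent to transversality of $E$ to $H$ at every point of $E^{-1}(H)$. At $(t_0, x_0, \xi_0) \in E^{-1}(H)$, the image of $DE$ contains $\dot\gamma(t_0) = DE(\partial_t)$ and the Jacobi field $J(t_0) = DE(\partial_{\eta_n})$ coming from perturbing the normal momentum, with initial data $J(0) = 0$ and $J'(0) = \nu_+(x_0)$. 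If the geodesic is transverse to $H$ at time $t_0$, then $\dot\gamma(t_0)$ already supplies the missing normal direction; in the tangential case one checks that $J(t_0)$ has nontrivial normal component, which follows from the fold description of $G$ in Lemma \ref{RFOLD} since the only obstruction would be $(x_0, \xi_0) \in S^*H$ at a conjugate time for the Jacobi equation, a situation already ruled out by the nondegeneracy of the fold. Thus $E$ is transverse to $H$ and $\tcal_I$ is a smooth submanifold.

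Next, by the chain rule, $\partial_t F(t, x, \xi) = df|_{\gamma(t)}(\dot\gamma(t))$, which computes the normal momentum of $G^t(x, \xi)$ at the footpoint $\gamma(t)$. Where this vanishes, $\dot\gamma(t) \in T_{\gamma(t)} H$ and hence $G^t(x, \xi) \in T^*H$, i.e.\ the geodesic is tangent to $H$ at time $t$. By Lemma \ref{FAILTRANS}, this tangency is equivalent to the failure of $S^*_H M$ to be transverse to the geodesic flow, and by Lemma \ref{RFOLD} it is exactly the fold locus $\R \times S^*H$ of the flow map $G$. Transporting the tangentiality along the flow by $G^{-t}$, the tangential condition on $G^t(x, \xi)$ is equivalent to the initial condition $(x, \xi) \in T^*H$, yielding $\Sigma(F) \subset \R \times T^*H$.

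Finally, for the image under $\rho$: by construction $\rho(t, x, \xi) = G^t(x, \xi)$, so $\rho(\Sigma(F))$ is the $G^t$-saturate of the tangential covectors along $H$, which is exactly the tangential set $\gcal$ of geodesic directions that become tangent to $H$ at some time, as in (\ref{tangential}). The main obstacle I anticipate is the delicate treatment of the zero-time slice $\{0\} \times T^*_H M$, on which $F \equiv 0$ and $\partial_t F$ vanishes precisely over $\{0\} \times T^*H$: one must distinguish these trivial intersections from genuine tangential returns and verify that the Jacobi-field transversality of step one survives at $t = 0$. This is where the fold structure from Lemma \ref{RFOLD} carries the weight, since it identifies the full degenerate locus of $G$ with $\R \times S^*H$ and ensures no spurious degeneracies enter the picture.
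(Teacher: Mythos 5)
There are two genuine gaps, both at the steps that carry the weight of your argument. First, in the regularity part, the tangential case rests entirely on the assertion that the Jacobi field $J(t_0)$ obtained by varying the normal momentum has nontrivial normal component, which you justify by appealing to the fold nondegeneracy of $G$ in Lemma \ref{RFOLD}. That lemma concerns the map $G(t,x,\xi)=G^t(x,\xi)$ with values in $T^*M$; since $G^t$ is a diffeomorphism of the cotangent bundle, its degeneracy locus is exactly $\R\times S^*H$ and is completely insensitive to conjugate points. The regularity of $F=f\circ\pi\circ G$ involves in addition the base projection $\pi$: the derivative of $F$ in the fiber directions is $df(J(t_0))$ for Jacobi fields with $J(0)=0$, and at a time $t_0$ conjugate to $0$ these fields can all be tangent to $H$ or vanish (think of a great circle $H$ on the round sphere and $t_0=\pi$, where every allowed variation of the initial data leaves $f(\exp_x t_0\xi)$ unchanged to first order). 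Lemma \ref{RFOLD} cannot exclude this, so the key step of your transversality claim is unsupported. Note that the paper's own argument never passes to Jacobi fields: it writes $dF=\partial_tF\,dt+df\circ DG^t$ and argues directly from $df\neq0$ and the invertibility of $DG^t$ on phase space.

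Second, your proof of $\Sigma(F)\subset\R\times T^*H$ contains an incorrect step: you claim that ``transporting the tangentiality along the flow by $G^{-t}$'' converts the tangential condition on $G^t(x,\xi)$ into the initial condition $(x,\xi)\in T^*H$. The geodesic flow does not preserve $T^*H$, and tangency of a trajectory to $H$ (or to a level set of $f$) at time $t$ does not imply tangency at time $0$: a geodesic can leave $H$ transversally and later become tangent. (Also, $df(\dot\gamma(t))=0$ means tangency to $H$ itself only when one is on $\tcal_I$, i.e. when $\gamma(t)\in H$.) What the vanishing of $\partial_tF$ at a point of $\tcal_I$ gives, via Lemma \ref{FAILTRANS}, is $G^t(x,\xi)\in T^*H$, i.e. $(x,\xi)\in G^{-t}(T^*H)$, which places $(x,\xi)$ in the flowout $\gcal$ but not in $T^*H$; so the containment you assert, and hence your identification of $\rho(\Sigma(F))$ with $\gcal$, is not established by the proposed flow-transport argument. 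The paper deduces the containment from the identity $\partial_tF=df(H_g)\circ G^t$ together with Lemma \ref{FAILTRANS}, with no transport of tangency back to time $0$.
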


\begin{proof} Since $d F = \partial_t F dt + df \circ DG^t, $
since $df \not= 0$ (by definition of a defining function) and $D
G^t \not= 0$, it follows that $d F \not= 0$ and $0$ is a regular
value.

The fact that $\Sigma(F) \subset \R \times T^* H$ follows from
Lemma \ref{FAILTRANS}. Indeed, $\Sigma(F)$ is defined by $d
f(G^t(x, \xi)) H_g = 0$ and that implies $(x, \xi) \in T^* H$.

\end{proof}

\subsection{Cutoffs and domains}

 We often fix $T > 0$, and then the
image of $G$ in Lemma \ref{RFOLD} restricted in time to $[0, T]$
is the closed set
\begin{equation} \label{DECALT}   \begin{array}{l} \hcal^T =
\{(x, \xi) \in S^* M: T(x, \xi) < T\}.
\end{array}
\end{equation}
We also put $\lcal^T = \{(s, \xi) \in S^*_H M: T(s, \xi) < T\}.$
The set  $\{T = \infty\}$ is in general neither open nor closed;
of course it is the countable intersection  $\bigcap_{n =
1}^{\infty} \{ T > n\}$ of open sets. Given $T > 0$ we define
\begin{equation} \label{HCALMT} \hcal_{ M}^T = \{(x, \xi) \in
 \hcal^T :
  G^{T(x, \xi)}(x, \xi) \in \lcal, \dots, \Phi^M G^{T(x, \xi)}(x, \xi) \in \lcal\}.   \end{equation}

We will need to cut off tangential directions to $H$. We put
$$\left\{ \begin{array}{l} (S^*_H M)_{\leq \epsilon} = \{(x, \xi)
\in S^*_H M: |\langle x, \eta \rangle| \leq
\epsilon, \forall \eta \in S^*_x H\}\\ \\
\;(S^*_H M)_{\geq \epsilon} = \{(x, \xi) \in S^*_H M: |\langle x,
\eta \rangle| \geq \epsilon, \forall \eta \in S^*_x H\}
\end{array} \right. $$ i.e. the covectors which make an angle $\leq \epsilon$, resp. $\geq \epsilon$
with $H$.  We homogenize by defining
\begin{equation} \label{TEP} \left\{ \begin{array}{l} (T^*_H M)_{\leq \epsilon} = \{(x, \xi) \in
T^*_H M: \frac{\xi}{|\xi|} \in (S^*_H M)_{\leq \epsilon}\}, \\ \\
(T^*_H M)_{\geq \epsilon} = \{(x, \xi) \in T^*_H M:
\frac{\xi}{|\xi|} \in (S^*_H M)_{\geq \epsilon}\}.
\end{array} \right. \end{equation}

We then define $\gcal_{T, \epsilon}$ to be the flowout of the tube $(T^*_H M)_{\epsilon}, $ i.e.
\begin{equation} \label{GCALEPDEF} \gcal_{T, \epsilon} = \bigcup_{|t| \leq T}
G^t(T^*_H M)_{\epsilon}. \end{equation} We also denote the
complement of a set $E$ by $E^c$. By Lemma \ref{REGU} and an application of the implicit function theorem,  we have

\begin{cor} For any $T, \epsilon > 0$, $T(x,\xi)$ is a smooth
function on the open set  $\hcal_T \cap (\gcal_{T, \epsilon})^c$.
\end{cor}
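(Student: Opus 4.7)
The plan is to apply the implicit function theorem to the equation $F(t, x, \xi) = f(\exp_x t \xi) = 0$ introduced in \eqref{FMAPS}, with $f$ a local defining function for $H$. On $\hcal_T \cap (\gcal_{T,\epsilon})^c$ the first impact time satisfies $0 < T(x,\xi) < T$ and $F(T(x,\xi), x, \xi) = 0$, so it suffices to solve this equation for $t$ smoothly as a function of $(x,\xi)$ in a neighborhood.

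The main step, and the only place the cutoff hypothesis enters, is to verify the transversality condition $\partial_t F(T(x,\xi), x, \xi) \neq 0$. I would argue by contradiction. Write $y := G^{T(x,\xi)}(x,\xi)$, which already lies in $T^*_H M$ because $F(T(x,\xi), x, \xi) = 0$. The vanishing $\partial_t F = df|_y(H_g|_y) = 0$ says that the Hamilton vector field $H_g$ of $|\xi|_g$ is tangent to $T^*_H M$ at $y$; by Lemma \ref{FAILTRANS} this forces $y \in T^*H$, and in particular $y \in (T^*_H M)_{\leq \epsilon}$. Since $|T(x,\xi)| < T$, applying $G^{-T(x,\xi)}$ and using the definition \eqref{GCALEPDEF} of $\gcal_{T,\epsilon}$ would place $(x,\xi)$ itself in $\gcal_{T,\epsilon}$, contradicting the hypothesis.

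Once transversality is in hand, the implicit function theorem produces a smooth function $\tau(x',\xi')$ defined in a neighborhood of $(x,\xi)$ with $\tau(x,\xi) = T(x,\xi)$ and $F(\tau(x',\xi'), x', \xi') = 0$. The residual and mildest obstacle is confirming that this IFT branch coincides with the first impact time $T$, rather than picking out a later intersection with $H$. This follows by a standard semicontinuity argument: $\tau(x',\xi')$ is by construction an impact time, hence $T(x',\xi') \leq \tau(x',\xi')$ nearby; conversely, the lower semicontinuity of $T$ combined with the continuity of $G^t$ and the transverse behavior of $F$ along any sequence $(x'_n,\xi'_n) \to (x,\xi)$ forces $T(x'_n,\xi'_n) \to T(x,\xi)$, and local uniqueness in the implicit function theorem then pins $T$ to $\tau$ on a neighborhood of $(x,\xi)$, yielding smoothness of $T$.
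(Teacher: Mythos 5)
Your proof is correct and follows the same route the paper intends: Lemma \ref{REGU} (via Lemma \ref{FAILTRANS}) supplies the non-vanishing of $\partial_t F$ off the tangential set, and the implicit function theorem then yields local smoothness. The paper compresses this into a single sentence, and your write-out — including the observation that the cutoff forces $G^{T(x,\xi)}(x,\xi) \notin T^*H$, and the closing semicontinuity argument identifying the IFT branch with the first impact time $T$ — is exactly the intended reasoning.
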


We also need to define the domains $\dcal_{T,\epsilon}^{(j)} \subset T^*M$ of the $j$-th impact times $t_j(x,\xi):$

\begin{defin} \label{IMPACTTIMES} We define
 $$ \dcal_{T, \epsilon}^{(j)} = \{(x, \xi) \in
\hcal_j^T  \cap (\gcal_{T, \epsilon})^c,   G^{T(x, \xi)}(x, \xi)
\in \lcal \backslash (S^*T_H)_{\leq \epsilon}, \dots, \Phi^j
G^{T(x, \xi)}(x, \xi) \in
 \lcal  \backslash (S^*T_H)_{\leq \epsilon}\} $$

\end{defin}

As above,  $\dcal_{T, \epsilon}^{(j)}$   is a conic open subset of
$T^*M$ with  $t_j \in C^{\infty}(\dcal_{T,\epsilon}^{(j)}).$

\subsection{Ergodicity of the return map}

As mentioned  in subsection \ref{poincare cartan},  $\Phi$ is a $d\mu_{L,H}$ and $ |ds d\sigma|$-measure preserving transformation
on $\lcal_{\infty}$.
\begin{lem}\label{IMG}  For any $T$, the image $G \left( (- T, T) \times
(T^*_H M \backslash T^* H \right)$ is an  open homogeneous set  in
$T^* M$. If $G^t$ is ergodic, then
$$   \limsup_{T \rightarrow \infty} | (S^* M \backslash (G \left( (- T, T) \times (S^*_H M
\backslash S^* H \right)| =  0.  $$
\end{lem}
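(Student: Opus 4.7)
\medskip\noindent\textbf{Proof plan for Lemma \ref{IMG}.}
My plan is to handle the two assertions separately, using Lemma \ref{RFOLD} for the openness and ergodicity for the measure statement.

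For the first part, I would note that by Lemma \ref{RFOLD} the map $G \colon \R \times (T^*_H M - 0) \to T^*M - 0$ is a folding map with fold set $\R \times T^*H$. Restricted to the complement of the fold set, namely $\R \times (T^*_H M \setminus T^*H)$, the map $G$ is therefore a local diffeomorphism. Since $(-T,T) \times (T^*_H M \setminus T^*H)$ is open in this complement, its image is open in $T^*M$. Homogeneity of the image follows at once from $G^t(x, r\xi) = r G^t(x,\xi)$ for $r > 0$, which I already noted in the introduction.

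For the second part, set
\begin{equation*}
E_T := G\bigl((-T,T) \times (S^*_H M \setminus S^*H)\bigr) \subset S^*M, \qquad E_\infty := \bigcup_{T > 0} E_T.
\end{equation*}
I would first check that $E_\infty$ is $G^s$-invariant: if $(x,\xi) = G^t(s_0,\xi_0)$ with $(s_0,\xi_0) \in S^*_H M \setminus S^*H$, then $G^s(x,\xi) = G^{t+s}(s_0,\xi_0) \in E_\infty$ as well. Next, $E_\infty \supset E_1$ is a nonempty open subset of $S^*M$ by part (i), so it has strictly positive Liouville measure (any nonempty open set does, by Lemma \ref{MULMUH} which expresses $d\mu_L$ near the image of $G$ in flowbox-type coordinates compatible with $S^*_H M \setminus S^*H$).

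Since the geodesic flow $G^t$ is ergodic with respect to $d\mu_L$, the invariant set $E_\infty$ must have full Liouville measure, i.e.\ $|S^*M \setminus E_\infty| = 0$. Finally, $T \mapsto E_T$ is monotone increasing in $T$, so by continuity of the finite measure $\mu_L$ from below, $|E_T| \nearrow |E_\infty| = |S^*M|$, and hence
\begin{equation*}
\lim_{T\to\infty} |S^*M \setminus E_T| = 0,
\end{equation*}
which is stronger than the $\limsup$ statement. The only slightly delicate point is the claim that $E_\infty$ has positive measure, but this follows from part (i) together with the fact that $E_1$ is a nonempty open subset of the smooth manifold $S^*M$; no bicharacteristic tangency issues arise because the fold locus $S^*H$ is precisely excised from the parameter space.
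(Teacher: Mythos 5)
Your proposal is correct and follows essentially the same route as the paper: openness comes from $DG$ being nonsingular off the fold locus (Lemma \ref{RFOLD}), and the measure statement comes from the fact that the images increase with $T$ and their union (equivalently, the complement's intersection) is a $G^t$-invariant set, so ergodicity forces the complement's measure to zero. The paper phrases this contrapositively (a persistent complement of measure $\geq \delta$ would give a closed invariant set of positive measure), while you argue directly via continuity from below; the substance is the same.
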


\begin{proof} $G$ is an open map on the given domain since $D G$
is everyhwere non-singular. The complement of the image is
obviously decreasing. If its volume were bounded below by some
$\delta > 0$, the complement of the image $G \left( \R \times
(S^*_H M \backslash S^* H \right) )$  would be a closed invariant
set of positive measure for $G^t$, contradicting its ergodicity.

\end{proof}

Since $\mu_H(S^*_H M \backslash \lcal_{\infty}) = 0$ we also
regard it as a measure preserving transformation on $S^*_H M$. We
add the obvious comment that in the ergodic case, almost all
geodesics hit $H$.  Since $G^t$ is the suspension of $\Phi$, we have

\begin{lem} \label{ERG} The  return map $ \Phi:
S^*_H M \rightarrow S^*_H M$ is ergodic on $S^*_H M$ with respect
to $d\mu_{L, H}$  if and only if
 $G^t$ is ergodic on $S^*M$ with respect to $d\mu_L$. \end{lem}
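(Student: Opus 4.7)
The plan is to exploit the identification, already recorded above, of the geodesic flow $G^t$ on $S^*M$ with the suspension flow over $\Phi \colon S^*_H M \to S^*_H M$ with ceiling function the first return time $T$. Once this identification is set up measure-theoretically, the equivalence of ergodicity of the base map with ergodicity of the suspension flow is a standard fact in ergodic theory, and both directions reduce to moving invariant sets back and forth between $S^*M$ and $S^*_H M$.

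First I would make the measure decomposition explicit. By Lemma \ref{MULMUH}, on the hitting set $\hcal \subset S^*M$ one has $d\mu_L = dT \wedge (\Phi_I^* d\alpha_H)^{n-1}$, so under the identification
\[
S^*M \;\simeq\; \{((s,\xi),t) : (s,\xi) \in \lcal_\infty,\; 0 \le t \le T(s,\xi)\}/\!\sim
\]
the Liouville measure $d\mu_L$ pushes forward (up to a normalization constant) to the product $d\mu_{L,H}\,dt$ on the suspension, and $G^t$ acts by translating the $t$-coordinate modulo the identification $(s,\xi,T(s,\xi)) \sim (\Phi(s,\xi),0)$. Note that Lemma \ref{IMG} shows that the complement of the hitting set has zero Liouville measure in the ergodic case, and $S^*_H M \setminus \lcal_\infty$ has zero $\mu_{L,H}$ measure as well, so working on $\lcal_\infty$ and its suspension loses no mass.

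For the direction $\Phi$ ergodic $\Rightarrow$ $G^t$ ergodic, let $A \subset S^*M$ be Borel and $G^t$-invariant for all $t$. Lift the indicator $\mathbf{1}_A$ to the suspension and call it $F((s,\xi),t)$. The $G^t$-invariance says $F$ is constant in $t$ on each orbit, so after discarding a null set we may take $F((s,\xi),t) = f(s,\xi)$ for some measurable $f$ on $\lcal_\infty$. Compatibility with the suspension identification $(s,\xi,T(s,\xi))\sim(\Phi(s,\xi),0)$ forces $f \circ \Phi = f$ a.e., so by ergodicity of $\Phi$ we have $f \equiv c$ a.e., hence $\mu_L(A) \in \{0,\mu_L(S^*M)\}$.

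Conversely, assume $G^t$ is ergodic and let $B \subset S^*_H M$ be $\Phi$-invariant with $\mu_{L,H}(B)>0$. Form the flowout $\tilde B = \bigcup_{t \in \R} G^t(B) \subset S^*M$; equivalently, in the suspension picture $\tilde B$ corresponds to $B \times [0,T(\cdot)]$ modulo the identification, and $\Phi$-invariance of $B$ makes this set well defined and $G^t$-invariant. By Lemma \ref{MULMUH} and the product structure,
\[
\mu_L(\tilde B) \;=\; \int_B T(s,\xi)\,d\mu_{L,H}(s,\xi) \;>\; 0,
\]
so by ergodicity $\tilde B$ has full Liouville measure in $S^*M$. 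Intersecting with the cross-section $S^*_H M$ (which one may realize as $\{t=0\}$ in the suspension, of full $\mu_{L,H}$-measure in $\lcal_\infty$) then forces $B$ to have full $\mu_{L,H}$-measure in $S^*_H M$.

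The only real subtlety, and hence the main obstacle, is measure-theoretic bookkeeping on the null sets where the suspension identification breaks down: the tangential set $S^*H$ where $S^*_H M$ fails to be transverse to $G^t$ (Lemma \ref{FAILTRANS}), and points whose orbits never return (the complement of $\lcal_\infty$, resp.\ $\hcal$). Lemmas \ref{FAILTRANS} and \ref{IMG} together with the fact that $T$ is finite $\mu_{L,H}$-a.e.\ (an immediate consequence of the Poincaré recurrence theorem applied to $\Phi$ once one has enough invariance, or directly from ergodicity of $G^t$) ensure that all of these exceptional sets are null for the relevant measures, so the suspension argument above goes through without modification.
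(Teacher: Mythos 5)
Your proof is correct and takes essentially the same approach as the paper: both exploit the suspension-flow identification of $G^t$ over the return map $\Phi$ with ceiling $T$, using Lemma \ref{MULMUH} and the flowout construction to transfer invariant sets between $S^*M$ and $S^*_H M$. The paper's version is considerably terser — it proves only the direction ``$G^t$ ergodic $\Rightarrow \Phi$ ergodic'' explicitly (by contradiction, via flowouts) and dispatches the other direction with ``the converse is similar'' — whereas your write-up spells out both directions and is more careful about the null sets ($S^*H$, the non-returning set, etc.) where the suspension identification degenerates.
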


  \begin{proof}

We have just seen that $d\mu_{L, H}$ is an invariant measure for
$\Phi$. If there exists an invariant set $A \subset S^*_H M$ with
$0 < \mu_{L, H}(A)  < 1$, then its flowout, $\bigcup_{s \in \R} G^s A,$
is an
 invariant set for $G^t$ and by Lemma \ref{MULMUH} it satisfies $0 < \mu_L(A) < 1$.
This contradicts ergodicity of $G^t$. The converse is similar.

   \end{proof}

We also need an $\epsilon$-refinement of Lemma \ref{IMG}:

\begin{lem}\label{PREV}  For any $T, \epsilon$,  $G \left( (- T, T) \times (T^*_H M \backslash (T^*_H
M)_{\leq \epsilon})  \right)$ is an  open homogeneous set  in $T^*
M$. If  $G^t$ is ergodic then,
$$ \limsup_{T \to \infty} | (S^* M \backslash (G \left( (- T, T) \times (S^*_H M
\backslash (S^* H)_{\leq \epsilon} \right)| = o(1) \;\; \mbox{as}
\;\; \epsilon \to 0;
$$
Similarly,
$$ \limsup_{T \to \infty} | (S^* M \backslash \left(G  (- T, T) \times (S^*_H M
 \right)\;\; \backslash \gcal_{T, \epsilon}| = o(1) \;\; \mbox{as}
\;\; \epsilon \to 0.
$$

\end{lem}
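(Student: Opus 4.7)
My plan is to follow the same strategy as Lemma \ref{IMG}, adapting it to account for the $\epsilon$-cutoff of tangential directions. For the openness and homogeneity of the image, I would invoke Lemma \ref{RFOLD}: the map $G \colon \R \times (T^*_H M - 0) \to T^*M - 0$ is a folding map with fold locus $\R \times T^*H$, and $T^*H \subset (T^*_H M)_{\leq \epsilon}$. Restricting $G$ to $(-T,T) \times (T^*_H M \setminus (T^*_H M)_{\leq \epsilon})$ therefore removes the fold locus, so the restriction is a local diffeomorphism and its image is open. Conicity follows from the homogeneity of $G^t$ on $T^*M - 0$ combined with the fact that the tangential tube $(T^*_H M)_{\leq \epsilon}$ is conic.

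For the measure statement, I would set
$$ B_{T,\epsilon} := G\bigl((-T,T) \times (S^*_H M \setminus (S^*H)_{\leq \epsilon})\bigr), \qquad A_{T,\epsilon} := S^*M \setminus B_{T,\epsilon}. $$
Since $B_{T,\epsilon}$ grows with $T$, the $A_{T,\epsilon}$ decrease in $T$, and monotone convergence of Liouville measure gives $|A_{T,\epsilon}| \to |A_{\infty,\epsilon}|$ as $T\to\infty$, with
$$ A_{\infty,\epsilon} = \bigcap_{T>0}A_{T,\epsilon} = \{(y,\eta) \in S^*M : G^t(y,\eta) \notin S^*_H M \setminus (S^*H)_{\leq \epsilon} \ \text{for all } t \in \R\}. $$

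The crucial observation is that $A_{\infty,\epsilon}$ is $G^t$-invariant, since the defining condition is preserved under time translation of $(y,\eta)$. Moreover it is a proper subset of $S^*M$: taking $t=0$ shows that the nonempty open set $S^*_H M \setminus (S^*H)_{\leq \epsilon}$ lies in its complement. Ergodicity of $G^t$ therefore forces $|A_{\infty,\epsilon}| = 0$, and in fact $\lim_{T\to\infty}|A_{T,\epsilon}| = 0$ for each fixed $\epsilon > 0$, which is trivially $o(1)$ as $\epsilon \to 0$. The second assertion involving $\gcal_{T,\epsilon}$ follows by the same ergodic dichotomy once one observes that $\gcal_{T,\epsilon}$ is by construction the $(-T,T)$-flowout of the tangential tube $(T^*_H M)_{\leq \epsilon}$, so excising it from $G((-T,T)\times S^*_H M)$ reduces (up to null sets) to the first statement.

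The only point demanding care, and the one I would describe as the main obstacle, is the verification of the invariance of $A_{\infty,\epsilon}$ together with the identification of a point in its complement; once these two items are in place, ergodicity collapses the problem in one step. No dimension count on the fold set $S^*H$ is needed and the $\epsilon \to 0$ limit is in fact unnecessary, which provides a useful sanity check that the $\epsilon$-refinement really is no harder than the original Lemma \ref{IMG}.
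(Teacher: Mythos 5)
Your argument is correct and follows essentially the same route as the paper: openness comes from the nonsingularity of $DG$ away from the tangential set (Lemma \ref{RFOLD}), and the volume statement from the ergodic dichotomy applied to a flow-invariant complement, exactly as in Lemma \ref{IMG} (the paper simply phrases this as the union of the images over all $T,\epsilon$ having full measure, so the complement is null). One small repair: the $t=0$ slice $S^*_H M \setminus (S^*H)_{\leq \epsilon}$ lies in a hypersurface of $S^*M$, hence is neither open in $S^*M$ nor of positive Liouville measure, so to conclude that the complement of $A_{\infty,\epsilon}$ has positive measure you should instead point to the nonempty open flowout $B_{T,\epsilon}$ you already constructed; with that substitution your remark that the limit vanishes for each fixed small $\epsilon$ (making the $\epsilon \to 0$ limit superfluous) is valid.
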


\begin{proof} In each case the image in question is the image of
an open set, hence open. For the volume estimates, we note that as in Lemma \ref{IMG},  by $\mu_L$-ergodicity of $G^t:S^*M \rightarrow S^*M,$
the union $\bigcup_{T, \epsilon > 0} G ( (-T,T) \times (S_H^*M/S^*H)_{\leq \epsilon} )$ of the image has full
measure, hence the measure of the complement is zero. A similar argument applies to the second set in Lemma \ref{PREV}.

\end{proof}

\section{\label{COMPOSITIONS} Compositions of  canonical relations}

In order to study the Fourier integral properties of $ V_{\epsilon}(t; a)$ and
$\VT(a)$, we need to understand the compositions of the canonical
relations underlying various operators. In essence we prove here that the
canonical relation $\Gamma_W^* \Gamma_W$ (cf. \eqref{GAMMAW})
is a  local canonical graphs,  determine the graph and relate it to the first return
times and maps. We choose to work with operator kernels on $M \times M$ rather
than with $W$ itself.

We refine \eqref{SIGMA} to

\begin{equation}\label{SIGMATEPDEF}  \Sigma_{T, \epsilon} = \bigcup_{|t| \leq T}
 ( G^t\times G^t ) \Delta_{(T^*_H M)_{\leq \epsilon} \times (T^*_H
M)_{\leq \epsilon}},
\end{equation}
and put
\begin{equation} \label{GAMMATEPDEF} \Gamma_{T, \epsilon} =
\Gamma_T \backslash \Sigma_{T, \epsilon}. \end{equation} In this section we prove that \eqref{WFVT},
cutoff away from the singular set, is a good canonical relation:

\begin{prop}\label{CANON}  For any $\epsilon > 0$,  $\Delta_{T^* M \times T^* M} \cup\Gamma_{T, \epsilon} \subset T^* M \times T^* M$  is
smoothly immersed homogeneous canonical relation. \end{prop}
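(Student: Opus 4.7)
The plan is to parameterize $\Gamma_{T,\epsilon}$ by the single smooth map
\begin{equation*}
\psi \colon (-T,T) \times T^*_H M \longrightarrow T^*M \times T^*M, \qquad \psi(t,p) = \bigl(G^t(p),\; G^t(r_H p)\bigr),
\end{equation*}
verify the three defining properties of a smoothly immersed homogeneous canonical relation (immersion, isotropy for $\omega_1 - \omega_2$, and $\R_+$-invariance), and then observe that the diagonal $\Delta_{T^*M \times T^*M}$ is a standard conic Lagrangian disjoint from $\Gamma_{T,\epsilon}$. An initial inspection gives $\psi^{-1}(\Gamma_{T,\epsilon}) = (-T,T) \times (T^*_H M \setminus T^*H)$: if $p \in T^*H$ then $r_H p = p$ forces $\psi(t,p) \in \Delta$, and the inclusion $T^*H \subset (T^*_H M)_{\leq \epsilon}$ places $\psi(t,p)$ in $\Sigma_{T,\epsilon}$; if instead $p \notin T^*H$ then $\psi(t,p) \notin \Delta \supset \Sigma_{T,\epsilon}$.

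For the immersion property at $(t,p)$ with $p \notin T^*H$, the differential
\begin{equation*}
D\psi(\partial_t) = \bigl(H_g(G^t p),\, H_g(G^t r_H p)\bigr), \qquad D\psi(X) = \bigl(DG^t X,\; DG^t\, Dr_H X\bigr), \quad X \in T_p T^*_H M,
\end{equation*}
yields, via $DG^t$-invariance of $H_g$, that any kernel element $v\partial_t + X$ must satisfy $X = -v\, H_g(p)$; the constraint $X \in T_p T^*_H M$ then forces $H_g(p) \in T_p T^*_H M$, which by Lemma \ref{FAILTRANS} occurs only for $p \in T^*H$. As $T^*H$ is excluded, $\psi$ is an immersion on $\psi^{-1}(\Gamma_{T,\epsilon})$.

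For isotropy, let $\alpha$ denote the canonical one-form on $T^*M$ and $\alpha_H = \sigma\cdot ds$ its restriction to $T^*_H M$ in Fermi coordinates. Degree-one homogeneity of $|\xi|_g$ implies
\begin{equation*}
\mathcal{L}_{H_g}\alpha = d(\iota_{H_g}\alpha) + \iota_{H_g}\, d\alpha = d|\xi|_g - d|\xi|_g = 0,
\end{equation*}
so $(G^t)^*\alpha = \alpha$, while $r_H$ acts on $T^*_H M$ in Fermi coordinates by $\eta_n \mapsto -\eta_n$ and therefore preserves $\alpha_H$. A direct computation using both invariances gives, with $\pi_2$ the second projection of the domain,
\begin{equation*}
\psi_1^*\alpha \;=\; \pi_2^*\alpha_H + |\xi|_g\, dt \;=\; \psi_2^*\alpha,
\end{equation*}
whence $\psi^*(\omega_1 - \omega_2) = 0$. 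Since $\dim \psi^{-1}(\Gamma_{T,\epsilon}) = 1 + (2n-1) = 2n = \frac{1}{2}\dim(T^*M \times T^*M)$, the isotropic immersion is Lagrangian. Conicity follows from $\psi(t, x, r\xi) = r\,\psi(t, x, \xi)$, which holds by homogeneity of $G^t$ and fiber-linearity of $r_H$.

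The diagonal is trivially a smooth conic Lagrangian, and we have already observed $\Gamma_{T,\epsilon} \cap \Delta = \emptyset$; hence $\Delta \cup \Gamma_{T,\epsilon}$ is a disjoint union of two smoothly immersed homogeneous canonical relations, completing the proof. The main subtlety is the isotropy calculation, which rests on the twin invariance identities $(G^t)^*\alpha = \alpha$ (from the homogeneity of the Hamiltonian $|\xi|_g$) and $r_H^*\alpha_H = \alpha_H$, the latter expressing that $r_H$ is a symplectomorphism of $(T^*_H M, d\alpha_H)$; cf.\ \S\ref{poincare cartan}. These two invariances cancel the $|\xi|_g\, dt$ contributions in $\psi_1^*\alpha - \psi_2^*\alpha$ and reduce the problem to the fiberwise identity $r_H^*\alpha_H = \alpha_H$, which is immediate in Fermi coordinates.
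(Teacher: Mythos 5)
Your proof is correct, and it takes a genuinely different route from the paper's. You parameterize the reflection branch directly by $\psi(t,p)=(G^t(p),G^t(r_Hp))$ on $(-T,T)\times(T^*_HM\setminus T^*H)$ and verify the canonical-relation property by hand: the immersion claim is in substance the paper's Lemma \ref{RFOLD} (resting on Lemma \ref{FAILTRANS}), the isotropy follows from the two invariances $(G^t)^*\alpha=\alpha$ and $r_H^*\alpha_H=\alpha_H$ via the identity $\psi_1^*\alpha=\pi_2^*\alpha_H+|\xi|_g\,dt=\psi_2^*\alpha$ (a Poincar\'e--Cartan-type computation in the spirit of \S\ref{poincare cartan}), and a dimension count plus homogeneity finishes the reflection branch, with the diagonal treated as a disjoint standard conic Lagrangian. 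The paper instead obtains $\Delta_{T^*M\times T^*M}\cup\Gamma_T$ functorially as the pushforward $\pi_{t*}\Delta_t^*\,\Gamma^*\circ C_H\circ\Gamma$: Lemma \ref{CSUBH} shows $C_H$ is an embedded Lagrangian, Lemmas \ref{GammaCHGammaLEM} and \ref{DELTATRANS} check transversality of the composition with the wave-group relation and of the pullback under $\Delta_t$, and Lemma \ref{PUSHPULL} checks transversality to the cohorizontal space ($d\tau\neq 0$ away from the flowout of $\Delta_{T^*H\times T^*H}$) together with the Lagrange-immersion property of $\iota(t,s,\xi,\xi')=(G^t(s,\xi),G^t(s,\xi'))$. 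Your argument is shorter and self-contained for the stated geometric fact; what the paper's staging buys is exactly what is used afterwards, namely the transversality of each operator composition (needed to conclude that $\VT(a)$ is a Fourier integral operator of the right order and to push forward half-density symbols in Proposition \ref{VTDECOMPa}) and the description of the self-intersections of $\iota$ as simultaneous return times, which feeds into the decomposition of $\Gamma_{T,\epsilon}$ into the graphs of $\rcal_j$ in Proposition \ref{graph}. Two minor points you should add: work in $(T^*M\setminus 0)\times(T^*M\setminus 0)$, i.e.\ delete the zero section from $T^*_HM$ (both $H_g$ and the homogeneous flow, hence $\psi$, are only smooth there); and note that $\psi$ need not be injective --- its self-intersections are precisely the return-time coincidences --- which is harmless for ``immersed'' but is the point where the paper's Lemma \ref{PUSHPULL} says more than your parametrization records. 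Also be aware that your identification $\psi^{-1}(\Gamma_{T,\epsilon})=(-T,T)\times(T^*_HM\setminus T^*H)$ uses the literal definition \eqref{GAMMATEPDEF}; under the intended reading of the $\epsilon$-cutoff (removing a conic neighborhood of the tangential flowout from both branches) $\Gamma_{T,\epsilon}$ is an open subset of the image of $\psi$, and your argument covers that case equally well.
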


The self-intersection locus is described in Lemma \ref{PUSHPULL}.
In the next section \S \ref{RCALSECTION} we show that it is a local canonical graph and
determine the branches.

We recall that a canonical relation is a Lagrangian submanifold
with respect to the difference symplectic form $\pi_1^*
\omega_{T^*M } - \pi_2^* \omega_{T^*M}$ where $\omega_{T^*M}$ is
the canonical symplectic form and $\pi_{k}: T^*M \times T^*M \rightarrow T^*M; k=1,2$ are the projecttions onto the two component $T^*M.$ We prove the Proposition as a
series of Lemmas. The final Lemma \ref{PUSHPULL} is more precise
and describes the singularities at $\epsilon = 0$.

\subsection{\label{CHSECTION} The canonical relation $C_H$}

We define \begin{equation} \label{CH} \left\{ \begin{array}{l} C_H
= \{(s, \xi, s, \xi') \in T^*_H M \times T^*_H M: s \in H, \,
\xi|_{TH} = \xi'|_{TH} \}, \\ \\
 \hat{C}_H
= \{(s, \xi, s, \xi') \in T^*_H M \times T^*_H M: s \in H, \,
\xi|_{TH} = \xi'|_{TH}, \;\;\; |\xi| = |\xi'| \}, \\ \\
S \hat{C}_H = \{(s, \xi, s, \xi') \in T^*_H M \times T^*_H M: s
\in H, \, \xi|_{TH} = \xi'|_{TH}, \;\;\; |\xi| = |\xi'| = 1 \}
\end{array} \right.
\end{equation} As above,  $S F$ denotes  the unit vectors
in any set $F$. Thus, $\hat{C}_H = \R_+ S \hat{C}_H$.
 As will be
seen below, $C_H$ is the canonical relation of $\gamma_H^* Op_H(a)
\gamma_H$, and $\hat{C}_H$ arises in the canonical relation of
$\VT(a)$.

 We  recall that
the fiber product of two fiber bundles $\pi: X \to Z$ and $\rho: Y
\to Z$ is the submanifold $X \times_Z Y \subset X \times Y$ equal
to $(\{(x, y): \pi(x) = \rho(y)\}. $ We apply the same terminology
with $X = Y = S^*_H M$, $Z = B^*H$ and   $\pi, \rho = \gamma_H$,
but as just observed, the restriction map is not a fiber bundle
projection but a folding map.

\begin{lem}\label{CSUBH} We have:

\begin{itemize}

\item  $C_H \simeq T^*_H M \times_{T^*H} T^*_H M$ is the fiber
 square of $T^*_H M$  with respect to the
 restriction map $\gamma_H: T^*_H M \to T^* H$. It is an embedded
Lagrangian submanifold  of $T^* M \times T^* M$.

\item $\hat{C}_H := \R S\hat{C}_H \simeq T^*_H M \times_{S^*H}
T^*_H M$ is an immersed homogeneous isotropic  submanifold of
dimension $2n - 1$ with transveral crossings on the
self-intersection locus $\R_+ \Delta_{S^*H \times S^* H} =
\Delta_{T^* H \times T^* H}$. Also, $\hat{C}_H \cap (T^* H \times
T^* H) = \Delta_{T^* H \cap T^* H}. $

\item $S\hat{C}_H \simeq S^*_H M \times_{S^*H} S^*_H M$ is the
`fiber  square' of $S^*_H M$  with respect to the (folding)
restriction map $\gamma_H: S^*_H M \to S^* H$. It is an immersed
isotropic  submanifold of dimension $2n - 2$ with transveral
crossings on the self-intersection locus $\Delta_{S^*H \times S^*
H}$.

\end{itemize}
\end{lem}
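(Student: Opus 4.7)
\medskip
\noindent\emph{Proof plan.} The plan is to work throughout in Fermi normal coordinates $(s, y_n, \sigma, \eta_n)$ along $H$. In these coordinates $T^*_H M = \{y_n = 0\}$, the restriction map reads $\gamma_H(s, 0, \sigma, \eta_n) = (s, \sigma)$, and the symplectic form pulls back to $\omega|_{T^*_H M} = ds \wedge d\sigma$, since $dy_n$ annihilates $T(T^*_H M)$. Moreover $y_n$ is the signed Riemannian distance to $H$, so Gauss's lemma gives the orthogonal decomposition of the co-metric on $T^*_H M$ as $|\xi|^2 = |\sigma|^2 + \eta_n^2$ with no cross terms. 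These are the only ingredients needed.

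For the first bullet, parametrize $C_H$ by
\begin{equation*}
\Psi\colon (s,\sigma,\eta_n,\eta_n')\;\longmapsto\;\bigl((s,0,\sigma,\eta_n),\,(s,0,\sigma,\eta_n')\bigr).
\end{equation*}
This is a smooth injective immersion of dimension $2n$ with image $C_H$, realizing $C_H$ as an embedded submanifold and identifying it with the fiber product $T^*_H M \times_{T^*H} T^*_H M$. Pulling back $\pi_1^*\omega - \pi_2^*\omega$ through $\Psi$ yields $(ds\wedge d\sigma) - (ds\wedge d\sigma) = 0$, so $C_H$ is isotropic; since $\dim C_H = 2n = \tfrac{1}{2}\dim(T^*M \times T^*M)$, it is Lagrangian.

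For the second bullet, the extra constraint $|\xi|=|\xi'|$ becomes $\eta_n^2 = (\eta_n')^2$ on $C_H$, i.e.\ $(\eta_n - \eta_n')(\eta_n + \eta_n') = 0$. Hence
\begin{equation*}
\hat C_H \;=\; \Delta_{T^*_H M}\;\cup\;\operatorname{graph}(r_H),
\end{equation*}
where $r_H$ is the reflection (\ref{rHDEF}). Each sheet is diffeomorphic to $T^*_H M$, hence smooth, embedded, $\R_+$-homogeneous, and of dimension $2n-1$; each is isotropic, since $\omega - \omega$ trivially vanishes on $\Delta_{T^*_H M}$ and $r_H^*(ds\wedge d\sigma) = ds \wedge d\sigma$ because $r_H$ only flips $\eta_n$, which does not appear in $\omega|_{T^*_H M}$. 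The two sheets coincide exactly on the fixed-point set of $r_H$, namely $\{\eta_n = 0\}\cong T^*H$, giving the self-intersection $\Delta_{T^*H}$. At any such point, the tangent spaces of the two sheets together span $(\partial_{s_i},\partial_{s_i})$, $(\partial_{\sigma_i},\partial_{\sigma_i})$, $(\partial_{\eta_n},\partial_{\eta_n})$ and $(\partial_{\eta_n},-\partial_{\eta_n})$, totaling $2n = (2n-1)+(2n-1)-(2n-2)$ dimensions, confirming transversal crossings. Under this description $\hat C_H$ is the image of the immersion from the disjoint union of the two sheets, which justifies the notation $T^*_H M \times_{S^*H} T^*_H M$.

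The third bullet then follows by intersecting $\hat C_H$ with $\{|\xi|=1\}$: each sheet is cut down by one dimension to $2n-2$ (the constraint $|\xi|=1$ automatically forces $|\xi'|=1$), the self-intersection becomes $\Delta_{S^*H}$, and both the transversality and the isotropic property are inherited from $\hat C_H$ by restriction. The main obstacle is verifying transversality along the singular locus and keeping straight which structures are embedded versus merely immersed; both reduce, after the Fermi-coordinate normalization, to the one-dimensional model of the two lines $\eta_n' = \pm \eta_n$ crossing transversally at the origin in the $(\eta_n, \eta_n')$-plane.
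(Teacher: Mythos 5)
Your proof is correct and follows essentially the same route as the paper's: Fermi coordinates along $H$, the identification of $\hat{C}_H$ (resp. $S\hat{C}_H$) as the union of the identity graph and the graph of $r_H$, and the diagonal/anti-diagonal tangent-space computation giving the transversal crossing along $\Delta_{T^* H \times T^* H}$ (resp. $\Delta_{S^*H \times S^*H}$). The only differences are cosmetic: you verify the isotropic/Lagrangian property explicitly by pulling back the difference form (which the paper leaves implicit) and you homogenize first and then restrict to unit covectors, whereas the paper analyzes the spherical model $S^{n-1}\times S^{n-1}\to \R^{n-1}$ first and then homogenizes.
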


\begin{proof}

The defining equations of $C_H \subset T^*_H M \times T^*_H M$ are
given by equating the map $(v,w) \to v |_{T H} - w |_{TH} \in T^*
H$ to zero. This map is a submersion.  Suppressing the $s \in H$ variable, it is just the map
$(\sigma, y_n, \sigma', y_n') \to \sigma - \sigma'$ with $\sigma,
\sigma' \in \R^{n-1}, y_n \in \R$. Thus,  the zero set is a
regular level set, hence an embedded submanifold of codimension $n
= \dim M$.

 We observe that $S\hat{C}_H$  is the union
$S\hat{C}_H = \mbox{gr}(Id) \cup \mbox{gr}(r_H)$ of the identity
and reflection maps. The graphs  intersect transversally along the
diagonal $\Delta_{S^* H \times S^* H} \subset S^*H \times S^* H$,
since the tangent space to the identity graph is the diagonal and
the tangent space to the reflection map is the  `anti-diagonal'
$(v, - v) \in T (S^* H \times S^* H)$. That is, the equation
$\pi_H(\zeta, \zeta') : = \gamma_H(\zeta) - \gamma_H(\zeta') = 0$
in $S^*_H M \times S^*_H M$ defines a submanifold of codimension
$n - 1$  on the dense open set where  $D_{\zeta} \gamma_H,
D_{\zeta'} \gamma_H$ spans $ T B^*H $. Suppressing the variable
along $H$, the singularities at each $x \in H$ are those of the
map $  \pi: S^{n-1} \times S^{n-1} \to \R^{n-1}, \pi(\sigma, y_n; \sigma',
y_n') = \sigma - \sigma', $ where $ (\sigma, y_n)  \in \R^{n-1} \times \R,   |\sigma|^2
+ y_n^2 = 1.$  Thus, $y_n =   \pm \sqrt{1-|\sigma|^2} $ and $\pi^{-1}(0) =  \{\sigma, y_n,
\sigma, y_n)\} \cup \{(\sigma, y_n, \sigma, - y_n)\}.$ Here, we fix $ s \in H$
and identify $T^*_s M \simeq \R^n, T^*_s H \simeq \R^{n-1}.$

 Since $\R_+ S
\hat{C}_H $ is the homogenization, we only need to homogenize the
results for $S \hat{C}_H$. In more detail, we again fix $x$ and
consider the map $\pi (r, s, y_n, s', y_n') = r (s - s') $ from
$\R_+ \times S^{n-1} \times S^{n-1} \to \R^{n-1}$. The zero set is
again defined by $s = s'$. The radial tangent direction is in the
kernel of $D \pi$ along $\pi^{-1}(0)$. Finally, we note that if
$(x, \xi, x, \xi') \in \hat{C}_H \cap (T^* H \times T^* H)$, then
$\xi = \xi'$.

\end{proof}

\subsection{\label{GCHG} The canonical relation $\Gamma^* \circ C_H \circ \Gamma$}
It is well known (see \cite{HoI-IV}, vol. IV)  that $U(t) \in
I^{-\frac{1}{4}}(\R \times M \times M, \Gamma), $ with $\Gamma =
\{(t, \sigma, x, \xi, G^t(x, \xi)): \sigma + |\xi| = 0\}. $ As in
\cite{DG}, the half density symbol of $U(t, x, y)$ is the
canonical volume half density $ \sigma_{U(t, x, y)} = |dt \otimes
dx \wedge d \xi |^{\frac{1}{2}}$ on $\Gamma$.

Here, \begin{equation} \label{GammaCHGamma}\begin{array}{lll}
 \Gamma^* \circ C_H \circ \Gamma  &= &   \{(t',-|\xi'| ,t, |\xi|, G^{t'}(s,
\xi'), G^{t}(s, \xi))  \\ && \\ && \in T^* \R \times T^* \R \times T^*M \times T^* M, \, , \, \xi|_{TH} = \xi'|_{TH}
 \}.
\end{array}\end{equation}

\begin{lem}\label{GammaCHGammaLEM}  The (set-theoretic) composition $\Gamma^* \circ C_H \circ \Gamma
$ is transversal, and  $\Gamma^* \circ C_H \circ \Gamma \subset T^*
\R \times T^* M \times T^* M $ is  the Lagrangian submanifold  parametrized by the embedding  $$\begin{array}{l} \iota_{\Gamma^* C_H \Gamma}:
\R \times \R \times T^*_H M \to T^* (\R \times \R \times M \times
M), \;\; \\ \\\iota_{\Gamma^* C_H \Gamma}(t', t, s, \xi, \xi') =
(t', - |\xi'|, t, |\xi|, G^{t'}(s, \xi), G^t(s, \xi') ), \,\,  \xi|_{TH} = \xi'|_{TH}.
\end{array}$$

\end{lem}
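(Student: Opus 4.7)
The strategy is to compose the three canonical relations in two transverse stages, using that $\Gamma$ is the twisted graph of the homogeneous geodesic flow and that $C_H$ is an embedded Lagrangian in $T^*M\times T^*M$ by Lemma~\ref{CSUBH}. First I would compute the set-theoretic composition from the defining equations: a point of $T^*(\R\times\R\times M\times M)$ lies in $\Gamma^*\circ C_H\circ\Gamma$ if and only if there exist intermediate covectors $(s,\xi),(s,\xi')\in T^*_HM$ with common base point $s\in H$ and $\xi|_{TH}=\xi'|_{TH}$ such that the $\Gamma$-factor flows an initial covector forward by $G^t$ to $(s,\xi')$ and the $\Gamma^*$-factor flows the terminal covector backward by $G^{-t'}$ to $(s,\xi)$. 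Solving for the endpoints yields precisely the claimed image formula
\begin{equation*}
\iota_{\Gamma^*C_H\Gamma}(t',t,s,\xi,\xi')=\bigl(t',-|\xi'|,t,|\xi|,G^{t'}(s,\xi),G^{t}(s,\xi')\bigr),
\end{equation*}
the sign flip on the second slot arising from the reversal in $\Gamma^*$.

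Second, I would verify transversality of the composition. The fiber product defining the triple composition is cut out from $\Gamma\times C_H\times\Gamma^*$ by matching the relevant $T^*M$-coordinates in two pairs of slots. Since $G^{\pm t}$ is a symplectomorphism of $T^*M\setminus 0$, each matching condition is equivalent to requiring that a graph of a diffeomorphism meet $C_H$ transversally in $T^*M\times T^*M$; concretely this reduces to independence of the two hypersurface conditions $\pi_MG^{t}(x,\xi)\in H$ and $\pi_MG^{t'}(y,\eta)\in H$, which involve disjoint sets of variables and therefore have linearly independent differentials. The tangential matching $\xi|_{TH}=\eta|_{TH}$ is already built into the smooth structure of $C_H$ via Lemma~\ref{CSUBH}. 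A dimension count
\begin{equation*}
\dim\Gamma+\dim C_H+\dim\Gamma^*-2\dim T^*M=(2n+1)+2n+(2n+1)-4n=2n+2
\end{equation*}
confirms that the composition is a Lagrangian of the expected dimension in the ambient $(4n+4)$-dimensional symplectic manifold $T^*(\R\times\R\times M\times M)$.

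Finally, $\iota_{\Gamma^*C_H\Gamma}$ is an embedding because it is injective with injective differential: one reads $t'$ and $t$ off the first and third slots, then recovers $(s,\xi)$ and $(s,\xi')$ by applying $G^{-t'}$ and $G^{-t}$ respectively to the last two components of the image, using invertibility of the geodesic flow, and verifies that the two base points agree. The main subtlety, which I expect to be the chief technical point, is the correct identification of the parametrizing space: strictly the domain of $\iota$ should be the fiber product $\R\times\R\times(T^*_HM\times_{T^*H}T^*_HM)\cong\R\times\R\times C_H$ of dimension $2n+2$, and the notation $\R\times\R\times T^*_HM$ in the statement is shorthand for this, with the constraint $\xi|_{TH}=\xi'|_{TH}$ carried along separately. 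Once this identification is made, the Lagrangian property of the image follows from the standard fact that transverse composition of Lagrangians is Lagrangian.
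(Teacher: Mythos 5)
Your overall strategy is right, and you correctly pinpoint the key structural fact—that $\Gamma$ and $\Gamma^*$ are (up to the trivial $T^*\R$ factor) graphs of the symplectomorphisms $G^{\pm t}$, and composing a Lagrangian with such a graph is automatically transversal. This is precisely the observation the paper isolates and then applies in two steps (first $C_H \circ \Gamma$, then $\Gamma^*$ on the left). Your dimension count, the reading-off-of-parameters argument for injectivity of $\iota$, and the remark that the true parameter space is $\R \times \R \times C_H$ rather than $\R\times\R\times T^*_H M$ are all fine.

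However, the sentence ``concretely this reduces to independence of the two hypersurface conditions $\pi_M G^t(x,\xi)\in H$ and $\pi_M G^{t'}(y,\eta)\in H$, which involve disjoint sets of variables and therefore have linearly independent differentials'' is not a correct account of the transversality. Transversality of $A\circ B$ is the condition that $A\times B$ meets $T^*X\times\Delta_{T^*Y}\times T^*Z$ transversally in the product symplectic manifold; it is not equivalent to the linear independence of the hypersurface conditions you name, nor do those two conditions actually involve disjoint variables once you account for the common base point $s$ and the tangential matching. The correct mechanism—which you gesture at but then replace—is linear-algebraic: if $\Gamma_\chi$ is the graph of a diffeomorphism, the tangent space $T\Gamma_\chi$ projects \emph{onto} each copy of $T(T^*M)$, so the middle two slots of $T(\Gamma_\chi\times\Lambda)$ together with the diagonal $T\Delta_{T^*M\times T^*M}$ span all of $T(T^*M)\times T(T^*M)$ regardless of any further condition on $\Lambda$. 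This is what the paper verifies explicitly, and it is automatic; there is no residual hypersurface-independence condition to check. As written, your argument would not survive scrutiny because the stated reduction is simply not true, even though the conclusion (transversality holds) is correct for the reason you initially identify.
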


\begin{proof} This follows from the following observation: if
$\chi: T^*M - 0 \to T^*M - 0$ is a homogeneous canonical
transformation and $\Gamma_{\chi} \subset T^*M \times T^*M$ is its
graph, and if $\Lambda \subset T^*M \times T^*M$ is any
homogeneous Lagrangian submanifold with no elements of the form
$(0, \lambda_2)$, the $\Gamma_{\chi} \circ \Lambda$ is a
transversal composition with composed relation
$\{(\chi(\lambda_1), \lambda_2): (\lambda_1, \lambda_2) \in
\Lambda\}. $ The condition that $\lambda_1 \not= 0$ is so that
$\chi(\lambda_1)$ is well-defined.

 We recall that transversality refers to the intersection
$$\Gamma_{\chi} \times \Lambda \cap T^* M \times \Delta_{T^*M \times T^* M}
\times T^* M. $$ Now, the tangent space at any intersection point
to $T^* M \times \Delta_{T^*M \times T^* M} \times T^* M$ contains
all vectors of the form $(v, 0, 0, 0)$ and $ (0, 0, 0, v') $ with $ v,v'
\in T(T^* M)$. Hence to prove transversality it suffices to fill
in the middle two components. The diagaonal $T \Delta_{T^*M \times
T^* M}$ contributes all tangent vectors of the form $(w,w)$. On
the other hand, the middle components of $\Gamma_{\chi} \times
\Lambda$ have the form $(w, \delta \lambda_1)$ where $w \in T(T^*
M)$ is arbitrary. The sum of such vectors with the diagonal
contains all vectors of the form $(w + v, \delta \lambda + v' )$ and
therefore clearly spans the middle $T(T^*M \times T^*M)$.

We apply this observation  in two steps. First, we compose
$$  C_H  \circ \Gamma = \{(s, \xi', G^t(s, \xi), t, -|\xi|): (s, \xi)
\in T^*_H M, \xi|_{TH} = \xi'|_{TH} \}  \subset T^* M \times T^*M \times T^* \R \backslash
0. $$ By the first part of Lemma \ref{CSUBH}, $C_H$ is a
Lagrangian submanifold, so the argument about graphs applies to
show that this composition is transversal (including the innocuous
$T^* \R$ factor.) We then apply the same argument to the left
composition with $\Gamma$. It is straightforward to determine the
composite as stated above.

\end{proof}

\subsection{\label{PBDELTA} The pullback   $\Gamma_H : = \Delta_t^*\;
 \Gamma^* \circ C_H \circ \Gamma$}

We now consider the pullback of $\Gamma^* \circ C_H \circ \Gamma$
under the time diagonal embedding $\Delta_t(t, x, y) = (t, t, x,
y): \R \times M \times M \to \R \times \R \times M \times M $. We
define \begin{equation} \label{GtCH} (G^t \times G^t) (C_H) =
\{(G^t(s, \xi), G^t(s, \xi')): (s, \xi, s, \xi') \in C_H \},
\end{equation}
and
 \begin{equation} \label{GAMMAH} \begin{array}{ll} \Gamma_H & :=
\{t,  |\xi| - |\xi'|, (G^t(s, \xi), G^t(s, \xi'))
 \in T^* \R \times   (G^t \times G^t)(C_H))\}.  \end{array}\end{equation}

\begin{lem}\label{DELTATRANS} The map $\Delta_t$ is transversal to $(\Gamma^*
\circ C_H \circ \Gamma)$, hence $$\Delta_t^* (\Gamma^* \circ C_H
\circ \Gamma) = \Gamma_H$$ is a smoothly embedded canonical
relation, under the Lagrange embedding
$$\begin{array}{l} \iota_{\Gamma_H}:
\R \times   T^*_H M \to T^* (\R \times M \times M), \;\; \\
\\\iota_{\Gamma_H}(t, s, \xi, \xi') =   (t, |\xi| - |\xi'|, G^t(s, \xi), G^t(s, \xi') ), \,\,  \xi|_{TH} = \xi'|_{TH}. \end{array}$$
\end{lem}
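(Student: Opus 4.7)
The plan is to verify clean transversality of the time-diagonal embedding $\Delta_{t}$ with the Lagrangian $\Gamma^{*}\circ C_{H}\circ \Gamma$ established in Lemma~\ref{GammaCHGammaLEM}, and then to read off the pullback directly from the parametrization given there.

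For the transversality step, recall that the pullback of a Lagrangian $\Lambda \subset T^{*}Y$ along a smooth map $f:X\to Y$ produces a smooth Lagrangian in $T^{*}X$ as soon as $f$ is transverse to the base projection of $\Lambda$; concretely, $T_{y}\pi_{Y}(\Lambda) + df(T_{x}X) = T_{y}Y$ at every $y \in f(X)\cap \pi_{Y}(\Lambda)$. Here $Y = \R\times\R\times M\times M$, $X = \R\times M\times M$, and by Lemma~\ref{GammaCHGammaLEM} the two time parameters $t',t$ in the parametrization of $\Lambda = \Gamma^{*}\circ C_{H}\circ \Gamma$ are independent. The image $d\Delta_{t}(T_{x}X)$ has codimension one in $T_{y}Y$, sitting along the anti-diagonal in the $T\R^{2}$ factor. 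Differentiating the parametrization with respect to $t'$ produces a tangent vector whose $T\R^{2}$-component is $(1,0)$, which lies outside the diagonal of $T\R^{2}$, so $T_{y}\pi_{Y}(\Lambda)+d\Delta_{t}(T_{x}X) = T_{y}Y$ and transversality holds.

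Given transversality, the pullback is computed by setting $t' = t$ in the parametrization and adding the cotangent components dual to $t'$ and $t$. Since $\Delta_{t}^{*}(t,\tau_{1},t,\tau_{2},x,\xi,y,\eta) = (t, \tau_{1}+\tau_{2}, x, \xi, y, \eta)$, applying this to the point $(t', -|\xi'|, t, |\xi|, G^{t'}(s,\xi), G^{t}(s,\xi'))$ of $\Lambda$ yields $(t, |\xi|-|\xi'|, G^{t}(s,\xi), G^{t}(s,\xi'))$ subject to $\xi|_{TH} = \xi'|_{TH}$, which is exactly $\iota_{\Gamma_{H}}(t,s,\xi,\xi')$ and the definition of $\Gamma_{H}$ in~\eqref{GAMMAH}.

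Finally, to see that $\iota_{\Gamma_{H}}$ is an embedding, an image point determines $t$ from its first coordinate and then determines $(s, \xi)$ and $(s, \xi')$ via $G^{-t}$ applied to the two $T^{*}M$-components; the conditions $s\in H$ and $\xi|_{TH}=\xi'|_{TH}$ are automatic on the image, and this inverse is smooth. Combined with the transversal pullback principle, this gives that $\Gamma_{H}$ is a smoothly embedded Lagrangian in $T^{*}(\R\times M\times M)$, i.e.\ a smoothly embedded canonical relation. The main, though mild, point is the transversality verification; the remaining steps follow directly from the explicit parametrization of $\Lambda$.
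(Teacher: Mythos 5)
Your proposal is correct and follows essentially the same route as the paper: both invoke the Guillemin--Sternberg pullback criterion (transversality of $\Delta_t$ with the base projection of $\Gamma^*\circ C_H\circ\Gamma$), read off the composed relation from the explicit parametrization of Lemma \ref{GammaCHGammaLEM}, and deduce the embedding from invertibility via $t$ and $G^{-t}$; your only added value is making explicit the transversality check that the paper merely asserts ``is clear from the explicit formula.'' One small wording slip: the image of $d\Delta_t$ contains the \emph{diagonal} of the $T\R^2$ factor (not the anti-diagonal), but your subsequent reasoning correctly uses that the $\partial_{t'}$-derivative contributes $(1,0)$ outside that diagonal, so the argument stands.
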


\begin{proof}   The explicit formula for the composition is simple
to verify.  We recall  that a map $f: X \to Y$ is said to be
transversal to $W \subset T^* Y$ if $df^* \eta \not= 0$ for any
$\eta \in W$. By (see \cite{GuSt}, Proposition 4.1), if   $f: X
\to Y$ is smooth and $\Lambda \subset T^* Y$ is Lagrangian, and if
$f:X \rightarrow Y$ and $\pi|_{\Lambda}: \Lambda \rightarrow Y$ are transverse then $f^* \Lambda \subset T^*X$ is
Lagrangian. It is clear from the explicit formula for the pullback
that transversality holds.

 Since $G^t \times G^t$ is a homogeneous
diffeomorphism, $G^t \times G^t(C_H)$ is a smooth embedded
manifold, and the map $\iota_{t, C_H} :  T^*_H M \times_{T^*H}
T^*_H M \to G^t \times G^t(C_H) \subset T^*M \times T^* M$ is a
smooth embedding.

\end{proof}

\subsection{\label{PF} The pushforward  $\pi_{t*} \Delta_t^* \Gamma^* \circ C_H \circ \Gamma$}

We now consider the map $\pi_t: \R \times M \times M \to M \times
M$ and push forward the canonical relation $ \Delta_t^* \; \Gamma^*
\circ C_H \circ \Gamma$. We recall that $\VT(a)$ is cutoff in time
(by  $\chi_T$) to $|t|\leq T$ and thus  \eqref{WFVT}, 
\begin{equation} \label{PushGtCH}  \begin{array}{lll} \Delta_{T^* M \times T^*M} \cup\Gamma_T &
=& \bigcup_{|t| \leq T} \{(G^t(s, \xi), G^t(s, \xi')): (s, \xi, s,
\xi') \in C_H, |\xi| = |\xi'| \} \\ && \\
& = &  \bigcup_{|t| \leq T} (G^t \times G^t) \hat{C}_H .
\end{array} \end{equation}
is the   proper pushforward
\begin{equation} \label{GAMMATDEF} \Gamma_T = \pi_{t*} \Delta_t^* \Gamma^* \circ C_H \circ \Gamma, \;\;\; \pi_t: [-T, T] \times M \times M \to M \times
M. \end{equation} Of course, the sharp cutoff to $[-T, T]$ puts a
boundary in $\Gamma_T$, but it causes no problems since all of our
operators are smooth in a neighborhood of the boundary and since
we use the smooth cutoff $\chi(\frac{t}{T})$ in the definition of
$\bar{V}_T$.

 We recall that the pushforward of $\Lambda \subset T^*
X$ under a map $f: X \to Y$ is defined by $f_* \Lambda =\{(y,
\eta): \exists x, \;\;y = f(x), (x, f^* \eta) \in \Lambda)\}. $ As
discussed in  (\cite{GuSt}, Proposition 4.2,page 149), if  $f: X
\to Y$ is a smooth map of constant rank and $H^*(X) $ is the
bundle of horizontal covectors, and if $\Lambda \cap H^*(X)$ is
transversal then $f^*(\Lambda)$ is a Lagrangian submanifold. Here,
$H^*(X) = f^* T^* Y$ is the set of covectors which annihilate the
tangent space to the fibers.

 In
our setting, $\pi_t^* T^*(M \times M)$ is the co-horizontal space
$H^* \subset T^*(\R \times M \times M)$ which is co-normal to the
fibers of $\pi_t$, i.e. its elements have the form $(t, 0, x, \xi,
y, \eta)$. Let $\tau: T^* \R \times T^* M \times T^* M \to \R$ be
the projection onto the second component of $T^*\R = \{(t, \tau)
\}.$ Thus,  \begin{equation} \label{INTER}  \Gamma_H \cap H^*(M \times M) = \Delta_t^*
\Gamma^* \circ C_H \circ \Gamma \cap H^*(M \times M) = \{z \in
\Delta_t^* \Gamma^* \circ C_H \circ \Gamma: \tau(z) = 0\},
\end{equation} and the pushforward relation is
Note that  $\bigcup_{|t| \leq T} G^t(T^*_x M)$ projects (for small
t) to $M$ to the ball of radius $t$ around $x$.

 By Lemma \ref{CSUBH}, (\ref{PushGtCH}) is the flow-out of an
immersed Lagrangian submanifold with transversal crossings on
$\R_+ \Delta_{S^*H \times S^*H}. $ Equivalently, the  pushforward
 relation is parameterized by the Lagrange mapping
\begin{equation} \label{IOTA} \iota: \R \times \hat{C}_H \to T^*M
\times T^* M:   (t, s, \xi, \xi') \mapsto (G^t(s, \xi), G^t(s,
\xi')).  \end{equation}

The following Lemma is the final step in the proof of Proposition
\ref{CANON}, and indeed is more precise than necessary for the
proof.

\begin{lem} \label{PUSHPULL}  We have,

\begin{itemize}

\item  $d \tau \not= 0$ on (\ref{INTER}) except on the  set of
points of $\R \times \Delta_{S^* H \times S^*H}$. Consequently,
(\ref{INTER}) is a smooth manifold except at these points and the
pushforward $$\pi_{t * } \Delta_t^* \left(\Gamma^* \circ C_H \circ
\Gamma \backslash T^* \R \times \R^+(\Delta_{S^* H \times S^*
H})\right)$$
  is an (immersed) Lagrangian submanifold.

\item   $\iota|_{ \R \times (\hat{C}_H \backslash \R \Delta_{S^* H
\times S^*H})}$ is a Lagrange immersion, with self-intersections
corresponding to `return times'.

\end{itemize}

 \end{lem}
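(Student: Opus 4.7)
The plan is to use the parametrization of $\Gamma_H$ from Lemma \ref{DELTATRANS} by $(t, s, \xi, \xi') \in \R \times C_H$, under which the co-time coordinate becomes $\tau(t, s, \xi, \xi') = |\xi| - |\xi'|$, so that $\Gamma_H \cap H^*(M \times M) = \{\tau = 0\}$ is identified with $\R \times \hat{C}_H$. For the first bullet one computes $d\tau$ on tangent vectors to $C_H$. A tangent vector at $(s, \xi, \xi')$ has the form $(\delta s, \delta\xi, \delta\xi')$ with $\delta\xi - \delta\xi' \in N^*H$; writing $\delta\xi' = \delta\xi - \nu$ with $\nu \in N^*H$,
\[
d\tau = \hat\xi \cdot \delta\xi - \hat\xi' \cdot \delta\xi' = (\hat\xi - \hat\xi') \cdot \delta\xi + \hat\xi' \cdot \nu,
\]
where $\hat\xi = \xi/|\xi|$. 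Forcing this to vanish on all admissible $(\delta s, \delta\xi, \nu)$ gives $\hat\xi = \hat\xi'$ (so $\xi = \xi'$ on $\hat C_H$) and $\hat\xi' \perp N^*H$, i.e.\ $\hat\xi' \in T^*H$; this is exactly the cone $\R\Delta_{S^*H \times S^*H}$. Off this locus the transversality hypothesis in Proposition 4.2 of \cite{GuSt} applies to $\pi_t: \R \times M \times M \to M \times M$, yielding the (immersed) Lagrangian pushforward claimed in the first bullet.

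For the second bullet I would show that $\iota(t, s, \xi, \xi') = (G^t(s, \xi), G^t(s, \xi'))$ is a Lagrange immersion on $\R \times (\hat C_H \setminus \R\Delta_{S^*H \times S^*H})$. The dimensions already match ($\dim(\R \times \hat C_H) = 2n$), so it is enough to verify isotropy plus the immersion property. Isotropy is a three-line invariance argument: $C_H$ is Lagrangian by Lemma \ref{CSUBH}, so $\pi_1^*\omega_{T^*M} - \pi_2^*\omega_{T^*M}$ vanishes on $T\hat C_H \subset TC_H$; the diagonal flow $G^t \times G^t$ preserves each $\pi_i^*\omega_{T^*M}$; and pairing the time direction against $v \in T\hat C_H$ gives
\[
(\omega_1 - \omega_2)(H_g \oplus H_g,\, \iota_* v) = -d|\xi|(v_1) + d|\xi'|(v_2) = -d(|\xi| - |\xi'|)(v) = 0,
\]
since $|\xi| - |\xi'| \equiv 0$ on $\hat C_H$.

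For immersivity, a kernel element of $d\iota$ at $(t,p)$, with $p = ((s,\xi),(s,\xi'))$, has the form $a\partial_t + v$; pulling back by $G^{-t} \times G^{-t}$ (a diffeomorphism) reduces to $t=0$, giving the condition $v = -a(H_g \oplus H_g)|_p \in T_p\hat C_H$. If $a=0$ then $v=0$ because the inclusion $\hat C_H \hookrightarrow T^*M \times T^*M$ is already injective on tangent spaces. If $a \neq 0$, the tangency of $H_g \oplus H_g$ to $\hat C_H$ forces the infinitesimal curves $\exp_s(\epsilon\xi^\sharp)$ and $\exp_s(\epsilon(\xi')^\sharp)$ to lie in $H$ with a common base point to first order, yielding $\xi = \xi' \in T^*H$ and placing $p$ in the excluded singular set $\R\Delta_{S^*H\times S^*H}$. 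The residual self-intersections of $\iota$ on its immersion locus correspond to distinct pairs $(t_1, p_1) \neq (t_2, p_2)$ with $(G^{t_i}\times G^{t_i})(p_i)$ coinciding; these are precisely higher return times of the geodesic flow to $T^*_HM$, encoded by the maps $\rcal_j$ of \eqref{RCAL}. The one delicate step is the computation in the first paragraph, where one must correctly parametrize tangent vectors to $C_H$ using the constraint $\delta\xi - \delta\xi' \in N^*H$; everything else is formal bookkeeping with the invariance of $\omega$ and $H_g$ under $G^t$.
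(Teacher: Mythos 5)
Your proof is correct and takes essentially the same route as the paper's: your coordinate-free computation of $d\tau$ on $T\hat{C}_H$ (via the constraint $\delta\xi - \delta\xi' \in N^*H$) agrees with the paper's explicit Fermi-coordinate calculation $\tau = \sqrt{\sigma^2+\eta_n^2} - \sqrt{\sigma^2+(\eta_n')^2}$, which yields $\tau = d\tau = 0 \iff \eta_n = \eta_n' = 0$, and your immersion argument reduces to the same tangency criterion (Lemma \ref{FAILTRANS}: $H_g$ tangent to $S^*_H M$ only along $S^*H$) that the paper invokes. The explicit three-step isotropy check you give via $G^t \times G^t$-invariance is a small addition the paper elides (it relies on the pushforward of a Lagrangian being Lagrangian), and you should note that $d\tau$ also has a $\delta s$-component, which cancels only because you are already restricted to the locus $|\xi| = |\xi'|$; otherwise the two proofs match in substance.
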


 \begin{proof}

As noted above,  if $\Gamma_H =  \Delta_t^*\; \Gamma^* \circ C_H
\circ \Gamma$ intersects $0_{\R} \times T^*M \times T^*M$
transversally, then $\pi_{t*} \Delta_t^* \Gamma^* \circ C_H \circ
\Gamma$ is Lagrangian. Since $H^*(M \times M)$ is of co-dimension
one, $\Delta^* \; \Gamma^* \circ C_H \circ \Gamma$ fails to be
transverse at an intersection point only if its tangent space is
contained in $T(H^*(M \times M))$. Thus, it fails to be transverse
only at points where $d \tau = \tau = 0$. Since $ \tau(t,s,\xi,\xi') = |\xi| - |\xi'| =
\sqrt{\sigma^2 + \eta_n^2} - \sqrt{\sigma^2 + (\eta_n')^2}, $ we
see that $\tau = 0$ if and only if $\eta_n = \pm \eta_n'$ and $d
\tau = 0$ on this set  if and only if $\eta_n = \eta_n' = 0$. This
proves that the intersection (\ref{INTER}) is transversal except
on the set $0_{\R} \times \Delta_{T^* H \times T^*H}$ and that it
fails to be transversal there. Consequently, the pushforward is a
smoothly immersed Lagrangian submanifold away from this singular
set.

We now consider $\iota$ and first  restrict it  to $\R \times
(\hat{C}_H \backslash \Delta_{T^* H \times T^*H})$ since
$\hat{C}_H$ does not have a well-defined tangent plane on the
critical locus.
 The map $\iota$ is then an immersion as long as $(G^t \times G^t)
(\hat{C}_H)$ is transverse to the orbits of $G^t \times G^t$. As
noted in \S \ref{HYPERSURFACES}, $S^* H$ is the set of points of
$S^*_H M$ where the Hamilton vector field $H_g$ of $g = |\xi|$ is
tangent to $S^*_H M$. Hence, $\iota|_{ \R \times (\hat{C}_H
\backslash \R \Delta_{S^* H \times S^*H})}$ is a Lagrange
immersion.  It follows that  $\pi_{t*} \Delta_t^* \Gamma^* \circ C_H
\circ \Gamma$ is an immersed canonical relation away  from the set
$ \R_+ \bigcup_{|t| \leq T} (G^t \times G^t) (S^* H \times S^* H)$.

We next consider self-intersection set of this immersion. The
fiber of $\iota$ over a point in the image,
\begin{equation} \label{FIBER} \iota^{-1}(x_0, \xi_0, y_0, \eta_0) =   \{(t,  s, \xi,
\xi') \in \R \times \hat{C}_H:
  (G^{-t}(s, \xi), G^{-t}(s, \xi')) = (x_0, \xi_0, y_0, \eta_0) \},
  \end{equation}
corresponds to simultaneous hitting times of $(x_0, \xi_0)$ and
$(y_0, \eta_0)$ on $T^*_H M$. Thus, the  self-intersection locus
of $\Gamma_{T, \epsilon}$  consists of the image of pairs   $ (t, s,
\xi, \xi'), (t', s', \eta, \eta')$ such that
$$G^t(s, \xi) = G^{t'}(s', \eta), \;\;\;G^t(s, \xi') = G^{t'}(s',
\eta') \iff G^{t - t'}(s, \xi), G^{t - t'}(s, \xi') \in T^*_H M.
$$
If $\xi = \xi'$ then $(s, \eta) = (s', \eta')$ and the
self-intersection points correspond to the return times and
positions of $(s, \xi)$ to $T^*_H M$. If $\xi' = r_H \xi$, then
the self-intersection points correspond to the times where the
left and right times are the same.  Away from $T^* H \times T^* H$
the set of return times is discrete.

This concludes the proof of the Lemma and hence of Proposition
\ref{CANON}.

\end{proof}

\section{\label{RCALSECTION} Return times and reflection maps $\rcal_j$}

In Proposition \ref{CANON},  $\Gamma_{T,\epsilon}$  is shown to be  a
canonical relation. In this section, we study the diagram
\begin{equation}\label{DIAGRAM} \begin{array}{ccccc} & & \Gamma_T \subset T^*M \times T^* M
 & &  \\ & & & & \\
& \pi \swarrow & & \searrow \rho & \\ & & & & \\
T^*M  & & & & T^*M .
\end{array}
\end{equation}
Our aim is to show that the map $\rho \pi^{-1}$  defines a
finitely multi-valued  symplectic correpondence.
 Underlying the projections is the map
\begin{equation} \label{G} G : \R  \times T^*_H M
\to T^*M - 0, \;\;\;   G (t, s, \xi) = G^t(s, \xi),
\end{equation} which was introduced in Lemma \ref{RFOLD}. We often restrict $G$ to $[- T, T] \times T^*_H M$
and then denote it by $G_T$. In Lemma \ref{RFOLD}, we determined
the  singular set of $G$.

We note that $D G  \frac{\partial}{\partial t} = H_g$ and that
$  D_{s, \xi}   G^t = D G^t |_{T^*_H M}$. Hence $D G $ is injective
(and surjective) as long as $H_g$ is linearly independent from $D
G^t |_{T^*_H M}$. As discussed in \S \ref{HYPERSURFACES}, $H_g = D
G^t X$ with $X \in T (T^*_H M)$  if and only if $X \in T(T^* H)$.
We now restrict the time domain to $(- T, T)$ (it is immaterial
whether we use the closed or open interval).

\subsection{Definition of the maps $\rcal_j$}

We now  define the correspondences (\ref{RCAL}) and the associated
return times. We consider the subset  $\pi_{t*} |_{[-T, T]} \;
\Delta_t^* \Gamma^* \circ C_H \circ \Gamma $ of $\pi_{t*} \;
\Delta_t^* \Gamma^* \circ C_H \circ \Gamma $ where we restrict the
time interval to $[-T, T]$.  We define $\hat{\Gamma}_{T \epsilon}
$ to consist of  $\Gamma_{T, \epsilon}$ together with a subset of
the diagonal $\Delta_{T^* M \times T^* M}$.

\begin{prop}
\label{graph}    The canonical relation $\hat{\Gamma}_{T \epsilon}
$ is the disjoint union of

\begin{itemize}

\item The diagonal graph over the image $(S^* M \backslash \left(G
(- T, T) \times (S^*_H M)
 \right)\;\; \backslash \gcal_{T, \epsilon}$;

\item  $\Gamma_{T,\epsilon} \subset T^*M \times T^*M$, which  is a
finite union of (transversally intersecting)  canonical graphs,
$$ \Gamma_{T,\epsilon} = \bigcup_{j=1}^{N_{T,\epsilon}} \{ ( x,\xi; \rcal_j(x,\xi) ): (x, \xi) \in \dcal^{(j)}_{T,\epsilon}\}.$$

\item The graph $\{ ( x,\xi; \rcal_j(x,\xi) )\}$ intersects the
graph $  \{ ( x,\xi; \rcal_k(x,\xi) )\} $ when \eqref{RJERKintro} holds.

\end{itemize}

\end{prop}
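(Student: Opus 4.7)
The plan is to parametrize elements of $\Gamma_T$ by the impact-time data introduced in Section \ref{IMPACTRETURN}, and then to organize this parametrization into finitely many smooth canonical graphs once tangential directions are removed. By \eqref{relationi}, a pair $(x,\xi;x',\xi') \in \Gamma_T$ has the form $(G^t(s,\eta),\, G^t r_H(s,\eta))$ for some $(s,\eta) \in T^*_H M$ and some $|t|<T$. Given $(x,\xi)$, the admissible values of $t$ are exactly those for which $G^{-t}(x,\xi) \in T^*_H M$, i.e. the (signed) impact times $\{t_j(x,\xi)\}$ of the orbit through $(x,\xi)$ with $H$. Labeling these as in Definition \ref{IMPACTTIMES}, the image of $(x,\xi)$ in the second factor is exactly
$$\rcal_j(x,\xi) = G^{t_j(x,\xi)} r_H G^{-t_j(x,\xi)}(x,\xi),$$
so the graph-of-$\rcal_j$ description is forced by the parametrization.

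Next I would establish smoothness and the canonical-graph property on each domain $\dcal^{(j)}_{T,\epsilon}$. Away from the tangential flowout $\gcal_{T,\epsilon}$, Lemma \ref{REGU} gives $\partial_t F \neq 0$, so the implicit function theorem applied to $F(t,x,\xi) = y_n(G^{-t}(x,\xi))$ furnishes a smooth local solution $t_j(x,\xi)$ on each connected component of the non-tangential domain, and $\rcal_j$ is correspondingly smooth on $\dcal^{(j)}_{T,\epsilon}$. To see that the graph of $\rcal_j$ is a canonical graph, I observe that each such branch is a smooth open piece of the immersed Lagrangian $\Gamma_{T,\epsilon}$ already produced in Proposition \ref{CANON}; since $t_j$ is smooth, the first projection is a local diffeomorphism, and any Lagrangian submanifold of $T^*M \times T^*M$ (with the difference symplectic form) whose first projection is a local diffeomorphism is automatically the graph of a homogeneous canonical transformation.

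Finiteness $N_{T,\epsilon} < \infty$ follows from the non-tangential cutoff: on $(\gcal_{T,\epsilon})^c$ every impact with $H$ is uniformly transverse (by Lemma \ref{FAILTRANS} together with the $\epsilon$-cutoff), so the spacing of successive impact times is bounded below locally uniformly, and only finitely many impacts can occur in $|t|<T$. The first bullet then follows by inspection: for $(x,\xi)$ lying outside $G((-T,T)\times T^*_H M)$ and outside $\gcal_{T,\epsilon}$, the orbit never meets $H$ in the admissible window, so no non-diagonal branch of $\Gamma_T$ passes through $(x,\xi)$, and the only contribution to $\hat{\Gamma}_{T\epsilon}$ over this set is the diagonal graph coming from case $(i)$ of \eqref{possibilities}.

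The third bullet is a direct algebraic consequence of the group law: applying $G^{-t_j}$ on both sides, $\rcal_j(x,\xi)=\rcal_k(x,\xi)$ becomes $r_H G^{t_j-t_k} = G^{t_j-t_k} r_H$ evaluated at $G^{-t_j}(x,\xi) \in T^*_H M$, which is exactly \eqref{RJERKintro}. The main obstacle I anticipate is the bookkeeping step: verifying simultaneously that the union of the graphs $\{(x,\xi;\rcal_j(x,\xi))\}$ really exhausts $\Gamma_{T,\epsilon}$ and that no branches are double-counted. This requires combining the implicit function theorem (giving smoothness of each $t_j$ via $\partial_t F\neq 0$ on $(\gcal_{T,\epsilon})^c$) with the Lagrange parametrization of $\Gamma_T$ by $(t,s,\eta)\in(-T,T)\times T^*_H M$ provided by the immersion $\iota$ of \S \ref{PUSHPULL}, so that every point of $\Gamma_{T,\epsilon}$ corresponds to exactly one pair $(j,(x,\xi))$ with $(x,\xi)\in\dcal^{(j)}_{T,\epsilon}$.
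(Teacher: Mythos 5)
Your proposal is correct and follows essentially the same route as the paper: parametrize $\Gamma_{T,\epsilon}$ by the flowout of the reflection branch of $\hat{C}_H$, use the removal of tangential directions to obtain smooth impact-time branches $t_j$ on the domains $\dcal^{(j)}_{T,\epsilon}$, identify each branch with the graph of $\rcal_j = G^{t_j} r_H G^{-t_j}$, and reduce the graph intersections to \eqref{RJERKintro}. The bookkeeping step you flag at the end is exactly what the paper resolves by noting that $G$ restricted to $[-T,T]\times (S^*_H M)_{\geq \epsilon}$ is a proper submersion, hence a finite covering map, with the $\dcal^{(j)}_{T,\epsilon}$ serving as fundamental domains, which packages your IFT smoothness, the finiteness of $N_{T,\epsilon}$, exhaustion of $\Gamma_{T,\epsilon}$ and non-double-counting in a single statement.
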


\begin{proof} We consider  the   projections $\pi, \rho$ in
the diagram (\ref{DIAGRAM}) restricted to $\pi_{t*} \Delta_t^*
\Gamma \circ C_H \circ \Gamma $, and   use the description of the
latter in  (\ref{PushGtCH}).  We thus define
$$  \pi(G^t(s, \xi), G^t(s, \xi')) = G^t(s, \xi), \;\;\;\rho(G^t(s, \xi), G^t(s,
\xi')), \;\; ((s, \xi, \xi') \in \hat{C}_H).  $$ The compositions
$\pi \circ \iota, \rho \circ \iota$ are just the map $G$ studied
in Lemma \ref{RFOLD}.  As shown there, each of these maps has a
bijective differential on $\R \times (T^*_H \backslash T^* H)$.
Since  the flowout of $\R \; \Delta_{S^* H \times S^*H}$ is
removed from $\Gamma_{T, \epsilon}$, and since  $\hat{C}_H \cap
T^* H \times T^* H = \Delta_{T^* H \times T^* H}$, there are no
$  (t, s, \xi, \xi')$ with $\xi$ or $\xi'$ in $T_s^* H $ in the part
of the domain of $\iota$ parameterizing $\Gamma_{T, \epsilon}$.

The new aspect is that we are considering $\pi, \rho$ directly as
maps on $\pi_{t*} \Delta_t^* \Gamma^* \circ C_H \circ \Gamma $,
which is an immersed rather than embedded relation.

On $[-T, T] \times (S^*_H M)_{\geq \epsilon}$, $G$ is a proper
submersion and hence a finite covering map. The  domains
$\{\dcal^{(j)}_{T, \epsilon}\}_{j = 1}^{N_{T, \epsilon}} $ defined
in Definition \ref{IMPACTTIMES} are fundamental domains for $G$,
and we have
\begin{equation} \label{DDCALJDEF} \begin{array}{l} , \\ \\
\dcal^{(1)}_{T, \epsilon} = \bigcup_{(x, \xi) \in (S^*_H M)_{\geq
\epsilon} } \; (0,  T^{(1)}(x, \xi)) \times \{(x, \xi)\},\\ \\ \cdots \\ \\
\dcal^{(j)}_{T, \epsilon} = \bigcup_{(x, \xi) \in (S^*_H M)_{\geq
\epsilon} } \; ( T^{(j-1)}(x, \xi)), T^{(j)}(x, \xi) ) \times
\{(x, \xi)\}
\end{array}  \end{equation} Thus,  $\dcal^{(j)}_{T, \epsilon}
\subset [-T, T] \times (S^*_H M)_{\geq \epsilon}$ are disjoint
open subsets whose union is $ [-T, T] \times (S^*_H M)_{\geq
\epsilon}$, such that $G$ is a diffeomorphism of $\dcal^{(j)}_{T,
\epsilon}$ to its image. The closures of the $\dcal^{(j)}_{T,
\epsilon}$ intersect at the points where \eqref{RJERKintro}  holds by the calculation in Lemma \ref{PUSHPULL}.

We now consider the smooth components of $\hat{\Gamma}_{T
\epsilon} $. In the parametrizing map (\ref{IOTA}), we have
removed the separating hypersurface $\R \times \Delta_{T^* M
\times T^* M}$ from the parameter space. Hence it has two
connected components, one is which is $\R \times \Delta_{T^*_H M
\times T^*_H M}$ and the other of which is $\R$ times the graph of
$r_H : T^*_H M \to T^*_H M$. Under $\iota$ these two components
map to disjoint canonical relations in $T^* M \times T^* M$. The
first is of course $\Delta_{T^* M \times T^* M}$, and the second
is $\Gamma_{T, \epsilon}$. We define $\rcal_j$ to be the partial
symplectic map defined on $\dcal_{T, \epsilon}^{(j)}$ by $\rho
\circ \pi^{-1} $.
 Thus,  $\rcal_j$ is well-defined and smooth $\dcal_{T,\epsilon}^{(j)}$.
  For $(x, \xi) \in \dcal_{T, \epsilon}^{(j)}$ the jth
$H$-reflection map $\rcal_j$ is given by (\ref{RCAL}).

 By definition of the  $N_{T,\epsilon}$  smooth functions $t_j:
   \dcal_{T,\epsilon}^{(j)} \rightarrow (-T,T), j =1,...,N_{T,\epsilon}$, we have
\begin{equation} \label{claim}
 \Gamma_{T,\epsilon} = \bigcup_{j=1}^{N_{T,\epsilon}}   \bigcup_{(x,\xi) \in
 \dcal^{(j)}_{T,\epsilon}} (x,\xi; G^{t_j(x,\xi)} r_H G^{-t_j(x,\xi)}(x,\xi) ). \end{equation}

\end{proof}

\section{Analysis of  $ V_{\epsilon}(t; a)$} \label{Vta operator}

So far, we have studied the symplectic geometric aspects of the
compositions underlying $V_{\epsilon} (t; a)$ and $\VT(a)$.  In \S
\ref{HYPERSURFACES} and \S \ref{RCALSECTION}, we studied the
composition of canonical relations underlying the composition of
operators in $ V_{\epsilon}(t; a)$ and $\VT(a)$. The compositions studied in
the previous section \ref{COMPOSITIONS} imply that these operators
are Fourier integral operators. The purpose of this section (and
the next)  is to calculate the principal symbol of $ V_{\epsilon}(t;a) $ (and
of $\VT(a)$). The results are again valid for any Riemannian
manifold and hypesurface; we again do not assume ergodicity of
$G^t$ in these sections. 

Our analysis begins with the operator  $\gamma_H^*  Op_H(a)
 \gamma_H$ and its cutoff  $(\gamma_H^*  Op_H(a)
 \gamma_H)_{\epsilon}$ away from the singular sets. It then  proceeds to conjugation by $U(t)$
and integration in $t$. We could equally well have begun with the analysis of $W$ \eqref{W} and
then $W^*W$, which would be closer to the analysis in \cite{SoZ}.

As recalled in \S \ref{FIOS}, the principal symbol of a Fourier
integral distribution
$$ I(x, y) = \int_{\R^N} e^{i \phi (x, y, \theta) }
a( x, y, \theta) d \theta $$
with non-degenerate homogeneous phase function $\phi$ and amplitude $a \in S^{0}_{cl}(M \times M \times \R^N),$
 is the transport to the Lagrangian $\Lambda_{\phi} =
 \iota_{\phi}(C_{\phi})$ of the square root of the density
$$ d_{C_{\phi}}: = \frac{|d \lambda|}{|D(\lambda,
\phi_{\theta}')/D(x,y,\theta)|} $$ on $C_{\phi}$, where $\lambda=(\lambda_1,...,\lambda_n)$ are local coordinates on the critical manifold $C_{\phi} =\{ (x,y,\theta); d_{\theta}\phi(x,y,\theta) = 0\}. $

\subsection{\label{CUTOFFS2} Pseudo-differential cutoffs}

As mentioned in the introduction, we wish to cutoff operators away
from $\Sigma_{T}$ and $N^*H-0 \times 0_{T^*M} \cup 0_{T^*M} \times N^H-0$. 
 As above, let  $x=(s,x_n)$ be Fermi normal coordinates along
$H$, i.e. let $x = \exp_{q_H(s)} x_n \nu_{s}$ where $s \mapsto
q_H(s) \in H$ denotes a local parametrization of $H.$ Then $H = \{
x_n=0 \}$. Let
  $\xi = (\sigma, \xi_n) \in T^*M$ denote the corresponding symplectically dual fibre
 coordinates. Here, we describe these pseudodifferential cutoffs (introduced in (\ref{CUTOFF}) in more detail in terms of Fermi coordinates.

 Let $\psi_{\epsilon} \in C^{\infty}_0(\R)$, $\psi_{\epsilon} \equiv 1$ on $[-\epsilon/2,
\epsilon/2]$ and $\psi_{\epsilon} \equiv 0$ on $(-\infty, -\epsilon] \cup [\epsilon, \infty).$ In Fermi normal coordinates, we may
take the cutoff  $\chi_{\epsilon}^{(tan)} \in C^{\infty}(T^*M)$ (see  also (i)-(iii) in the Introduction) to be
\begin{equation} \label{CHIEP} \chi_{\epsilon}^{(tan)}(s, y_n, \sigma, \eta_n) =  
\psi_{\epsilon}\left( \frac{|\eta_n|^2}{|\sigma|^2 + |\eta_n|^2} \right) \cdot \psi_{\epsilon}( y_n).
\end{equation}  which is equal to one in  a conic neighbourhood of $T^*H= \{ y_n= \eta_n = 0 \}.$
 We
further introduce a homogeneous cutoff $\chi_{\epsilon}^{(n)} \in C^{\infty}(T^*M)$ given by
\begin{equation} \label{PSIEP} \chi_{\epsilon}^{(n)}(s, y_n, \sigma, \eta_n) = 
\psi_{\epsilon}\left( \frac{|\sigma|^2}{|\sigma|^2 + |\eta_n|^2} \right) \cdot \psi_{\epsilon}(y_n)
\end{equation} 
which equals one on a conic neighborhood of $N^*H = \{y_n = \sigma = 0\}.$  More precisely,
we multiply \eqref{CHIEP} and \eqref{PSIEP}  by  a bump function $\psi(\xi)$ which vanishes identitically
 near the zero section.

As in (\ref{CUTOFF}) we  introduce the combined smooth homogeneous cutoff
\begin{equation}\label{totalcutoff}
\chi_{\epsilon} := \chi_{\epsilon}^{(tan)} + \chi_{\epsilon}^{(n)} \end{equation}
and denote the corresponding pseudo-differential
operator by $\chi_{\epsilon}(x, D)$ or by $Op(\chi_{\epsilon})$.


%
%


\subsection{\label{gammaHsect} $ Op_H(a) \gamma_H (1-\chi_{\epsilon}) $ }
  In Fermi  coordinates,
  \begin{equation} \label{OPAH}    Op_H(a) \gamma_H (s; x_n, s') = C_n \int e^{i \langle s-s', \sigma \rangle  - i x_n \xi_n} a(s,\sigma)  (1-\chi_{\epsilon}(s',x_n,\sigma',\xi_n) ) \, d\xi_n d\sigma.
  \end{equation}
  The phase $\phi(s, x_n, s', \xi_n, \sigma) =  \langle s - s', \sigma
   \rangle - x_n \xi_n  $   is linear and non-degenerate, the number  of phase variables is $N = d $ and $n = 2d-1$,
  where $d = \dim M$, so $ \frac{N}{2} - \frac{n}{4}=
  \frac{1}{4}. $ Then $C_{\phi} =\{(s, x_n, s', \sigma,
   \xi_n): s = s', x_n = 0, \} $ and
   $\iota_{\phi}(s, 0, s, \sigma, \xi_n) \to (s, \sigma, s, \sigma,
0,\xi_n). $

The complication arises that elements of the form $(s, \xi, s, 0)$ arise when $\xi \in N^*H$ in the canonical relation of $\gamma_H$
and similarly $(s, 0, s, \xi)$ arises in that of $\gamma_H^*$. Hence they are not homogeneous canonical relations in the sense
of \cite{HoI-IV}, i.e. conic  canonical relations $C \subset (T^*X \backslash 0) \times (T^* Y \backslash 0)$.  We  introduced the cutoff  $(1-\chi_{\epsilon})$ in \eqref{OPAH}
so that no such elements occur in the support of the cutoff and then
$$\gamma_H (1-\chi_{\epsilon}) \in I^{ \frac{1}{4}}(M \times H,
\Lambda_H), $$ where $\Lambda_H =   \{(s, \xi, s, \sigma) \in T^*_H M
\times T^* H: \xi|_{TH} = \sigma\}.  $ Its adjoint $ (1-\chi_{\epsilon}) \gamma_H^* $ then
lies in $ I^{ \frac{1}{4}}(H \times M, \Lambda_H^*), $ where
$\Lambda_H^*=  \{(s, \sigma, s, \xi) \in   T^* H \times T^*_H M:
\xi|_{TH} = \sigma \}.  $

\subsection{$(\gamma_H^* Op_H(a)
\gamma_H)_{ \geq \epsilon}$}
The composition $\gamma_H^* Op_H(a) \gamma_H$ also fails to be a Fourier integral operator with homogeneous canonical relation
for the same reason. We recall that
 (\cite{HoI-IV} Theorem 8.2.14) that the general composition of wave front sets has the form:
Let $A: C_0^{\infty}(Y) \to \dcal'(X), B: C_0^{\infty} (Z) \to \dcal'(Y)$. Then if
$WF_Y'(A) \cap WF'_Y(B) = \emptyset$,  then $A \circ B: C_0^{\infty} (
Z) \to \dcal'(X)$ and
$$WF'(A \circ B) \subset WF'(A) \circ WF'(B) \cup (WF'_X(A) \otimes 0_{T^*Z} ) \cup (0_{T^*X} \times WF'_Z(B)). $$
 Thus,
$$\begin{array}{lll} WF'(\gamma_H^* Op_H(a) \gamma_H)  & \subset &  \{(q, \xi, q, \xi'): \xi |_H = \xi' |_H),  (q, \xi), (q, \xi') \in T^*_H M - 0\} \\ &&\\ &&\cup
 \{(q, \nu, q, 0):  (q, \nu) \in N^*H - 0\} \cup \{(q, 0, q, \nu): \nu \in N^*H - 0\}. 
. \end{array}$$
With the cutoff   $(I - \chi_{\frac{\epsilon}{2}})$  on the left and $(1-\chi_{\epsilon})$ on the right of $\gamma_H^* Op_H(a)
\gamma_H,$ the last two sets are erased. Observing that $(1-\chi_{\frac{\epsilon}{2}}) (1-\chi_\epsilon) = 1-\chi_{\epsilon},$ we have proved 

\begin{lem}\label{gammaH*gammaH}  If $a \in S^0_{cl}(T^*H)$, then $(\gamma_H^*  Op_H(a)
 \gamma_H)_{\epsilon}\in I^{\half} (M \times M, C_H). $ In the Fermi normal
 coordinates  the symbol is given by
 $$\sigma_{(\gamma_H^* Op_H(a) \gamma_H)_{ \geq \epsilon}}(s, \sigma, \eta_n, \eta_n') =  (1 - \chi_{\epsilon})   a(s,
\xi|_{TH}) |\Omega|^{\half}, $$ where $\Omega = | ds \wedge d
\sigma \wedge d \eta_n \wedge d
 \eta_n'|$.
\end{lem}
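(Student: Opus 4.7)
The plan is to work directly in Fermi normal coordinates $(s,y_n)$ along $H$ and write the Schwartz kernel of $\gamma_H^*\, Op_H(a)\,\gamma_H$ as an explicit oscillatory integral. Since $(\gamma_H u)(s)=u(s,0)$ and $(\gamma_H^* v)(s,y_n)=v(s)\,\delta(y_n)$, the kernel factors as $\delta(y_n)\,k_a(s,s')\,\delta(y_n')$, where $k_a$ is the Schwartz kernel of $Op_H(a)$. Inserting the Fourier representation of the two $\delta$-factors together with that of $k_a$ produces, up to normalization, the oscillatory representation with phase $\phi(s,y_n,s',y_n',\sigma,\eta_n,\eta_n')=\langle s-s',\sigma\rangle+y_n\eta_n-y_n'\eta_n'$, phase variables $\theta=(\sigma,\eta_n,\eta_n')$ of total dimension $N=n+1$, and amplitude $a(s,\sigma)$ (which is independent of the $\eta_n,\eta_n'$ variables).

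First I will verify that $\phi$ is a non-degenerate phase function and compute the critical set $C_\phi=\{s=s',\,y_n=y_n'=0\}$ together with the Lagrange embedding $\iota_\phi(s,\sigma,\eta_n,\eta_n')=(s,0,\sigma,\eta_n,\,s,0,\sigma,\eta_n')$, checking that its image coincides exactly with $C_H$ (after the harmless sign convention in the $\eta_n'$ variable absorbed into the $\pm$ of $\phi$). The order count is then $0+N/2-\dim(M\times M)/4=(n+1)/2-n/2=\tfrac12$, placing the cutoff operator in $I^{1/2}(M\times M,C_H)$.

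The main obstacle, and the reason the cutoffs are necessary, is that $a(s,\sigma)$ is not a classical symbol in the full phase variable $\theta=(\sigma,\eta_n,\eta_n')$: it fails to decay in $\eta_n,\eta_n'$. Equivalently, by H\"ormander's wave front composition rule (Theorem 8.2.14 of \cite{HoI-IV}), the uncut product picks up the spurious pieces $N^*H\times 0_{T^*M}$ and $0_{T^*M}\times N^*H$, so $\gamma_H^*\,Op_H(a)\,\gamma_H$ is not a Fourier integral operator with a homogeneous canonical relation in the Hörmander sense. The normal cutoff $\chi_\epsilon^{(n)}$ excises the directions where $\sigma\sim 0$, which simultaneously kills both spurious wave front pieces and forces $a(s,\sigma)$ to become a bona fide classical symbol in $\theta$ on the conic support of the amplitude (since there $|\sigma|$ is bounded below by a fixed fraction of $|\theta|$). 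After inserting the cutoff $(I-\chi_{\epsilon/2})$ on the left and $(I-\chi_\epsilon)$ on the right, the resulting oscillatory integral is an honest element of $I^{1/2}(M\times M,C_H)$.

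For the symbol computation, I will apply the standard principal-symbol formula: the half-density on $C_H$ is the restriction of the amplitude to $C_\phi$ multiplied by $\bigl(|d\lambda|/|D(\lambda,\phi'_\theta)/D(x,x',\theta)|\bigr)^{1/2}$, where $\lambda=(s,\sigma,\eta_n,\eta_n')$ provides local coordinates on $C_\phi$. A direct row reduction of the $(3n{+}1)\times(3n{+}1)$ Jacobian matrix shows that its determinant equals $\pm 1$, so the Leray density collapses to $|\Omega|=|ds\,d\sigma\,d\eta_n\,d\eta_n'|$. The pseudodifferential cutoffs then contribute through the standard rule for composing a Fourier integral operator with pseudodifferential operators on left and right: they multiply the symbol by $(1-\chi_{\epsilon/2})$ evaluated on the left leg of $C_H$ and by $(1-\chi_\epsilon)$ evaluated on the right leg. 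The elementary support identity $(1-\chi_{\epsilon/2})(1-\chi_\epsilon)=(1-\chi_\epsilon)$, valid because $\operatorname{supp}(\chi_{\epsilon/2})\subset\{\chi_\epsilon\equiv 1\}$, collapses this product to a single factor $(1-\chi_\epsilon)$, yielding the stated symbol $(1-\chi_\epsilon)\,a(s,\xi|_{TH})\,|\Omega|^{1/2}$ and finishing the proof.
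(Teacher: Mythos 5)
Your proposal is correct and follows essentially the same route as the paper: the explicit Fermi-coordinate oscillatory representation with phase $\langle s-s',\sigma\rangle + y_n\eta_n - y_n'\eta_n'$, the order count $N/2-\dim(M\times M)/4=\tfrac12$, the Jacobian/Leray computation giving $|\Omega|^{1/2}$, and the role of the cutoffs in removing $N^*H\times 0_{T^*M}\cup 0_{T^*M}\times N^*H$ together with the collapsing identity $(1-\chi_{\epsilon/2})(1-\chi_\epsilon)=1-\chi_\epsilon$. The paper merely supplements this with a cross-check of the order via transversality of $\Lambda_H^*\circ\Lambda_H$ and an invariant fiber-product description of the half-density on $C_H$, neither of which your argument needs.
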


\begin{proof} In Lemma \ref{CSUBH}, we showed that $C_H$ is an
embedded Lagrangian submanifold of $T^* M \times T^*M$. The proof
shows that the composition of $
\Lambda_H^* \circ \Lambda_H$ is transversal. Since the order of
$\gamma_H^*$ equals that
  of $\gamma_H$ and the orders add under transversal  composition,
  the order of $(\gamma_H^*  Op_H(a)
 \gamma_H)_{ \geq \epsilon}$ is $\half$.  Hence, for any
homogeneous pseudo-differential operator $Op_H(a)$ on $H$,
 \begin{equation} \label{WFGAMMA} (\gamma_H^* Op(a) \gamma_H)_{\geq \epsilon} \in
 I^{\half}(M \times M, C_H).
 \end{equation}

Next we compute its principal symbol.
  By Lemma
 \ref{CSUBH}, $C_H$  is the fiber product $T^*_H M \times_{T^*H} T^*_H
 M$, hence it  carries  a canonical  half-density (associated to
 the fiber map).
 As discussed in \cite{GuSt} (p. 350), on any fiber product $A
 \times_B C$, half-densities on $A, C$ together with
 a negative density on $B$ induce a half density on the
 fiber product. In our setting, the canonical half-density on $T^*_H M$ is
 given by the square root of the quotient $\frac{\Omega_{T^*M}}{d
 y_n} = ds \wedge d \sigma \wedge d \eta_n$ of the symplectic volume density on $T^* M$ by the
 differential of the defining function $y_n$ of $T^*_H M$. We also
 have a canonical density $|ds \wedge d \sigma|$ on $T^* H$, which
 induces a  canonical $-1$-density. The induced half-density on
 $C_H$ is then $| ds \wedge d \sigma \wedge d \eta_n \wedge d
 \eta_n'|^{\half}$.

 We compute the principal symbol and order  using the
special oscillatory integral formula,
\begin{equation} \label{SYMBOLORDER} \begin{array}{lll} \gamma_H^* Op(a) \gamma_H (s, x_n;
s', x_n') & = &   C_n
 \delta_0(x_n) \int e^{i \langle s-s', \sigma \rangle  - i x_n' \xi_n'} a(s,\sigma) d\xi_n'
 d\sigma  \\ && \\
 & = &   C_n
  \int_{\R^n \times \R \times \R}    e^{i \langle s-s', \sigma \rangle  + i x_n \xi_n - i x_n' \xi_n'} a(s,\sigma) d\xi_n
 d\sigma d \xi_n' .  \end{array}
\end{equation} If we  compose on left and right by  $(1 -\chi_{\epsilon})$  and $(1-\chi_{\frac{\epsilon}{2}})$ respectively then we further obtain  factor
of  \ $(1 -\chi_{\epsilon}(s, x_n, \sigma, \xi_n) )$  under the integral.
The phase is $ \phi(s, x_n, s', x_n', \xi_n,
 \xi_n', \sigma) =  \langle s - s', \sigma
   \rangle + x_n \xi_n - x_n' \xi_n'  $
   with phase variables $(\xi_n, \xi_n', \sigma)$, and
   $$C_{\phi} = \{(s, x_n, s', x_n', \sigma, \xi_n, \xi_n'): s = s',
   x_n = 0, x_n' = 0\}. $$
   Also,
   $$\iota_{\phi}(s, 0, s, 0, \sigma, \xi_n, \xi_n') = (s, \sigma, s, \sigma,
   0, \xi_n, 0, \xi_n') \in T^* M \times T^* M. $$
   Thus, $(s, \sigma, \xi_n, \xi_n')$ define coordinates on
   $C_{\phi}$.

   As discussed in \S \ref{FIOS}, the delta-function on $C_{\phi}$
   is given by
   $$ d_{C_{\phi}} = \frac{| ds \wedge d \sigma \wedge d \xi_n
   \wedge d \xi_n' |}{D(s,  \sigma, \xi_n, \xi_n', \phi'_{\xi_n},  \phi'_{\xi_n'}
   \phi'_{\sigma})/D(s, s', \sigma, x_n , x_n', \xi_n, \xi_n')|}.
   $$ Since
    $$  | \, D(   \phi'_{\xi_n}, \phi'_{\xi_n'},
   \phi'_{\sigma})/D( s', x_n, x_n')  \,  | = 1,
   $$
   the lemma follows.

\end{proof}

We further recall from the introduction the operator $(\gamma_H^*
Op_H(a) \gamma_H)_{ \geq \epsilon}$ in  (\ref{>ep}). We have:

\begin{cor}  With the same notation and assumptions as above, $$(\gamma_H^*
Op_H(a) \gamma_H)_{\geq \epsilon}  \in I^{\half} (M \times
M, C_H),$$ and its symbol is given by
 $$\sigma_{(\gamma^* Op_H(a) \gamma})_{\geq \epsilon} (s, \sigma, \eta_n, \eta_n') =  (1 - \chi_{\epsilon}) a(s,
\xi|_{TH}) |\Omega|^{\half}, $$ where $\Omega = | ds \wedge d
\sigma \wedge d \eta_n \wedge d
 \eta_n'|$. 

\end{cor}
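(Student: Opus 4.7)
The plan is to derive this Corollary as an almost immediate consequence of Lemma \ref{gammaH*gammaH}, by composing on the left with the zeroth-order pseudo-differential cutoff $(I - \chi_{\epsilon/2}(x,D))$. Unpacking the definition \eqref{>ep},
\[
(\gamma_H^* Op_H(a) \gamma_H)_{\geq \epsilon} = (I - \chi_{\epsilon/2}(x,D)) \, \bigl[\gamma_H^* Op_H(a) \gamma_H\, (I - \chi_\epsilon(x,D))\bigr],
\]
and the bracketed operator on the right is precisely the one whose Fourier integral structure and principal symbol were computed in Lemma \ref{gammaH*gammaH}, namely an element of $I^{1/2}(M\times M, C_H)$ with principal symbol $(1-\chi_\epsilon)\,a(s,\xi|_{TH})|\Omega|^{1/2}$.

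Next I would invoke the standard composition rule: a properly supported pseudo-differential operator of order zero composed on the left with a Fourier integral operator $F \in I^{1/2}(M\times M, C_H)$ produces another operator in $I^{1/2}(M\times M, C_H)$, whose principal symbol is the product of the pullback of the pseudo-differential symbol under the left projection $\pi_L : C_H \to T^*M$ with the symbol of $F$. This is a clean composition because the canonical relation of a pseudo-differential operator is the diagonal in $T^*M\times T^*M$, which is transverse to $C_H$ in the required sense. Applying this to our setting yields
\[
\sigma_{(\gamma_H^* Op_H(a)\gamma_H)_{\geq\epsilon}} = \bigl(\pi_L^*(1 - \chi_{\epsilon/2})\bigr)\cdot (1-\chi_\epsilon)\,a(s,\xi|_{TH})\,|\Omega|^{1/2}.
\]

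The small but essential observation is that on the support of $(1-\chi_\epsilon)$ the factor $(1-\chi_{\epsilon/2})$ simplifies away. Indeed, by construction $\chi_\epsilon \equiv 1$ on the $\epsilon/2$-aperture subcone about $T^*H \cup N^*H$, which contains the $\epsilon/2$-aperture support of $\chi_{\epsilon/2}$. Hence $\chi_{\epsilon/2}(1-\chi_\epsilon) \equiv 0$, whence
\[
(1-\chi_{\epsilon/2})(1-\chi_\epsilon) = 1 - \chi_\epsilon - \chi_{\epsilon/2} + \chi_{\epsilon/2}\chi_\epsilon = 1-\chi_\epsilon.
\]
Combined with the previous identity for the symbol, this produces exactly the stated formula.

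I expect no genuine obstacle beyond the bookkeeping of the nested cutoffs; the only substantive point to verify carefully is that both cutoffs $\chi_\epsilon$ and $\chi_{\epsilon/2}$ live on $T^*M$ and their symbols are evaluated at the $\pi_L$-image of $C_H$, so the collapse of the cutoff factor on $C_H$ is genuinely pointwise and requires no distributional argument. The order of the composed operator remains $1/2$ because the pseudo-differential factor contributes order zero, and all wavefront-set conditions needed to make the composition a Fourier integral operator in the sense of \cite{HoI-IV} were already arranged in the proof of Lemma \ref{gammaH*gammaH} by the insertion of the right cutoff $(I-\chi_\epsilon)$, which killed the problematic components $N^*H\times 0_{T^*M}$ and $0_{T^*M}\times N^*H$.
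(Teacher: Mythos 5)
Your proposal is correct and is essentially the paper's own argument: the Corollary is treated there as immediate from Lemma \ref{gammaH*gammaH} together with precisely your nesting identity $(1-\chi_{\epsilon/2})(1-\chi_{\epsilon})=1-\chi_{\epsilon}$, which the paper records verbatim just before the Lemma, the left cutoff serving to erase the residual component $N^*H\times 0_{T^*M}$ of the wave front relation (your attribution of both erasures to the right cutoff alone is a slight misstatement, since the right cutoff only removes $0_{T^*M}\times N^*H$). The one point you gloss over --- shared equally by the paper --- is that in the composed symbol the factor $1-\chi_{\epsilon/2}$ is evaluated at the left projection of $C_H$ (at $\eta_n$) while $1-\chi_{\epsilon}$ is evaluated at the right projection (at $\eta_n'$), so the pointwise collapse is literally valid only where the two covectors lie in the same aperture regime; this is harmless for all later uses, since only the sub-relation $\hat{C}_H$ with $|\eta_n|=|\eta_n'|$ ever enters.
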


\begin{rem}  In the case of semi-classical pseudo-differential
operators on $H$ in \cite{HZ}, we could use a cutoff on $B^* H$
away from its boundary $S^* H$. No such cutoff exists for
homogeneous pseudo-differential operators on $H$. The closest
analogue is to introduce the  cutoff on $\gamma_H^* Op_H(a)
\gamma_H$.
\end{rem}

\subsection{  $U(t_1)^* (\gamma_H^* Op_H(a) \gamma_H)_{\geq \epsilon} U(t_2)$ }

The next step is to right and left compose with the wave group.
The canonical relation was determined in Lemma
\ref{GammaCHGammaLEM}. We now work out the symbol.

\begin{lem} \label{GAMMAHLEMa} If $a \in S^0_{cl}(T^*H)$, then $$  U(-t_1) (\gamma_H^*  Op(a)
 \gamma_H)_{\geq \epsilon} U(t_2)  \in I^{0} (\R \times M \times \R \times M,
 \Gamma^* \circ C_H \circ \Gamma). $$

 Under the embedding $\iota_{\Gamma^* C_H \Gamma}$ (of Lemma \ref{GammaCHGammaLEM}), the  principal symbol
 pulls back to the homogeneous function on $
\R \times \R \times  T^*_H M$ given by
 $$  (1 - \chi_{\epsilon}) a_H(s, \xi) |dt \wedge dt_1 \wedge \Omega|^{\half},
 $$
 where $|dt \wedge dt_1 \wedge \Omega|^{\half}$ is the canonical volume
 half-density on $\Gamma^* \circ C_H \circ \Gamma$ (defined in the proof).

\end{lem}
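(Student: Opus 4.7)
The plan is to apply the symbol calculus for transversally composed Fourier integral operators to the three factors $U(-t_1)$, $(\gamma_H^* Op_H(a) \gamma_H)_{\geq \epsilon}$, $U(t_2)$, combining the symbol formula of Lemma \ref{gammaH*gammaH} with the standard description of the wave kernel as $U(t) \in I^{-1/4}(\R \times M \times M, \Gamma)$ with principal half-density symbol $|dt \otimes dx \wedge d\xi|^{1/2}$ (see \cite{DG,HoI-IV}).

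For the order, note that by Lemma \ref{GammaCHGammaLEM} the composition of canonical relations $\Gamma^* \circ C_H \circ \Gamma$ is transversal, so the Fourier integral orders add: $-\tfrac{1}{4} + \tfrac{1}{2} - \tfrac{1}{4} = 0$, placing the operator in $I^0(\R \times M \times \R \times M, \Gamma^* \circ C_H \circ \Gamma)$.

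For the symbol, I would use the transversal composition rule: principal symbols multiply as sections of the half-density bundle, pulled back to the parameter space of the composed canonical relation via the embedding $\iota_{\Gamma^* C_H \Gamma}$. Parameterize by $(t_1, t_2, s, \xi, \xi')$ with $\xi|_{TH} = \xi'|_{TH}$. The wave kernel half-density $|dt_2 \otimes dx \wedge d\xi|^{1/2}$ on $\Gamma$ pulled back to this parameter space contributes a factor of $|dt_2|^{1/2}$ after the $|dx \wedge d\xi|^{1/2}$ factor is absorbed by the matching $T^*M$ copy in $C_H$; symmetrically $U(-t_1)$ contributes $|dt_1|^{1/2}$. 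The symbol of the middle factor from Lemma \ref{gammaH*gammaH}, namely $(1 - \chi_\epsilon) a(s, \xi|_{TH}) |\Omega|^{1/2}$, pulls back unchanged (with $a_H(s,\xi) = a(s, \xi|_{TH})$) because the geodesic flows $G^{\pm t_i}$ are homogeneous canonical transformations and the cutoff $\chi_\epsilon$ depends only on the intermediate covector $(s, \xi) \in T^*_H M$. Multiplication of these three half-densities then yields the claimed formula, with the canonical volume half-density on $\Gamma^* \circ C_H \circ \Gamma$ being exactly $|dt \wedge dt_1 \wedge \Omega|^{1/2}$ under this parameterization.

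The main obstacle will be the bookkeeping of half-densities in the three-fold transversal composition. One must verify that the product of the three factor half-densities pulls back cleanly to $|dt \wedge dt_1 \wedge \Omega|^{1/2}$ with no extra Jacobian factor. This reduces to checking that the natural half-density on $C_H$ identified in the proof of Lemma \ref{gammaH*gammaH}, namely $|ds \wedge d\sigma \wedge d\eta_n \wedge d\eta_n'|^{1/2}$, is compatible with the canonical half-densities on $\Gamma$ and $\Gamma^*$ under the fiber-product structure of the transversal composition, using that $G^t$ preserves the symplectic volume density on $T^*M$ and that the projection $T^*_HM \to T^*H$ has $d\eta_n$ (resp. $d\eta_n'$) as the natural fiber half-density on each side.
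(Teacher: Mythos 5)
Your proposal is correct and follows essentially the same route as the paper: transversality of $\Gamma^* \circ C_H \circ \Gamma$ from Lemma \ref{GammaCHGammaLEM} gives the order count $-\tfrac14+\tfrac12-\tfrac14=0$, and the symbol is obtained by multiplying the canonical half-density symbols of $U(\pm t_i)$ with the symbol of $(\gamma_H^* Op_H(a)\gamma_H)_{\geq\epsilon}$ from Lemma \ref{gammaH*gammaH} and pulling back to the parameter space $(t_1,t_2,s,\sigma,\eta_n,\eta_n')$, exactly as the paper does. The half-density bookkeeping you flag as the main obstacle is handled in the paper in the same way you indicate, via the invariance of the symplectic volume under $G^t$ and the identification of the natural half-density on the parameter domain as $|dt_1\wedge dt_2\wedge ds\wedge d\sigma\wedge d\eta_n\wedge d\eta_n'|^{1/2}$.
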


\begin{proof} It is well known (see \cite{HoI-IV}, vol. IV)  that $U(t) \in I^{-\frac{1}{4}}(\R
\times M \times M, \Gamma), $ with $\Gamma = \{(t, \tau, x, \xi,
G^t(x, \xi)): \tau + |\xi| = 0\}. $ As in \cite{DG}, the half
density symbol of $U(t, x, y)$ is the canonical volume half
density $ \sigma_{U(t, x, y)} = |dt \otimes dx \wedge d \xi
|^{\frac{1}{2}}$ on $\Gamma$. By Proposition
\ref{GammaCHGammaLEM}, the composition is $\Gamma^* \circ C_H \circ
\Gamma$ is transversal for any hypersurface $H$, hence $ U(-t_1)
(\gamma_H^*  Op_H(a)
 \gamma_H)_{\geq \epsilon} U(t_2) $ is a
Fourier integral operator with the stated canonical relation.
Under transversal composition the orders add, and the stated order
follows from Lemma \ref{gammaH*gammaH} together with the fact that
$U(t) \in I^{-\frac{1}{4}}(\R \times M \times M, \Gamma)$.

To prove the formula for the symbol,  we observe that $ \Gamma^*
\circ C_H \circ \Gamma$ is parameterized  by $$  \iota_1 (t, t', s,
\sigma, \eta_n, \eta_n') = (t, |\xi|, t', - |\xi'|, G^t(s, \xi),
G^{t'}(s, \xi')): s \in H, \xi, \xi' \in T_x M, \;\; \xi|_{TH} =
\xi' |_{TH} \}, $$ where  $ \xi = (\sigma,  \eta_n), \xi' = (\sigma, \eta_n'))$ are dual Fermi coordinates  in the orthogonal decomposition of
$T_H^* M = T^* H \oplus N^* H $. The natural volume half density on
parameter domain of $\Gamma \circ C_H \circ \Gamma$  is $  |dt_1
\wedge dt_2 \wedge d s \wedge d \sigma \wedge d \eta_n \wedge d
\eta_n'|^{\half}  $ where $ds \wedge d \sigma$ is the symplectic volume
form on $T^*H$, where $(\eta_n, \eta_n')$ are the normal components of
$(\xi, \xi')$ and where $d\eta_n$ is the Riemannian density on $N^*_s
H$. The stated symbol then follows by transversal composition from
the symbols of $U(t)$ and of $\gamma^*_H Op_H(a) \gamma_H$
determined in Lemma \ref{GammaCHGammaLEM}).
\end{proof}

\subsection{$V_{\epsilon}(t; a)$}

The purpose of this section is to prove

\begin{lem} \label{GAMMAHLEM} If $ a \in S^0_{cl}(T^*H) $, then $$V_{\epsilon}(t; a) = U(-t) (\gamma_H^*  Op(a)
 \gamma_H)_{\geq \epsilon} U(t) \in I^{\frac{1}{4}} (\R \times M \times M,
 \Gamma_H). $$

 Under the embedding $\iota_{\Gamma_H}$ (of Lemma \ref{DELTATRANS}), the  principal symbol
 pulls back to the homogeneous function on $
\R \times   T^*_H M$ given by
 $$ \iota_{\Gamma_H}^* \sigma_{V_{\epsilon}(t; a) }(t, s, \xi,  \xi')  = (1 - \chi_{\epsilon})(s, \xi) a_H(s, \xi) |dt \wedge \Omega|^{\half},
   $$
 where $|dt \wedge \Omega|^{\half}$ is the canonical volume
 half-density on $\Gamma_H$ (defined in the proof).

\end{lem}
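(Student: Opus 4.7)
The plan is to obtain Lemma \ref{GAMMAHLEM} as a transversal pullback of Lemma \ref{GAMMAHLEMa}. Observe that the Schwartz kernel of $V_{\epsilon}(t;a)$ on $\R \times M \times M$ is obtained from that of $U(-t_1) (\gamma_H^* Op(a) \gamma_H)_{\geq \epsilon} U(t_2)$ on $\R \times \R \times M \times M$ by setting $t_1 = t_2 = t$, that is, by pulling back under the time-diagonal embedding $\Delta_t(t,x,y) = (t,t,x,y)$. Lemma \ref{DELTATRANS} already establishes that $\Delta_t$ is transversal to $\Gamma^* \circ C_H \circ \Gamma$ and that $\Delta_t^*(\Gamma^* \circ C_H \circ \Gamma) = \Gamma_H$ is a smoothly embedded canonical relation. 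Thus the general pullback theorem for Lagrangian distributions (H\"ormander, Theorem 25.2.2) applies and produces a Lagrangian distribution associated with $\Gamma_H$.

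Next I would track the order. Writing the kernel from Lemma \ref{GAMMAHLEMa} as an oscillatory integral, the number of phase variables $N$ is unchanged by the pullback (we merely substitute $t_1=t_2=t$), while the base manifold dimension drops by one (from $2+2n$ to $1+2n$). Using the standard relation $m = m_{\mathrm{sym}} + N/2 - n_{\mathrm{base}}/4$ between amplitude and Lagrangian orders, the order increases by exactly $1/4$, going from $0$ to $1/4$. This matches the claimed membership $V_{\epsilon}(t;a) \in I^{1/4}(\R \times M \times M, \Gamma_H)$.

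To identify the principal symbol, I would use the functorial transformation rule for symbols under transversal pullback. The parameterization $\iota_{\Gamma^* C_H \Gamma}$ has parameter space $\R \times \R \times T^*_H M$, and restriction to $t_1 = t_2 = t$ reduces it to exactly the parameterization $\iota_{\Gamma_H}$ with parameter space $\R \times T^*_H M$ described in Lemma \ref{DELTATRANS}. Since the substitution $t_1=t_2=t$ corresponds to the diagonal embedding $\R \hookrightarrow \R \times \R$, whose Jacobian is trivial, the canonical half-density $|dt_1 \wedge dt_2 \wedge \Omega|^{1/2}$ from Lemma \ref{GAMMAHLEMa} pulls back to $|dt \wedge \Omega|^{1/2}$ on $\Gamma_H$. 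Combined with the scalar factor $(1-\chi_{\epsilon})(s,\xi)\, a_H(s,\xi)$ (which depends only on $(s,\xi)$, hence is unaffected by the time restriction), this yields the stated symbol formula.

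The main technical delicacy is the half-density bookkeeping: one must confirm that the intrinsic half-density on $\Gamma^* \circ C_H \circ \Gamma$, described via the parameterization of Lemma \ref{GammaCHGammaLEM}, transfers without an extra Jacobian correction to the intrinsic half-density on $\Gamma_H$ under $\Delta_t^*$. This reduces to checking compatibility of the natural volume forms on the parameter spaces, and it is here transversality (already verified in Lemma \ref{DELTATRANS}) is used essentially. A secondary point is that the cutoff factor $(1-\chi_{\epsilon})$ is preserved cleanly, which follows because $\chi_{\epsilon} \in S^0_{cl}(T^*M)$ composes transversally with the other factors and is constant along the $t$-variable substitution.
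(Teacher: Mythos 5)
Your proposal follows exactly the paper's route: apply the transversal pullback under the time-diagonal embedding $\Delta_t$ to Lemma \ref{GAMMAHLEMa}, invoke Lemma \ref{DELTATRANS} for transversality, and track the order increase and symbol transformation. The order bookkeeping (via oscillatory integral phase-variable counting rather than citing the $\Delta_t^*$ order convention from \cite{DG} directly) is a cosmetic variation and checks out.

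The one place where you are too quick is the symbol pullback. You assert that $|dt_1 \wedge dt_2 \wedge \Omega|^{1/2}$ pulls back to $|dt \wedge \Omega|^{1/2}$ because the Jacobian of the diagonal embedding $\R\hookrightarrow\R\times\R$ is trivial, and you flag this as a ``technical delicacy'' without actually resolving it. But the transformation rule for symbols of Lagrangian distributions under pullback is not naive restriction of the half-density: one must form the tensor product of the symbol with the canonical half-density on the conormal bundle $\ncal^*(\mbox{graph}(\Delta_t))$, and then divide by the canonical half-density on the ambient $T^*\R \times T^*\R \times T^*M\times T^*M$, using the fiber-product/pullback diagram of Duistermaat--Guillemin. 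The paper carries this out explicitly and observes that the factors $|dt_1 \wedge d\tau_1 \wedge dt_2 \wedge d\tau_2 \wedge \Omega|^{1/2}$ cancel in the quotient, producing precisely $|dt \wedge ds \wedge d\sigma \wedge d\eta_n \wedge d\eta_n'|^{1/2}$ (eq. \eqref{QUO}). Your answer is correct, but the justification ``the Jacobian is trivial'' is not the mechanism; the cancellation has to be exhibited in the diagram. Without it, there is no a priori reason the pullback symbol should not acquire an extra scalar factor. The passage from $I^0$ to $I^{1/4}$ is itself a warning that symbol manipulation under $\Delta_t^*$ is nontrivial, and the cancellation in the half-density quotient is exactly what one must verify to conclude.
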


\begin{proof} The new step beyond Lemma \ref{GAMMAHLEMa} is to
pull back the canonical relation and symbol under the
time-diagonal embedding  $\Delta_t (t, x, y) = (t, t,  x, y)$  of
$\R \times M \times M \to \R \times \R \times M \times M$. In \S
\ref{GCHG} and Lemma \ref{GammaCHGamma}, together with \S
\ref{PBDELTA} and Lemma \ref{DELTATRANS}, we showed that the
compositions are transversal.
 Hence, for any hypersurface $H$, $ V_{\epsilon}(t; a)$ is a
Fourier integral operator with the stated canonical relation.

As mentioned above, orders add under transversal composition.
Before pulling back under the diagonal relation the composition
has order $0$ by Lemma \ref{GAMMAHLEMa}. Setting  $t = t'$ is
composition with the pullback $\Delta_t^*$, which has order $
\frac{1}{4}$ \cite{DG}. Hence the order is now $\frac{1}{4}$.

To compute the symbol,  we use that  the pullback of  $\Gamma_H$ under $\Delta_t$
may be parameterized by
$$  \iota_{\Gamma_H}: (t, s,
\sigma, \eta_n, \eta_n')   \in \R \times T^*H \times T_H^* M \times T_H^*M
\to (t, |(\sigma,\eta_n)| - |(\sigma,\eta_n')|, G^{- t}(s, \sigma, \eta_n), G^{-t}( s, \sigma, \eta_n')),
$$ in the notation of Lemma \ref{GAMMAHLEMa}. We need to verify that
\begin{equation}\label{QUO}   \Delta_t^* |dt_1 \wedge d t_2 \wedge d s \wedge d \sigma
\wedge d \eta_n \wedge d \eta_n'|^{\half} = |dt \wedge ds \wedge d
\sigma \wedge d \eta_n \wedge d \eta_n'|^{\half}.  \end{equation} We use
the pullback diagram
$$\begin{array}{ccc} \Gamma^* \circ C_H \circ \Gamma  & \leftarrow & F  \to
\Delta^*\Gamma^* \circ C_H \circ \Gamma
\\ &&\\ \downarrow &  & \downarrow \\ && \\
T^*(\R \times \R \times M \times M)  & \leftarrow &
\ncal^*(\mbox{graph}(\Delta_t))
\end{array}$$
Here, $F$ is the fiber product, $\ncal^*(\mbox{graph}(\Delta_t))$
is the co-normal bundle to the graph, and  $\alpha: F \to
\Delta^*\Gamma^* \circ C_H \circ \Gamma$  is the map to the
composition (see \cite{DG,GuSt}. Since the composition is
transversal, $D \alpha$ is an isomorphism (loc. cit.). The graph
of $\Delta_t$ is the set  $\{(t, t, x, y,  t, x,  y)\}$
and its conormal bundle is
$$\ncal^*(\mbox{graph}(\Delta_t)) = \{(t, \tau_1, t, \tau_2, x,
\xi, y, \eta, t, - (\tau_1 + \tau_2), x, - \xi, y, - \eta)\}. $$
The canonical half-density on this graph is $|dt \wedge d \tau_1
\wedge d \tau_2 \wedge \Omega|^{\half}$, where $\Omega = |dx
\wedge d \xi \wedge dy \wedge d \eta|$ is the canonical volume
density on $T^* M \times T^* M$. The half density produced by the
pullback diagram takes the product of the half densities $ |dt_1
\wedge dt_2 \wedge d s \wedge d \sigma \wedge d \eta_n \wedge d
\eta_n'|^{\half}$ and $|dt_2 \wedge d \tau_1 \wedge d \tau_2 \wedge
\Omega|^{\half}$ on the two factors $  \Gamma_H$ and  $\ncal^*(\mbox{graph}(\Delta_t)) $ and divides by the canonical
half density $|dt_1 \wedge d \tau_1 \wedge dt_2 \wedge d \tau_2
\wedge \Omega |^{\half}$ on $T^* \R \times T^* \R \times T^* M
\times T^* M$. The factors of $| dt_1 \wedge d \tau_1 \wedge dt_2
\wedge d \tau_2 \wedge \Omega|^{\half}$ cancel in the quotient
half-density, leaving  the one stated in (\ref{QUO}).

Finally, the presence of the factor of  $(1 - \chi_{\epsilon}(x, \xi)) a_H(s, \xi)$ 
follows immediately from the representation (\ref{SYMBOLORDER}).

\end{proof}

\section{Analysis of  $\VT(a)$} \label{v operator}

 The purpose of this section
is to prove Proposition \ref{VTDECOMPa}.
To define $\VT(a)$ we need to integrate in $t$, i.e. pushforward
from $\R \times M \times M \to M \times M$. It is in this step
that the composition becomes non-transversal due to the tangential
geodesics and requires a cutoff. We begin by defining it more
precisely. Then we decompose $\VT(a)$ into its branches and define
the principal symbol on each branch. In other words, we compute the
symbol of $W^* W$ or more precisely of $W^* \chi_T Op_H(a) W$ away
from the fold singularity.

\begin{lem}\label{VTALEM} For all $T,\epsilon >0,$ we have that   $\VT(a) \in I^{0}(M \times M; \hat{\Gamma}_{T,\epsilon}).$  \end{lem}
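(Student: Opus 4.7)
The strategy is to realize $\VT(a)$ as a proper pushforward of $V_{\epsilon}(t;a)$ under the time projection $\pi_t: \R \times M \times M \to M \times M$ and to apply Hörmander's pushforward theorem for Fourier integrals, using Lemma \ref{GAMMAHLEM} as input. Explicitly,
\begin{equation*}
\VT(a) = \frac{1}{T}\,\pi_{t*}\bigl(\chi(T^{-1}t)\,V_{\epsilon}(t;a)\bigr),
\end{equation*}
and since $\chi(T^{-1}\cdot)$ is compactly supported in $(-T-\delta,T+\delta)$ the kernel $\chi(T^{-1}t)V_{\epsilon}(t;a)$ is compactly supported in the $t$-variable, so the pushforward is proper. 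By Lemma \ref{GAMMAHLEM} this kernel lies in $I^{1/4}(\R \times M \times M, \Gamma_H)$.

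To apply the pushforward calculus we must verify that $\Gamma_H$ meets the horizontal covectors $H^*(M \times M) = \{(t,\tau,x,\xi,y,\eta) : \tau = 0\}$ cleanly (here, transversally). This was the content of Lemma \ref{PUSHPULL}: on $\Gamma_H$ one has $\tau = |\xi| - |\xi'|$, and $d\tau$ vanishes on the locus $\{\tau = 0\}$ precisely where $\eta_n = \eta_n' = 0$, i.e.\ where $\xi, \xi' \in T^*H$. The pseudodifferential cutoffs $\chi_{\epsilon}^{(tan)}, \chi_{\epsilon}^{(n)}$ built into $(\gamma_H^* Op_H(a)\gamma_H)_{\geq \epsilon}$ excise an $\epsilon$-conic neighbourhood of $T^*H \cup N^*H$ in $T^*_H M$, and conjugation by $U(\pm t)$ for $|t| \leq T$ transports this exclusion along the geodesic flow to its flowout $\gcal_{T,\epsilon}$. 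Consequently the wavefront set of $\chi(T^{-1}t)V_{\epsilon}(t;a)$ sits inside $\Gamma_H \setminus (\R \times \Delta_{T^*H \times T^*H})$, where $\Gamma_H \cap H^*$ is transversal, so the pushforward theorem applies.

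The order shift under a proper submersion with one-dimensional fibres is $-1/4$ (reflecting $\delta_0 \in I^{1/4}(\R)$), so the starting order $1/4$ from Lemma \ref{GAMMAHLEM} drops to $0$. It remains to identify the pushforward Lagrangian with $\hat{\Gamma}_{T,\epsilon}$. By \S \ref{PF} and equation \eqref{PushGtCH}, $\pi_{t*}\Gamma_H$ restricted to $|t| \leq T$ is parametrized by $\iota(t,s,\xi,\xi') = (G^t(s,\xi), G^t(s,\xi'))$ with $\xi|_{TH} = \xi'|_{TH}$ and $|\xi| = |\xi'|$, and after removing $\Sigma_{T,\epsilon}$ Proposition \ref{graph} identifies the result with $\hat{\Gamma}_{T,\epsilon} = \Delta_{T^*M \times T^*M} \cup \Gamma_{T,\epsilon}$. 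Combining the three ingredients yields $\VT(a) \in I^{0}(M \times M, \hat{\Gamma}_{T,\epsilon})$.

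The main subtlety is the fold singularity of $\Gamma_T$ along $\Sigma_T$, where the map $G: \R \times T^*_H M \to T^*M$ fails to be a local diffeomorphism (Lemma \ref{RFOLD}). At such points the pushforward does not produce a Lagrangian in the smooth sense and one would have to invoke the calculus of Fourier integrals with folds. The purpose of the $\epsilon$-cutoff is precisely to stay in the transversal regime, at the cost that the singular contribution from tangential geodesics must be treated separately (which is exactly what is done later via Lemma \ref{PREV} and the ergodicity arguments).
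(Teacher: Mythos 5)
Your proposal is correct and follows essentially the same route as the paper: both realize $\VT(a)$ as the time pushforward of the conjugated operator, invoke the transversality of $\Gamma_H$ with $\{\tau=0\}$ away from the tangential set (Lemma \ref{PUSHPULL}), use the $\epsilon$-cutoffs to stay off the fold locus, and identify the resulting Lagrangian with $\hat{\Gamma}_{T,\epsilon}$ via Proposition \ref{graph}. The only cosmetic difference is the order bookkeeping: you pass through the order-$\tfrac14$ object of Lemma \ref{GAMMAHLEM} and subtract $\tfrac14$ for $\pi_{t*}$, while the paper treats $\pi_{T*}\Delta_t^*$ as a single order-zero operator applied to the order-zero operator of Lemma \ref{GAMMAHLEMa}; both give order $0$.
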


\begin{proof} This follows from Lemma \ref{PUSHPULL}.

We denote  by $ \pi_t: \R \times M \times M \to M \times M $ the
natural projection  $ \pi_t(t,x,y) = (x,y) $ and define
$$\pi_{T *} K(t, x, y) = \int_{\R} \chi(T^{-1} t) K(t, x, y) dt.
$$
Then, \begin{equation} \label{COMPO}  \VT(a) = \pi_{T*} \circ U(-t)
\circ (\gamma_H^* Op_H(a) \gamma_H)_{\geq \epsilon}\circ U(t),
\end{equation} or equivalently,  the Schwartz kernel of $\VT(a)$
is
 \begin{equation}\begin{array}{ll}
 \VT(a)(x,y) & =\frac{1}{T} \int_{-T}^{T} \int_H \int_H U_{\frac{\epsilon}{2}}^*(t, x,
s) \\ & \\ & \cdot Op_H(a)(s,s') \cdot U_{\epsilon}(t,
s',y) \chi(T^{-1}t) \, d\sigma(s) d\sigma(s') dt,
\end{array}\end{equation}
where, $U_{\epsilon}(t):= (1-\chi_{\epsilon}) U(t).$ 
Integration in $t$ pushes forward the canonical relation $
\Delta_t^*\Gamma^* \circ C_H \circ \Gamma $ to the canonical
relation $ \Gamma_{T,
\epsilon} $ studied in \S \ref{PF} and in Lemma \ref{PUSHPULL}.
$$WF'( \VT(a)) =   \{ (x,\xi,x',\xi'): (t,0,x,\xi,x',\xi') \in  WF'( V_{\epsilon}(t,a))  \}.$$
Thus $\VT(a)$ is a Fourier integral operator as long as the
composition is transversal. With the cutoff in place, transversal
composition was proved in Lemma \ref{PUSHPULL}.

We compute the order by the argument of \cite{DG}. We note that
$\pi_{T *} \Delta_t^*$ maps half densities on $\R \times \R \times
M \times M$ to half densities on $M \times M$ and its Schwartz
kernel is then a half density on $(\R \times \R \times M \times M)
\times (M \times M)$ which coincides with the Schwartz kernel of
the identity operator under an interchange of order of the
variables. Hence, $\pi_{T *} \Delta_t^* \in I^0((\R \times \R
\times M \times M) \times (M \times M), \Gamma)$ where $\Gamma$ is
the identity graph. As a result, applying it to   $U(-s) (\gamma_H^*
Op_H(a)
 \gamma_H)_{\geq \epsilon} U(t)$  preserves the order. But as noted above, the
 latter operator has order zero.

\end{proof}

\subsection{Symbol of $\VT(a)$}

We now calculate the symbol of $\VT(a)$. The symbol is a section
of the bundle of half-densities (tensor Maslov factors) on the
canonical relation $\pi_{t*} \Delta_{t}^* \Gamma^* \circ C_H \circ
\Gamma$. We parametrize the canonical relation by (\ref{IOTA}) and
view the symbol as a half-density on the parameter space $\R
\times \hat{C}_H$. The symbol of $\Delta^* \Gamma^* \circ C_H \circ
\Gamma$ is a half-density (tensor Maslov factor) on the parameter
space $\R \times C_H$ (see Lemma \ref{GAMMAHLEM}). To calculate
it, we use the pushforward diagram,  $$\begin{array}{ccc}
\Delta_t^*\Gamma^* \circ C_H \circ \Gamma & \leftarrow & F  \to
\pi_{t *} \Delta^*\Gamma^* \circ C_H \circ \Gamma
\\ &&\\ \downarrow &  & \downarrow \\ && \\
T^*(\R \times M \times M)  & \leftarrow &
\ncal^*(\mbox{graph}(\pi_t)).
\end{array}$$
Here, $\mbox{graph}(\pi_t) = \{(x, y; t, x, y)\} $ and
$$\ncal^*(\mbox{graph}(\pi_t)) = \{(x, \xi, y, \eta; t, 0, x, -
\xi, y, - \eta)\}, $$ which is naturally parameterized by $(t, x,
\xi, y, \eta) \in \R \times T^*(M \times M). $

 We note that
$\R \times \hat{C}_H$ is the set $\{\tau = 0\} \cap \R \times C_H$
where $ \tau = |\xi| - |\xi'| = |(\sigma,\eta_n)| - |(\sigma,\eta_n')| $. We claim that
\begin{lem}
\begin{equation}   \pi_{t*} |dt \wedge ds \wedge d
\sigma \wedge d \eta_n \wedge d \eta_n'|^{\half}  = |\frac{\sqrt{\sigma^2
+ \eta_n^2}}{\eta_n}  dt \wedge d s \wedge d
  \sigma \wedge d \eta_n|^{\half}, \end{equation} \end{lem}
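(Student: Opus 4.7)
The plan is to interpret the pushforward $\pi_{t*}$ on half-densities via the fiber-product diagram exhibited just before the Lemma: the operation amounts to restricting the half-density on $\R\times C_H$ to the subset $\{\tau=0\}\cap(\R\times C_H)=\R\times\hat C_H$ and dividing by $|d\tau|^{1/2}$ in the transverse direction, exactly as for the conormal pushforward $\pi_{t*}\Delta_t^*$ computed in \cite{DG,GuSt}.

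First I would express the function $\tau$ explicitly on the parameter space $\R\times C_H$, parameterized by $(t,s,\sigma,\eta_n,\eta_n')$ with $\xi=(\sigma,\eta_n)$, $\xi'=(\sigma,\eta_n')$. Using the metric norm in Fermi coordinates,
\[
\tau(s,\sigma,\eta_n,\eta_n')=|\xi|-|\xi'|=\sqrt{|\sigma|^2+\eta_n^2}-\sqrt{|\sigma|^2+(\eta_n')^2}.
\]
In particular the subset $\hat C_H$ is cut out by $\tau=0$, equivalently $(\eta_n')^2=\eta_n^2$, which on each smooth branch away from the fold gives $\eta_n'=\pm\eta_n$. (The $+$ branch contributes to $\Delta_{T^*M\times T^*M}$; the $-$ branch contributes to $\Gamma_{T,\epsilon}$.)

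Next I would compute the transverse Jacobian. Since $\partial\tau/\partial\eta_n'=-\eta_n'/\sqrt{|\sigma|^2+(\eta_n')^2}$, on $\hat C_H$ where $(\eta_n')^2=\eta_n^2$ one obtains
\[
\left|\frac{\partial\tau}{\partial\eta_n'}\right|=\frac{|\eta_n|}{\sqrt{|\sigma|^2+\eta_n^2}},
\]
which is nonzero precisely because the cutoff $(1-\chi_\epsilon)$ keeps us away from $T^*H=\{\eta_n=0\}$. Changing variables from $(\eta_n,\eta_n')$ to $(\eta_n,\tau)$ converts the ambient half-density:
\[
|dt\wedge ds\wedge d\sigma\wedge d\eta_n\wedge d\eta_n'|^{1/2}
=\Bigl|\tfrac{\sqrt{|\sigma|^2+\eta_n^2}}{\eta_n}\Bigr|^{1/2}\,
|dt\wedge ds\wedge d\sigma\wedge d\eta_n\wedge d\tau|^{1/2}
\]
on the subset $\{\tau=0\}$.

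Finally I would apply the pushforward rule from the diagram: the pullback to the fiber product produces the product of this half-density with the canonical half-density $|dt\wedge d\tau_1\wedge d\tau_2\wedge\Omega|^{1/2}$ on $\ncal^*(\operatorname{graph}(\pi_t))$, divided by the canonical half-density on $T^*(\R\times M\times M)$; as in the proof of Lemma \ref{GAMMAHLEM}, the $|d\tau_j|^{1/2}$ factors cancel, and the effect of $\pi_{t*}$ is precisely to delete the transverse factor $|d\tau|^{1/2}$. This yields exactly the stated formula
\[
\pi_{t*}|dt\wedge ds\wedge d\sigma\wedge d\eta_n\wedge d\eta_n'|^{1/2}
=\Bigl|\tfrac{\sqrt{\sigma^2+\eta_n^2}}{\eta_n}\,dt\wedge ds\wedge d\sigma\wedge d\eta_n\Bigr|^{1/2}.
\]
The main point to be careful about is that we are working on each smooth branch of $\hat C_H$ separately (the reflection branch $\eta_n'=-\eta_n$ and the identity branch $\eta_n'=\eta_n$), which is legitimate because $\hat\Gamma_{T,\epsilon}$ has been arranged in Proposition \ref{graph} to be a disjoint union of canonical graphs; the apparent obstacle, namely the failure of transversality at $\eta_n=0$ identified in Lemma \ref{PUSHPULL}, is precisely what produces the factor $\gamma^{-1}=\sqrt{|\sigma|^2+\eta_n^2}/|\eta_n|$ in \eqref{gammaDEF}, and it is harmless only because the cutoff $\chi_\epsilon$ excises the tangential directions where this factor blows up.
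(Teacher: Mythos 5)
Your proof is correct and follows essentially the same route as the paper: both interpret $\pi_{t*}$ via the conormal pushforward diagram (cancelling the $|d\tau_j|^{1/2}$ factors as in Lemma \ref{GAMMAHLEM}), and both compute the Jacobian factor $\gamma^{-1}=\sqrt{\sigma^2+\eta_n^2}/|\eta_n|$ coming from the transverse coordinate $\tau=|\xi|-|\xi'|$. The only cosmetic difference is that you compute $\partial\tau/\partial\eta_n'$ and change variables $(\eta_n,\eta_n')\mapsto(\eta_n,\tau)$, whereas the paper computes $d\tau|_{\tau=0}$ as a one-form and divides the five-form by it; these are equivalent.
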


\begin{proof} Away from the tangential directions, the pushforward
is a transversal composition, and we only calculate the
half-density symbol on that set. The map from half-densities on
the fiber product to half-densities on the composition is then a
canonical  isomorphism.

On the fiber product, we have the half-density given by tensoring
$  |dt \wedge ds \wedge d \sigma \wedge d \eta_n \wedge d
\eta_n'|^{\half} $ with the canonical half density $|dt_1 \otimes
\Omega|^{\half}$ where $\Omega$ is the symplectic volume density
on $T^* M \times T^* M$. When we divide by the canonical
half-density $|dt_1 \wedge d\tau_1 \wedge \Omega|^{\half}$ on $T^*
(\R \times M \times M)$ we obtain
$$ \frac{|dt \wedge ds \wedge d \sigma \wedge d \eta_n \wedge d
\eta_n'|^{\half}}{|d \tau_1|^{\half}}. $$

 $$\begin{array}{lll}   d \tau |_{\tau=0}  &= &  d ( |(\sigma,\eta_n)| - |(\sigma,\eta_n')| ) |_{\eta_n = \pm \eta_n'} \\&&\\
 &=& \half (\sigma^2 + \eta_n^2)^{-\half} (d \sigma^2 + d \eta_n^2) -
 (\sigma^2 + (\eta_n')^2)^{-\half} (d \sigma^2 + d (\eta_n')^2) |_{\eta_n = \pm \eta_n'} \\ &&\\
 & = &  \eta_n (\sigma^2 + \eta_n^2)^{-\half} ( d \eta_n
 \mp d (\eta_n')) |_{\eta_n = \pm \eta_n'}.  \end{array} $$
 Then the quotient density  is
  \begin{equation} \label{QUOTIENT} \sqrt{\sigma^2 + \eta_n^2}   \frac{  dt \wedge d s \wedge d
  \sigma \wedge d \eta_n \wedge d \eta'_n}{ d \eta_n^2 -
  d (\eta_n')^2} = \mp \frac{\sqrt{\sigma^2 + \eta_n^2}}{\eta_n}  dt \wedge d s \wedge d
  \sigma \wedge d \eta_n.  \end{equation}

  The presence of the $\pm$ is due to the fact that the canonical
  relation underlying $\VT(a)$ has both a diagonal and a reflection branch.
Moreover,  it  is immersed rather than embedded, so the symbol is
a collection of half-densities (tensor Maslov factors) on the
union of canonical graphs.

\end{proof}

To complete the calculation, we have
\begin{lem} \label{QUOTCALC} $|dt \wedge d s \wedge d
  \sigma \wedge d \eta_n|^{\half} = |\frac{\eta_n}{\sqrt{\sigma^2  + \eta_n^2}}|^{-\half} \left|  \pi^*
  \Omega_{T^* M} \right|^{\half}, $ where $\pi: \Gamma_H \to T^* M$ is
  the natural projection. \end{lem}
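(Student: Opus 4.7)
The plan is to compute $\pi^*\Omega_{T^*M}$ explicitly in coordinates. On the smooth sheets of $\hat{\Gamma}_{T,\epsilon}$ the reduction $\eta_n' = \pm \eta_n$ collapses the parameter space from $\R \times T^*_H M \times_{T^*H} T^*_H M$ to $\R \times T^*_H M$ with coordinates $(t, s, \sigma, \eta_n)$, and the projection $\pi$ is nothing but the map $G: (t, s, \sigma, \eta_n) \mapsto G^t(s, 0, \sigma, \eta_n)$ studied in Lemma \ref{RFOLD}. So the claim reduces to showing that, in Fermi symplectic coordinates,
$$ G^* \Omega_{T^*M} \;=\; \gamma \, dt \wedge ds \wedge d\sigma \wedge d\eta_n, \qquad \gamma = \frac{\eta_n}{\sqrt{|\sigma|^2 + \eta_n^2}};$$
taking square roots and solving for $|dt \wedge ds \wedge d\sigma \wedge d\eta_n|^{\half}$ then yields the stated identity.

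I would establish this Jacobian formula by a symplectic reduction to $t = 0$. Write $G = \phi_t \circ \iota$ with $\phi_t = G^t$ and $\iota: T^*_H M \hookrightarrow T^*M$ the inclusion. At a base point $(t_0, q_0)$, the tangent map $dG$ sends $\partial_t$ to $H_g|_{\phi_{t_0}(\iota(q_0))} = d\phi_{t_0}(H_g|_{\iota(q_0)})$ and each $\partial_{q_i}$ to $d\phi_{t_0}(d\iota(\partial_{q_i}))$. Because $\phi_{t_0}^*\Omega_{T^*M} = \Omega_{T^*M}$, one may strip $d\phi_{t_0}$ off all $2n$ arguments and evaluate instead at $\iota(q_0) \in T^*_H M$. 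There the vectors $d\iota(\partial_{q_i})$ span the codimension-one subspace $\ker dy_n = T_{\iota(q_0)}(T^*_H M)$, while the Hamilton vector field of $|\xi|_g$ has vertical component $dy_n(H_g) = \partial_{\eta_n}|\xi|_g = \eta_n/|\xi|_g$. Expanding $\Omega_{T^*M} = ds \wedge dy_n \wedge d\sigma \wedge d\eta_n$ along the $dy_n$ row of the Jacobian, only the $H_g$ column contributes a nonzero entry, producing the claimed factor $\gamma = \eta_n/|\xi|_g$.

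The only delicate point is the fold set $\R \times S^*H = \{\eta_n = 0\}$, where $\gamma$ vanishes, $G$ has a fold singularity (Lemma \ref{RFOLD}), and the parameterization $\eta_n' = \pm \eta_n$ ceases to be smooth. This is, however, precisely the locus excised by the tangential cutoff $\chi_\epsilon^{(tan)}$ built into $\hat{\Gamma}_{T,\epsilon}$, so the formula holds throughout the domain in which it is used. The resulting symbol factor $|\gamma|^{-\half}$ is ultimately the source of the $\gamma_{B^*H}^{-1}$ in the limit measure $\omega(a)$, and explains why the principal symbol of $\VT(a)$ fails to be square integrable near $T^*H$; this is the fold-induced singularity of $W^*W$ alluded to in the introduction rather than a genuine obstacle.
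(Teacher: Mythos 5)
Your proposal is correct and follows essentially the same route as the paper: you identify the projection $\pi$ on the smooth sheets with the flow-out map $G(t,s,\sigma,\eta_n)=G^t(s,\sigma,\eta_n)$, strip off $dG^t$ using invariance of the symplectic volume form, and read off the Jacobian as the $\partial_{y_n}$-component $\eta_n/|\xi|_g$ of $H_g$ against the tangent space of $T^*_H M$, exactly as in the paper's computation. The remarks on the fold set $\{\eta_n=0\}$ and the cutoff match the paper's treatment as well.
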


  \begin{proof} In terms of the parametrizing coordinates  $(t, s, \sigma,
  \eta_n, \eta_n')$, the map $\pi$ is given by $\pi(t, s, \sigma,
  \eta_n, \eta_n') = G^t(s, \sigma, \eta_n)$. Hence
  $$\begin{array}{lll} \frac{\pi^* \Omega_{T^* M}}{dt \wedge d s \wedge d
  \sigma \wedge d \eta_n} & = & \Omega_{T^* M} (\frac{d}{dt} G^t(s,
  \sigma, \eta_n), d G^t \frac{\partial}{\partial s_j},  d G^t \frac{\partial}{\partial
  \sigma_j},  d G^t \frac{\partial}{\partial \eta_n} ) \\ && \\
  & = & \Omega_{T^* M} (H_g,  \frac{\partial}{\partial s_j},  \frac{\partial}{\partial
  \sigma_j},   \frac{\partial}{\partial \eta_n} )  \\ && \\
  & = &  \frac{\eta_n}{\sqrt{\sigma^2 + \eta_n^2}}  \Omega_{T^* M} (\frac{\partial}{\partial y_n},  \frac{\partial}{\partial s_j},  \frac{\partial}{\partial
  \sigma_j},   \frac{\partial}{\partial \eta_n} ) \\ && \\
  & = & \frac{\eta_n}{\sqrt{\sigma^2  + \eta_n^2}}.  \end{array} $$
since $\frac{d}{dt} G^t(s,
  \sigma, \eta_n) = H_g =  \frac{\eta_n}{\sqrt{\sigma^2  + \eta_n^2}} \frac{\partial}{\partial y_n} + \cdots $ is the Hamilton vector
  field of $g^2 = \eta_n^2 + (g')^2$ where $\cdots$  represent vector fields in
  the span of $\frac{\partial}{\partial s_j},  \frac{\partial}{\partial
  \sigma_j},   \frac{\partial}{\partial \eta_n}$. Finally, we use that   $d G^t$ is symplectic
  linear and that $s, y_n, \sigma, \eta_n$ are symplectic
  coordinates.

  \end{proof}

  \begin{cor}\label{BIGFACTOR}
  \begin{equation} \pi_{t*}  |dt \wedge ds \wedge d
\sigma \wedge d \eta_n \wedge d \eta_n'|^{\half} = |\frac{\sqrt{\sigma^2
+ \eta_n^2}}{\eta_n}  |^{\half} |\frac{\eta_n}{\sqrt{\sigma^2  +
\eta_n^2}}|^{-\half} \left|  \pi^*
  \Omega_{T^* M} \right|^{\half} = |\frac{\sqrt{\sigma^2
+ \eta_n^2}}{\eta_n}  | \left|  \pi^*
  \Omega_{T^* M} \right|^{\half} .  \end{equation}
  \end{cor}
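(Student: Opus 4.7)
The plan is to derive Corollary \ref{BIGFACTOR} as an immediate combination of the pushforward computation in the preceding lemma and the identification of the tangential half-density in Lemma \ref{QUOTCALC}. The first lemma established that under the pushforward diagram associated with $\pi_t$, the quotient half-density on $\pi_{t*}\Delta_t^*\Gamma^*\circ C_H\circ \Gamma$ is
$$ \pi_{t*}|dt\wedge ds\wedge d\sigma\wedge d\eta_n\wedge d\eta_n'|^{\half} \;=\; \Bigl|\frac{\sqrt{\sigma^2+\eta_n^2}}{\eta_n}\Bigr|^{\half}\, |dt\wedge ds\wedge d\sigma\wedge d\eta_n|^{\half}, $$
thanks to the computation of $d\tau$ on the set $\tau=0$ in \eqref{QUOTIENT}.

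Next, I would invoke Lemma \ref{QUOTCALC}, which identifies the remaining factor in invariant form:
$$ |dt\wedge ds\wedge d\sigma\wedge d\eta_n|^{\half} \;=\; \Bigl|\frac{\eta_n}{\sqrt{\sigma^2+\eta_n^2}}\Bigr|^{-\half}\,|\pi^*\Omega_{T^*M}|^{\half}, $$
where $\pi:\Gamma_H\to T^*M$ is the natural projection. The content of this identity is that on the parameter space $\R\times\hat C_H$, the coordinate half-density can be traded for the pullback of the symplectic half-density $|\Omega_{T^*M}|^{\half}$ at the cost of the Jacobian factor coming from $\pi^*\Omega_{T^*M}/(dt\wedge ds\wedge d\sigma\wedge d\eta_n)$, which was computed using $H_g = \frac{\eta_n}{\sqrt{\sigma^2+\eta_n^2}}\partial_{y_n}+\cdots$.

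Substituting the second identity into the first and combining the two factors of $|\eta_n/\sqrt{\sigma^2+\eta_n^2}|^{\pm\half}$ gives
$$ \pi_{t*}|dt\wedge ds\wedge d\sigma\wedge d\eta_n\wedge d\eta_n'|^{\half} \;=\; \Bigl|\frac{\sqrt{\sigma^2+\eta_n^2}}{\eta_n}\Bigr|^{\half}\Bigl|\frac{\eta_n}{\sqrt{\sigma^2+\eta_n^2}}\Bigr|^{-\half}|\pi^*\Omega_{T^*M}|^{\half} \;=\; \Bigl|\frac{\sqrt{\sigma^2+\eta_n^2}}{\eta_n}\Bigr|\,|\pi^*\Omega_{T^*M}|^{\half}, $$
which is the content of the corollary.

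The only real issue to be careful about is bookkeeping: the two half-density factors combine with matching absolute values (so no sign ambiguity enters the final statement), the singularity on the tangential locus $\eta_n=0$ is already excluded by the cutoff $\chi_\epsilon$, and the formula only needs to be verified off the fold set $\R\times\Delta_{T^*H\times T^*H}$ where the composition is transversal by Lemma \ref{PUSHPULL}. Once those points are noted, nothing further is required; the two displayed identities make the corollary essentially algebraic.
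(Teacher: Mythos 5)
Your proposal is correct and follows exactly the route the paper takes: Corollary \ref{BIGFACTOR} is obtained by substituting the identity of Lemma \ref{QUOTCALC} into the pushforward formula of the preceding lemma and multiplying the two scalar factors $|\sqrt{\sigma^2+\eta_n^2}/\eta_n|^{\half}$ and $|\eta_n/\sqrt{\sigma^2+\eta_n^2}|^{-\half}$. Your remarks about working off the tangential locus $\eta_n=0$ and off the fold set are consistent with the paper's setup, so nothing further is needed.
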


\subsection{The $P_{T, \epsilon} + F_{T, \epsilon}$ decomposition}

We first define the pseudo-differential part $P_{T,\epsilon}(a)$
of $\VT(a)$. As discussed in Lemma \ref{graph},
 $\R \times \Delta_{T^* H \times T^* H}$   is a separating
 hypersurface in the parameter space $\R \times \hat{C}_H$
  Hence $\R \times \hat{C}_H \backslash  \R \times \Delta_{T^* H \times T^* H} $    has two connected components, $\R \times \Delta_{T^*_H M \times T^*_H M}$ and  $\R \times \mbox{graph} (r_H : T^*_H M \to T^*_H
   M)$,
which map respectively to  $\Delta_{T^* M \times T^* M}$, and the
second is $\Gamma_{T, \epsilon}$.

To separate these pieces of the canonical relation in the support
of the cutoff, we introduce a finite  conic open cover $\{U_j\}$
of $T^* M$ with sufficiently small sets $U_j$ so that the image
under $\iota$ of $\R \times \mbox{graph}(r_H: T^*_H M \to T^*_H
M)$ does not intersect $\bigcup_{j} U_j \times U_j$. This is
possible since $d(\xi, r_H \xi) \geq \epsilon$ in the support of
the cutoff. We then define:

\begin{defin}
 \label{psdopart}
$
 P_{T,\epsilon}(a):= \sum_{j=1}^{N_{T,\epsilon}} \psi_{U_j} \VT(a)
 \psi_{U_j},$ and
 $F_{T,\epsilon}^{(j)}(a) = \VT(a) - P_{T, \epsilon}(a).$

 \end{defin}

Since   $WF'( P_{T,\epsilon}(a)) \subset \Delta_{T^*M \times
T^*M},$  $P_{T,\epsilon}(a)$ is a pseudo-differential operator. We
make a further decomposition of $F_{T, \epsilon}$ below.

\subsection{Principal symbol of $P_{T, \epsilon}$}

The following is a key calculation in the proof of Theorem
\ref{maintheorem}.

\begin{lem} \label{egorov} Let $t_j(x,\xi); j  \in {\mathbb Z}$ denote the impact times of the geodesic $G^{t}(x,\xi)$
with the hypersurface $H$ in Lemma \ref{graph}.  Then the
principal symbol $a_{T, \epsilon}$ of $P_{T,\epsilon}(a) $ is given by
$$ \begin{array}{lll}  a_{T, \epsilon}(x,\xi) &= & \frac{1}{T}  \sum_{j \in {\mathbb Z} }
( (1 -\chi_{\epsilon}) \gamma^{-1} a_H)(\Phi^j  \Phi_I  (x,\xi))
\cdot \chi (\frac{ t_j(x,\xi)}{T}).  \end{array}$$
\end{lem}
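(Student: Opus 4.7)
The plan is to extract the principal symbol of $P_{T,\epsilon}(a)$ on the diagonal branch of the canonical relation of $\bar V_{T,\epsilon}(a)$ by combining the symbol formula for $V_\epsilon(t;a)$ from Lemma \ref{GAMMAHLEM} with the pushforward calculation from Corollary \ref{BIGFACTOR}, and then summing over the preimages of the parametrizing map.

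By Lemma \ref{GAMMAHLEM}, the symbol of $V_\epsilon(t;a)$ pulls back under the parametrization $\iota_{\Gamma_H}$ to $(1-\chi_\epsilon)(s,\xi)\,a_H(s,\xi)\,|dt\wedge\Omega|^{1/2}$ on $\R \times T_H^*M$. The diagonal branch of the canonical relation $\hat\Gamma_{T,\epsilon}$ corresponds to the component $\xi = \xi'$ of $\hat C_H$, and on this branch the parametrizing map reduces to $(t,s,\xi)\mapsto G^t(s,\xi)$, which is the map $G$ of Lemma \ref{RFOLD}. By Definition \ref{psdopart}, $P_{T,\epsilon}(a)$ isolates precisely this branch of $\bar V_{T,\epsilon}(a)$, so computing its symbol reduces to pushing the symbol of $V_\epsilon(t;a)$ forward under $\pi_t$ along this branch.

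For a point $(x,\xi)\in T^*M$ in the support, the preimages in $[-T,T]\times T_H^*M$ under the parametrizing map are precisely the points of the form $(-t_j(x,\xi),\,G^{t_j(x,\xi)}(x,\xi))$ as $j$ ranges over $\Z$: the condition $s\in H$ forces $G^{-t}(x,\xi)\in T_H^*M$, i.e.\ $-t$ must be an impact time. By Proposition \ref{graph} and homogeneity \eqref{impact}, the cutoffs make $G$ a finite covering on the relevant conic set, so the pushforward symbol is a \emph{finite} sum over the preimages. The pushforward half-density factor on the diagonal branch (where $\eta_n = \eta_n'$) was computed in Corollary \ref{BIGFACTOR} and contributes $|\sqrt{\sigma^2+\eta_n^2}/\eta_n| = \gamma^{-1}$ by \eqref{gammaDEF}, evaluated at each preimage $G^{t_j(x,\xi)}(x,\xi) \in T_H^*M$.

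Assembling the pieces: the prefactor $1/T$ from \eqref{VTFORMep}; the time cutoff $\chi(T^{-1}t)$ evaluated at the preimage, which gives $\chi(t_j(x,\xi)/T)$ after reindexing $j\mapsto -j$ in the sum (so the set $\{-t_j\}_j$ becomes $\{t_j\}_j$); the symbol values $((1-\chi_\epsilon)\,\gamma^{-1}\,a_H)$ evaluated at $G^{t_j(x,\xi)}(x,\xi) = \Phi^j\Phi_I(x,\xi)$; and the identification of the resulting half-density on $T^*M$ with a scalar multiple of $|\pi^*\Omega_{T^*M}|^{1/2}$ (again from Corollary \ref{BIGFACTOR}). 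The only delicate point is justifying that the sum is well-defined: the cutoff $\chi_\epsilon$ removes tangential directions where $G$ fails to be a local diffeomorphism (Lemma \ref{RFOLD}), and the time cutoff $\chi(T^{-1}t)$ truncates to finitely many impact times in any bounded region, which is exactly what Proposition \ref{graph} guarantees. The remaining verification is that the transversality statements of Lemmas \ref{DELTATRANS} and \ref{PUSHPULL} make the half-density pushforward formula apply verbatim on each fundamental domain $\dcal^{(j)}_{T,\epsilon}$ from \eqref{DDCALJDEF}.
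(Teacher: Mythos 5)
Your argument is essentially the paper's own proof: you compute the symbol of $P_{T,\epsilon}(a)$ by pushing the symbol of $V_{\epsilon}(t;a)$ from Lemma \ref{GAMMAHLEM} forward under the $t$-integration, summing over the fiber of the parametrizing map (the impact times and impact points, made finite and smooth by the cutoffs and the fundamental domains of Proposition \ref{graph}), with the factor $\gamma^{-1}$ supplied by the half-density quotient of Corollary \ref{BIGFACTOR} --- exactly the route taken in the paper. One small blemish: relabeling $j\mapsto -j$ does not literally turn the set $\{-t_j(x,\xi)\}$ into $\{t_j(x,\xi)\}$ (the impact-time set is not symmetric under $t\mapsto -t$), but this is only a sign-convention ambiguity in the parametrization of the canonical relation of $U(t)$, present in the paper's own fiber formula as well, and it is harmless once the orientation convention is fixed (or $\chi$ is taken even).
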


\begin{proof}

By Lemma \ref{VTALEM}, $P_{T,\epsilon}$ is a pseudodifferential
operator. To compute  its symbol, we use that
 the formula (\ref{psdopart}) is equivalent to,
$$  P_{T,\epsilon}(a) = \sum_{j=1}^{N_{T,\epsilon}}  \pi_{T*} \psi_{U_j} V_{\epsilon}(t;a) \,
\psi_{U_j}.  $$ By Lemma \ref{GAMMAHLEM},  $ \psi_{U_j}
V_{\epsilon}(t;a)\psi_{U_j} $ is a Fourier integral operator,
and the symbol of $P_{T,\epsilon}$ is obtained from that of $
V_{\epsilon}(t;a)$ by multiplying by $ \sigma_{\psi_{U_j}} =
\psi_{U_j}\left( \frac{\xi}{|\xi|} \right)$ and pushing forward
under $ \pi_{T*}$.

The pushforward symbol at $(x, \xi, x, \xi) \in \Delta_{T^* M
\times T^* M}$  is obtained by summing contributions from each
point of
 the  `fiber'
\begin{equation} \label{fiber} \begin{array}{ll}
\iota^{-1} (x, \xi, x, \xi) = \{(t, s, \xi', \xi') \in [-T, T]
\times \hat{C}_H : G^t(s, \xi') = (x, \xi)\}.
\end{array} \end{equation} The fiber thus consists of the impact times  $t_j(x,
\xi)$ and impact points.  By Lemma \ref{GAMMAHLEM}, and taking
into account the normalizing factor $\frac{1}{T}$ in $\VT(a)$, at
each point we get the scalar  $$  \frac{1}{T} ( (1-\chi_{\epsilon})
 a_H )(G^{t_j(x, \xi)} (x, \xi)) \chi(T^{-1} t_j(x,
\xi)) $$ times the target half-density (and Maslov factor)
calculated in (\ref{QUOTIENT}) and Corollary (\ref{BIGFACTOR}),
$$- \gamma^{-1} = - |\frac{\sqrt{\sigma^2
+ \eta_n^2}}{\eta_n}  |$$ times the symplectic volume half
density.
  Here, the minus sign is due to the fact that $\eta_n = \eta_n'$
  in this diagonal component.

\end{proof}

\begin{rem} We note that $T^*_H M $ embeds as the subset $\{(s, \xi, \xi) \in  \hat{C}_H\}$.
Hence the diagonal branch of $\Gamma_T$ may be parametrized by $j:
\R \times S^*_H M \to S^* M \times S^* M, $ $j(t, s, \xi) =
(G^t(s, \xi), G^t(s, \xi))$. This is  similar to the description
of $G^t$ as the suspension of $\Phi$ with height function $T$,
except that it does not identify $(s, \xi, T(s, \xi)) \sim
(\Phi(s, \xi), 0)$.  In terms of this parametrization,
$$ \begin{array}{lll}  a_{T, \epsilon}(s, \xi, t) &= & \frac{1}{T}  \sum_{j \in {\mathbb Z} }
 ( (1 - \chi_{\epsilon}) \gamma^{-1} a_H )(\Phi^j  (s,\xi))  \cdot
\chi (\frac{t +  T^{(j)}(s,\xi))}{T}), 
\end{array}$$ where we implicitly use the identification
$G^{t'}(s, \xi, t) = (\Phi^n(s, \xi), t' + t - T^{(n)}(s, \xi))  \,\,;  T^{(n)}(s,\xi) \leq t + t' \leq T^{(n+1)}(s,\xi).  $
\end{rem}

\subsection{Symbol of $F_{T,\epsilon}(a)$: Proof of (ii)}

By   Lemmas \ref{graph} and \ref{VTALEM}, $\Gamma_{T,\epsilon} =
 \bigcup_{j=1}^{N_{T,\epsilon}} \text{graph}  (\rcal_j)$ is  a union of canonical graphs. By definition of
 $\epsilon$ they are disjoint. Let $U_j  \times V_j\subset T^* M\times T^*M, j=1,...,N_{T,\epsilon}$
 be conic open sets which separate the sets$\text{graph}
 (\rcal_j).$ Add the conic open sets $U_{j} \times U_{j}; j=1,...,N_{T,\epsilon}$ containing the diagonal components of
  $\Gamma_{T,\epsilon}$  and let $U_0 \times V_0$ denote an additional open set so that
\begin{equation} \label{cover}
\bigcup_{j=1}^{N_{T,\epsilon}} (U_j \times V_j) \cup (U_j \times
U_j) \cup (U_0 \times V_0) = T^* M \times T^* M. \end{equation}
 Let $ \psi_{U_j} \in Op(S^0(T^*M))$ and $ \psi_{V_j} \in Op(S^0(T^*M));
  \, j=1,...,N_{T,\epsilon}$  with the property that $\psi_{U_j} \times \psi_{V_j}, \psi_{U_j}
  \times \psi_{U_j} j=1,...,N_{T,\epsilon}$ together with $\psi_{U_0} \times \psi_{V_0}$;  form a pseudo-differential partition of unity subordinate to
 the cover (\ref{cover}).

 \begin{defin} We put

 $$F_{T,\epsilon}^{(j)}(a) = \psi_{V_j} F_{T,\epsilon}(a)
 \psi_{U_j}. $$
 $F_{T,\epsilon}^{(0)}(a)$ is a smoothing operator, and (iii) holds.
 \end{defin}

 Since the canonical relation of  each term $F_{T,\epsilon}^{(j)}(a)$ is the
 graph of a canonical transformation, it carries a canonical graph
 $1/2$-density $|dx \wedge d \xi|^{1/2}$ pulled back from the
 projection to the domain of $\rcal_j$. Hence, we can identify the
 symbol of $F_{T,\epsilon}^{(j)}(a)$ with a scalar function on $T^* M$.

The symbol of $\VT(a)$ is the pushforward under
 $\pi_{t*}$ of the symbol of $V(t; a) \chi(\frac{t}{T})$.
Hence, the calculation of this symbol is analogous to that of the
 pseudo-differential part, except that now it is
only the  elements of  $\{(s, \tau, \xi_n; s, \sigma,
  \xi_n')\}$ with $\xi_n = -\xi_n'$ which  contribute to the
  composition. This
 canonical relation (with boundary) is parameterized by
 $$(t, s, \xi) \in [-T, T] \times T_H^* M \to (G^t(s,
\xi), G^t(s, r_H \xi)). $$ The symbol of $U(-t) \circ \gamma_H^*
Op(a) \gamma_H \circ U(t)$ as Fourier integral kernel in
$\dcal'(\R \times M \times M)$ is computed in  Lemma
\ref{GAMMAHLEM}.

The difference to the diagonal calculation lies with the push
forward of the symbol and canonical relation to $\bigcup_j
\mbox{graph} \rcal_j \subset T^* M \times T^*M$.  The `fiber' over
$(x, \xi, \rcal_j(x, \xi)) $ is the discrete set
$$\{(t, s, \sigma, \xi_n) \in [-T, T] \times T_H^* M : G^t(s, \sigma, \xi_n) = (x,
\xi), G^{t}(s, \sigma, - \xi_n) = \rcal_j(x, \xi)\}. $$ The second
condition only holds when  $t = t_j(x, \xi)$ and then follows from
the first. Hence, the symbol is given in graph coordinates ($t =
t_j(x, \xi), (s, \sigma, y_n, \eta_n) = \Phi_j(x, \xi)$) as the
scalar factor
\begin{equation} \label{fiosymbol}
 \sigma(F_{T,\epsilon}^{(j)})(x,\xi)  =\frac{1}{T} \sum_{j} ( (1 - \chi_{\epsilon}) \gamma^{-1} a_H) (G^{t_j(x, \xi)}(x, \xi))
\chi(\frac{t_j(x,\xi)}{T}) 
\end{equation}
times the half-density $|dx \wedge d\xi|^{\frac{1}{2}}$.

 This completes the proof of Proposition \ref{VTDECOMPa}. \qed

\section{\label{LWLFIO} Local Weyl law for homogeneous Fourier integral
operators}

In this section, we collect together the instances of the local
Weyl laws we need in the proof of Theorem \ref{maintheorem}.
We only state the first one, since a proof can be found in
\cite{Z}.

\begin{prop}  \label{LWLFIOs}  Let $C_{F} \subset T^*M-0 \times T^*M-0$ be a local canonical graph and  $F \in I^0(M \times M;C_F).$ Then,
$$\lim_{\lambda \to \infty} \frac{1}{N(\lambda)} \sum_{j: \lambda_j \leq \lambda} \langle F  \phi_j,
\phi_j
\rangle =  \frac{1}{vol(S^*M)}  \int_{S (C_F \cap \Delta_{S^*M \times S^* M})} \sigma_F
d\mu_L. $$
\end{prop}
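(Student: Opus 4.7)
The plan is to follow the standard Duistermaat--Guillemin style argument, reducing the Weyl sum to the analysis of the distributional trace of $F \circ U(t)$ near $t=0$ and then applying a Tauberian theorem. Set
$$N_F(\lambda) = \sum_{j: \lambda_j \leq \lambda} \langle F \phi_j, \phi_j \rangle, \qquad \Sigma_F(t) := \sum_j \langle F \phi_j, \phi_j \rangle e^{-it \lambda_j} = \operatorname{Tr}(F \, U(-t)),$$
so that $\Sigma_F$ is the distributional Fourier transform of $dN_F$. The first step is to identify $\Sigma_F$ as the trace of a homogeneous Fourier integral operator. Since $F \in I^0(M \times M; C_F)$ and $U(-t) \in I^{-1/4}(\R \times M \times M; \Gamma_{-t})$ with $\Gamma_{-t}$ the graph of $G^{-t}$, and since $C_F$ is a local canonical graph, the composition $F \circ U(-t)$ is an FIO with canonical relation $\{(x,\xi, G^{-t}(y,\eta)) : (x,\xi,y,\eta) \in C_F\}$; its distributional trace in $t$ has wavefront set concentrated at the values of $t$ for which this relation meets $\Delta_{T^*M \times T^*M}$.

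Next, I would localize near $t=0$. Choose $\rho \in \mathcal{S}(\R)$ with $\hat\rho \in C_c^\infty$, $\hat\rho \equiv 1$ near $0$, $\operatorname{supp} \hat\rho$ small enough that no nontrivial closed orbits of the flow enter the picture, and $\rho \geq 0$ even with $\rho(0)=1$. Then
$$(\rho * dN_F)(\lambda) = \frac{1}{2\pi} \int e^{it\lambda} \hat\rho(t)\, \Sigma_F(t)\, dt,$$
and the asymptotic behavior as $\lambda \to \infty$ is governed by the singularity of $\hat\rho \cdot \Sigma_F$ at $t=0$. One represents the Schwartz kernel of $FU(-t)$ as an oscillatory integral (e.g. using a phase parametrizing $C_F$ composed with the phase $S(t,x,y,\xi) - \tau t$ of $U(-t)$), evaluates on the diagonal, and applies stationary phase in the $t$-variable and the fiber variables. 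The critical set is precisely the set of $(x,\xi)$ with $(x,\xi,x,\xi) \in C_F$, i.e. $C_F \cap \Delta_{T^*M \times T^*M}$, and the clean intersection hypothesis (automatic when $C_F$ is a local canonical graph and the intersection is transverse modulo the scaling direction) ensures the stationary phase expansion is valid. A bookkeeping computation of the leading order, combined with homogeneity in $\xi$, converts the integral into an integral of $\sigma_F$ over $S(C_F \cap \Delta_{S^*M \times S^*M})$ against Liouville measure, and the dominant power of $\lambda$ is $\lambda^{n-1}$ with the same constant as in the ordinary local Weyl law.

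Finally, apply a Tauberian theorem of Hörmander type to pass from the asymptotics of $\rho * dN_F$ to those of $N_F(\lambda)$ itself. Because $\langle F \phi_j, \phi_j \rangle$ need not be real or non-negative, one cannot invoke monotonicity directly, but decomposing $F$ as a combination of self-adjoint pieces $\tfrac12(F+F^*)$ and $\tfrac{1}{2i}(F-F^*)$ (each of which is again an FIO of the same class with symbol the corresponding combination) reduces to the self-adjoint case, where the monotone Tauberian theorem applies. Dividing by $N(\lambda) \sim (2\pi)^{-n} \operatorname{vol}(S^*M)\, \lambda^n / n$ produces the stated formula.

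The main technical obstacle is the clean intersection and symbol computation at $t=0$: one must verify that the composition $C_F \circ \Gamma_{-t}$ is clean in $t$ near $t=0$ and track the Maslov/half-density factors carefully so that the principal symbol of $\Sigma_F(t)$ at $t=0$ pulls back to exactly $\sigma_F \cdot d\mu_L$ on $S(C_F \cap \Delta_{S^*M})$ with no extra Jacobian factors. Since $C_F$ is a local canonical graph, the non-pseudodifferential part intersects the diagonal in a set of dimension strictly less than $2n$, hence contributes only to lower-order terms and drops out of the leading asymptotics; this is what forces the answer to depend only on the portion of $C_F$ that sits over the diagonal.
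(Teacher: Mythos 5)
Your overall strategy is exactly the one the paper has in mind: it does not reprove Proposition \ref{LWLFIOs} but cites \cite{Z}, and its proof of the companion Proposition \ref{LWLH} runs along the same Duistermaat--Guillemin lines you describe (analyze $\operatorname{Tr} F\,U(-t)$, extract the leading singularity at $t=0$, finish with a Fourier Tauberian theorem). However, your Tauberian step would fail as written. Splitting $F$ into $\frac{1}{2}(F+F^*)$ and $\frac{1}{2i}(F-F^*)$ makes the matrix elements real but not nonnegative, so even for self-adjoint $F$ the function $N_F(\lambda)=\sum_{\lambda_j\le\lambda}\langle F\phi_j,\phi_j\rangle$ is not monotone and the monotone Tauberian theorem does not apply to it. The standard repair is either to replace $F$ by $F+C\,I$ with $C\ge\|F\|_{L^2\to L^2}$, so that the increments $\langle F\phi_j,\phi_j\rangle+C$ are nonnegative, apply the Tauberian theorem to this monotone counting function (whose trace has the FIO singularity at $t=0$ plus $C$ times the usual Weyl singularity) and subtract $C\,N(\lambda)$; or to use a non-monotone Tauberian argument based on the band estimate $\sum_{\lambda_j\in[\lambda,\lambda+1]}|\langle F\phi_j,\phi_j\rangle|=O(\lambda^{n-1})$, which follows from Cauchy--Schwarz, the sharp local Weyl law for the zeroth-order pseudodifferential operator $F^*F$ (it is pseudodifferential precisely because $C_F$ is a canonical graph), and $N(\lambda+1)-N(\lambda)=O(\lambda^{n-1})$.

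The second soft spot is your claim that cleanliness of $C_F\cap\Delta_{T^*M\times T^*M}$ at $t=0$ is ``automatic'' for a local canonical graph. It is not: the fixed point set of the underlying canonical transformation is merely a closed conic set, so the clean stationary phase computation of the leading singularity cannot simply be invoked; this is the genuinely delicate point and the reason the paper defers to \cite{Z}. Note that for the applications in this paper only two regimes occur: the part of $C_F$ contained in the diagonal (a pseudodifferential operator, handled by the classical local Weyl law) and graphs whose fixed point set has Liouville measure zero, where the assertion is that the averages tend to $0$. The latter still requires an argument rather than cleanliness --- for instance, write $F=FB+F(I-B)$ with $B$ a pseudodifferential cutoff to a small conic neighborhood of the fixed point set, bound the averages of the first piece by Cauchy--Schwarz together with the local Weyl law for $(FB)^*(FB)$ (whose symbol has small integral), and observe that $\operatorname{Tr} F(I-B)U(-t)$ has no singularity at $t=0$ once $\operatorname{supp}\hat\rho$ is taken small, since the canonical relation of $F(I-B)$ stays a positive distance from the diagonal. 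With those two repairs your argument matches the intended proof.
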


Here, $S (C_F \cap \Delta_{S^*M \times S^* M})$ is the set of unit
vectors in the diagonal part of $C_F$. The proof is similar to
that of the next Proposition, which we use to determine the limit
state $\omega(a)$.

\begin{prop} \label{LWLH} We have,
$$\lim_{\lambda \to \infty} \frac{1}{N(\lambda)} \sum_{j: \lambda_j \leq \lambda} \langle Op_H(a)  \gamma_H \phi_j,  \gamma_H \phi_j \rangle
=  \frac{2}{vol(S^*M)}  \int_{B^*H} a_0 \gamma_{B^*H}^{-1}  |ds \wedge d \sigma|, $$ where $|ds
\wedge d \sigma|$ is symplectic volume measure on $B^*H$, and
$a_0$ is the principal symbol of $Op_H(a)$.
\end{prop}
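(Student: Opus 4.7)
The plan is to rewrite the matrix element via \eqref{ME1} and Proposition \ref{VTDECOMPa}, and apply a local Weyl law to each piece of the resulting decomposition.

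For any $T,\epsilon>0$, \eqref{ME1} gives $\langle Op_H(a)\gamma_H\phi_j,\gamma_H\phi_j\rangle=\langle\bar V_T(a)\phi_j,\phi_j\rangle$. Decomposing $\bar V_T(a)$ via \eqref{op decomp} produces the averaged $\bar V_{T,\epsilon}(a)$ together with a $(\gamma_H^*Op_H(a)\gamma_H)_{\leq\epsilon}$-term and a smoothing remainder. Proposition \ref{VTDECOMPa} then splits $\bar V_{T,\epsilon}(a)=P_{T,\epsilon}(a)+F_{T,\epsilon}(a)+R_{T,\epsilon}(a)$. The canonical relation of $F_{T,\epsilon}(a)$ is the finite union $\bigcup_j\text{graph}(\rcal_j)$, and its intersection with $\Delta_{T^*M\times T^*M}$ lies in $\bigcup_j\{(x,\xi):G^{-t_j(x,\xi)}(x,\xi)\in T^*H\}$, a $\mu_L$-null subset of $S^*M$. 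Hence Proposition \ref{LWLFIOs} forces the Weyl contribution from $F_{T,\epsilon}$ to vanish, and $R_{T,\epsilon}$ contributes zero trivially.

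The main work is the Weyl average of $P_{T,\epsilon}(a)$, which by the standard pseudo-differential local Weyl law equals $\frac{1}{vol(S^*M)}\int_{S^*M}a_{T,\epsilon}\,d\mu_L$. By Lemma \ref{MULMUH}, $S^*M$ admits the suspension description $\int_{S^*M}f\,d\mu_L=\int_{S^*_H M}d\mu_{L,H}(y)\int_0^{T(y)}f(G^s(y))\,ds$. Inserting the form of $a_{T,\epsilon}$ from Lemma \ref{egorov}, namely $a_{T,\epsilon}(G^s(y))=T^{-1}\sum_j g(\Phi^j(y))\,\chi((T^{(j)}(y)-s)/T)$ with $g=(1-\chi_\epsilon)\gamma^{-1}a_H$, I would interchange the sum and $s$-integral and re-index the $j$-th summand by the measure-preserving substitution $y\mapsto\Phi^{-j}y$. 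The integration intervals for the $j$-th term then join end-to-end: their endpoints $T^{(j)}(\Phi^{-j}y)$ form a bi-infinite partition of $\R$ by the cocycle identity $T^{(j+1)}(\Phi^{-(j+1)}y)-T^{(j)}(\Phi^{-j}y)=T(\Phi^{-(j+1)}y)$. Since $\int_\R\chi=1$, the $j$-sum collapses and
$$\int_{S^*M}a_{T,\epsilon}\,d\mu_L=\int_{S^*_H M}(1-\chi_\epsilon)\gamma^{-1}a_H\,d\mu_{L,H},$$
independent of $T$.

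Letting $\epsilon\to 0$, the cutoff $\chi_\epsilon$ is supported in shrinking conic neighborhoods of the Liouville-null sets $S^*H$ and $N^*H$, so dominated convergence yields $\int_{S^*_H M}\gamma^{-1}a_H\,d\mu_{L,H}$. Under the two-sheeted cover $\xi_\pm:B^*H\to S^*_H M$, $\gamma(\xi_\pm(s,\sigma))=\gamma_{B^*H}(s,\sigma)$ and $a_H(\xi_\pm)=a_0(s,\sigma)$, which produces the factor of $2$ and the claimed integral $2\int_{B^*H}a_0\gamma_{B^*H}^{-1}\,ds\,d\sigma$. The $(\gamma_H^*Op_H(a)\gamma_H)_{\leq\epsilon}$-contribution vanishes in the $\epsilon\to 0$ Weyl limit by the same measure-theoretic reasoning applied to its symbol. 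The main technical obstacle is verifying the tiling of $\R$ by the re-indexed impact intervals; everything else is bookkeeping with the established symbol calculus.
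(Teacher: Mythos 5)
Your argument is correct in outline, but it takes a genuinely different route from the one in the paper. The paper's proof of Proposition \ref{LWLH} does not pass through $\bar V_T(a)$ or the $P_{T,\epsilon}+F_{T,\epsilon}$ decomposition at all: it observes that $(\gamma_H^* Op_H(a)\gamma_H)_{\geq\epsilon}\in I^{1/2}(M\times M,C_H)$ (Lemma \ref{gammaH*gammaH}), composes with $U(t)$, and runs the Duistermaat--Guillemin trace argument directly, computing the singularity at $t=0$ where the entire (codimension one) fixed-point set $S^*_H M$ contributes; the $1/2$ gain in order exactly compensates the $1/2$ loss from the codimension drop, and the Fourier Tauberian theorem then gives the stated limit with the Liouville half-density on $S^*_H M$ pushed forward to $B^*H$. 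Your proof instead exploits the identity $\langle Op_H(a)\gamma_H\phi_j,\gamma_H\phi_j\rangle=\langle\bar V_T(a)\phi_j,\phi_j\rangle$ from \eqref{ME1}, splits $\VT(a)=P_{T,\epsilon}+F_{T,\epsilon}+R_{T,\epsilon}$ via Proposition \ref{VTDECOMPa}, kills the $F_{T,\epsilon}$ contribution with Proposition \ref{LWLFIOs} because $\text{graph}(\rcal_j)\cap\Delta_{T^*M\times T^*M}$ is empty on the $\epsilon$-cutoff domain, and evaluates the pseudo-differential Weyl average of $a_{T,\epsilon}$ using Lemma \ref{MULMUH} and the tiling identity $T^{(j+1)}(\Phi^{-(j+1)}y)-T^{(j)}(\Phi^{-j}y)=T(\Phi^{-(j+1)}y)$; this is precisely the computation the paper carries out as Lemma \ref{LWLPT}, which the paper presents only as an a posteriori consistency check for $\omega(a)$. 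What your route buys is that it recycles the machinery already built for the variance estimate and makes manifest why the limit is independent of $T$; what it costs is that it invokes the heavy Fourier-integral decomposition of \S \ref{COMPOSITIONS}--\S \ref{v operator} for a statement that the paper proves with a single short trace computation. One point you should tighten: the $(\gamma_H^*Op_H(a)\gamma_H)_{\leq\epsilon}$ contribution cannot be dismissed by "measure-theoretic reasoning applied to its symbol," since its wave front set contains $N^*H\times 0_{T^*M}\cup 0_{T^*M}\times N^*H$ and it is not a conic Fourier integral operator; as in the paper's argument around \eqref{GENLWL}--\eqref{pt bounds}, one must estimate $\frac{1}{N(\lambda)}\sum_{\lambda_j\leq\lambda}\|\gamma_H\chi_\epsilon^{(tan)}\phi_j\|^2_{L^2(H)}$ and the analogous normal-cutoff term by $O(\epsilon)$ using the pointwise local Weyl law on $M$ restricted to $H$.
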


\begin{rem} When $a = V$ is a multiplication operator, then this
follows from the pointwise Weyl asymptotic, $$\frac{1}{N(\lambda)}
\sum_{j: \lambda_j \leq \lambda}  |\phi_j(x)|^2 = 1 +
O(\lambda^{-1}).
$$
The pointwise asymptotics imply that the $L^2$-norm squares of $\gamma_H \phi_j$ are bounded on average,
 \begin{equation} \label{L2H} \frac{1}{N(\lambda)}
\sum_{j: \lambda_j \leq \lambda}  ||\gamma_H \phi_j||_{L^2(H)}^2  = Vol^{n-1}(H)  +
O(\lambda^{-1}). \end{equation}

In fact, by \cite{HoI-IV} Proposition 29.1.2, for any pseudo-differential operator $B$ of order zero on $M$,
the Schwartz kernel $K_B(t, x)$  of $U(t) B$ or $B U(t)$ on the diagonal $\Delta_{M \times M}$ is conormal with respect to $\R \times  \Delta_{M \times M}$ and if
$$\frac{\partial A(\lambda, x) }{\partial \lambda} = {\mathcal F}_{t \to \lambda} K_B(t, x)$$
the $A(\lambda, x)$ is a symbol of order $n$ with\begin{equation} \label{HORM} A(\lambda, x)  = \sum_{j: \lambda_j \leq \lambda} \phi_j(x) A \phi_j(x) \sim (2 \pi)^{-n} \int_{|\xi| < \lambda} a_0 d \xi + O(\lambda^{n-1}) \end{equation}
in the case where $A = A^*$. There is an analogous statement for $A U(t) B$.  Integrating \eqref{HORM} over $H$ gives
\begin{equation} \label{GENLWL}  \sum_{j : \lambda_j \leq \lambda} \langle \gamma_H A \phi_j(x) , \gamma_H B \phi_j \rangle_{L^2(H)} \sim C_n \lambda^n \int_{B^*_H M} a_0 b_0 ds  d \xi. \end{equation}

\end{rem}


\begin{proof} We first   prove the  local Weyl law for  $(\gamma_H^*
Op_H(a) \gamma_H)_{\geq \epsilon}$ \eqref{>ep}  on $M$, that is, we prove
\begin{equation} \label{LWLM} \lim_{\lambda \to \infty} \frac{1}{N(\lambda)} \sum_{j: \lambda_j \leq \lambda} \langle 
(\gamma_H^*
Op_H(a) \gamma_H)_{\geq \epsilon} \phi_j, \phi_j \rangle = \frac{2}{vol(S^*M)}  \int_{B^*H} a_0 (1 - \chi_{\epsilon}) \gamma_{B^*H}^{-1}  |ds \wedge d \sigma| . \end{equation}



As  in the proof of Lemma \ref{GAMMAHLEM}, \ $U(t) (\gamma_H^*
Op_H(a) \gamma_H)_{\geq \epsilon}$  is a Fourier integral operator of order $
\frac{1}{4}$ associated to the (clean) composition of the
canonical relation $\Gamma$ of $U(t)$ and $C_H$. The trace is the
further composition with $\pi_* \Delta^*$ as in \cite{DG}, where
$\Delta: \R \times M \to \R \times M \times M$ is the embedding
$(t, x) \to (t, x, x)$ and $\pi: \R \times M \to \R$ is the
natural projection. Then   $\pi_* \Delta^* U(t) \circ (\gamma_H^*
Op_H(a) \gamma_H)_{\geq \epsilon} $ has singularities at times $t$ so that $G^t (x,
\xi) = (x, \xi)$ with $(x, \xi) \in S^*_H M$. By the standard
Fourier Tauberian theorem (see \cite{HoI-IV}, vol. III) the growth
rate of the sums above are determined by the singularity at $t =
0$ of the trace, where of course all of $S^*_H M$ is fixed. Hence
the fixed point set is a codimension one submanifold of $S^*M$. If
$n = \dim M$,   $\pi_* \Delta^* U(t) \circ (\gamma_H^* Op_H(a)
\gamma_H)_{\geq \epsilon} \in I^{\frac{1}{4} + n - 1 - \frac{1}{4}}(T_0^* \R)$. 
Note that, due to the drop of one in codimension, the singularity
of the  trace loses a degree of $\half$, but due to the extra
$\half$ in the order  of  $(\gamma_H^* Op_H(a) \gamma_H)_{\geq \epsilon}$  (compared
to a pseudo-differential operator), it gains it back again. Hence
the order of the singularity is the same as for
pseudo-differential operators, and so the spectral asymptotics
have the same order in $\lambda$. The principal symbol of the
trace is determined by the symbol composition   and Lemma \ref{gammaH*gammaH}.  
. Except for the
factor of $\gamma_H^* a$, the half-density symbol is the canonical
Liouville volume form on $S^*_H M$. Since $\gamma_H^* a$ is a
pullback from $B^* H$, we can project the measure to $B^* H$ and
then we obtain the stated formula.

It remains to show that

$$\begin{array}{l} \lim_{\lambda \to \infty} \frac{1}{N(\lambda)} \sum_{j: \lambda_j \leq \lambda} \langle Op_H(a)  \gamma_H \phi_j, \gamma_H  \phi_j\rangle = \eqref{LWLM}  + o(1), \;\; \mbox{as}\;\; \epsilon \to 0, \end{array}, $$
 and in view of (\ref{op decomp}), it is enough to prove 
$$\begin{array}{l} \lim_{\lambda \to \infty} \frac{1}{N(\lambda)} \sum_{j: \lambda_j \leq \lambda} \langle 
\chi_{2 \epsilon} \gamma_H^* Op_H(a) \gamma_H \chi_{\epsilon}  \phi_j, \phi_j \rangle =  o(1), \;\; \mbox{as}\;\; \epsilon \to 0. \end{array} $$

By \eqref{CUTOFF}, there are three types of terms: one with the tangential cutoff in both cutoff positions, 
one with the normal cutoff  in both positions and two mixed ones with one tangential and one normal cutoff. Successive applications of the inequality $ab \leq \frac{1}{2}(a^2 + b^2),$ Cauchy-Schwarz and $L^2$-boundedness of $Op_H(a)$ implies that
\begin{equation} \label{tn decomp} \begin{array}{ll}
\left| \frac{1}{N(\lambda)} \sum_{\lambda_j \leq \lambda} \langle (\gamma_H^* Op_H(a) \gamma_H)_{\leq \epsilon} \phi_j, \phi_j \rangle \right| \\ \\ \leq \frac{C}{N(\lambda)} \sum_{\lambda_j \leq \lambda} \left( \| \gamma_H \chi_{\epsilon}^{(tan)} \phi_j \|_{L^2(M)}^2 + \| \gamma_H \chi_{2\epsilon}^{(tan)} \phi_j \|_{L^2(M)}^2  + \| \gamma_H \chi_{\epsilon}^{(n)} \phi_j \|_{L^2(M)}^2 + \| \gamma_H \chi_{2\epsilon}^{(n)} \phi_j \|_{L^2(M)}^2 \right). \end{array} \end{equation}

Finally, one applies the pointwise local Weyl law (\ref{GENLWL}) on $M$ to estimate the right side in (\ref{tn decomp}). It follows that
\begin{equation} \label{pt bounds}
 \frac{1}{N(\lambda)} \sum_{\lambda_j \leq \lambda}  \| \gamma_H \chi_{\epsilon,2\epsilon}^{(tan)} \phi_j  \|_{L^2(H)}^2= {\mathcal O}(\epsilon), \end{equation}
 and the same is true for the other cutoff operators $ \chi_{\epsilon,2\epsilon}^{(n)}.$

\end{proof}

We further prove a local Weyl law for $P_{T, \epsilon}$. It is a
special case of the general local Weyl law for pseudo-differential
operators, but we include as a check on the formula for
$\omega(a)$.

In the following we put $V = \mu_L(S^* M)$. 

\begin{lem} \label{LWLPT}  For any $T,\epsilon>0$  we have,  $$\begin{array}{lll} \lim_{\lambda \to \infty} \frac{1}{N(\lambda)} \sum_{j: \lambda_j \leq \lambda} \langle P_{T, \epsilon}(a)  \phi_j,
\phi_j \rangle & = & \frac{1}{V}  \int_{S^* M} \left( \frac{1}{T} \sum_{j \in \Z} ((1 - \chi_{\epsilon}) \gamma^{-1}
a_H)(\Phi^j
\Phi_I (x, \xi)) \chi(\frac{t_j(x, \xi)}{T})\right) d\mu_L  \\ && \\
& = & \frac{1}{V} \int_{S^*_H M} ((1 - \chi_{\epsilon}) \gamma^{-1} a_H) )(s,
\xi)) d\mu_{L, H} \\ && \\
 & = &  \frac{2}{V} \int_{B^*H} ((1 - \chi_{\epsilon})\gamma_{B^*H}^{-1} a_0 )(s,\sigma) ds d\sigma. 
\end{array}$$ 
\end{lem}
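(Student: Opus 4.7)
The proof has three steps, one per equality.

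Equality~(i) follows directly from Proposition~\ref{LWLFIOs} applied to $F = P_{T,\epsilon}(a)$: by Proposition~\ref{VTDECOMPa}(i), $P_{T,\epsilon}(a) \in Op_{cl}(S^0(T^*M))$ is a pseudodifferential operator of order zero, so its canonical relation is the diagonal and $S(C_F\cap \Delta_{S^*M\times S^*M}) = S^*M$; its principal symbol $a_{T,\epsilon}$ was computed in Lemma~\ref{egorov}. Substituting into the local Weyl law yields the first equality.

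For equality~(ii) I need the geometric identity
\[
\int_{S^*M} \frac{1}{T}\sum_{j\in\Z} g\bigl(\Phi^j\Phi_I(x,\xi)\bigr)\,\chi\!\left(\tfrac{t_j(x,\xi)}{T}\right)\,d\mu_L \;=\; \int_{S^*_H M} g\,d\mu_{L,H},
\]
with $g = (1-\chi_\epsilon)\gamma^{-1}a_H$. The plan is to unfold $S^*M$ over the cross-section $S^*_H M$ via Lemmas~\ref{IMG} and~\ref{MULMUH}: $\mu_L$-a.e.\ $(x,\xi)$ is uniquely of the form $G^\tau(s_0,\eta_0)$ with $(s_0,\eta_0)\in S^*_H M$ and $\tau\in[0,T(s_0,\eta_0))$, and $d\mu_L = d\tau\wedge d\mu_{L,H}$ in these coordinates. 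The $j$-th impact data of $(x,\xi)$ transform explicitly through the return sequence $\{T^{(k)}(s_0,\eta_0)\}_{k\in\Z}$; applying the $\mu_{L,H}$-preserving change $\Phi^{-j}$ in each summand pulls the factor $g$ outside, leaving the orbit-telescoping identity
\[
\sum_{j\in\Z}\int_{T^{(j)}(s',\xi')/T}^{T^{(j+1)}(s',\xi')/T}\chi(u)\,du \;=\; \int_\R \chi(u)\,du \;=\; 1,
\]
which holds because the intervals $\{[T^{(k)}(s',\xi')/T,\,T^{(k+1)}(s',\xi')/T)\}_{k\in\Z}$ partition $\R$ by monotonicity of the return times. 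This cancels the prefactor $\tfrac{1}{T}$ and proves the identity.

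Equality~(iii) is the pushforward under the 2-to-1 folding map $\pi_H\colon S^*_H M \to B^*H$ of \S\ref{HYPERSURFACES}, with sheets $\xi_\pm(s,\sigma)$. On $S^*_H M$ one has $\gamma = \gamma_{B^*H}\circ \pi_H$ by \eqref{gammaDEF}--\eqref{gammaBH}, and $(1-\chi_\epsilon)\gamma^{-1}a_H$ is invariant under the reflection $r_H$ (each factor depends only on $(s,\sigma)$ and $|\xi_n|$, and $a_H(s,\xi)=a_0(s,\pi_H(\xi))$). A short Fermi-coordinate computation gives $d\mu_{L,H} = |ds\wedge d\sigma|$ on each sheet, so summing contributions from $\xi_+$ and $\xi_-$ produces the factor of~$2$.

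The main technical hurdle is the index bookkeeping in step~(ii): matching the $j$-indexing of $(\Phi^j\Phi_I, t_j)$ from the symbol formula with the full bi-infinite return sequence of a single reference point $(s',\xi')$, and confirming that summation over $j$ combined with the $\Phi^{-j}$ translation exactly tiles $\R$. Modulo this, the content is simply that the bump $\chi/T$ has unit mass and $\Phi$ is $d\mu_{L,H}$-preserving.
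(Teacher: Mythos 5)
Your proof is correct and follows essentially the same route as the paper's: equality~(i) is the pseudo-differential case of the local Weyl law plus Lemma~\ref{egorov}; equality~(ii) is obtained by unfolding $S^*M$ over the cross-section $S^*_H M$ via Lemma~\ref{MULMUH}, translating by $\Phi^{-j}$, and observing the tiling identity $\sum_j\int_{T^{(j)}/T}^{T^{(j+1)}/T}\chi\,du=\int_{\R}\chi\,du=1$ (which is exactly the paper's claim $\overline{\chi_T}\equiv 1$); and equality~(iii) is the pushforward under the 2-to-1 folding map using $r_H$-invariance of the integrand. The paper actually leaves step~(iii) implicit, so your Fermi-coordinate computation there is a useful elaboration rather than a divergence.
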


\begin{proof} By the local Weyl law, the limit equals $\frac{1}{V}\int_{S^*
M} \sigma_{P_{T, \epsilon}(a)} d \mu_L$. We then use Lemma
\ref{egorov} to evaluate  $\sigma_{P_{T, \epsilon}(a)}.$

The fiber $\Phi^{-1}(s, \xi)$ is the backwards orbit $G^{-t}(s,
\xi)$ for the interval $t \in [0, T^{-1}(s, \xi)]$. So we may
re-write the integral as
$$\int_{S^*_H M} \left( \frac{1}{T} \sum_{j = 1}^{N_{T, \epsilon}}
((1 - \chi_{\epsilon}) \gamma^{-1} a_H)(\Phi^j (s, \xi)) \left[
\int_{0}^{T^{(-1)}(s, \xi)}  \chi(\frac{t_j(G^{- t}(s, \xi))}{T})
dt \right] \right)   d\mu_{L, H}.
$$
Now $t_j(G^{- t}(s, \xi)) = t + T^{(j)}(s, \xi)$ when $t \in [0,T^{-1}(s,\xi)]$   and so the inner
integral equals \begin{equation} \label{TILDECHI}
 \int_{0}^{T^{(-1)}(s, \xi)}
\chi(\frac{t +  T^{(j)}(s, \xi)}{T}) dt.
\end{equation}
By the $\Phi$-invariance of $ d\mu_{L, H}$ we change variables in the
$j$th term to $(s', \xi') = \Phi^j(s, \xi)$ (and then drop the
primes) to get
$$\begin{array}{l} \int_{S^*_H M}
((1 - \chi_{\epsilon}) \gamma^{-1} a_H)((s, \xi))
\overline{\chi_T}(s, \xi)  d\mu_{L, H},\\ \\\;\;
\mbox{where}\;\;\overline{\chi_T}(s, \xi) := \sum_j \frac{1}{T}
\int_{0}^{T^{(-1)}(\Phi^{-j}(s, \xi))} \chi(\frac{t +
T^{(j)}(\Phi^{-j}(s, \xi))}{T}) dt.\end{array}
$$

We claim that $\overline{\chi_T}(s, \xi) \equiv 1$.  Indeed,
$$\begin{array}{lll}
\int_{0}^{T^{(-1)}(\Phi^{-j}(s, \xi))} \chi(\frac{t +
T^{(j)}(\Phi^{-j}(s, \xi))}{T}) dt  & = &   \int_{
T^{(j)}(\Phi^{-j}(s, \xi))}^{ T^{(j)(\Phi^{-j}(s, \xi)) +
T^{(-1)}(\Phi^{-j}(s, \xi))}} \chi(\frac{t }{T}) dt.
\end{array}$$
We observe that  $$[T^{(j)}(\Phi^{-j}(s, \xi)),
T^{(j)}(\Phi^{-j}(s, \xi)) + T^{(-1)}(\Phi^{-j}(s, \xi))]  = [
T^{(-j)}(s, \xi), T^{(-j - 1)}(s, \xi)], $$ since $T^{(j)}
\Phi^{-j}(s, \xi) = T^{(-j)}(s, \xi), $ and $T^{(j)}(\Phi^{-j}(s,
\xi)) + T^{(-1)}(\Phi^{-j}(s, \xi)) = T^{(-j - 1)}(s, \xi). $
Hence,
$$\overline{\chi_T}(s, \xi) = \frac{1}{T}
\sum_j \int_{[ T^{(-j)}(s, \xi), T^{(-j - 1)}(s, \xi)]}
\chi(\frac{t }{T}) dt = \frac{1}{T} \int_{\R} \chi(\frac{t }{T})
dt = 1.$$

\end{proof}

\section{Quantum ergodic restriction: Proof of Proposition \ref{BIGSTEP}} \label{nice case}

In this section, we prove the main result. It is the first section
in which we assume $G^t$ is ergodic.  To prove quantum ergodicity
for the eigenfunction restrictions $ \phi_{\lambda_j}|_{H},
j=1,2,...$ we follow the outline in \eqref{ME1}. 

\subsection{Proof of Proposition \ref{BIGSTEP}}

In outline the proof is as follows:

\begin{equation} \label{variance}
\begin{array}{llll}  \frac{1}{N(\lambda)}
\sum_{j: \lambda_j \leq \lambda} \left|  \langle \left(\gamma_H^*
Op_H(a) \gamma_H)_{\geq \epsilon}\right) \phi_j, \phi_j
 \rangle_{L^2(M)}  -\omega((1-\chi_{\epsilon})a)\right|^2  \\ \\
 =\frac{1}{N(\lambda)} \sum_{\lambda_j \leq \lambda} | \langle
 [\VT(a)-  \omega((1-\chi_{\epsilon})a )] \phi_{\lambda_j}, \phi_{\lambda_j} \rangle |^{2} \\ \\
=\frac{1}{N(\lambda)} \sum_{\lambda_j \leq \lambda} |  \langle [P_{T,\epsilon}(a) - \omega((1-\chi_{\epsilon})a)
 + F_{T,\epsilon}(a) ]\phi_{\lambda_j}, \phi_{\lambda_j} \rangle |^{2} + {\mathcal O}(\lambda^{-n})\\ \\
\leq  \frac{2}{N(\lambda)} \sum_{\lambda_j \leq \lambda} | \langle
(P_{T,\epsilon}(a) - \omega((1-\chi_{\epsilon})a)) \phi_{\lambda_j}, \phi_{\lambda_j}
\rangle |^{2}\\ && \\
+ \frac{2}{N(\lambda)} \sum_{\lambda_j \leq \lambda} | \langle
 F_{T,\epsilon}(a) \phi_{\lambda_j}, \phi_{\lambda_j} \rangle |^{2} + {\mathcal
O}(\lambda^{-n})
\end{array} \end{equation}

In \S \ref{psdovariance}, Corollary \ref{PQE}, we show that the $P_{T, \epsilon}$ term tends to zero and in \S \ref{g operator},
Proposition \ref{FIOdecomp},
we show that the $F_{T, \epsilon}$ term tends to zero.  The fact that $\omega((1 - \chi_{\epsilon}) a)$
is the correct constant follows from Lemma \ref{LWLPT}.


\subsection{Contribution of $ \frac{2}{N(\lambda)} \sum_{\lambda_j \leq \lambda} | \langle
(P_{T,\epsilon}(a) - \omega((1-\chi_{\epsilon})a)) \phi_{\lambda_j}, \phi_{\lambda_j}
\rangle |^{2}$ to the variance} \label{psdovariance}

It follows from the standard quantum ergodicity theorem \cite{Sch,
CV,Z3} and Lemma \ref{LWLPT} that this term tends to zero. We
briefly go over the proof using the additional time average in $r$
(see the last line of \eqref{VTFORM}). By the above decomposition,
\begin{equation} \bar{V}_{T, R, \epsilon}(a)= P_{T, R, \epsilon}(a) +
F_{T, R, \epsilon}(a), \;\; \mbox{where} \end{equation}
\begin{equation} \left\{ \begin{array}{l} P_{T, R, \epsilon}  =
\frac{1}{2 R} \int_{-R}^R U(r)^* P_{T, \epsilon} U(r) dr, \\ \\
F_{T, R, \epsilon}(a)  = \frac{1}{2 R} \int_{-R}^R U(r)^* F_{T,
\epsilon}(a) U(r) dr. \end{array} \right. \end{equation}  With the
same notation as in Lemma \ref{egorov}, we denote  the principal
symbol of $P_{T, R, \epsilon}(a) $ by  $a_{T, R, \epsilon}$.

\begin{prop}\label{PTERGODIC}  Let  $a_{T, R, \epsilon}$ be the principal symbol of $P_{T,
R, \epsilon}(a) $.  Assume that $G^t$ is ergodic. Then for any $T
>0$ and any  $\epsilon > 0$, there exists $R_0$ so that for $R
\geq R_0$,  $$ \int_{S^*M} |
  a_{T,R, \epsilon
  }(x,\xi) - \omega((1-\chi_{\epsilon})a)|^{2} d\mu_L < \epsilon. $$ \end{prop}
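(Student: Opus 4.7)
The plan is a standard application of Egorov's theorem and the $L^2$ mean ergodic theorem, with the one twist that one must verify the principal symbol of $P_{T,\epsilon}(a)$ is in $L^2(S^*M, d\mu_L)$ despite the factor of $\gamma^{-1}$ that appears in Lemma~\ref{egorov}.

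First I would observe that, since $U(r)$ is unitary and $P_{T,\epsilon}(a) \in \mathrm{Op}_{cl}(S^0(T^*M))$ by Proposition~\ref{VTDECOMPa}(i), the operator
$$
P_{T,R,\epsilon}(a) = \frac{1}{2R}\int_{-R}^{R} U(r)^* P_{T,\epsilon}(a)\, U(r)\, dr
$$
is itself a zeroth order pseudo-differential operator whose principal symbol, by Egorov's theorem, is
$$
a_{T,R,\epsilon}(x,\xi) = \frac{1}{2R}\int_{-R}^{R} a_{T,\epsilon}\bigl(G^r(x,\xi)\bigr)\, dr,
$$
with $a_{T,\epsilon}$ the function in \eqref{aTFORM}. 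The factor $(1-\chi_{\epsilon})$ in \eqref{aTFORM} vanishes identically in an $\frac{\epsilon}{2}$-conic neighborhood of $T^*H$, i.e.\ in the set where $\gamma^{-1}$ is singular; hence $(1-\chi_{\epsilon})\gamma^{-1} a_H$ is a bounded homogeneous function of degree $0$ on $T^*M\setminus 0$. For fixed $T,\epsilon$, the sum over $j$ in \eqref{aTFORM} has only $O_{\epsilon}(T)$ nonzero terms at each $(x,\xi)\in S^*M$ (by Definition~\ref{IMPACTTIMES} and the bound $N_{T,\epsilon} = O_{\epsilon}(T)$), so $a_{T,\epsilon}\in L^{\infty}(S^*M)\subset L^2(S^*M, d\mu_L)$.

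Next I would invoke the $L^2$ mean ergodic theorem of von Neumann for the unitary group $f\mapsto f\circ G^r$ on $L^2(S^*M, d\mu_L)$. Since $G^t$ is assumed ergodic, the strong $L^2$ limit of the $r$-average is the orthogonal projection onto the constants, namely the constant
$$
\frac{1}{\mathrm{vol}(S^*M)}\int_{S^*M} a_{T,\epsilon}\, d\mu_L.
$$
By Lemma~\ref{LWLPT} this constant equals $\omega((1-\chi_{\epsilon})a)$. Therefore
$$
\bigl\| a_{T,R,\epsilon} - \omega((1-\chi_{\epsilon})a) \bigr\|_{L^2(S^*M, d\mu_L)} \longrightarrow 0
\quad\text{as } R\to\infty,
$$
which is exactly the estimate claimed (for any prescribed tolerance, and in particular $<\epsilon$, once $R\geq R_0(T,\epsilon)$).

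The only nontrivial point, and the one I would be most careful about, is the boundedness of $a_{T,\epsilon}$ on $S^*M$; the cutoff $(1-\chi_{\epsilon})$ was engineered precisely to remove the $\gamma^{-1}$ singularity along $S^*H$, so once this is remarked, the argument reduces to the standard quantum-ergodicity reasoning and requires no further input.
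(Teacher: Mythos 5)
Your proposal is correct and follows essentially the same route as the paper: Egorov's theorem to identify $a_{T,R,\epsilon}$ as the flow average of $\sigma_{P_{T,\epsilon}}$, the $L^2$ mean ergodic theorem (justified by the boundedness of $(1-\chi_{\epsilon})\gamma^{-1}a_H$ coming from the cutoff), and Lemma \ref{LWLPT} to identify the limiting constant with $\omega((1-\chi_{\epsilon})a)$. The paper adds only a rewriting of the averaged symbol and a uniform $C^0$ bound in $R$, which your direct appeal to von Neumann's theorem renders unnecessary.
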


  \begin{proof} We first claim:

\begin{lem} \label{egorovb} With the same notation as in Lemma
\ref{egorov}, the principal symbol $a_{T, R, \epsilon}$ of $P_{T,
R, \epsilon}(a) $
 is given by
$$ \begin{array}{lll} a_{T, R, \epsilon}(x,\xi): &= & \frac{1}{2 R} \int_{-R}^R
\sigma_{P_{T, \epsilon}}(G^r(x, \xi))dr \\ && \\ & = & \frac{1}{T}
\sum_{j \in {\mathbb Z} } \frac{1}{2 R} \int_{-R}^R ((1 -
\chi_{\epsilon})\gamma^{-1} a_H)(\Phi^j \Phi_I G^r (x,\xi)) \cdot
 \chi (\frac{ t_j(G^r(x,\xi))}{T}) dr \\ && \\ & = & \frac{1}{T}
\sum_{j \in {\mathbb Z} } ((1 - \chi_{\epsilon}) \gamma^{-1}
a_H)(\Phi^j \Phi_I (x,\xi))  \cdot \left(\frac{1}{2 R} \int_{-R}^R
 \chi (\frac{ t_j(x,\xi) - r)}{T}) dr \right).
\end{array}$$
\end{lem}

The proof of the first formula is immediate from the standard
Egorov theorem combined with Lemma \ref{egorov}. In the second
line, we rewrote the formula as follows: $a_{T, R, \epsilon}$ is
the principal symbol of
$$\frac{1}{2 R} \int_{-R}^R \int_{\R} \chi(\frac{t}{T})   V_{\epsilon}(t + r;a)
dt dr  = \frac{1}{2 R} \int_{-R}^R \int_{\R} \chi(\frac{t - r}{T})  V_{\epsilon}(t;a
) dt dr.  $$ By the same symbol calculation as for $\VT(a)$,
$$\int_{\R} \chi(\frac{t - r}{T})  V_{\epsilon}(t; a
) dt  = \sum_{j \in \Z}  ((1 - \chi_{\epsilon}) \gamma^{-1}
a_H)(\Phi^j \Phi_I (x,\xi)) \chi(\frac{t_j(x, \xi) - r}{T}), $$
hence
$$a_{T, R, \epsilon}(x, \xi)= \frac{1}{T} \sum_{j \in \Z}  ((1 - \chi_{\epsilon}) \gamma^{-1} a_H)(\Phi^j
\Phi_I  (x,\xi)) \left( \frac{1}{2 R} \int_{-R}^R
\chi(\frac{t_j(x, \xi) - r}{T}) dr\right).$$

 Granted the Lemma, it follows by the mean ergodic theorem that
  $$ \lim_{R \rightarrow \infty}  \int_{S^*M} \left| a_{T,R,\epsilon}(x,\xi) -  \frac{1}{vol(S^*M)}  \int_{S^*M} \sigma_{P_{T,\epsilon}} d\mu_L  \right|^2 \, d\mu_L =0. $$
 From the formula above,
\begin{equation} \label{useful}
 \| a_{T,R,\epsilon} \|_{C^0} = {\mathcal O}_{T,\epsilon}(1)
\end{equation}
uniformly for $R>0.$
 From (\ref{useful}) and dominated convergence,  we may take the limit
$\lim_{R \to \infty}$ under the integral
  sign.  Hence, to complete the proof of the Proposition,   it suffices to show that
   $$  \omega((1-\chi_{\epsilon})a) =  \frac{1}{vol(S^*M)}  \int_{S^* M} \sigma_{P_{T, \epsilon}} d\mu_L.$$ But the last identity
  is proved in Lemma \ref{LWLPT}.

\end{proof}

By the standard quantum ergodicity argument (cited above), we then
have:

\begin{cor} \label{PQE} We have,  $$\lim_{\lambda \to \infty}  \frac{2}{N(\lambda)} \sum_{\lambda_j \leq \lambda} | \langle
(P_{T,\epsilon}(a) - \omega((1-\chi_{\epsilon})a)) \phi_{\lambda_j}, \phi_{\lambda_j}
\rangle |^{2} = 0. $$

\end{cor}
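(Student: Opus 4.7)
The plan is to apply the standard Schnirelman--Zelditch--Colin de Verdi\`ere variance argument to the zeroth-order pseudo-differential operator $B_{T,R,\epsilon} := P_{T,R,\epsilon}(a) - \omega((1-\chi_\epsilon)a)\,I$. The first observation is that, because $U(r)\phi_{\lambda_j} = e^{ir\lambda_j}\phi_{\lambda_j}$, the $r$-average is trivial on diagonal matrix elements, so
$$\langle P_{T,R,\epsilon}(a)\phi_{\lambda_j}, \phi_{\lambda_j}\rangle = \langle P_{T,\epsilon}(a)\phi_{\lambda_j}, \phi_{\lambda_j}\rangle.$$
Hence the variance sum in the statement of Corollary \ref{PQE} may be rewritten with $P_{T,R,\epsilon}(a)$ in place of $P_{T,\epsilon}(a)$, a replacement that is essential in order to exploit the mean ergodic theorem through Proposition \ref{PTERGODIC}.

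Next, I would apply the Cauchy--Schwarz inequality in the form
$$|\langle B_{T,R,\epsilon}\phi_{\lambda_j}, \phi_{\lambda_j}\rangle|^2 \leq \langle B_{T,R,\epsilon}^* B_{T,R,\epsilon}\phi_{\lambda_j}, \phi_{\lambda_j}\rangle.$$
Since $P_{T,R,\epsilon}(a)$ is a zeroth-order pseudo-differential operator (an averaged Egorov conjugate of $P_{T,\epsilon}(a)$, which is pseudo-differential by Proposition \ref{VTDECOMPa}), so is $B_{T,R,\epsilon}^* B_{T,R,\epsilon}$, and its principal symbol is $|a_{T,R,\epsilon} - \omega((1-\chi_\epsilon)a)|^2$.

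Now apply the ordinary local Weyl law for zeroth-order pseudo-differential operators on $M$ (Proposition \ref{LWLFIOs} specialized to the diagonal) to the operator $B_{T,R,\epsilon}^* B_{T,R,\epsilon}$. This gives
$$\limsup_{\lambda\to\infty}\frac{1}{N(\lambda)}\sum_{\lambda_j \leq \lambda} \langle B_{T,R,\epsilon}^* B_{T,R,\epsilon}\phi_{\lambda_j}, \phi_{\lambda_j}\rangle = \frac{1}{\operatorname{vol}(S^*M)}\int_{S^*M}\bigl|a_{T,R,\epsilon} - \omega((1-\chi_\epsilon)a)\bigr|^2 d\mu_L.$$
By Proposition \ref{PTERGODIC}, for any tolerance $\delta>0$ one can choose $R$ large enough (depending on $T$ and $\epsilon$) so that the right-hand side is less than $\delta$. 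Since $\delta$ is arbitrary and the left-hand side of the corollary is bounded by twice the above limsup, the stated limit is zero.

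There is no serious obstacle here: everything reduces to the classical argument because $P_{T,R,\epsilon}(a)$ is genuinely pseudo-differential and the symbol identification $\omega((1-\chi_\epsilon)a) = \operatorname{vol}(S^*M)^{-1}\int_{S^*M}\sigma_{P_{T,\epsilon}}\,d\mu_L$ was already established in Lemma \ref{LWLPT}. The only mildly delicate point is keeping track of the two distinct roles played by the cutoff parameter $\epsilon$ (microlocal cutoff) and the tolerance $\delta$ (from the mean ergodic theorem); one fixes $T,\epsilon$ first, sends $\lambda\to\infty$ via the local Weyl law, and then exploits the freedom to take $R$ large to close the argument.
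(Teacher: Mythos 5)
Your proof is correct and follows exactly the ``standard quantum ergodicity argument'' that the paper invokes without spelling out: the $r$-average does not change diagonal matrix elements, Cauchy--Schwarz reduces the variance to a trace of $B_{T,R,\epsilon}^*B_{T,R,\epsilon}$, the pseudo-differential local Weyl law identifies the limit with $\operatorname{vol}(S^*M)^{-1}\int_{S^*M}|a_{T,R,\epsilon}-\omega((1-\chi_\epsilon)a)|^2\,d\mu_L$, and Proposition~\ref{PTERGODIC} lets one send $R\to\infty$ to kill that integral. This is precisely the mechanism for which the paper introduced the second $r$-averaging and proved Proposition~\ref{PTERGODIC}, so your argument matches the intended one.
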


\begin{rem}

The purpose of the second time average in $r$ is just to ensure
that the maps
\begin{equation} \label{aTFORMa} a \to  a_{T,\epsilon}(x,\xi)
 = \frac{1}{T}  \sum_{j \in {\mathbb Z}} (\gamma^{-1} a_H)(G^{t_j(x,\xi)}(x,\xi))  \, \chi (T^{-1}  t_j(x,\xi))  \end{equation}
 defined in \eqref{aTFORMa}
are time averages in the sense that ${\bf 1}_{T, \epsilon} \equiv
{\bf 1}$ for all $T$. In fact, one could prove this by applying
the decomposition of Proposition \ref{VTDECOMPa} to the identity
operator and restricting to $(x, \xi)$ away from intersections of
the diagonal canonical relation from that of $F_{T, \epsilon}$.
But since it follows so quickly and easily from the second time
averaging, we presented the proof in this way.

\end{rem}

\subsection{Analysis of $F_{T,\epsilon}(a)^* F_{T,\epsilon}(a)$}
\label{g operator}

It remains to show that the limit of the  second term,
$$\limsup_{\lambda \to \infty} \frac{2}{N(\lambda)} \sum_{\lambda_j \leq \lambda} | \langle
 F_{T,\epsilon}(a) \phi_{\lambda_j}, \phi_{\lambda_j} \rangle
 |^{2}$$
  in (\ref{variance}) tends to zero as $T \to \infty$. We do not
  need to average in $r$ for this term.

By the  Schwartz inequality
  $$| \langle
 F_{T,\epsilon}(a) \phi_{\lambda_j}, \phi_{\lambda_j} \rangle
 |^{2} \leq  \langle
 F^*_{T,\epsilon}(a) F_{T,\epsilon}(a) \phi_{\lambda_j}, \phi_{\lambda_j}
 \rangle, $$ so it suffices to prove
  \begin{prop}  \label{FIOdecomp}
 $$\limsup_{\lambda \to \infty} \frac{1}{N(\lambda)} \sum_{\lambda_j \leq \lambda}  \langle
 F^*_{T,\epsilon}(a) F_{T,\epsilon}(a) \phi_{\lambda_j}, \phi_{\lambda_j}
 \rangle = o(1), \;\;\;(T \to \infty). $$

 \end{prop}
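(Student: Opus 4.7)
\medskip\noindent\textbf{Proof proposal for Proposition \ref{FIOdecomp}.}
The plan is to apply the local Weyl law for homogeneous Fourier integral operators (Proposition \ref{LWLFIOs}) to $F_{T,\epsilon}(a)^* F_{T,\epsilon}(a)$ and to show that the resulting limit splits into a ``diagonal'' piece of size $O(1/T)$ plus an ``off-diagonal'' piece that vanishes identically under the zero-measure-of-symmetry hypothesis. First I would use the decomposition \eqref{FIOsum} to write
\begin{equation*}
F_{T,\epsilon}(a)^* F_{T,\epsilon}(a) = \sum_{k,\ell=1}^{N_{T,\epsilon}} F_{T,\epsilon}^{(\ell)}(a)^* F_{T,\epsilon}^{(k)}(a),
\end{equation*}
and observe, using Proposition \ref{graph}, that each summand is a Fourier integral operator with canonical relation $\mathrm{graph}(\rcal_\ell)^{-1}\circ \mathrm{graph}(\rcal_k) = \mathrm{graph}(\rcal_\ell^{-1}\rcal_k)$, hence a local canonical graph. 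Applying Proposition \ref{LWLFIOs} to each term, only the diagonal piece $\Delta_{T^*M} \cap \mathrm{graph}(\rcal_\ell^{-1}\rcal_k)$ contributes, which is precisely the set $S\{\rcal_\ell = \rcal_k\}$ of unit covectors where the two $H$-reflection maps agree. The resulting limit is exactly the identity recorded in Lemma \ref{WeylCOR}.

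Next I would separate the diagonal terms $\ell=k$ from the off-diagonal terms $\ell\neq k$. For $\ell=k$ the graph coincides with the full diagonal, so the contribution is
\begin{equation*}
\frac{1}{T^2}\int_{S^*M}\sum_{j=1}^{N_{T,\epsilon}} \Bigl|\chi(T^{-1}t_j(x,\xi))\,(1-\chi_\epsilon)\,\gamma^{-1}a_H(G^{t_j(x,\xi)}(x,\xi))\Bigr|^2\, d\mu_L,
\end{equation*}
which, since $N_{T,\epsilon}=\mathcal{O}_\epsilon(T)$, the cutoffs $\chi$ are bounded by one, and the integrand is uniformly bounded on the support of $(1-\chi_\epsilon)$ (because the singular factor $\gamma^{-1}$ is controlled away from $T^*H$), is bounded by $\mathcal{O}_\epsilon(T^{-1}) \|a_H\|_{C^0(S^*_H M)}^2$, giving \eqref{FIObound}. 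This tends to zero as $T\to\infty$ for each fixed $\epsilon>0$.

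For the off-diagonal terms $\ell\neq k$, the contribution is an integral over $S\{\rcal_\ell=\rcal_k\}$. Using the key identity \eqref{RJERKintro}, the condition $\rcal_\ell(x,\xi)=\rcal_k(x,\xi)$ is equivalent to the microlocal reflection-symmetry relation $r_H G^{t_\ell - t_k}(x,\xi) = G^{t_\ell - t_k} r_H(x,\xi)$. By Lemma \ref{mrns}, the asymmetry hypothesis of Theorem \ref{maintheorem} (zero measure of microlocal reflection symmetry, Definition \ref{ANC}) implies that for every $T,\epsilon>0$,
\begin{equation*}
\mu_L\bigl(S\{\rcal_\ell=\rcal_k\}_{T,\epsilon}\bigr)=0 \quad\text{for all } \ell\neq k,
\end{equation*}
so the off-diagonal contribution vanishes identically. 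Combining the two pieces gives
\begin{equation*}
\limsup_{\lambda\to\infty}\frac{1}{N(\lambda)}\sum_{\lambda_j\leq\lambda}\langle F_{T,\epsilon}(a)^*F_{T,\epsilon}(a)\phi_{\lambda_j},\phi_{\lambda_j}\rangle = \mathcal{O}_\epsilon\!\left(T^{-1}\|a_H\|_{C^0}^2\right),
\end{equation*}
which is $o(1)$ as $T\to\infty$ for each fixed $\epsilon$, as claimed.

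The main obstacle I anticipate is justifying that the local Weyl law in Proposition \ref{LWLFIOs} really applies to $F_{T,\epsilon}^{(\ell)}(a)^* F_{T,\epsilon}^{(k)}(a)$: one must verify that the composition is clean (or transversal) away from a negligible set, that the resulting operator is a genuine zeroth-order Fourier integral operator on a local canonical graph, and that its principal symbol is the product computed formally from those of the factors in Proposition \ref{VTDECOMPa}. The careful bookkeeping uses the fact that the $\rcal_j$ are honest canonical transformations on the domains $\dcal^{(j)}_{T,\epsilon}$ (Proposition \ref{graph}) and that the tangential cutoff has removed the singular set $\Sigma_{T,\epsilon}$ where the compositions could fail to be clean. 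Once this is in place, the argument reduces to the measure-zero statement in Lemma \ref{mrns}, which is where the hypothesis of Theorem \ref{maintheorem} enters decisively.
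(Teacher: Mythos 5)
Your proposal is correct and follows essentially the same route as the paper: decompose $F_{T,\epsilon}(a)^*F_{T,\epsilon}(a)$ via \eqref{FIOsum} into diagonal and off-diagonal blocks, apply the local Weyl law for Fourier integral operators so that only $S\{\rcal_\ell=\rcal_k\}$ contributes, kill the off-diagonal terms by Lemma \ref{mrns} under Definition \ref{ANC}, and bound the diagonal block by $\mathcal{O}_\epsilon(T^{-1})$. The only cosmetic difference is that you bound the diagonal term via $N_{T,\epsilon}=\mathcal{O}_\epsilon(T)$ as in \eqref{FIObound}, whereas the body of the paper gets the same $\mathcal{O}(1/T)$ using $\chi^2\leq\chi$ together with the identity $\int_{S^*M}\frac{1}{T}\sum_j\chi(T^{-1}t_j)\,d\mu_L=1$ from Lemma \ref{LWLPT}.
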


 \begin{proof} The main step is to prove the

\begin{lem} Under the measure zero microlocal reflection symmetry condition in Definition \ref{ANC},
\begin{equation} \label{fiobound}
\begin{array}{l} \limsup_{\lambda \to \infty} \frac{1}{N(\lambda)} \sum_{\lambda_j
\leq \lambda}  \langle
 F^*_{T,\epsilon}(a) F_{T,\epsilon}(a) \phi_{\lambda_j}, \phi_{\lambda_j}
 \rangle \\ \\ =  \frac{1}{T^2}
\sum_{j }  \int_{S^* M} \left| (\gamma^{-1}  (1-\chi_{\epsilon})  a_H (
G^{t_j(x,\omega)}(x,\omega))  \chi (T^{-1} t_j(x,\omega))
\right|^2 d\mu_L.
 \end{array} \end{equation}
 \end{lem}

 \begin{proof}  From Proposition \ref{VTDECOMPa}
 $ F_{T,\epsilon}(a) = \sum_{j =1}^{N_{T,\epsilon}} F_{T,\epsilon}^{(j)}(a)$ where for each $j,$
 $  F_{T,\epsilon}^{(j)}(a) \in I^0 (M \times M; \Gamma_{T,\epsilon}^{(j)}), $  with $\Gamma_{T,\epsilon}^{(j)} = \text{graph}  \, {\mathcal R}_{j}$.  We then write
 $$ F_{T,\epsilon}(a)^* F_{T,\epsilon}(a) = I_{T,\epsilon}(a) + II_{T,\epsilon}(a)$$
 where we break up the sum into diagonal, resp. off-diagonal
 parts,

\begin{equation} \label{I, II} \begin{array}{ll}
I_{T,\epsilon}(a) = \sum_{|j| \leq N_{T,\epsilon}} F_{T,\epsilon}^{(j)}(a)^* F_{T,\epsilon}^{(j)}(a), \\ \\
II_{T,\epsilon}(a) = \sum_{j \neq k; |j| \leq N_{T,\epsilon}, |k|
\leq N{T,\epsilon}} F_{T,\epsilon}^{(j)}(a)^*
F_{T,\epsilon}^{(k)}(a). \end{array} \end{equation}

From the symbol computations in Proposition \ref{VTDECOMPa}  (see
(\ref{fiosymbol}) ) and  the fact that $WF'(
F_{T,\epsilon}^{(j)}(a)) =  \Gamma_{T,\epsilon}^{(j)},$ a
canonical graph, it follows that
$$ F_{T,\epsilon}^{(j)}(a)^* F_{T,\epsilon}^{(j)}(a) \in Op(S^{0}_{cl}(T^*M)),$$ with
\begin{equation} \label{I1} \begin{array}{ll}
\sigma  ( \, F_{T,\epsilon}^{(j)}(a)^* F_{T,\epsilon}^{(j)}(a) \,
) (x,\xi)= & \frac{1}{T^2} \,  | \, a_H \gamma^{-1} (1 -
\chi_{\epsilon}) (\Phi^j \Phi(x, \xi))   \chi (T^{-1} t_j(x,\xi))
\, |^{2}
 \end{array}
 \end{equation} By the local Weyl law,
\begin{equation} \label{LWLI}  \begin{array}{l} \lim_{\lambda \rightarrow \infty}
\frac{1}{N(\lambda)} \langle   I_{T,\epsilon}(a) \phi_{\lambda_j},
\phi_{\lambda_j} \rangle  = \frac{1}{T^2} \sum_j \int_{S^*M}  \, |
\, a_H \gamma^{-1} (1 - \chi_{\epsilon}) (\Phi^j \Phi(x, \xi))
\chi (T^{-1} t_j(x,\xi)) \, |^{2} d\mu_L.  \end{array}
\end{equation}
This is the desired limit.

To complete the proof of the Lemma we need to show that the limit
of the off-diagonal sum
$$II_{T,\epsilon}(a) = \sum_{j \neq k; |j| \leq N_{T,\epsilon}, |k| \leq
N_{T,\epsilon}} \left( \lim_{\lambda \rightarrow \infty}
\frac{1}{N(\lambda)} Tr \Pi_{[0, \lambda]}
F_{T,\epsilon}^{(j)}(a)^* F_{T,\epsilon}^{(k)}(a) \right),
$$ is zero. To prove this,
we recall that
$$ WF'(F_{T,\epsilon}^{(j)}(a)) = \text{graph}  (\rcal_j),$$ hence the fixed point set  in the local Weyl
law integral  (\ref{oldWeyl}) is the set $\{\rcal_j = \rcal_k\}$
and that
\begin{equation} \label{II1}\begin{array}{lll}
\sigma  ( \, F_{T,\epsilon}^{(j)}(a)^* F_{T,\epsilon}^{(k)}(a) \,
) (x,\xi) & = & \frac{1}{T^2} \,   \,(1 - \chi_{\epsilon})
\gamma^{-1}
a_H ( G^{t_j(x,\xi)}(x,\xi)) \chi (T^{-1} t_j(x,\xi)) \\ && \\
&\cdot & (1 - \chi_{\epsilon}) \gamma^{-1} a_H (
G^{t_k(x,\xi)}(x,\xi)) \chi (T^{-1} t_k(x,\xi)).
\end{array}
\end{equation}

It follows from the local Weyl law for Fourier integral operators
(\ref{oldWeyl}) that,
\begin{equation} \begin{array}{lll}\lim_{\lambda \to \infty} \frac{1}{N(\lambda)} Tr \Pi_{[0, \lambda]} F_{T,\epsilon}^{(j)}(a)^*
F_{T,\epsilon}^{(k)}(a) & = &  \frac{1}{T^2} \int_{S\{\rcal_j =
\rcal_k\}_{T,\epsilon}} (\gamma^{-1} (1 - \chi_{\epsilon})  a_H) (
G^{t_j(x,\omega)}(x,\omega)) \chi (T^{-1} t_j(x,\omega))\\ && \\
&&  \, (\gamma^{-1} (1 - \chi_{\epsilon})  a_H) (
G^{t_k(x,\omega)}(x,\omega)) \chi (T^{-1} t_k(x,\omega)) d\mu_L.
\end{array}\end{equation}

We now show that  the domain of integration is empty when $j \not=
k$ when condition (\ref{ANC}) is satisfied, hence that this term
is zero.


As discussed in the introduction (see \eqref{RJERKintro}), 
for $(x, \xi) \in \dcal_{T,
 \epsilon}^{(j)} \cap  \dcal_{T,
 \epsilon}^{(k)} \cap S^*M$,
the condition $ \rcal_j(x, \xi) = \rcal_k(x, \xi)$ on a set of positive measure is equivalent to the condition
in Definition \ref{ANC}.


%


\end{proof}

We now complete the proof of the Proposition: The right side of
Lemma \ref{fiobound} differs from that of Proposition
\ref{PTERGODIC} in three ways. First, and most importantly, it is
normalized by $\frac{1}{T^2}$ rather than $\frac{1}{T}$. Second,
it is a sum $\sum_j$ of squares and not the square of the sum; and
third, we do not subtract $\omega(a)$. Due to the last two
properties, the limit estimate is {\it not} due to ergodicity.

Rather, we estimate the right side of (\ref{LWLI}) by
$$ \,  \leq \frac{1}{T} ||
\, a_H \gamma^{-1} (1 - \chi_{\epsilon})||_{C^0} \int_{S^*M}
\left( \frac{1}{T} \sum_j \chi (T^{-1} t_j(x,\xi)) \right) d\mu_L
\leq \frac{1}{T} || \, a_H \gamma^{-1} (1 -
\chi_{\epsilon})||_{C^0},
$$
where aside from bounding the $a_H$ factor we also use that
$\chi^2 \leq \chi$ since $0 \leq \chi \leq 1$. Here we used that,
as in Lemma \ref{LWLPT}, the evaluation $$ \int_{S^*M}  \left(
\frac{1}{T} \sum_j \chi (T^{-1} t_j(x,\xi)) \right) d\mu_L  =
\frac{1}{T} \sum_j \int_{S^*_H M} \left(\int_{0}^{T^{(-1)}(s,
\xi)} \chi(\frac{t_j(G^{- t}(s, \xi))}{T}) dt) \right) d\mu_{L, H} = 1.
$$ Hence the term $I$ satisfies the limit estimate of Proposition
\ref{FIOdecomp}, completing its proof. 

\end{proof}

This completes the proof of Proposition \ref{BIGSTEP}. 







\section{Proof of Theorem \ref{maintheorem}}

\subsection{\label{finish} Completion of proof of Theorem \ref{maintheorem}}
\subsubsection{Decomposition of matrix elements}

First, we prove the asymptotic decomposition formula in (\ref{op decomp}) for matrix elements. We have the operator decomposition
\begin{equation} \label{op decomp2}
\gamma_H^* Op_H(a)
 \gamma_H = (\gamma_H^* Op_H(a)
 \gamma_H) )_{\geq \epsilon} +  (\gamma_H^* Op_H(a)
 \gamma_H) )_{\leq \epsilon}  + K_{\epsilon}, \end{equation}
 where
 \begin{equation} \label{Kop1}
 K_{\epsilon} := \chi_{\epsilon/2} \gamma_H^* Op_H(a) \gamma_H (1-\chi_{\epsilon}) +  (1-\chi_{2\epsilon}) \gamma_H^* Op_H(a) \gamma_H \chi_{\epsilon}. \end{equation}
 By wave front calculus, we can further decompose $K_{\epsilon} = K_{\epsilon}' + K_{\epsilon}''$ where, $WF'(K_\epsilon') \subset T^*M-0 \times T^*M-0$ and 
 $WF(K''_{\epsilon}) \subset 0_{T^*M} \times N^*H \cup N^*H \times 0_{T^*M}. $  To estimate the matrix elements 
 $\langle K''_{\epsilon} \phi_j, \phi_j \rangle$ we let  $\chi_{0} \in C^{\infty}_{0}(T^*M)$ and note that for any $N>0,$  the operator $ \chi_0 (-\Delta + 1)^{N}  \in \Psi^0(M).$   $L^2$-boundedness of $ \chi_0 (-\Delta + 1)^{N} $ implies that $\| \chi_0 \phi_j \|_L^2 = {\mathcal O}(\lambda_j^{-\infty})$ and by replacing $\chi_0$ with $\Delta^m \chi_0 \in \Psi^{-\infty}(M)$ for any $m >0,$ it follows by an application of the Garding inequality that $\| \chi_0 \phi_j \|_{C^k(M)} = {\mathcal O}_{k}(\lambda_j^{-\infty})$ for any $k \in \Z^+.$ Consequently, by $L^2$-boundedness of $Op_H(a) \in \Psi^0(H),$
$$  | \langle  Op_H(a) \gamma_H \chi_0 \phi_j, \gamma_H  \phi_j \rangle_{L^2(H)} | \leq C \| \gamma_H \chi_0 \phi_j \|_{L^2(H)} \| \gamma_H \phi_j \|_{L^2(H)} = {\mathcal O}(\lambda_j^{-\infty}). $$
 Here, one can use the universal restriction bound  $\| \gamma \phi_j \|_{L^2(H)} = {\mathcal O}(\lambda_j^{1/4})$ \cite{BGT} to bound  the $\| \gamma_H \phi_j \|_{L^2(H)}$-term on the right side of the Schwarz inequality, but any crude polynomial bound in $\lambda_j$ will suffice.  The argument for $\chi_0 (\gamma_H^* Op_H(a) \gamma_H)$ is very similar and also gives the ${\mathcal O}(\lambda_j^{-\infty})$ bound for matrix elements. As a result, 
 \begin{equation} \label{matrixelements2}
\langle K_{\epsilon}'' \phi_j, \phi_j \rangle = {\mathcal O}_{\epsilon}(\lambda_j^{-\infty}). \end{equation}
To estimate the matrix elements $\langle K'_{\epsilon} \phi_j, \phi_j \rangle$, we note that by time-averaging,
\begin{equation} \label{timeaverage1} \begin{array}{ll}
\langle K_{\epsilon}' \phi_j, \phi_j \rangle_{L^2(M)} =  \left\langle \frac{1}{T} \left( \int_{0}^T U(-t) \chi_{\epsilon/2} \gamma_H^* Op_H(a) \gamma_H (1-\chi_{\epsilon}) U(t) dt \right) \phi_j, \phi_j \right\rangle \\ \\
+ \left\langle \frac{1}{T} \left( \int_{0}^T U(-t) (1-\chi_{2\epsilon}) \gamma_H^* Op_H(a) \gamma_H \chi_{\epsilon} U(t) dt \right) \phi_j, \phi_j \right\rangle, \end{array} \end{equation}
and by general wave front calculus (\cite{HoI-IV} Theorem 8.2.14),
\begin{equation} \label{wf} \begin{array}{lll}
WF'  \left( \int_{0}^T U(-t) \chi_{\epsilon/2} \gamma_H^* Op_H(a) \gamma_H (1-\chi_{\epsilon}) U(t) dt \right) \\ \\
 \subset \{ (x,\xi; x',\xi') \in T^*M-0 \times T^*M-0;  \exists t  \in (-T,T),    \,\, \exp_x t \xi = \exp_{x'} t \xi' = s \in H, \\ \\
G^t(x, \xi) |_{T_sH} = G^t(x', \xi') |_{T_s H}, \, G^{t}(x,\xi) \in \text{supp} (\chi_{\epsilon/2}), \,\, G^t(x',\xi') \in \text{supp}(1-\chi_\epsilon),  \;\; |\xi| =
|\xi'|\} \,\, \,  = \,\,\, \emptyset. \end{array} \end{equation}
We note that the wave front in (\ref{wf}) is empty  since in addition to the requirement that $G^t(x,\xi)|_{T_sH} = G^t(x',\xi')|_{T_sH},$  there is the condition that $|G^{t}(x,\xi)| = |G^{t}(x',\xi')|$ which is imposed by the integration over $t \in (0,T).$  Since supp $(\chi_{\epsilon/2})  \cap $ supp $(1-\chi_{\epsilon}) = \emptyset$ and $r_H^* \chi_{\epsilon} = \chi_{\epsilon},$  this is impossible. Similarily, the second time-averaged operator on the RHS of (\ref{timeaverage1}) also has empty wave front. Thus,
\begin{equation} \label{matrixelements1}
\langle K_{\epsilon}' \phi_j, \phi_j \rangle = {\mathcal O}(\lambda_j^{-\infty}) \end{equation}
and  in view of (\ref{matrixelements2}), this proves the decomposition formula in (\ref{op decomp}). 

To   complete the proof of Theorem \ref{maintheorem} we note that by (\ref{op decomp}) and Cauchy-Schwarz,
\begin{equation} \label{l1} \begin{array}{lll}
\limsup_{\lambda \rightarrow \infty} \frac{1}{N(\lambda)} \sum_{\lambda_j \leq \lambda} | \langle Op_H(a) \gamma_H \phi_j, \gamma_H \phi_j\rangle - \omega(a) | \\ \\
 \leq  \limsup_{\lambda \rightarrow \infty} \left( \frac{1}{N(\lambda)} \sum_{\lambda_j \leq \lambda} | \langle (\gamma_H^* Op_H(a) \gamma )_{\geq \epsilon} \phi_j, \phi_j\rangle_M  - \omega((1-\chi_{\epsilon})a) |^2 \right)^{1/2} \\ \\
+ \limsup_{\lambda \to \infty} \frac{1}{N(\lambda)} \sum_{\lambda_j \leq \lambda} | \langle (\gamma_H^* Op_H(a) \gamma_H)_{\leq \epsilon} \phi_j, \phi_j \rangle_M  | + | \omega( (1-\chi_{\epsilon})a) - \omega(a) |. \end{array} \end{equation}
By Proposition \ref{BIGSTEP}, the first term on the RHS of the inequality (\ref{l1}) vanishes.
The second term is just
\begin{equation} \label{sing} \begin{array}{lll}
\limsup_{\lambda \to \infty} \frac{1}{N(\lambda)} \sum_{\lambda_j \leq \lambda} | \langle Op_H(a) \gamma_H \chi_{\epsilon} \phi_j,  \gamma_H \chi_{2\epsilon}   \phi_j \rangle_H  | \\ \\
 \leq C \limsup_{\lambda \to \infty} \frac{1}{N(\lambda)} \sum_{\lambda_j \leq \lambda}  \|  \gamma_H \chi_{\epsilon} \phi_j \|_{L^2(H)}  \, \|  \gamma_H \chi_{2\epsilon}   \phi_j \|_{L^2(H)}, \end{array} \end{equation}
which follows from the $L^2$-boundedness of $Op_H(a).$  We use the $ab \leq \frac{1}{2} (a^2 +b^2)$ inequality to get that  the last line in (\ref{sing}) is bounded by
\begin{equation}\label{sing2}
 \frac{C }{2} \limsup_{\lambda \to \infty} \frac{1}{N(\lambda)} \sum_{\lambda_j \leq \lambda}   \, \left( \, \|  \gamma_H \chi_{\epsilon} \phi_j \|^2_{L^2(H)} \, +  \, \|  \gamma_H \chi_{2\epsilon}   \phi_j \|^2_{L^2(H)} \, \right) = {\mathcal O}(\epsilon),\end{equation}
where the last estimate follows immediately from  the local Weyl law  in (\ref{GENLWL}). Since $\epsilon >0$ is arbitrary, we finally take the $\epsilon  \to 0^+$ limit in (\ref{l1}) and that completes the proof of Theorem \ref{maintheorem}.

\section{\label{HHP} Curves in $\H^2/ \Gamma$ with zero measure of microlocal symmetry}

We now illustrate Theorem \ref{maintheorem} in some important
examples.

\subsection{Geodesic circles of hyperbolic  surfaces : Proof of Corollary \ref{HYPCIRCLE} }

We first consider the case where $H = C_r$ is an embedded geodesic
circle of a small radius $r$ in a hyperbolic surface $\H^2/\Gamma$
where $\H^2$ is the hyperbolic plane and $\Gamma \subset PSL(2,
\R)$ is a co-compact Fuchsian group.  A geodesic circle is a
separating curve, so there are two global sides of $H$
corresponding to the interior an exterior. Corollary
\ref{HYPCIRCLE} follows from the following

\begin{lem} For any finite area hyperbolic surface, no distance
circle can have positive measure of microlocal symmetry.
\end{lem}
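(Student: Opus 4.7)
The plan is to combine real-analyticity of the hyperbolic geodesic flow with the exceptional rotational symmetry of the disk $D_r \subset M$ enclosed by $C_r$. Since $r < \operatorname{inj}(M,g)$, the disk $D_r$ is isometric to a geodesic disk in $\H^2$ and carries a continuous one-parameter group of rotations about its center. The flow $G^t$, the curve $C_r$, and the reflection map $r_H$ are all real-analytic, so the implicit function theorem makes the return times $T^{(j)}_{\pm}$ and the return maps $\pcal_{\pm,j}$ real-analytic on the connected, full-measure open subsets of $B^*C_r$ on which they are defined.

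First I would decompose the microlocal symmetry set. For fixed $j \neq 0$, whenever \eqref{rH} holds at $(s,\xi)$ the condition $G^{T^{(j)}(s,\xi)} r_H(s,\xi) \in S^*_H M$ forces $T^{(j)}_+(s,\sigma) = T^{(k)}_-(s,\sigma)$ for a unique integer $k = k(j)$, so the set in Definition \ref{ANC} is a countable union of the coincidence loci
\[
E_{j,k} \;=\; \bigl\{(s,\sigma) \in B^*C_r : \pcal_{+,j}(s,\sigma) = \pcal_{-,k}(s,\sigma)\bigr\}.
\]
Each $E_{j,k}$ is a real-analytic subvariety of the connected real-analytic manifold $B^*C_r$, so it is either a proper subset of measure zero or equals $B^*C_r$. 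The task reduces to excluding the latter alternative for every pair $(j,k)$.

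The key input is an explicit description of the inside chord map $B := \pcal_{-,1}$. In angular coordinates $(\theta,\sigma) \in (\R/L\Z)\times(-1,1)$ on $B^*C_r$, with $L = 2\pi\sinh r$, the rotational symmetry of $D_r$ together with hyperbolic trigonometry yields
\[
B(\theta,\sigma) \;=\; \bigl(\theta + \operatorname{sign}(\sigma)\,\Delta_r(|\sigma|),\ \sigma\bigr),
\]
for a real-analytic $\Delta_r \colon [0,1) \to (0, L/2)$ that is strictly positive on $(0,1)$. Thus $B$ is an integrable twist preserving the non-constant real-analytic function $F(\theta,\sigma) := |\sigma|$. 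Setting $A := \pcal_{+,1}$ and using that a trajectory exits $D_r$ after each chord, one checks that $\pcal_{+,j}$ and $\pcal_{-,k}$ are the length-$j$ and length-$k$ alternating words in $A$ and $B$ ending respectively in $A$ and $B$.

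Assume now $E_{j_0,k_0} = B^*C_r$ for some pair. Then a nontrivial word identity $w_{j_0}(A,B) = w_{k_0}(A,B)$ holds on $B^*C_r$. Applying $F$ and repeatedly stripping leading $B$'s via $F \circ B = F$ reduces this to an identity $F \circ u = F \circ v$ between shorter alternating words, from which one extracts a real-analytic function on $B^*C_r$ that is invariant under a nontrivial power of the alternating return map, most naturally $\pcal_{+,2} = B \circ A$. By Lemma \ref{ERG} the return map $\Phi$ on $(S^*_H M, d\mu_{L,H})$ is ergodic as a consequence of ergodicity of the Anosov flow $G^t$ on a finite-area hyperbolic surface; the standard induced-system argument for $\pm$-alternating skew products then makes $\Phi^2$ restricted to the $+$-side ergodic, so the extracted analytic invariant must be almost everywhere, hence everywhere, constant. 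This either contradicts its manifest non-constancy or forces the range of $A$ to lie in a proper analytic subvariety of $B^*C_r$, contradicting that $A$ is a diffeomorphism. The main obstacle is implementing this extraction uniformly across all $(j,k)$; the case analysis hinges on the integrable twist form of $B$ and the diffeomorphism property of $A$. Taking the countable union over $(j,k)$ then finishes the proof.
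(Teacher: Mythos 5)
Your setup is correct as far as it goes: since $r<\operatorname{inj}(M,g)$ the region bounded by $C_r$ is a genuine hyperbolic disk, the inward first return map is the chord map, by rotational symmetry it has the twist form you write and preserves $|\sigma|$, and the alternating word structure of $\pcal_{\pm,j}$ in the chord map $B$ and the exterior excursion map $A$ is right. The proof breaks down, however, at the step where you declare each coincidence set $E_{j,k}$ to be ``either of measure zero or all of $B^*C_r$.'' The maps $\pcal_{+,j}$ and $\pcal_{-,k}$ are not globally defined real-analytic maps on the connected manifold $B^*C_r$: they are defined and analytic only on the open set where the relevant return times are finite and all intermediate intersections with $C_r$ are transversal, and this domain has in general many (typically infinitely many) connected components, indexed by which $\Gamma$-translates of the lifted circle the geodesic meets and in which order. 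The identity theorem propagates $\pcal_{+,j}=\pcal_{-,k}$ only within one such component; this is precisely why the paper's own proof works in the universal cover with a fixed pair of translates $\sigma_{\pm}C_r$ and concludes only that the two return maps coincide on the set of directions hitting those particular translates. So you may not assume any word identity $w_{j}(A,B)=w_{k}(A,B)$ on all of $B^*C_r$, only on some open, non-invariant subset.

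This undercuts the ergodicity step, which is the engine of your contradiction: the relation you extract (for instance $F\circ A=F$ with $F=|\sigma|$ in the case $j=k=1$) holds only on an open set that is not invariant under $\Phi$ or $\Phi^2|_{S^+}$, and ergodicity gives no information about a function satisfying an invariance relation merely on a non-invariant set of partial measure. In addition, the passage from a general word identity to an honest invariant of $\pcal_{+,2}=B\circ A$ is asserted rather than proved: e.g. $E_{3,1}=B^*C_r$ would give $F\circ(A\circ B\circ A)=F$, i.e. $(F\circ A)\circ(B\circ A)=F$, which is not an invariance statement for $F\circ A$, and you yourself flag this extraction as the ``main obstacle'' without carrying it out. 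To rescue the approach you would need either a purely local argument valid on a single component of the domain --- which is essentially what the paper does, by conjugating with the reflection through the tangent line, showing $\sigma_{+}C_r=\epsilon\,\sigma_{-}C_r$, and deriving a contradiction from the common tangents of the two translated circles, using that $C_r$ lies on one side of the axis of $\epsilon$ --- or a genuinely global invariant relation, which your construction does not supply.
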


\begin{proof}  We uniformize and consider the
$\Gamma$-orbit $\Gamma C_r$ of the distance circle. It  is a union
of disjoint geodesic circles of $\H^2$. If $C_r$ has a positive
measure of microlocal reflection symmetry, then there exist  two
components, $\sigma_+ H, \sigma_- H$ with $\sigma_{\pm} \in
\Gamma$, and an open set $U^* \subset B^* C_r$, so that geodesics
$\xi_{\pm}(s, \sigma)$ defined by $(s, \sigma) \in U$ hit the
components  $\sigma_{\pm} H$ at the same time and at points which
are equivalent modulo $\Gamma$. Since the return maps are real
analytic on their open sets of definition, the left and right
return maps must coincide for all $(s, \sigma)$ so that $\exp_s
\xi_{\pm} (x, \sigma)$ hits $\sigma_{\pm} H$. That is,  $\exp_s t
\xi_+(s, \sigma)$ hits $\sigma_+ H$ at the same time that $\exp_s
t \xi_-(s, \sigma)$ hits $\sigma_- H$ and the points are
equivalent under the action of $\Gamma$.

Let $U$ denote the set of footpoints of $U^*$.  With no loss of
generality, we position the closest   point of   $U$ to $\sigma_+
C_r$ at the origin $i \in \H^2$.  We also rotate the configuration
so that $T_i C_r $ is the vertical axis, so that the minimizing
geodesic from $i$ to $\sigma_+ C_r$ has horizontal initial tangent
vector.  We denote the distance between $C_r$ and $\sigma_+ C_r$
by $d$. By the assumption that the return maps agree, the geodesic
ray in the reflected horizontal direction  hits $\sigma_- C_r$ at
time $d$.

We now claim that  $\sigma_+ C_r = \epsilon \sigma_- C_r$ where
$\epsilon$ is the Euclidean reflection through the tangent line
$T_- H$.

To see this, we consider the    two extreme geodesic rays
emanating from tangent vectors at the center $i$ that hit
$\sigma_+ C_r$ tangentially.  Their reflections through $T_i C_r$
must be rays that hit $\sigma_- C_r$ tangentially,  since the
domains of the left/right return maps coincide and so the the
reflection of extreme rays (on the boundary of the domains of
analyticity) must be invariant under reflection. Also, the
distances to the tangential intersections with $\sigma_{\pm} C_r$
are equal. Since a circle is determined by three points,
$\sigma_{\pm } C_r$ are determined by the nearest points to $C_r$
and the points where the tangential rays based at $i$ intersect
it. Since this data is $\epsilon$ invariant, we must have
$\sigma_+ C_r = \epsilon \sigma_- C_r$.

We use the same analysis to prove that it is impossible for a
circle  $C_r$ to have a positive measure of reflection symmetry.
In fact, this is quite easy to see because the configuration
cannot be $\epsilon$-symmetric due to the fact that $C_r$ lies on
the left side of the symmetry axis. Here is a more formal proof.

For  each $s \in C_r$, there is a maximal interval $I_s^+ \subset
S^*_s \H^2$ of unit tangent vectors pointing to the exterior of
$C_r$ whose geodesics hit $\sigma_+ C_r$. There is also a maximal
domain $I_s^-$ of inward pointing unit vectors whose geodesics hit
$\sigma_- C_r$. If the $\pm$ return maps coincide, one must have
$r_{C_r} I_s^{\pm} = I_s^{\mp}$ for all $s \in U$. The boundary of
the domain of analyticity for the $\pm$ return map consists of the
exterior directions whose geodesic rays hit $\sigma_{\pm} C_r$
tangentially. It is clear that $I_s^{\pm}$ shrinks to a point when
the tangent line to $C_r$ at $s$ hits $\sigma_{\pm} C_r$
tangentially. There are two such points $s_j^+$ for $\sigma_+
C_r$, and $U = [s_1^+, s_2^+]$ is the arc of $C_r$ with boundary
points $s_j^+$ which contains the origin. If the $\pm$ return maps
were the same, this would have to be the same as the corresponding
interval $[s_1^-, s_2^-] \subset C_r$. In particular, the geodesic
tangent to $C_r$ at $s_1^+$ would have to be simultaneously
tangent to $\sigma_{\pm} C_r$ and to hit them at the same
distance. In fact the pair of circles has only two common tangent
circles/lines. We ignore the point that they need not be geodesics
of $\H^2$, i.e. need not hit the boundary orthogonally.  Since the
configuration of two circles $\sigma_{\pm}$ is invariant under
$\epsilon$, the common tangent must be $\epsilon$-invariant.
Define the midpoint of the common tangent to be the point where
the distance along the tangent is the same to $\sigma_- C_r$ and
$\sigma_+ C_r$. Then the midpoint must be $\epsilon$-invariant.
But also the distance must be the same on the tangent from its
intersection with $C_r$. Hence, the midpoint is the intersection
point of the common tangent to $\sigma_- C_r, C_r, \sigma_+$. But
this midpoint cannot be $\epsilon$-invariant since $C_r$ lies on
one side of the fixed line of $\epsilon$, and the intersection of
the common to tangent with
 $H$ occurs at a point in the left half-plane with
respect to $T_i C_r$.

This contradiction shows  that the $\pm$ return maps for $ C_r$
cannot coincide for any pair of components $\sigma_{\pm} C_r$,
hence that $C_r$ satisfies the condition of Definition \ref{ANC}.

\end{proof}

The same proof generalizes to distance spheres of hyperbolic
quotients of any dimension. It generalizes also to certain
negatively curved manifolds, for which there exists an isometric
involution fixing the tangent plane of a distance sphere at some
point.

\subsection{Closed geodesics in hyperbolic surfaces: }

\begin{prop} Suppose that $\gamma $ is a closed geodesic of $ \H^2/ \Gamma$
with a  positive measure of microlocal symmetry. Then there exists
an orientation reversing involution $\epsilon$ of $\H^2$
preserving the axis of $\gamma$, and a  three generator subgroup
$\langle \gamma,\sigma_+, \sigma_- \rangle$ such that $\sigma_- =
\epsilon \sigma_+ \epsilon. $
\end{prop}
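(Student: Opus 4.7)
\medskip

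\noindent\emph{Proof proposal.} The plan is to mimic the argument for distance circles, but with the circle replaced by the axis of $\gamma$ and the Euclidean reflection across a tangent line replaced by the geodesic reflection across the axis. Lift $\gamma$ to its axis $\tilde\gamma\subset\H^2$, so that the preimage of $\gamma$ under the covering projection is the disjoint union $\Gamma\tilde\gamma$ of geodesics. Define $\epsilon$ to be the orientation-reversing isometric involution of $\H^2$ given by geodesic reflection across $\tilde\gamma$; by construction $\epsilon$ fixes $\tilde\gamma$ pointwise, and its differential along $T^*_{\tilde\gamma}\H^2$ coincides with the fiberwise reflection $r_H$ of \eqref{rHDEF}, so that $d\epsilon\cdot\xi_+(s,\sigma)=\xi_-(s,\sigma)$ for every $(s,\sigma)\in B^*\gamma$.

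By hypothesis there exist $j_0\neq 0$ and a positive-measure subset of $S^*_\gamma M$ on which $r_H G^{T^{(j_0)}}=G^{T^{(j_0)}}r_H$. Projecting to $B^*\gamma$ this says $\pcal_{+,j_0}=\pcal_{-,k_0}$ on a set of positive measure, for some $k_0$. Since the return maps are real-analytic on the open domains where they are defined, the equality propagates to an entire open set $U\subset B^*\gamma$ on which the $+$ and $-$ first return times and positions coincide. In the universal cover this selects fixed elements $\sigma_+,\sigma_-\in\Gamma$ (representing the cosets of the lifts actually hit, so in particular $\sigma_\pm\tilde\gamma\neq\tilde\gamma$) such that, for every $(s,\sigma)\in U$, the geodesic $\exp_s t\,\xi_+(s,\sigma)$ meets $\sigma_+\tilde\gamma$ at some time $t=t(s,\sigma)$, and $\exp_s t\,\xi_-(s,\sigma)$ meets $\sigma_-\tilde\gamma$ at the same time.

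Apply $\epsilon$. Because $\epsilon$ fixes each footpoint $s\in\tilde\gamma$ and sends $\xi_+$ to $\xi_-$, one has $\epsilon\bigl(\exp_s t\,\xi_+(s,\sigma)\bigr)=\exp_s t\,\xi_-(s,\sigma)$. Consequently this common point belongs simultaneously to $\epsilon\sigma_+\tilde\gamma$ and to $\sigma_-\tilde\gamma$, for every $(s,\sigma)$ in the open set $U$. Two distinct geodesics of $\H^2$ meet in at most one point, whereas here the two geodesics share a smooth open curve of points parametrized by $U$, so
\[
\epsilon\sigma_+\tilde\gamma=\sigma_-\tilde\gamma
\]
as sets. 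Therefore $\sigma_-^{-1}\epsilon\sigma_+$ stabilizes $\tilde\gamma$. The stabilizer of $\tilde\gamma$ in the full isometry group $\mathrm{Isom}(\H^2)$ is generated by the hyperbolic translation $\gamma$ together with the reflection $\epsilon$; since $\sigma_-^{-1}\epsilon\sigma_+$ is orientation-reversing it must have the form $\epsilon\gamma^{k}$ for some $k\in\Z$. Absorbing $\gamma^k$ into $\sigma_+$ (which leaves the lift $\sigma_+\tilde\gamma$ unchanged because $\gamma^k$ stabilizes $\tilde\gamma$), we obtain the stated identity $\sigma_-=\epsilon\sigma_+\epsilon$, and $\langle\gamma,\sigma_+,\sigma_-\rangle$ is the desired three-generator subgroup.

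The main obstacle I expect is the real-analytic propagation step, including a careful verification that $\sigma_+\tilde\gamma\neq\tilde\gamma$ (so that $\sigma_\pm\notin\langle\gamma\rangle$ and the subgroup is genuinely three-generated), and the bookkeeping of how the power $\gamma^k$ produced by the stabilizer calculation can be removed by relabeling; everything else is a direct transcription of the circle argument in the previous subsection, with the hyperbolic reflection $\epsilon$ playing the role of the Euclidean reflection $\epsilon$ there.
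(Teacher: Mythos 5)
Your overall strategy is the same as the paper's: uniformize, put the axis $\tilde\gamma$ at a standard position, take $\epsilon$ to be the hyperbolic reflection across $\tilde\gamma$ (which intertwines $\xi_\pm$), propagate the positive-measure coincidence to an open set by real-analyticity, and then use the geometry of geodesics in $\H^2$ to extract an algebraic relation between $\sigma_\pm$ and $\epsilon$. The decomposition into cases and the identification of $\epsilon$ with $r_H$ are both correct.

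There is, however, a genuine gap in the final step, and you have already half-noticed it. You only use the \emph{hitting-time and hitting-set} coincidence: the geodesic from $\xi_+$ hits $\sigma_+\tilde\gamma$ at the same time that the geodesic from $\xi_-$ hits $\sigma_-\tilde\gamma$. Applying $\epsilon$ gives, as you say, the set-theoretic identity $\epsilon\sigma_+\tilde\gamma=\sigma_-\tilde\gamma$, hence that $g:=\sigma_-^{-1}\epsilon\sigma_+$ stabilizes $\tilde\gamma$ as a set. But your description of the setwise stabilizer of a geodesic in $\mathrm{Isom}(\H^2)$ is wrong: it is not $\langle\gamma,\epsilon\rangle$ but the full dihedral-type group generated by the entire one-parameter translation group along $\tilde\gamma$, the reflection $\epsilon$, and a half-turn about a point of $\tilde\gamma$ (equivalently, the reflections across geodesics orthogonal to $\tilde\gamma$). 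So $g$ could a priori be a glide reflection $\epsilon T_a$ with $a$ \emph{any} real number — not a multiple of the translation length of $\gamma$, since $\epsilon$ need not normalize $\Gamma$ and $g\notin\Gamma$ — or even a reflection in a geodesic orthogonal to $\tilde\gamma$. In the first case there is no $\gamma^k$ to absorb; in the second case the relation $\sigma_-=\epsilon\sigma_+\epsilon$ simply does not come out.

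The missing ingredient is the information you discard when you pass from ``the return maps coincide'' to ``the hit geodesics coincide.'' The hypothesis $\pcal_{+,j_0}=\pcal_{-,k_0}$ is an equality of maps $B^*H\to B^*H$, so it identifies the \emph{return positions} on $\tilde\gamma$, not just the translated axes. Writing $q_+$ for the hitting point on $\sigma_+\tilde\gamma$ and $q_-=\epsilon q_+$ for the hitting point on $\sigma_-\tilde\gamma$, the return-position equality reads $\sigma_+^{-1}q_+=\sigma_-^{-1}q_-=\sigma_-^{-1}\epsilon q_+$, equivalently $\sigma_-^{-1}\epsilon\sigma_+$ fixes the point $\sigma_+^{-1}q_+\in\tilde\gamma$. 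Since this holds for an open arc of such points, $\sigma_-^{-1}\epsilon\sigma_+$ fixes an arc of $\tilde\gamma$ pointwise, hence all of $\tilde\gamma$ pointwise. An isometry of $\H^2$ fixing a geodesic pointwise is either the identity or the reflection in that geodesic; since $\sigma_-^{-1}\epsilon\sigma_+$ is orientation-reversing it must be $\epsilon$, which gives $\sigma_-=\epsilon\sigma_+\epsilon$ with no residual $\gamma^k$ ambiguity and no need to classify the full setwise stabilizer. This is exactly the argument in the paper (which phrases it as ``$\sigma_-\epsilon\sigma_+^{-1}$ fixes the axis pointwise''), and it is the step you need to restore.
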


\begin{proof}

In the universal cover, we pick one component of the orbit $\Gamma
\mbox{Axis}\;(\gamma)$  of the axis of the geodesic. With no loss
of generality we may assume it is the vertical geodesic $i \R_+$.
We orient the geodesic so that it moves towards $\infty$,  and we
choose its left and right sides as the $\mp $ sides.

Let $\epsilon: \H \to \H$ be the orientation-reversing isometric
involution $\epsilon(x, y) = (-x, y)$, i.e.  $\epsilon z = -
\bar{z}$.

\begin{lem} If the left  return map corresponding to
$\sigma_- H$ coincides at a common time on a set of positive measure of $B^*H$ with
the right return map corresponding to $\sigma_+ H$, then $\epsilon
\sigma_- \epsilon = \sigma_+$. \end{lem}

We now consider what happens if
\eqref{rH} or equivalently \eqref{RJERKintro}  holds  on a set of positive measure
 of $B^* i \R_+$. Since the maps are real analytic, this implies
 that  $\pcal_{j, +} \equiv \pcal_{k, -}$ where the return indices are determined by
the condition that the return times are the same.  The return maps are
 given by $d \sigma_{\pm} \circ \alpha'_{\xi_{\pm}}(T_j(\xi_{+}))$
 where $\sigma_{\pm}^{-1}$  are the
elements of $\Gamma$ taking $\mbox{Axis}(\gamma) = i \R_+$ to the
components hit by $\alpha_{\xi_{\pm}}$, and $T_j(\xi_{+})$  is the common
times  when $\alpha_{\xi_{\pm}}$ hit the respective components.


Clearly $\epsilon \alpha_{\xi_{\pm}} = \alpha_{\xi_{\mp}}$. Hence
if $\pcal_{j, +} \equiv \pcal_{k, -}$ where $j, k$ are related as above,  then $\epsilon \sigma_-
\mbox{Axis}(\gamma)) = \sigma_+ \mbox{Axis}(\gamma)$. But then $
\sigma_- \epsilon \sigma_+^{-1}$ is an isometry of $\H$ which
fixes $\mbox{Axis}(\gamma)$ pointwise. The only such possible
isometries are the identity and $\epsilon$ and by considering
orientations it is clear that $ \sigma_- \epsilon \sigma_+^{-1} =
\epsilon.$

We now consider any component $\sigma \mbox{Axis}(\gamma)$ with
$\sigma \notin \Gamma_{\gamma}$.  Given one of its points we find
the closest point of $\mbox{Axis}(\gamma)$. The minimizing
geodesic then intersects $\mbox{Axis}(\gamma)$ and $\sigma
\mbox{Axis}(\gamma)$ orthogonally and on $\mbox{Axis}(\gamma)$
projects to the zero covector. Then by assumption, $\epsilon$ of
this minimizing geodesic is the minimizing geodesic from this
point to another component $\tau \mbox{Axis} (\gamma)$. But then
$\epsilon \sigma \epsilon = \tau$.

 \end{proof}

 Note that the quotient of $\H^2$ by the two-generator subgroup
$\langle \gamma,\sigma_+
 \rangle$ is an infinite area pair of pants with three simple
 closed geodesics corresponding to the axis $\mbox{Axis}(\gamma)$ of
 $\gamma$ and its translates by $\sigma_+, \gamma \sigma_+$. The
 quotient by $\langle \gamma,\sigma_-
 \rangle$ is a second pair of pants. If we truncate each pair of
 pants at the simple closed geodesic $\gamma$ and glue them
 together, we obtain the quotient by the three element subgroup.
 Thus, there exists a locally isometric $\Z_2$-infinite sheeted cover
 $\pi: \H^2/ \langle \gamma,\sigma_+, \sigma_- \rangle \to
 \H^2/\Gamma$. To our knowledge, such a cover may exist without $ \H^2/\Gamma$
 possessing a $\Z_2$ symmetry.

However, a generic compact hyperbolic surface does not have a
triple of elements $\gamma, \sigma_+, \sigma_-$ with the property
above. Indeed, it suffices to show that for any closed geodesic
$\gamma$, and any pair of elements $\sigma_{\pm}$ satisfying the
relation above, there exist  infinitesimal deformations which
destroy the relation. Such a deformation is given by twisting
along $\gamma$, in  twist-length coordinates on moduli space.

\subsection{Closed horocycles for $\Gamma = SL(2, \Z)$}

 Now we consider the case where $H$ is a closed horocycle $H$ of the
 modular curve $\H/ SL(2,\Z)$. Numerical studies of the quantum
 ergodic property of restrictions of eigenfunctions to horocycles
 are given in \cite{HR}.

  For simplicity we assume $H$ is an
 embedded horocyle in the parabolic end. It is a separating curve,
 and if we orient the end `upwards' the two sides of $H$ may be
 visualized as upward pointing and downward pointing.

 Except for the upward vectors orthogonal to $H$, all upward
 vectors define geodesics which return to $H$ after a sojourn in
 the end. The orthogonal geodesics to $H$ run out to infinity and never
 return. In the standard tesselation of $\H$ by fundamental
 domains of $SL(2, \Z)$, the horocycle is a horizontal line $y =
 C$ and the upward geodesics correspond to half-circles orthogonal
 to $\R$ which intersect the horizontal line in two points.

\begin{prop} Suppose that $H $ is a closed horocycle  of $ \H^2/ SL(2,
\Z)$. Then $H$ has a zero measure of microlocal symmetry.
Consequently, restrictions of eigenfunctions to $H$ are quantum
ergodic.
\end{prop}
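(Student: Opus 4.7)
The plan is to assume, for contradiction, that the closed horocycle $H$ has positive measure of microlocal reflection symmetry and then derive a contradiction by a dimension-counting argument in the universal cover $\H^2$. By Definition \ref{ANC} and the real-analyticity of the geodesic flow and return times off a measure-zero set, the symmetry relation for some fixed $j\ne 0$ must hold on a connected open subset $U\subset B^*H$, and by discreteness of $SL(2,\Z)$ we may shrink $U$ so that a single element $\gamma_0\in SL(2,\Z)$ realizes the lifted identity
\[
r_H\,G^{T^{(j)}}(\tilde s, \tilde\xi_+) \;=\; \gamma_0 \cdot G^{T^{(j)}}(\tilde s, \tilde\xi_-)
\]
throughout $U$. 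Lift $H$ to the horizontal line $\tilde H_0=\{y=C\}$; then $\tilde H$ decomposes as $\tilde H_0$ together with satellite horocircles $\tilde H_{p/q}$ of Euclidean radius $\frac{1}{2q^2 C}$ tangent to $\R$ at each rational $p/q$. For $C$ large enough that $H$ is embedded, all of these satellites lie strictly below $\tilde H_0$.

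The main step is the first-return case $j=1$. The $\xi_+$-trajectory from $(x_0,C)$ is a Euclidean semicircle which stays above $\tilde H_0$ until it returns to $\tilde H_0$ itself (necessarily, since all satellite components lie lower), landing at some $(x_1,C)$ after a hyperbolic time $T_+$ depending only on $|\sigma|$. Matching footpoints in the symmetry relation then forces $G^{T_+}(\tilde s, \tilde\xi_-)$ to have footpoint $\gamma_0^{-1}(x_1,C)$, hence to lie on the one-dimensional curve $\gamma_0^{-1}\tilde H_0\subset\H^2$. However, as $(x_0,\sigma)$ varies over $U$, the map $(x_0,\sigma)\mapsto G^{T_+(|\sigma|)}(\tilde s,\tilde\xi_-)$ is a local diffeomorphism into $\H^2$: the $x_0$-derivative is the horizontal translation vector $\partial_x$ (by invariance of the hyperbolic metric under $x$-translation), while the $\sigma$-derivative introduces genuinely new, non-horizontal variation since both the shape of the $\xi_-$-semicircle and the elapsed time $T_+$ depend non-trivially on $\sigma$. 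The image is therefore a two-dimensional open subset of $\H^2$, which cannot be contained in a one-dimensional curve, yielding the desired contradiction.

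For higher returns $j\geq 2$ the same strategy applies with only notational changes: the $j$-th-hit component of $\tilde H$ and the reflecting element $\gamma_0$ are both locally constant on $U$ by analyticity and discreteness, and the symmetry relation forces the $\xi_-$ endpoint at the real-analytic time $T^{(j)}(x_0,\sigma)$ into a fixed one-dimensional curve $\gamma_0^{-1}\tilde H_{p^*/q^*}\subset\H^2$, again contradicting the two-dimensional image of the endpoint map. The main technical obstacle is verifying that the endpoint map remains a local diffeomorphism for $j\geq 2$ despite the nontrivial dependence of $T^{(j)}$ on both $x_0$ and $\sigma$; this reduces to the non-degeneracy of the iterated return maps established by the fold-type analysis of \S\ref{IMPACTRETURN}. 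Combined with Theorem \ref{maintheorem}, this gives QER of eigenfunctions along $H$.
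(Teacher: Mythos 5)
Your argument is genuinely different from the paper's, and its core idea works for the first return, but as written there is a real gap for higher returns. The condition in Definition \ref{ANC} is a union over all $j \neq 0$, so you must rule out a positive-measure symmetric set for \emph{every} $j$, and your treatment of $j \geq 2$ is not "only notational changes." The two structural facts your $j=1$ argument rests on both fail when $j \geq 2$: the return time $T^{(j)}$ is no longer a function of $\sigma$ alone (the configuration of satellite horocircles $\Gamma \tilde H_0$ is not invariant under the non-integral horizontal translations $n_{x_0}$, so translation invariance in $x_0$ is lost), and the assertion that $(x_0,\sigma)\mapsto \pi G^{T^{(j)}}(\tilde s,\tilde\xi_-)$ is a local diffeomorphism is not proved anywhere: the fold analysis of \S\ref{IMPACTRETURN} only gives smoothness of the impact times away from glancing and the fold structure of $S^*_H M \to B^*H$, not that this particular composite endpoint map has rank $2$; indeed, showing its image is two-dimensional on the putative symmetric set is essentially what you are trying to prove, so the appeal is close to circular. (For $j=1$ your local-diffeomorphism claim is also stronger than needed and not really justified as stated, but it is easily repaired: freeze $\sigma$ and vary $x_0$ over an interval; since $T_+$ depends only on $\sigma$, the minus endpoint sweeps a horizontal Euclidean segment at a fixed height $<C$, which cannot be contained in the fixed satellite $\gamma_0^{-1}\tilde H_0$, a Euclidean circle tangent to $\R$. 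That already gives the contradiction for $j=1$ without any rank computation.)

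By contrast, the paper's proof is uniform in the return index, which is exactly why it avoids your obstacle. It uses the point symmetries $w_p=\epsilon_p\circ s_p$ (reflection of $T_p\H^2$ through $T_pH$; at $i$, $w(z)=1/z$): if the hitting times and positions from an interval of points $p\in H$ agree with those of the reflected directions modulo $\Gamma$, then $w_p\sigma_+w_p^{-1}=\sigma_-$ for all such $p$, hence $g_{p,q}=w_p^{-1}w_q$ lies in the centralizer of $\sigma_+$, a conjugate of the diagonal subgroup fixing the endpoints of the axis of $\sigma_+$; the explicit family $g_{p,q}=n_x w n_{u-x}w^{-1}n_u$ cannot all fix two common boundary points, contradiction. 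Nothing in that argument refers to which return $j$ produced the components $\sigma_\pm H$. To complete your proof you would either need a genuinely new mechanism for $j\geq 2$ (establishing the non-degeneracy you currently assume) or replace the higher-return step by a conjugation/rigidity argument of the paper's type.
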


\begin{proof} We argue by contradiction again. If $H$ had a
positive measure of microlocal symmetry, there would have to exist
horocycles $\sigma_+ H, \sigma_- H$ such that the hitting times
and return maps from some open set of $S^*_H \H^2$ and its
reflection through $H$ were the same modulo the action of
$\Gamma$.

Since $\H^2$ is a symmetric space, there exists an inversion
symmetry $s_p$ at each point $p$, i.e. an involutive isometry that
fixes $p$ and reverses all geodesics through $p$. In the case of
$i$ it is given by $s_i(z)=  -\frac{ 1}{z}$. If $p \in H$ and
compose $s_p$  with the reflection symmetry $\epsilon_p$  with
respect to the vertical geodesic through $p$, then $\epsilon_p
\circ \sigma_p$ is an isometry of $\H^2$ which reflects $T_p \H^2$
through $T_p H$; at i, it is $w(z) = \frac{1}{z}$.

As above, we can reconstruct $\sigma_- H$ from $H, \sigma_+ H$
using only the geodesics from one point of $H$, which we take to
be $i$ again with no loss of generality (so that $H$ is $y = 1$).
Since $\epsilon_i \circ \sigma_i$ takes the upward `interval' of
geodesics which hit $\sigma_+H$ to the `downward interval' that
hits $\sigma_- H$ and since the hitting times and positions are
the same, we must have $(\epsilon_i \circ \sigma_i)  \sigma_+
(\epsilon_i \circ \sigma_i)^{-1} = \sigma_-$. But the same
argument applies to any point $p \in H$ for which there exists an
interval of geodesics hitting $\sigma_+ H$. Then we get
$(\epsilon_p \circ \sigma_p)  \sigma_+ (\epsilon_p \circ
\sigma_p)^{-1} = \sigma_-$. But this implies $(\epsilon_p \circ
\sigma_p)  \sigma_+ (\epsilon_p \circ \sigma_p)^{-1} = (\epsilon_q
\circ \sigma_q)  \sigma_+ (\epsilon_q \circ \sigma_q)^{-1}$ for
all $(p, q)$ in some interval on $H$. If $\gamma_{p, q} =
(\epsilon_p \circ \sigma_p)^{-1} (\epsilon_q \circ \sigma_q)$ then
we would have $g_{p,q} \sigma_+ = \sigma_+ g_{p,q}$, which implies
that $g_{p,q} \in G_{\sigma_+}$, the centralizer of $\sigma_+$ in
$G = PSL(2, \R)$. This is a group of hyperbolic elements which is
conjugate to the real diagonal matrices, and in particular must
fix the endpoints of the axis of $\sigma_+$. Concretely, if $N =
\{n_x =
\begin{pmatrix} 1 & x \\ & \\ 0 & 1\end{pmatrix} \}$ is the
unipotent subgroup, and if $p = n_x i, q = n_u i$ then $g_{pq} =
n_x w n_{u - x} w^{-1} n_u. $ It is easy to see that the elements
$n_x w n_{u - x} w^{-1} n_u$ cannot all fix the same two points of
$\R = \partial \H^2$. Indeed, if $t$ were such a fixed point then
$n_x w n_{u - x} w^{-1} n_u t = \frac{u + t}{1 - (u - x) (u +t)} +
x $ would equal $t$ for all $x, u$. This is absurd since as $x \to
u$ it becomes $2x + t$.

This contradiction concludes the proof.

\end{proof}

\section{Proof of Theorem \ref{sctheorem}} \label{sc}

In this appendix, we 
convert the proof of Theorem \ref{maintheorem} into the
semi-classical version Theorem \ref{sctheorem}.  The proof parallels the one in homogeneous case but with two (minor) differences: 1)  In the semiclassical case, we will need to cut-off the Fourier integral operators appearing in Proposition \ref{VTDECOMPa} in order to apply the compactly-supporrted semiclassical Fourier integral operator calculus in \cite{GuSt2}. A key issue is mass concentration for eigenfunctions and their  restrictions to $H.$  For completeness, we review the relevant results here (see \cite{Zw} for more detail). 2) The second difference deals with the role of the $N^*H$. In the homogeneous case, one must remove a conic neighbourhood of $N^*H$ (see (\ref{PSIEP})) to ensure that  $ \chi(x_n) a(s,\sigma)$ is a polyhomogeoneous symbol on $T^*M$. In the semiclassical case, because of mass localization (see Lemma (\ref{mass estimate})), for the proof of Theorem (\ref{sc}) it suffices to consider matrix elements $\langle Op_{h_j}(a) \phi_{h_j}, \phi_{h_j} \rangle$ where $a \in C^{\infty}_{0}(T^*H).$  Under the tangential projection $\pi_H: T_H^*M \rightarrow T^*H,$   $\pi_H(N^*H) = (0)_{T^*H}$ and the zero section $0_{T^*H} = \{ (s,\sigma=0); s \in H)\}$ is of no special interest in the semiclassical case. 

\subsection{ Semiclassical symbols}
A natural class of semiclassical symbols \cite{Zw} is given by
\begin{equation} \label{sc1}
S^{m,k}(T^*M \times [0,h_0)) := \{ a \in C^{\infty};  a(x,\xi,h) \sim_{h \rightarrow 0^+} \sum_{j=0}^{\infty} a_{k-j}(x,\xi) h^{m+j}, \,\, a_{k-j} \in S^{k-j}_{1,0}(T^*M) \}. \end{equation}
Here, we recall that $S^{m}_{1,0}$ is the standard H\"{o}rmander class consisting of smooth functions $a(x,\xi)$ satisfying the estimates
$|\partial_{x}^{\alpha} \partial_{\xi}^{\beta} a(x,\xi)| \leq C_{\alpha,\beta} \langle \xi \rangle^{m-|\beta|}$ for all multi-indices $\alpha, \beta \in N^{n}.$
We say that $A(h) \in Op_h(S^{m,k})$ provided its Schwartz kernel is locally of the form
\begin{equation} \label{sc2}
A(h)(x,y) = (2\pi h)^{-n} \int_{\R^n} e^{i \langle x-y,\xi \rangle/h} a(x,\xi,h) \, d\xi \end{equation}
with $a \in S^{m,k}$ and alternatively, we sometimes write $Op_h(a)$ or $a(x,hD_x)$ for the operator $A(h)$ in (\ref{sc2}). 

Let $\phi_{\lambda_j}; j=1,2,...$ be $L^2$ orthonormal basis of Laplace eigenfunctions on $(M,g)$ and $H \subset M$ a hypersurface. From now on, we assume that  $h \in \{ \lambda_j^{-1} \}; j=1,2,...,$ write $h_j = \lambda_j^{-1}$ and denote the corresponding eigenfunction by $\phi_{h_j}.$  Given $0 < \epsilon_0 <1$ an arbitrary small number, let 
\begin{equation} \label{chi}
\chi(x,\xi) \in C^{\infty}_{0}(T^*M), \,\, \chi |_{A(\epsilon_0)} \equiv 1,  \,\, \text{supp}\, \chi \subset A(2\epsilon_0), \end{equation}
with $A(\epsilon_0):= \{ (x,\xi);  (1-\epsilon_0)  < |\xi|_g < (1 + \epsilon).$  Let $\tilde{\chi} \in C^{\infty}_{0}$ be another cutoff equal to one on $A(2\epsilon_0)$ and with supp $\tilde{\chi} \subset A(4\epsilon_0).$   Consider  the eigenfunction equation
$$ (-h^2 \Delta_g - 1) \phi_{h} = 0.$$
Since $P(h):= -h^2 \Delta_g - 1$ is $h$ elliptic for $(x,\xi) \in T^*M - A(\epsilon),$  one can construct  an $h$-mircolocal parametrix  $Q(h) \in Op_{h}(S^{-1}_{0,0})$ so that
$$(1- \tilde{\chi}(h)) Q(h)  P(h) (1-\chi(h)) \phi_{h} = (1-\chi(h)) \phi_{h} + {\mathcal O}(h^{\infty}).$$ Since $P(h) \phi_{h} = 0$  and $\sigma( [ P(h), (1-\chi(h))] (x,\xi) = 0$ for $(x,\xi) \in $supp $ (1-\tilde{\chi}(h)),$  one gets the well-known energy  surface concentration estimate
\begin{equation} \label{con1} \begin{array}{ll}
 \|  (1-\tilde{\chi}(h)) \phi_{h} \|_{L^2} = {\mathcal O}(h^{\infty}). 
 \end{array}
 \end{equation}
 Since $\epsilon_0 >0$ can be chosen arbitrarily small, it follows from (\ref{con1}) that $WF_{h}(\phi_{h}) \subset S^*M. $
 
 A similar argument with the derivatives $\partial_x^{\alpha} \phi_h(x)$ combined with Sobolev lemma implies
 \begin{equation} \label{ptwise}
  \|  (1-\tilde{\chi}(h)) \phi_{h} \|_{C^k} = {\mathcal O}_k(h^{\infty}).\end{equation}

In analogy with (\ref{con1}), for the eigenfunction restrictions  one has the following energy surface mass localization result:

\begin{lem} \label{mass estimate} Let $H \subset M$ be a hypersurface and $u_{h} :=  \phi_{h}|_{H} = \gamma_H \phi_{h}.$ Then,
$$ WF_{h}(u_h) \subset B^*H.$$
\end{lem}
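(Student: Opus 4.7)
The plan is to test $u_h$ against a compactly supported semiclassical pseudodifferential operator on $H$ and reduce to an ambient matrix element, then apply the energy-surface localization (\ref{con1}). I fix a point $(s_0, \sigma_0) \in T^*H$ with $|\sigma_0|_g > 1$ and show $(s_0, \sigma_0) \notin WF_h(u_h)$. Choose $\delta > 0$ so that $|\sigma_0|_g > 1 + 2\delta$ and pick $a \in C_0^\infty(T^*H)$ equal to $1$ near $(s_0, \sigma_0)$ with $\operatorname{supp}(a) \subset \{(s,\sigma): |\sigma|_g > 1 + \delta\}$; it then suffices to prove $\|Op_h(a) u_h\|_{L^2(H)} = O(h^\infty)$.

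Rewrite the norm as an ambient matrix element,
$$\|Op_h(a) u_h\|_{L^2(H)}^2 = \langle \gamma_H^* B_h \gamma_H \phi_h,\ \phi_h\rangle_{L^2(M)},\qquad B_h := Op_h(a)^* Op_h(a),$$
where $B_h \in Op_h(S^{0,0}(T^*H))$ is microlocally supported in $\operatorname{supp}(a)$, and observe that the semiclassical wavefront relation satisfies
$$WF_h'(\gamma_H^* B_h \gamma_H) \subset \{(x, \xi, x, \xi') : x \in H,\ \xi|_{TH} = \xi'|_{TH} = \sigma,\ (s,\sigma) \in \operatorname{supp}(a)\}.$$
The Pythagorean identity arising from the orthogonal decomposition $T^*_H M = T^*H \oplus N^*H$ gives $|\xi|_g^2 = |\sigma|_g^2 + \xi_n^2 > (1+\delta)^2$ on this relation, so both the $\xi$- and the $\xi'$-projections lie strictly outside a neighborhood of $S^*M$. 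Choosing $\epsilon_0 < \delta/4$ in (\ref{chi}), the ambient microlocal cutoff $\tilde\chi(x, hD)$ from (\ref{con1}) is supported in $\{|\xi|_g < 1 + \delta/2\}$, and is therefore disjoint from this wavefront. Since $\phi_h = \tilde\chi(x, hD) \phi_h + O(h^\infty)_{L^2}$ by (\ref{con1}), the semiclassical symbolic calculus yields $\gamma_H^* B_h \gamma_H \cdot \tilde\chi(x, hD) = O(h^\infty)$ as an operator $L^2(M) \to L^2(M)$, which forces $\|Op_h(a) u_h\|_{L^2(H)} = O(h^\infty)$.

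The main technical point is to justify that the composition $\gamma_H^* B_h \gamma_H$ fits into the compactly supported semiclassical FIO calculus of \cite{GuSt2} despite the fold singularity of $\gamma_H$ along $S^*H$ (\S\ref{HYPERSURFACES}) and the conormal contribution $N^*H \times 0_{T^*M} \cup 0_{T^*M} \times N^*H$ in $WF'(\gamma_H^* Op_H(a) \gamma_H)$ that required cutoffs in the homogeneous case. Here, however, the support condition $|\sigma|_g > 1 + \delta$ keeps everything uniformly away from the fold at $|\sigma|_g = 1$ and from the zero section $\sigma = 0$, so the composition is transversal and no special treatment is needed. I would record this reduction as a short lemma cleanly separating the role of (\ref{con1}) from the orthogonal-decomposition inequality.
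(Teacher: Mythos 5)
Your argument is correct in substance, but it is a genuinely different route from the paper's. The paper works intrinsically on $H$: it writes $-h^{2}\Delta_g$ in Fermi normal form \eqref{nf1}, uses $[\gamma_H,\Delta_H]=0$ to transfer the eigenfunction equation to the restriction as in \eqref{nf2}, kills the right side by the ambient $C^k$ localization \eqref{ptwise}, and then runs an elliptic parametrix for $-h^{2}\Delta_H-1$ on $H$ outside $B^*H$ to conclude $\|(1-\zeta(s,0,hD_s))u_h\|_{L^2(H)}=\mathcal{O}(h^\infty)$. You avoid the induced equation on $H$ altogether: you test with a compactly microlocalized $Op_h(a)$, pass by duality to the ambient matrix element $\langle \gamma_H^* B_h\gamma_H\,\phi_h,\phi_h\rangle$, and exploit that the tangential frequency of the restriction cannot exceed the ambient frequency, so microsupports of $B_h\gamma_H$ and of $\tilde\chi(x,hD)$ are disjoint; with $\phi_h=\tilde\chi\phi_h+\mathcal{O}(h^\infty)$ from \eqref{con1} this gives the conclusion by nonstationary phase (note that since you compose with a pseudodifferential cutoff rather than with $U(t)$, transversality is automatic and the fold along $S^*H$ never actually enters; only the distance of $\operatorname{supp}a$ from $\{\sigma=0\}$ and from $\{|\sigma|=1\}$ matters). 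Both proofs rest on the same geometric fact; yours is arguably more elementary (no Fermi-coordinate computation, no parametrix on $H$), while the paper's version delivers directly the quantitative bound $\|(1-\zeta)u_h\|_{L^2(H)}=\mathcal{O}(h^\infty)$ with a cutoff supported on \emph{all} of $\{|\sigma|\ge 1+\epsilon_0\}$, which is the form invoked later, e.g.\ in \eqref{outermass} where $\chi_{H,out}$ is not compactly supported.

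Two points need repair or strengthening. First, the assertion that $\gamma_H^* B_h\gamma_H\,\tilde\chi(x,hD)=\mathcal{O}(h^\infty)$ as an operator on $L^2(M)$ cannot be literally correct: $\gamma_H^*$ produces surface measures $g\,\delta_H$, which never lie in $L^2(M)$. The correct statement, which your nonstationary phase argument does give, is $\|B_h\gamma_H\,\tilde\chi(x,hD)\|_{L^2(M)\to C^k(H)}=\mathcal{O}(h^\infty)$; you then bound
\begin{equation*}
\|Op_h(a)u_h\|^2_{L^2(H)}=\langle B_h\gamma_H\tilde\chi\phi_h,\gamma_H\phi_h\rangle_{L^2(H)}+\langle B_h\gamma_H(1-\tilde\chi)\phi_h,\gamma_H\phi_h\rangle_{L^2(H)}
\end{equation*}
using a crude polynomial bound on $\|\gamma_H\phi_h\|_{L^2(H)}$ (e.g.\ the $\mathcal{O}(h^{-1/4})$ bound of \cite{BGT}, exactly as the paper does in \S\ref{finish}) and \eqref{ptwise} for the second term. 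Second, testing only against $a\in C_0^\infty$ near a fixed point $(s_0,\sigma_0)$ establishes the pointwise wave front statement but not uniformity at fiber infinity, which is what is actually used downstream. This is not a real obstruction: take $a=1-\zeta$ with $\zeta\in C_0^\infty(T^*H)$ equal to $1$ on $B_{1+\epsilon_0}^*H$; the integration by parts in the intermediate $H$-variable still gains $h/|\sigma-\xi'|$ with $|\sigma-\xi'|\gtrsim 1+|\sigma|$ on the relevant supports, so the same argument yields $\|Op_h(1-\zeta)u_h\|_{L^2(H)}=\mathcal{O}(h^\infty)$, matching the strength of the paper's conclusion. With these adjustments your proof is complete.
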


\begin{proof}
 Let $(s,x_n) \in \R^{n-1} \times (-\epsilon_0,\epsilon_0)$ be Fermi coordinates in an $\epsilon_0$ collar neighbourhood  of $H$ with $H= \{ x_n= 0 \}.$
In these coordinates,
\begin{equation} \label{nf1}
 -h^{2} \Delta_g =  h^{2}D_{x_n}^2  - h^{2} \Delta_{H} + {\mathcal O}(x_n) | h D_{s}|^{2} + {\mathcal O}( h^2 ( |D_s|  + |D_{x_n}|) ). \end{equation}
 
 Let $\zeta(s,x_n,\sigma) \in C^{\infty}_{0}(\R^{n-1} \times (-\epsilon_0, \epsilon_0) \times \R^{n-1})$  be equal to $1$ when $ \sigma \in B(1 + \epsilon_0)$ and vanishing outside $B(1+ 2\epsilon_0).$ 
Since $[\gamma_H, \Delta_H] = 0,$
\begin{equation} \label{nf2} \begin{array}{lll}
 (- h^2 \Delta_H - 1) (1-\zeta(s,x_n=0,h D_{s})  ) u_h  = \gamma_H  \, \big[ h^2 D_{x_n}^2   + {\mathcal O}(x_n) | h D_{s}|^{2} \\ \\  + {\mathcal O}( h^2 ( |D_s|  + |D_{x_n}|) ) \, \big] (1-\zeta(s,x_n,h D_{s}) )\phi_{h}   + \gamma_H (-h^{2}\Delta_g  - 1) (1-\zeta(s,x_n,h D_{s}) ) \phi_{h}. \end{array} \end{equation}
Since  $|\sigma|^2 \geq 1+\epsilon_0$ on supp $(1-\zeta)$, obviously $\xi_n^2 + |\sigma|^2 \geq 1 + 2\epsilon_0$ also holds on supp $(1-\zeta)$. But then, by (\ref{ptwise}) it follows that both terms on the RHS of (\ref{nf2}) are ${\mathcal O}(h^{\infty}).$ As for the LHS,  it then follows that
$$ (- h^2 \Delta_H - 1)  \cdot (1-\zeta(s,x_n=0,h D_{s}) ) u_h = {\mathcal O}(h^{\infty}).$$
Then, since $h^2 \Delta_H -1$ is $h$-elliptic on supp $1-\zeta(x_n=0,s,\sigma)$,
by the same kind of parametrix construction used to prove (\ref{con1}), it follows that
$$ \| (1-\zeta(s,x_n=0,h D_{s}) ) u_{h} \|_{L^2(H)} = {\mathcal O}(h^{\infty}).$$

\end{proof}

\subsubsection{\label{QERsc} QER for semiclassical symbols}

\begin{proof}
The proof is very similar to the homogeneous case discussed in the rest of the paper and so we only point out here the relatively  minor differences and how to deal with them.  We use the notation $\gamma_H \phi_{h_j} = \phi_{h_j} |_H$.
First, we note that by $L^2$-boundedness and the $L^2$-restriction bound \cite{BGT} $\|  \gamma_H \phi_h \|^{2}_{L^2(H)} 
= {\mathcal O}(h^{-\frac{1}{2}}),$ it follows  that  for $a \in S^{0,0}(T^*H \times (0,h_0]),$ 
\begin{equation} \label{scthm1} \langle Op_{h_j}(a)
\gamma_H \phi_{h_j}, \gamma_H\phi_{h_j} \rangle_{L^{2}(H)} = \langle Op_{h_j}(a_0)
 \gamma_H \phi_{h_j}, \gamma_H\phi_{h_j} \rangle_{L^{2}(H)} + {\mathcal O}(h_j^{\frac{1}{2}}).\end{equation}
 So without loss of generality we can assume that $a = a_0$, since in view of (\ref{scthm1}),  the lower-order terms $a_{-j}; j \geq 1$ in the symbol expansion  (\ref{scthm1}) do not affect the leading-order asymptotics of the matrix elements. 
The next step is to replace the symbol $a_0$ in the matrix elements $\langle Op_{h_j}(a_0) \gamma_H\phi_{h_j}, \gamma_H\phi_{h_j} \rangle$ by a compactly-supported  cutoff to the interior of $B^*H.$ In the following, we let $B_{r}^{*}H:=  \{(s,\sigma); |\sigma| \leq r \}$. Then, for a fixed small constant $\epsilon >0$  we let  $\chi_{H,in} \in C^{\infty}_{0}(T^*H)$ with supp $\chi_{in} \subset B_{1-\epsilon}^*H, \,\, \chi_{H,\epsilon} \in C^{\infty}_{0}(T^*H)$ with supp $\chi_{\epsilon} \subset B_{1+\epsilon}^*H \setminus B_{1-\epsilon}^*H$ and choose $\chi_{H,out} \in C^{\infty}(T^*H)$ supported outside $B^*H$ with the property that
$$ \chi_{H,in} + \chi_{H,\epsilon} + \chi_{H,out} \equiv 1 \,\,\, \text{on} \,\, T^*H.$$
Due to the large number of semiclassical pseudodifferential cutoffs appearing in the argument, we sometimes denote both a cutoff function $\chi_H \in C^{\infty}_{0}(T^*H)$ and the corresponding operator $Op_{h_j}(\chi_H)$ both by $\chi_H$. By Lemma \ref{mass estimate}, 
\begin{equation} \label{outermass}
\langle \chi_{H,out} Op_{h_j}(a_0) \gamma_H\phi_{h_j}, \gamma_H\phi_{h_j} \rangle = {\mathcal O}(h_j^{\infty}) \end{equation}  and so, the matrix elements 
\begin{equation} \label{scmatrix} \begin{array}{ll}
 \langle Op_{h_j}(a_0) \gamma_H \phi_{h_j}, \gamma_H\phi_{h_j} \rangle = \langle  \chi_{H,in} Op_{h_j}(a_0)
 \gamma_H\phi_{h_j},\gamma_H\phi_{h_j} \rangle_{L^{2}(H)} \\ \\ + \langle \chi_{H,\epsilon} Op_{h_j}(a_0)
 \gamma_H\phi_{h_j}, \gamma_H\phi_{h_j} \rangle_{L^{2}(H)} + {\mathcal O}_{\epsilon}(h_j^{\infty}). \end{array} \end{equation}



 In analogy with the homogeneous case, the main part of the proof of Theorem \ref{sc} is the variance estimate
\begin{equation} \label{scvariance2}
\limsup_{h \rightarrow 0^+} \frac{1}{N(h)}  \sum_{j; h_j \geq h} \vert  \langle  \chi_{H,in} Op_{h_j}(a_0) \gamma_H \phi_{h_j}, \gamma_H \phi_{h_j} \rangle - \omega(\chi_{H,in} a_0)  \vert ^2  = 0,\end{equation} where $N(h):= \# \{ j; h_j \geq h \} \sim_{h \rightarrow 0^+} C_n h^{-n}.$
The averaging argument proceeds as before, except that the constituent homogeneous Fourier integral operators are cut-off using the mass concentration estimates in (\ref{con1}), Lemma \ref{mass estimate} and the reduction (\ref{scvariance2}). The resulting cut-off operators are then compactly supported semiclassical Fourier operators in the sense of \cite{GuSt2}.

Let $\chi \in C^{\infty}_{0}(T^*M)$ be the cutoff in (\ref{chi}). We define the semiclassically cut-off wave operators $U(\cdot,h):C^{\infty}(M) \rightarrow C^{\infty}(M \times \R)$ by 
\begin{equation} \label{wave}
U(\cdot,h):=\chi_{\R}(t,hD_t)  \, [  \chi(x,hD_x) \, U(\cdot) \, \chi(x,hD_x) ]. \end{equation}
where, $\chi_{\R}(t,t',h):= (2\pi h)^{-1} \int_{\R} e^{i(t-t')\tau/h} \chi(\tau-1) \chi(t/T) \, d\tau$ and in the latter $\chi \in C^{\infty}_{0}(\R)$ is a cutoff as in (\ref{VTFORM}).
Similarily, we  define the cut-off restriction operators $\gamma_H(h): C^{\infty}(M) \rightarrow C^{\infty}(H)$ by
\begin{equation} \label{restrict}
\gamma_H(h):= \chi_{in}(s,hD_s) \, \gamma_H \, \chi(x,hD_x).\end{equation} It follows that
\begin{equation} \label{scvariance3} \begin{array}{ll}
\frac{1}{N(h)}  \sum_{j; h_j \geq h} \vert  \langle  \chi_{in} Op_{h_j}(a_0) \gamma_H \phi_{h_j},  \gamma_H \phi_{h_j} \rangle - \omega(\chi_{in} a_0)  \vert ^2  \\ \\ = 
\frac{1}{N(h)}  \sum_{j; h_j \geq h} \vert  \langle  V_{T,\epsilon}(a_0,h)  \phi_{h_j},  \phi_{h_j} \rangle - \omega(\chi_{in} a_0)  \vert ^2 + {\mathcal O}\left( \frac{1}{N(h)} \right),\end{array} \end{equation} 
where, the semiclassical averaging operator 
\begin{equation} \label{average}
V_{T,\epsilon}(a_0,h):= \frac{1}{T} \int_{\R} U(-t,h) \gamma_H(h)^*  \chi_{in} Op_h(a_0) \gamma_H(h) U(t,h) \, \chi (\frac{t}{T} ) \, dt.\end{equation}
Thus, it suffices to take $\limsup_{h \rightarrow 0^+}$ of the RHS in (\ref{scvariance3}). 
In analogy with Proposition \ref{VTDECOMPa} one shows that modulo residual error $V_{T,\epsilon}(a_0,h)$ is the sum of a semiclassical pseudodifferential operator in $Op_h(S^{0,-\infty}(T^*M \times (0,h_0]))$  and a compactly-supported zeroth-order semiclassical Fourier integral operator.  

Given a manifold-Lagrangian pair $(X \times Y,\Lambda)$ and an operator $F(h): C^{\infty}(X) \rightarrow C^{\infty}(Y)$ with $WF'_{h}(F(h)) \subset \Lambda,$ following \cite{GuSt2} we say that    $F(h) \in {\mathcal F}_0^{k}(X \times Y, \Lambda)$ provided it has a Schwartz kernel  locally of the form
$$ F(h)(x,y) = (2\pi h)^{k-\frac{N}{2} -\frac{\dim Y}{2}} \int_{\R^N} e^{i\phi(x,y,\theta)/h} a(x,y,\theta,h) \, d\theta,$$
with $a \in C^{\infty}_{0}$ in all variables. In this case,  we call $F(h):C^{\infty}(X) \rightarrow C^{\infty}(Y)$ a compactly-supported semiclassical Fourier integral operator (scFIO) of order $k.$ We refer the reader to \cite{GuSt2} Chapter 8 for a detailed discussion of composition formulas and symbol calculus for these operators. In particular, given two scFIO's $F_1(h) \in {\mathcal F}_0^{m_1}(X_1 \times X_2,\Lambda_1)$ and $F_{2}(h) \in {\mathcal F}_{0}^{m_2} (X_2 \times X_3, \Lambda_2)$ with associated Lagrangians $\Lambda_1 \subset T^*X_1 \times T^* X_2$ and $\Lambda_2 \subset T^*X_2 \times T^*X_3$ that are  transversally composible, one has
\begin{equation} \label{composition}
F_1(h) \circ F_2(h) \in {\mathcal F}_0^{m_1 + m_2  } (X_1 \times X_3, \Lambda_1 \circ \Lambda_2). \end{equation} Following the argument in section \ref{v operator}, one shows that 
$ U(-t_1,h) \gamma_H(h)^* \chi_{in} Op_h(a_0) \gamma_H(h) U(t_2,h) \in {\mathcal F}^{1/2}_{0}(M \times \R \times M \times \R, (\Gamma^* \circ C_H \circ \Gamma)_{\chi} )$
 where, $(\Gamma^* \circ C_H \circ \Gamma)_{\chi} = (\Gamma^* \circ C_H \circ \Gamma)  \cap ( \text{supp} \chi \times T^*\R \times \text{supp} \chi \times T^*\R)$ and $ \pi_{T_*} \Delta_t^* \in {\mathcal F}_{0}^{ \,-1/2}( (\R \times \R \times M \times M) \times (M \times M),\Gamma_{\chi})$ where $\Gamma$ is the identity graph and $\Gamma_{\chi} = \Gamma \cap (T^*\R \times T^*\R \times \text{supp} \chi  \times \text{supp} \chi \times \text{supp} \chi \times \text{supp} \chi). $ We note that the transversality conditions for all of the scFIO compositions in (\ref{average}) are verified exactly as before since the associated Lagrangians are just  subsets of the corresponding conic Lagrangians in section \ref{v operator}. Since $V_{T,\epsilon}(a_0,h) = ( \pi_{T_*} \Delta_t^{*}) \circ U(-t_1,h) \gamma_H(h)^* \chi_{in} Op_h(a_0) \gamma_H(h) U(t_2,h),$ it follows from (\ref{composition}) that  the $h$-microlocal deomposition of $V_{T,\epsilon}(a_0,h)$ then parallels the one in Proposition \ref{VTDECOMPa} with
 $$V_{T,\epsilon}(a_0,h) = P_{T,\epsilon}(a_{0},h) + F_{T,\epsilon}(a_0,h) + R(a_0,h).$$
 Here, $P_{T,\epsilon}(a_0,h) \in Op_{h}(S^{0,-\infty}(T^*M \times (0,h_0])),$ 
 $F_{T,\epsilon} \in {\mathcal F}_0^0 (M \times M, (\Gamma_{T,\epsilon})_{\chi} )$ and $\| R(a_0,h) \|_{L^2 \rightarrow L^2} = {\mathcal O}(h^{\infty}).$
 The principal symbol formulas also parallel the homogeneous ones in (\ref{aTFORM}) with $\sigma(P_{T,\epsilon}(a_0,h))(x,\xi) = (\chi_{in} a_0)_{T,\epsilon}(x,\xi)$ and likewise for $\sigma(F_{T,\epsilon}(a_0,h))(x,\xi).$ The rest of the proof of Theorem \ref{sctheorem} folows as in Theorem \ref{maintheorem}.   \end{proof}

\section{\label{FIOS} Appendix}


In this appendix, we briefly review the basic facts of symbol
composition that we will use in the calculations. We are working
in the framework of homogeneous pseudo-differential operators on
$H$. Thus, we assume $a(s,\sigma) \in S^{0}_{cl}(T^*H)$ is a
zeroth order classical polyhomogeneous symbol on $H$  with $a \sim
\sum_{j=0}^{\infty} a_j, \, a_j \in S^{-j}_{1,0}(T^*H)$ and
$Op_H(a)$ is its quantization as a pseudo-differential operator on
$L^2(H)$. We refer to \cite{DS,GS} and especially to volume IV of
\cite{HoI-IV} for background on Fourier integral operators. We use
the notation $I^m(X \times Y, C)$ for the class of Fourier
integral operators of order $m$ with wave front set along the
canonical relation $C$, and $WF'(F)$ to denote the canonical
relation of a Fourier integral operator $F$.

We recall that a Fourier integral operator $A: C^{\infty}(X) \to
C^{\infty}(Y)$ is an operator whose Schwartz kernel may be
represented by an oscillatory integral
$$K_A(x,y) = \int_{\R^N} e^{i \phi(x, y, \theta)} a(x, y, \theta) d\theta$$
where the phase $\phi$ is homogeneous of degree one in $\theta$.
The critical set of the phase is given by
$$C_{\phi} = \{(x, y, \theta): d_{\theta} \phi = 0\}. $$
Under ideal conditions, the map
$$\iota_{\phi} : C_{\phi} \to T^*(X, Y), \;\;\; \iota_{\phi}(x, y,
\theta) = (x, d_x \phi, y, - d_y \phi) $$ is an embedding, or at
least an immersion. In this case the phase is called
non-degenerate.  Less restrictive, although still an ideal
situation, is where the phase is clean. This means that the map
$\iota_{\phi} : C_{\phi} \to \Lambda_{\phi} $, where
$\Lambda_{\phi} $ is the image of $\iota_{\phi}$,  is locally a
fibration with fibers of dimension $e$.  From \cite{HoI-IV}
Definition 21.2.5, the number of linearly independent
differentials $d \frac{\partial \phi}{\partial \theta}$ at a point
of $C_{\phi}$ is $N - e$ where $e$ is the excess.

We a recall that the order of $F: L^2(X) \to L^2(Y)$ in the
non-degenerate case  is given in terms of a local oscillatory
integral formula by $m +
 \frac{N}{2} - \frac{n}{4},$, where $n = \dim X + \dim Y, $ where
$m$ is the order of the amplitude, and $N$ is the number of phase
variables in the local Fourier integral representation (see
\cite{HoI-IV}, Proposition 25.1.5); in the general clean case with
excess $e$, the order goes up by $\frac{e}{2}$ (\cite{HoI-IV},
Proposition 25.1.5'). Further, under clean composition of
operators of orders $m_1, m_2$, the order of the composition is
$m_1 + m_2 - \frac{e}{2}$ where $e$ is the so-called excess (the
fiber dimension of the composition); see \cite{HoI-IV}, Theorem
25.2.2.

The symbol $\sigma(\nu)$ of a Lagrangian (Fourier integral)
distributions is a section of the bundle $\Omega_{\half} \otimes
\mcal_{\half}$ of the bundle of half-densities (tensor the Maslov
line bundle). In terms of a Fourier integral representation it is
the square root $\sqrt{d_{C_{\phi}}}$ of the delta-function on
$C_{\phi}$ defined by $\delta(d_{\theta} \phi)$, transported to
its image in $T^* M$ under $\iota_{\phi}$. If $(\lambda_1, \dots,
\lambda_n)$ are any local coordinates on $C_{\phi}$, extended as
smooth functions in  neighborhood, then
$$ d_{C_{\phi}}: = \frac{|d \lambda|}{|D(\lambda,
\phi_{\theta}')/D(x, \theta)|}, $$ where $d \lambda$ is the
Lebesgue density.

\end{document}